\renewcommand{\a}{\alpha }
\renewcommand{\b}{\beta }
\newcommand{\G}{\Gamma }
\newcommand{\g}{\gamma }
\newcommand{\D}{\Delta }
\renewcommand{\d}{\delta }
\newcommand{\e}{\varepsilon }
\renewcommand{\i}{\iota }
\renewcommand{\l}{\lambda }
\newcommand{\s}{\sigma }
\renewcommand{\S}{\Sigma }
\renewcommand{\t}{\tau }
\newcommand{\cA}{{\cal{A}}}
\newcommand{\cF}{{\cal{F}}}
\newcommand{\cK}{{\cal{K}}}
\newcommand{\cL}{{\cal{L}}}
\newcommand{\cS}{{\cal{S}}}
\newcommand{\nul}{\emptyset }
\newcommand{\ceil}[1]{{\lceil} {#1} {\rceil}}
\newtheorem{theorem}{Theorem}[section]
\newtheorem{lemma}[theorem]{Lemma}
\newtheorem{corollary}[theorem]{Corollary}
\newtheorem{proposition}[theorem]{Proposition}
\newtheorem{definition}[theorem]{Definition}
\newtheorem*{defn*}{Definition}
\newtheorem{exam}[theorem]{Example}
\newenvironment{example}{\begin{exam} \rm}{\end{exam}}
\newtheorem{remk}[theorem]{Remark}
\numberwithin{equation}{section}
\numberwithin{figure}{section}
\newcommand{\lk}{\operatorname{lk}}
\newcommand{\supp}{\operatorname{supp}}
\newcommand{\genus}{\operatorname{genus}}
\newcommand{\ZZ}{\ensuremath{\mathbb{Z}}}
\newcommand{\RR}{\ensuremath{\mathbb{R}}}
\newcommand{\ZD}{\ZZ[\frac{1}{2}]}
\renewcommand{\cF}{\mathcal{F}}
\renewcommand{\cS}{\mathcal{S}}
\newcommand{\la}{\langle}
\newcommand{\ra}{\rangle}
\newcommand{\mbf}{\mathbf}
\newcommand{\maps}{\rightarrow}
\newcommand{\bs}{\backslash}
\newcommand{\be}{\begin{enumerate}}
\newcommand{\ee}{\end{enumerate}}
\newcommand{\bd}{\begin{description}}
\newcommand{\ed}{\end{description}}
\newcommand{\bei}{\begin{itemize}}
\newcommand{\eei}{\end{itemize}}
\newenvironment{ajd1}{\noindent\color{red} AJD }{}
\newcommand{\ajd}[1]{\begin{ajd1} #1 \end{ajd1}}
\author{Andrew Duncan, Steven Fulthorp\\
School of Mathematics and Statistics\\
Herschel Building\\
University of Newcastle upon Tyne\\
Newcastle upon Tyne
}
\title{Orientable and non-orientable genus $n$ Wicks forms over hyperbolic groups}
\date{\today}
\begin{document}
\maketitle

\footnote{AMS Mathematics Subject Classification: 20F12, 20F65, 20F67}
\begin{abstract}
In 1962 M.J.~Wicks \cite{wicks62} gave a precise description of the form a commutator could take
in a
free group or a free product and in 1973 extended this description to cover a product of two squares \cite{wicks73}. 
Subsequently, lists of ``Wicks forms'' were  
found for arbitrary products of commutators and squares in free groups and free products \cite{Culler81}, \cite{vdovina97}. 
Here we 
construct   Wicks forms for products of commutators and squares in a hyperbolic group. As applications we give 
explicit lists of forms for  a commutator and  for a square, and find bounds on the lengths of conjugating elements 
required to express a quadratic tuple of elements of a hyperbolic group  as a Wicks form. 
\end{abstract}
\section{Introduction}

In 1962 M.J.~Wicks \cite{wicks62} described  algorithms to decide, given an element $u$ of a free group or a free product of groups, 
whether or not $u$ is a commutator; and in 1973 \cite{wicks73} 
 extended the results to cover the case where $u$ is a product of two squares.   
These algorithms are based on the construction of a set of ``forms'' for commutators and squares. 
 For example, an element $u$ in the 
free group  $F(X)$
is shown to be a commutator if and only if it is conjugate to a cyclically reduced word of the form
$F=ABCA^{-1}B^{-1}C^{-1}$, with $A,B,C\in F(X)$ (where cyclically reduced means the word and 
all its cyclic permutations are reduced).
The analogous result for a word $u$ equal to a  product $a^2b^2$ 
 is that $u$ is 
conjugate to a cyclically reduced word  
$F_1=A^2BC^2B^{-1}$ or $F_2=ABACB^{-1}C$. 
 We call the words $F$, $F_1$ and $F_2$ 
\emph{Wicks forms} for commutators and products of two squares, in free groups. 
The lists of forms \cite{wicks62}, \cite{wicks73} for 
commutators and products of two squares in free products of groups are similar though
more involved.  

The question of whether an element is a commutator or product of squares is an example of an equation over a group: 
as defined in Section \ref{sec:gandvk}. 
Landmarks in the theory of equations over groups are the papers of Makanin \cite{Makanin82} showing that
 arbitrary systems of  equations over a free group are solvable, 
in the free group; and of  Razborov \cite{razborov85} giving a parametric
 description of the set of all solutions. The complexity of Makanin's algorithm is very high: it is
not known to be primitive recursive, whereas Wicks' algorithms which show that the quadratic equations $[a,b]=u$ and $a^2b^2=u$, in
variables $a,b$,  are
solvable in free groups and free products, run in time bounded by a  polynomial in the length of $u$. In addition, 
quadratic equations play an important part  in algorithms for  solution of general systems of  equations, and they are closely related
 to compact surfaces. For these reasons, among others,  the
study of quadratic equations is of independent interest. 

 An element of 
a group is said to have  \emph{orientable genus} $n$ if it may be expressed as  a product  
of $n$ commutators, and no fewer, and \emph{non-orientable genus} $n/2$ if it is a product of $n$ squares, 
and no fewer (see Section \ref{sec:gandvk} below). The corresponding quadratic equations
were shown to be solvable in a free groups by Edmunds \cite{edmunds75},
\cite{edmunds79}. Culler \cite{Culler78} and, independently, Goldstein and Turner \cite{GoldsteinTurner79}, used the theory of 
compact surfaces to produce such algorithms; in particular Culler described a  
set of Wicks forms for elements of orientable and non-orientable genus $n$ in the free group.
Algorithms to decide the solvability, and to find a solution,  of a general quadratic equation 
over a free group and over a free product of groups (where quadratic equations are solvable in the factors) were given 
by Comerford and Edmunds \cite{ComerfordEdmunds81}. In the case of the free group, Ol'shanski \cite{ols89} (see also \cite{GK92}) shows that there is such an algorithm 
which runs in time polynomial in the length of the coefficients of the equation (see Section \ref{sec:gandvk} for more detail). 
 The set of all solutions of a quadratic equation in a free group $F(X)$ can, as shown by Comerford and Edmunds \cite{ComerfordEdmunds89}, be described
in terms of a finite set of \emph{basic} solutions and certain $F(X)$-automorphisms of $F(\cA)*F(X)$; and all the parameters of this description
may be effectively computed, given the equation $w=1$. In fact Grigorchuk and Kurchanov \cite{GK92} show that the set of basic 
solutions  of the quadratic equation $w=1$ may be computed in polynomial 
time in the length of the coefficients of $w$. Moreover in \cite{GK92} a set of Wicks forms for solutions of a given quadratic equation 
in a free group 
is shown to exist. 

Quadratic equations in small cancellation groups and hyperbolic groups were studied in \cite{Schupp79}, \cite{Comerford81},   
\cite{lysionok89}. 
Ol'shanskii \cite{ols89}[Theorem 6] (see also \cite{GK92}) constructed  
an algorithm, to determine whether or not a quadratic equation 
of genus $g$ has solution in $H$, 
and if so to find one. Moreover this algorithm runs in time bounded by a polynomial in the sum of the 
lengths of coefficients. A parametric description of 
the set of solutions of a quadratic equation was 
described in  \cite{GrigorchukLysionok}. Generalising Makanin's algorithm, Rips and Sela \cite{RipsSela95} proved the
 existence of an algorithm
for the solvability of a finite system of equations over a torsion-free hyperbolic group, and  Dahmani and Guirardel \cite{DG10} have  
extended this result to cover the case of hyperbolic groups with torsion.  

In \cite{vdovina97} Vdovina described a procedure for constructing Wicks forms
for elements of any orientable genus  in a free product. 
In this paper we use similar methods to construct Wicks 
forms for elements of arbitrary genus in hyperbolic groups. Using the forms of genus $1$ and $1/2$ we then 
list all the possible forms of commutators and squares in
a hyperbolic group. Similar lists of forms could be constructed for  
elements of
higher genus. However, the number of possible extensions increases
dramatically with the increase of genus: Bacher and Vdovina show  
\cite{BacherVdovina} that the number of orientable Wicks forms of genus $g$ is $\Omega(g!)$.
 We also describe, as an 
application of our results, forms for quadratic tuples of words (defined below).

We begin by introducing a number of definitions and preliminary results in Sections \ref{sec:hyp} to \ref{sec:gandvk}. 
The key technique which we call an ``extension'' of a quadratic word is introduced and developed in Sections \ref{sec:ext} and 
\ref{sec:genlen}.  
This puts us in a
 position to state the main results of the paper: Theorem \ref{Thexp} and Theorem \ref{Thexp-}, in Section \ref{sec:mainthm}. 
Section \ref{sec:applications} contains  
some   applications of  these theorems. In Sections \ref{sec:commutators} and \ref{sec:squares}, we prove
Propositions \ref{propos} and Propositions \ref{prop:squares},  which describe the
possible forms for commutators and squares in a
$\delta$-hyperbolic group $H$.   In Section \ref{sec:sqe} we discuss quadratic equations, give bounds
on the length of elements in ``minimal'' solutions to such equations and show how the main theorems
may be used to describe forms for quadratic tuples of elements. 
The proofs of the main theorems are left to Section \ref{sec:proof}. 
In Section \ref{sec:pre-proof} we give 
preliminary results needed to construct an appropriate extension; and complete this construction in Section \ref{stext}. 
Finally, in Section \ref{sec:Rbound} we bound the length of the conjugator which appears in the 
main theorems.  

\section{Definitions and Main results}
\subsection{Hyperbolic groups}\label{sec:hyp} 
For background on hyperbolic groups the reader is referred to
\cite{AlonsoBrady,ghys90,Gromov93}. Here we use the notation of \cite{AlonsoBrady}. 
The Cayley graph of a group $G$ with respect to a generating
set $X$ will be denoted $\G_X(G)$. 
 As usual, the word
metric and makes $G$ into a geodesic metric space.   
We use $[x,y]$ to denote a geodesic path $p$ from $x$ to $y$ and $(x,y]$,
$[x,y)$ and $(x,y)$ to denote $p\backslash \{x\}$, $p\backslash \{x\}$
and $p\backslash \{x,y\}$, respectively. If $p$ is a path from 
$x$ to $y$ then we write $x=\i(p)$, $y=\t(p)$, we denote the reverse of $p$ from $y$ to $x$ by $p^{-1}$ and, if $p$ is geodesic, then we write $|p|=d(x,y)$. 
  Let $w$ be a (reduced) word in
$F(X)$. The length of $w$ as a word is denoted $|w|$.  If  $|v|\geq
|w|$ for all words $v$ in $F(X)$ such that $w= _G v$ then we say that
the word
$w$ is 
$G$-\emph{minimal}. We use the notation $|w| _G$
to denote the length of a $G$-minimal word in $F(X)$ which is equal to $w$ in $G$.
It follows that, for all $g\in G$, a $G$-minimal 
word representing $g$ is the label of a geodesic path from $1$ to $g$ 
in $\G_X(G)$. 

A \emph{geodesic triangle}, denoted $\triangle xyz$, consists of the union of 
three points $x,y,z$, of the 
the Cayley graph, and geodesic paths $[x,y]$, $[y,z]$ and $[x,z]$
joining them. 
Let $H$ be  a group generated by a set $X$ and let
 $T=\triangle xyz$ be a geodesic triangle in $\Gamma _X(H)$. Let 
 $T'=\triangle 'x'y'z'$ be  a Euclidean triangle with sides the same
length as $T$. That is
$d_E(x',y')=d(x,y)$ etc, where $d_E$ is the standard Euclidean
metric. There is a natural identification map $\phi$ from  $T$ to
$T'$. The maximum inscribed circle in $T'$ meets the side
$[x'y']$ (respectively $[y'z']$, $[z'x']$) at a point $c_z$
(respectively $c_x$, $c_y$) such that 
\begin{equation*}
d(x',c_z)=d(x',c_y), d(y',c_x)=d(y',c_z), d(z',c_y)=d(z',c_x).
\end{equation*}
Notice that we have
\begin{equation}\label{dtcon}
d(x',c_z)=\frac{1}{2}(d(x',c_z)+d(x',c_y))=\frac{1}{2}(d(x',z')+d(x',y')-d(z',y')).
\end{equation}
The preimages $\phi^{-1}(c_x)$, 
$\phi^{-1}(c_y)$
and $\phi^{-1}(c_z)$ of $c_x,c_y$ and $c_z$ 
in $T$ are called the \emph{internal points}
of $T$. 

There is a unique isometry $\iota _T$ of the triangle $T'$ onto a \emph{tripod}: that is a tree
with one vertex $p$ of degree three and vertices $x'', y'', z''$ each
of degree one, such that $d(p,z'')=d(z',c_y)=d(z',c_x)$ etcetera. 
Let $\psi$ be the composite map $\psi=\iota_T\circ\phi$.  
We say that a geodesic triangle is \emph{$\delta$-thin} if the fibres of
$\psi$ have diameter at most $\delta$ in $\Gamma_X(H)$. That is, for
all $x, y$ in $T$,
\begin{equation}\label{dt2con}
\psi(x)=\psi(y)\implies d(x,y)\leq \delta.
\end{equation} 

\begin{definition}\label{def:thintriang}
  A group $H$ is said to be $\d$\emph{-hyperbolic}, with respect to 
a finite generating set $X$, if 
all geodesic triangles 
in $\Gamma _X(H)$ are 
$\d$-thin. 
A group is \emph{(word) hyperbolic} if, for some $\d\ge 0$, it is 
$\d$-hyperbolic, with respect to some generating set $X$. 
\end{definition}
In the light of \eqref{dtcon} and \eqref{dt2con} a group is $\delta$-hyperbolic if and 
only if it satisfies the following condition. Let $w$ and $z$  be any words in $F(X)$
which are $H$-minimal,  with
$w=w_1w_2$ and $z=z_1z_2$, $|w|=|w_1|+|w_2|$ and $|z|=|z_1|+|z_2|$. 
\begin{equation}
\textrm{If 
} 
|w_2|=|z_1| 
\leq 
\frac{1}{2}(|w|+|z|-|wz|_H)
\textrm{ then } 
|w_2z_1|_H 
\leq 
\delta. 
\end{equation}
It can be shown that word hyperbolic groups are finitely presented. 
For this, and an account of many characterisations of hyperbolic groups,
see, for example, \cite{AlonsoBrady}. 
For the remainder of the paper, $H$ will denote a $\delta$-hyperbolic
group, with respect to the generating set $X$, and we shall assume that
$H$ has a finite presentation $\la X|S\ra$.

The following lemma of Gromov (see \cite{GrigorchukLysionok})
 shows that the conjugacy problem is
solvable in hyperbolic groups.  
\begin{lemma}[\cite{GrigorchukLysionok}]\label{lb}
If $H$-minimal words $h_1$ and $h_2$ are conjugate in $H$, then a word $w$ can
be found such that $h_1=_Hwh_2w^{-1}$ and 
\begin{equation*}
|w|\leq \frac{1}{2}(|h_1|+|h_2|)+M+1,
\end{equation*}
where $M$ is the number of elements of $H$ represented by words of $F(X)$ of 
length $\leq 4\delta $.
\end{lemma}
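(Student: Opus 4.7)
The plan is to prove the lemma by contradiction, combining the thin-quadrilateral property of hyperbolic groups with a pigeonhole argument over short group elements. Choose $w$ to be an $H$-minimal word of shortest length satisfying $h_1 =_H w h_2 w^{-1}$, and assume for contradiction that $|w| > \frac{1}{2}(|h_1|+|h_2|) + M + 1$. The natural geometric object is the geodesic quadrilateral $Q$ in $\Gamma_X(H)$ with vertices $1, w, h_1 w, h_1$, where $h_1 w = w h_2$. I would take the side $[h_1, h_1 w]$ to be the translate $h_1\cdot [1,w]$, so that each prefix vertex $w_i$ of $[1,w]$ has a canonical partner $h_1 w_i$ on $[h_1, h_1 w]$ at the same parameter value.

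First I would split $Q$ into two geodesic triangles along a diagonal and apply the $\delta$-thin condition to each. The internal-points description from \eqref{dtcon}--\eqref{dt2con} shows that for every $i$ in a ``middle range''---those $i$ further than roughly $\frac{1}{2}|h_1|+O(\delta)$ from $1$ and further than $\frac{1}{2}|h_2|+O(\delta)$ from $w$---the vertex $w_i$ lies within $2\delta$ of some point of $[h_1,h_1 w]$. Using the canonical parameterization $j\mapsto h_1 w_j$ of that side, I would then deduce that the element $\gamma_i := w_i^{-1} h_1 w_i \in H$ satisfies $|\gamma_i|_H \leq 4\delta$ for all such $i$.

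Under the contradiction hypothesis, the middle range contains more than $M$ indices, so by the pigeonhole principle applied to the set of elements of $H$ represented by words of $F(X)$ of length $\leq 4\delta$, there exist $i < i'$ in the middle range with $\gamma_i =_H \gamma_{i'}$. Writing $w = w_i\, u\, v_{i'}$, where $u$ is the subword of $w$ between positions $i$ and $i'$, a short calculation gives $\gamma_i = u\,\gamma_{i'}\, u^{-1}$ in the free group, so the equality $\gamma_i =_H \gamma_{i'}$ forces $u$ to commute with $\gamma_{i'}$ in $H$. Consequently $w':= w_i v_{i'}$ is a conjugator with $w' h_2 (w')^{-1} =_H h_1$ and $|w'| \leq |w| - (i'-i) < |w|$, contradicting the minimality of $w$.

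The main obstacle is calibrating the geometric estimates in the second step: both the precise $4\delta$ bound on $|\gamma_i|_H$ and the $\frac{1}{2}$ factor in the middle range arise from careful use of the tripod/internal-points description of $\delta$-thinness. A naive two-step slim-triangle argument would instead yield only $|\gamma_i|_H \leq 4\delta + |h_1|$ together with a middle range of length $|w| - |h_1| - |h_2| - O(\delta)$, producing a significantly weaker bound than the one claimed; keeping track of exactly where the relevant internal point lies on each side of each triangle is the delicate part of the proof.
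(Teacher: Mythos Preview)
The paper does not prove Lemma~\ref{lb}; it is quoted from \cite{GrigorchukLysionok} without argument, so there is no in-paper proof to compare against. Your overall strategy---shortest conjugator, geodesic quadrilateral on $1,w,h_1w,h_1$, and pigeonhole on the elements $\gamma_i=w_i^{-1}h_1w_i$---is exactly the standard one and does lead to the stated bound.

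There is, however, a real gap in your second step. From ``$w_i$ lies within $2\delta$ of \emph{some} point $h_1w_j$ of the opposite side'' one cannot conclude $|\gamma_i|_H=d(w_i,h_1w_i)\le 4\delta$: the tripod identification along the diagonal $[1,h_1w]$ matches $w_i$ with $h_1w_{i-(D-|w|)}$, where $D=|h_1w|_H$, and the offset $D-|w|$ is a priori of order $\min(|h_1|,|h_2|)$, not $O(\delta)$. (In a tree take $h_1=h_2=a^2$ and the non-minimal conjugator $w=a^n$: then $|\gamma_i|=2$ for every $i$ while $4\delta=0$.) Two further ingredients are needed, and both are missing from your sketch. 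First, minimality of $w$ must be used \emph{here}, not only in the final shortening: since $h_1w$ and $h_1^{-1}w$ are themselves conjugators of $h_2$ to $h_1$, one has $D\ge|w|$ and $D':=|h_1^{-1}w|_H\ge|w|$. Second, the hyperbolic four-point inequality applied to $\{1,w,h_1,h_1w\}$ gives $D+D'\le 2|w|+2\delta$ (using $2|w|>|h_1|+|h_2|$). Together these force $0\le D-|w|\le 2\delta$, and then the tripod computation yields $d(w_i,h_1w_i)\le 2\delta+(D-|w|)\le 4\delta$ for $i$ in the interval $\bigl[(D+|h_1|-|w|)/2,\,(|w|+D-|h_2|)/2\bigr]$, whose length is exactly $|w|-\tfrac{1}{2}(|h_1|+|h_2|)>M+1$. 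So your instinct that this step is the delicate one is correct, but the delicacy lies in invoking minimality together with the four-point condition, not merely in bookkeeping of internal points.
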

(The obvious algorithm for the conjugacy problem based on the lemma above takes  exponential
time, 
in the length of the two input words, but a closer analysis yields  
 linear bounds on the time required: see Bridson and Haefliger \cite{BridsonHaefliger}[pp. 451--454] or 
Bridson and Howie \cite{BridsonHowie05}.)

\subsection{Thin polygons}\label{prelim}
Let $H=\la X|S\ra$ be a finitely generated
hyperbolic group. 
 The following lemma is a more explicit version of \cite{Gromov87}[6.1C]; and of 
\cite{ols89}[Lemma 10].  
 Its statement requires some further notation. Let $\g=[x,y]$ be a geodesic in  $\Gamma_X(H)$ and let 
$v_0,\ldots ,v_n$ be points of $\g$ such that $v_0=x$, $d(x,v_{i-1})<d(x,v_{i})$ and $v_n=y$, and set $\g_i=[v_{i-1},v_i]$, 
$i=1,\ldots, n$. Then the sequence $\g_1,\ldots ,\g_n$ is called a \emph{partition} of $\g$. If $\g=[x,y]$ and $\g'=[x',y']$ are geodesics 
 such that $|\g|=|\g'|$ and, for all $z\in\g$ and $z'\in \g'$, if $d(x,z)=d(x',z')$ then $d(z,z')\le k$; we say that 
$\g$ and $\g'$ $k$\emph{-fellow travel}.

\begin{lemma}\label{lem:la}
Let  $q=\gamma _0\gamma _1\ldots\gamma_n$ be a closed path in $\Gamma_X(H)$, where $\gamma _i$ is a geodesic path, for $i=0,\ldots , n$, $n\ge 1$.
Then 
\bei
\item there exist  integers $r_0,\ldots, r_n$, $2\le r_i\le n$ and,  
for $i=0,\ldots n$, there exists  a partition $\g_i^{(1)},\ldots, \g_i^{(r_i)}$ of $\g_i$ and, 
\item setting $I=\cup_{i=0}^n\{(i,j)\,:\, 1\le j \le r_i\}$, there exists an involution $\s$ of $I$,  
\eei
such that the following hold.  
\be[(i)]
\item\label{it:nla1} Writing $\s(0,i)=(a_i,b_i)$, for $1\le i\le r_0$, 
geodesics $\g_0^{(i)}$ and $(\g_{a_i}^{(b_i)})^{-1}$ are $\d\lceil \log_2(n)\rceil$-fellow travellers and  
$n\ge a_1\ge \ldots \ge a_{r_0}\ge 1$; and 
\item \label{it:nla2} 
fixing $i$ with  $1\le i \le n$ and writing $\s(i,j)=(a_j,b_j)$, geodesics $\g_i^{(j)}$ and $(\g_{a_j}^{(b_j)})^{-1}$ are 
$\d(2\lceil \log_2(n)\rceil-1)$-fellow travellers and
there exists an integer $c_i$, such that $1\le c_i \le r_i -1$ and 
\[n\ge a_{c_i+1}\ge \cdots \ge a_{r_i}\ge i+1\textrm{ and }i-1\ge a_{1}\ge \cdots \ge a_{c_i}\ge 0.\]
\item  \label{it:nla3}  In addition,  $\s(0,1)=(n,n)$, $\s(i,1)=(i-1,n)$, for $1\le i \le n$,  and $\g_i^{(1)}$ and $(\g_{i-1}^{(r_{i-1})})^{-1}$ are 
$\d$-fellow travellers, for $0\le i\le n$ (subscripts modulo
$n+1$). 
\ee
(Here $\i(\g_i^{(j)})$ and $\t(\g_i^{(j)})$ are  not necessarily vertices of $\G_H(X)$. In \ref{it:nla2} if $a_j=0$ then, from \ref{it:nla1}, in fact $\g_i^{(j)}$ and $(\g_{a_j}^{(b_j))^{-1}}$ are 
$\d\lceil \log_2(n)\rceil$-fellow travellers.)
\end{lemma}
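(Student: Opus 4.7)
The plan is to prove the statement by induction on $n$, using the $\delta$-thinness of triangles as the base case and repeatedly cutting the polygon by a well-chosen diagonal in the inductive step.

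For the base case $n=2$ the path $q = \gamma_0\gamma_1\gamma_2$ is a geodesic triangle. I would partition each side at its unique internal point, giving $r_i = 2$ for $i=0,1,2$, and define $\sigma$ by pairing the two pieces that meet at each vertex: $\sigma(0,1) = (2,2)$, $\sigma(1,1) = (0,2)$, $\sigma(2,1) = (1,2)$. The fibre condition \eqref{dt2con} for the tripod map then supplies the $\delta$-fellow traveller bound for each pair, and since $\lceil \log_2 2 \rceil = 1$ all three bounds in (i)--(iii) collapse to $\delta$. The monotonicity and corner conditions are then immediate.

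For the inductive step, pick $k$ with $k \approx n/2$ and insert a diagonal $d$ from $\iota(\gamma_0)$ to $\iota(\gamma_{k+1})$, splitting $q$ into two sub-polygons: $P_1$ with sides $\gamma_0,\ldots,\gamma_k,d^{-1}$ and $P_2$ with sides $d,\gamma_{k+1},\ldots,\gamma_n$, each strictly smaller than $q$. I would apply the inductive hypothesis to each, taking $\gamma_0$ as side 0 of $P_1$ and $d$ as side 0 of $P_2$, so that the sharper bound in (i) applies whenever $\gamma_0$ or $d$ is in the role of distinguished side. The partition and involution for the original polygon are then assembled as follows: pieces of $\gamma_i$ ($i\ge 1$) paired within a single sub-polygon inherit their matching directly; a piece paired with $d^{\pm 1}$ in one sub-polygon must be refined to match the partition of $d$ produced by the other sub-polygon, and the induced composite involution sends it to a piece of some $\gamma_j$ in the opposite sub-polygon, with fellow-traveller constants adding via the triangle inequality. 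The asymmetric bounds $\delta\lceil\log_2 n\rceil$ in (i) and $\delta(2\lceil\log_2 n\rceil-1)$ in (ii) reflect the fact that $\gamma_0$ always plays the role of distinguished side, so its pieces accumulate a $\lceil \log_2 \rceil$-contribution from only one sub-polygon plus (when they cross the diagonal) a $\lceil \log_2 \rceil$-contribution from $d$ as the distinguished side of the other; whereas a general side $\gamma_i$ may be non-distinguished in both sub-polygons and so accumulates up to twice as much, minus the $\delta$ saved at the corner matching shared with the base case.

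The corner property (iii) is preserved through the recursion because every diagonal is drawn at an existing vertex, never splitting the interior of an original side, so first and last pieces of consecutive $\gamma_i$ remain first and last pieces in the assembled partition, and the $\delta$-fellow traveller bound at each corner is inherited from the base case. The main obstacle I expect is the combinatorial bookkeeping of partitions and involution: verifying that the monotonicity condition $n \ge a_1 \ge \cdots \ge a_{r_0} \ge 1$ in (i) and the two-block monotonicity around $c_i$ in (ii) survive the refinement and composition of involutions across the diagonal, and that the precise constants $\lceil \log_2 n \rceil$ and $2\lceil \log_2 n \rceil -1$ emerge cleanly from the recursion. This forces a careful ordering of the pieces of a refined side (matching the order of the corresponding pieces of $d$ in each sub-polygon), and may require a slightly more subtle choice of $k$ than symmetric bisection in order to make the $\lceil \log_2 \rceil$-arithmetic come out exactly right.
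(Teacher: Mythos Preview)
Your inductive scheme and base case are exactly those of the paper. The gap is in the inductive step: with a single diagonal the recursion for the constant in (i) does not close.

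With your diagonal $d$ from $\iota(\gamma_0)$, the side $\gamma_0$ remains the distinguished side of $P_1$. A piece of $\gamma_0$ that must reach a side lying in $P_2$ is matched in $P_1$, via (i), to a piece of $d^{-1}$ at cost $\delta\lceil\log_2 n_1\rceil$; that piece of $d$, now side $0$ of $P_2$, matches some $\gamma_b$ via (i) in $P_2$ at cost $\delta\lceil\log_2 n_2\rceil$. Since $n_1+n_2=n+1$ with $n_1,n_2\ge 2$, one has $\lceil\log_2 n_1\rceil+\lceil\log_2 n_2\rceil>\lceil\log_2 n\rceil$ already at $n=4$ (every admissible split gives $1+2=3>2$), so the inductive hypothesis for (i) is not recovered. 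No choice of $k$ rescues this: the sum of two ceilings of logs cannot in general be brought down to a single one.

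The paper avoids this by drawing \emph{two} diagonals, one from each endpoint of $\gamma_0$ to the midpoint vertex (after padding to $n=2^k$). Now $\gamma_0$ is one side of a triangle and is \emph{not} a side of either sub-polygon; each diagonal becomes side $0$ of a sub-polygon with exactly $2^{k-1}$ remaining sides. A piece of $\gamma_0$ then reaches any $\gamma_a$ via the triangle (cost $\delta$) plus a single application of (i) in one sub-polygon (cost $\delta(k-1)$), totalling $\delta k$. A piece of $\gamma_i$, $i\ge 1$, that crosses between sub-polygons passes through (i) in one sub-polygon, the triangle, and (i) in the other: $\delta(k-1)+\delta+\delta(k-1)=\delta(2k-1)$, which is exactly the bound in (ii). Your reading of the asymmetry between (i) and (ii) is correct; what enforces it is isolating $\gamma_0$ in its own triangle so that its pieces pay only the triangle's $\delta$, rather than a second sub-polygon's $\delta(k-1)$, to cross over.
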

An instance of the case $n=4$ is illustrated in Figure \ref{fig:fibres}. 
\begin{figure}
\begin{center}
\psfrag{a1}{{\scriptsize $\g_0^{(1)}$}}
\psfrag{a2}{{\scriptsize $\g_0^{(2)}$}}
\psfrag{a3}{{\scriptsize $\g_0^{(3)}$}}
\psfrag{a4}{{\scriptsize $\g_0^{(4)}$}}
\psfrag{b1}{{\scriptsize $\g_1^{(1)}$}}
\psfrag{b2}{{\scriptsize $\g_1^{(2)}$}}
\psfrag{b3}{{\scriptsize $\g_1^{(3)}$}}
\psfrag{c1}{{\scriptsize $\g_2^{(1)}$}}
\psfrag{c2}{{\scriptsize $\g_2^{(2)}$}}
\psfrag{c3}{{\scriptsize $\g_2^{(3)}$}}
\psfrag{c4}{{\scriptsize $\g_2^{(4)}$}}
\psfrag{d1}{{\scriptsize $\g_3^{(1)}$}}
\psfrag{d2}{{\scriptsize $\g_3^{(2)}$}}
\psfrag{d3}{{\scriptsize $\g_3^{(3)}$}}
\psfrag{d4}{{\scriptsize $\g_3^{(4)}$}}
\psfrag{e1}{{\scriptsize $\g_4^{(1)}$}}
\psfrag{e2}{{\scriptsize $\g_4^{(2)}$}}
\psfrag{e3}{{\scriptsize $\g_4^{(3)}$}}
\includegraphics[scale=0.4]{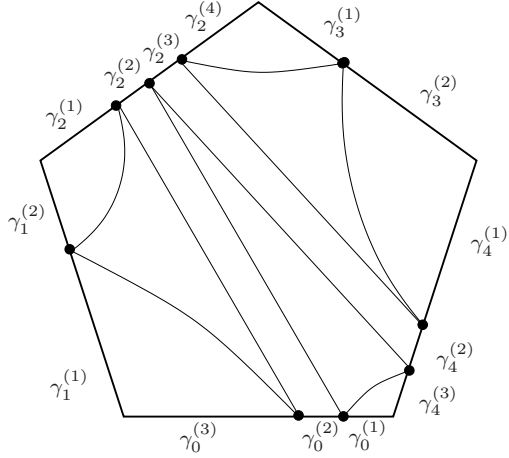}
\caption{$\g_2^{(3)}$ and $(\g_4^{(2)})^{-1}$ are $4\d$-fellow travellers and  $\g_0^{(2)}$  and $(\g_2^{(2)})^{-1}$ are
$2\d$-fellow travellers. All other paired intervals $\d$-fellow travel.}\label{fig:fibres}
\end{center}
\end{figure}
\begin{proof}
Let $k=\ceil{\log_2(n)}$. 
By adding paths of length zero between $\tau(\gamma_n)$ and
$\iota(\gamma_0)$, we may assume that $n=2^k$. Thus  we may replace 
 $ \ceil{\log_2(n)}$ with $k$, throughout the statement of the lemma.
If $k=1$ then the result follows directly from the thin triangles condition: the 
internal points of a triangle determine appropriate partitions of the sides. 

For the induction we need to be able to modify the partitions of the $\g_i$ by adding new points, so we 
begin by describing how to do this.  
The data consisting of the partitions $\g_i^{(1)},\ldots ,\g_i^{(n)}$ of the $\g_i$ and the map $\s$, as
in the lemma, is called a \emph{matching} of $q$, \emph{based} at $\g_0$. 
Given a matching $M$ as described in the statement of the lemma, satisfying \ref{it:nla1}, \ref{it:nla2} and \ref{it:nla3}, supp pose that for 
some $i,j$ we have points $p,r$ such that $\g_i^{(j)}=[p,r]$ and a point $q\in [p,r]$, $p\neq q\ne r$. 
We form a \emph{simple refinement} $M'$ of $M$ as follows. Let $\s(i,j)=(a,b)$, let $\g_a^{(b)}=[p',r']$ and 
let $q'$ be the point of $[p',r']$ such that $d(p,q)=d(q',r')$. Replace the 
partition $\g_i^{(1)},\ldots ,\g_i^{(r_i)}$ of $\g_i$ with the partition $\g_i^{\prime (1)},\ldots ,\g_i^{\prime (r_i+1)}$,
where $\g_i^{\prime (s)}=\g_i^{(s)}$, for $1\le s\le j-1$, $\g_i^{\prime (j)}=[p,q]$, $\g_i^{\prime (j+1)}=[q,r]$ and $\g_i^{\prime (s)}=\g_i^{(s-1)}$,
for $j+2\le s \le r_i+1$.  Replace the 
partition $\g_a^{(1)},\ldots ,\g_a^{(r_a)}$ of $\g_a$ with the partition $\g_a^{\prime (1)},\ldots ,\g_a^{\prime (r_a+1)}$, formed in
exactly the same way (replacing $i$, $j$, $p$, $q$ and $r$,  by $a$, $b$, $p'$, $q'$ and $r'$, throughout). Now replace 
$\s$ with an involution $\s'$ such that
\begin{align*} 
\s'(i,s)&=\s(i,s), 1\le s\le j-1,&\s'(a,t)&=\s(a,t), 1\le t\le b-1 \\
\s'(i,j)&=(a,b+1)\textrm{ and }\s'(i,j+1)=(a,b),&\s'(a,b)&=(i,j+1)\textrm{ and }\s'(a,b+1)=(i,j)\\
\s'(i,s)&=\s(i,s-1), j+2\le s\le r_i+1, & \s'(a,t)&=\s(a,t-1), b+2\le t\le r_b+1,
\end{align*}
and for $u\neq i,a$, 
\[\s'(u,v)=
\begin{cases}
(i,s+1) & \textrm{if }\s(i,s)=(u,v), \textrm{ where } j+1\le s\le r_i,\\
(a,t+1) &\textrm{if }\s(a,t)=(u,v), \textrm{ where } b+1\le t\le r_a,\\
\s(u,v) & \textrm{otherwise}.
\end{cases}
\]
Then these partitions of the $\g_i$ together with the function $\s'$ form a new matching of $q$, which again satisfies
conditions \ref{it:nla1}, \ref{it:nla2} and \ref{it:nla3}. 
A finite sequence of simple refinements of $M$ is called a \emph{refinement}. 

Assume then that $k>1$, that the result holds  for all non-negative 
integers $n$ no larger than $2^{k-1}$ and that  $n=2^k$.  
Let $\g_0^\prime$ be the geodesic path  
from  $\tau(\gamma_{2^{k-1}})$ to $\t(\g_0)$ and  
 let 
$\g_{2^{k-1}}^\prime$  be the geodesic path  from $\i(\g_0)$ to  $\tau(\gamma_{2^{k-1}})$ 
(see Figure \ref{fig:pathdivn}). 
\begin{figure}[htp]
\begin{center}
\psfrag{s}{{\normalsize{$s_1$}}}
\psfrag{z1}{{\normalsize{$\zeta _1$}}}
\psfrag{e1}{{\normalsize{$\eta_1$}}}
\psfrag{q0}{{\normalsize{$\g^\prime_0$}}}
\psfrag{q1}{{\normalsize{$\g^\prime_{2^{k-1}}$}}}
\psfrag{g0}{{\normalsize{$\gamma _0$}}}  
\psfrag{g1}{{\normalsize{$\gamma _1$}}}    
\psfrag{g2}{{\normalsize{$\gamma _2$}}}  
\psfrag{g3}{{\normalsize{$\gamma _3$}}}    
\psfrag{g4}{{\normalsize{$\gamma _{2^{k-1}}$}}}  
\psfrag{g5}{{\normalsize{$\gamma _{2^{k-1}+1}$}}}    
\psfrag{g6}{{\normalsize{$\gamma _6$}}}  
\psfrag{g7}{{\normalsize{$\gamma _7$}}}    
\psfrag{g8}{{\normalsize{$\gamma _{2^k}$}}}  
\includegraphics[scale=0.35]{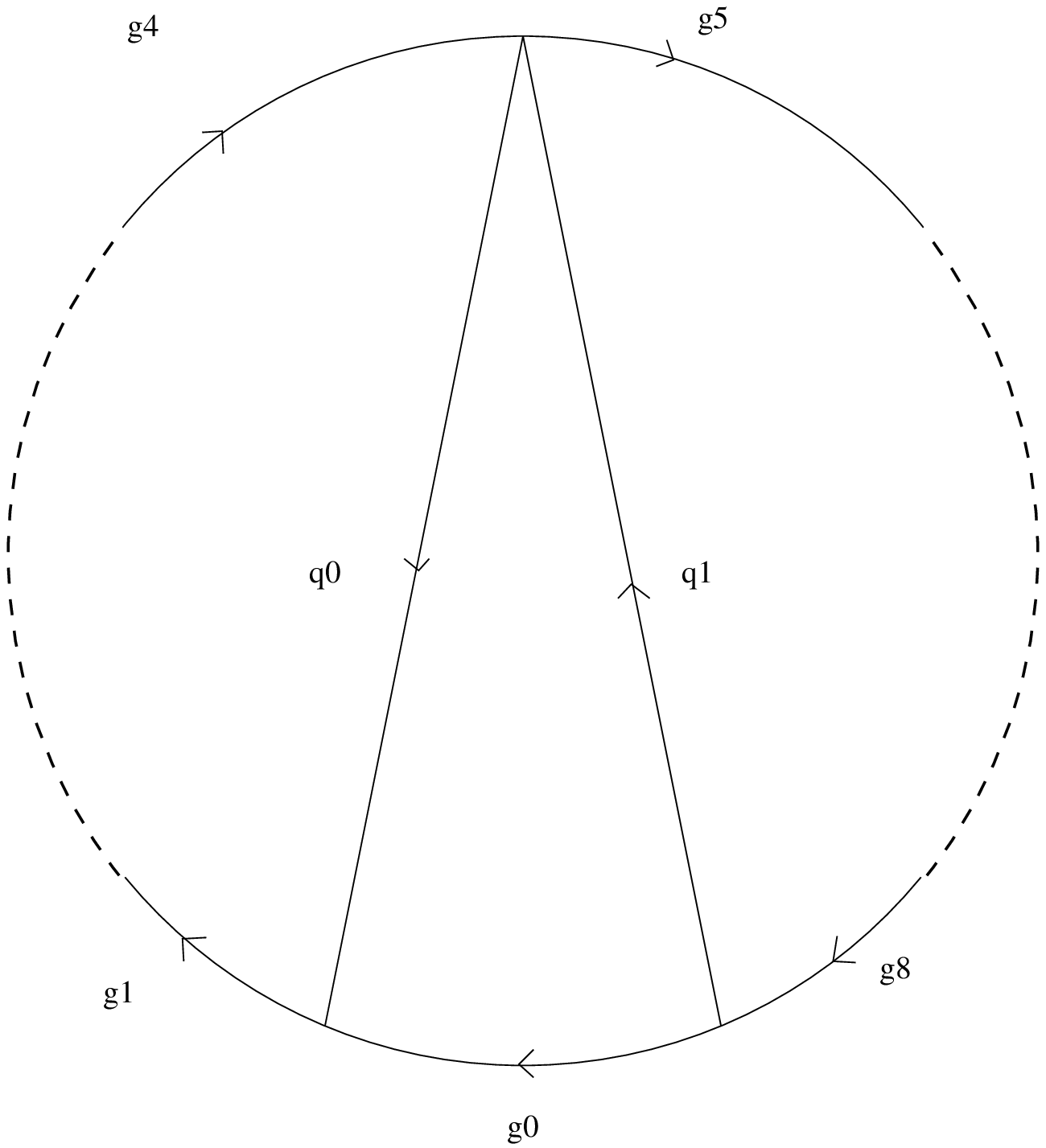}
\caption{}
\label{fig:pathdivn}                                             
\end{center}
\end{figure} 
Let $c_0$, $c'_0$ and $c''_0$ be the internal points 
of triangle $T=\g_0 {\g'}_0^{-1}{\g'}_{2^{k-1}}^{-1}$, with $c_0$ on $\g_0$, $c'_0$ on $\g'_0$ and 
$c''_0$ on $\g'_{2^{k-1}}$ and let $t_0=\i(\g_0)$, $t_1=\t(\g_0)$ and $t_2=\i(\g'_0)$ be the vertices of $T$. 
Let $M_0$ be the corresponding matching, with partitions 
$[t_0,c_0],[c_0,t_1]$ of $\g_0$; $[t_1,c'_0],[c'_0,t_2]$ of $\g_0^{\prime -1}$ and $[t_2,c''_0],[c''_0,t_0]$ 
of ${\g'}_{2^{k-1}}^{-1}$; and involution $\s_0$ such that $\s_0(0,1)=(2,2)$, $\s_0(0,2)=(1,1)$ and $\s_0(1,2)=(2,1)$. 
 Let
$q^\prime_1=\g'_0\g_1\cdots \g_{2^{k-1}}$ and 
$q^\prime_2=\g'_{2^{k-1}}\g_{2^{k-1}+1} \cdots \g_{2^{k}}$. From the 
inductive hypothesis, there exist matchings $M_1$ of 
$q'_1$ and $M_2$ of $q'_2$, based at $\g'_0$ and $\g'_{2^{k-1}}$, respectively.
Suppose that $\s_i$ is the involution associated to $M_i$ and that, under 
these matchings the partitions of $\g'_0$ and $\g'_{2^{k-1}}$ are 
${\g'}^{(1)}_0,\ldots, {\g'}^{(r')}_0$ and ${\g'}^{(1)}_{2^{k-1}},\ldots, {\g'}^{(r'')}_{2^{k-1}}$, 
respectively.  

 We shall refine the partitions $M_1$ and $M_2$, so that the intervals, lying between
$t_2$ and either $c'_0$ or $c''_0$, are of the same number and lengths.
First, single out the end points of intervals of the partitions of  $\g'_0$ and $\g'_{2^{k-1}}$ which lie between 
$[t_2,c'_0]$ and  $[c''_0,t_2]$.
That is, let $S_1=\{p\in [t_2,c'_0]\,:\, p=\t({\g'}^{(j)}_0), 1\le j\le r'\}$ and 
let  $S_2=\{p\in [c''_0,t_2]\,:\, p=\i({\g'}^{(j)}_{2^{k-1}}), 1\le j\le r''\}$. Now form sets of points to be used in the refinement: 
 let $T_1=\{q\in [t_2,c'_0]\,:\, d(q,t_2)=d(p,t_2), \textrm{ for some } p \in S_2\}$ and 
$T_2=\{q\in [c''_0,t_2]\,:\, d(q,t_2)=d(p,t_2), \textrm{ for some } p \in S_1\}$.
Refine $M_1$ by adding the points of $T_1\cup \{c'_0\}$ to $\g'_0$ and refine $M_2$ by adding the 
points of $T_2\cup\{c''_0\}$ to $\g'_{2^{k-1}}$. We shall now consider $M_1$ and $M_2$ to 
be these refined matchings and, after adjusting notation, take the partitions of $\g'_0$ and $\g'_{2^{k-1}}$ to be
${\g'}^{(1)}_0,\ldots, {\g'}^{(r')}_0$ and ${\g'}^{(1)}_{2^{k-1}},\ldots, {\g'}^{(r'')}_{2^{k-1}}$, again. 

Now we refine $M_0$, using the partitions of $\g'_0$ and $\g'_{2^{k-1}}$ under $M_1$ and $M_2$. Let
$U_1=\{p\in [t_2,t_1]\,:\, p=\t({\g'}^{(j)}_0), 1\le j\le r'\}$ and 
$U_2= \{p\in [t_0,t_2]\,:\, p=\i({\g'}^{(j)}_{2^{k-1}}), 1\le j\le r''\}$. Refine $M_0$ by adding the 
points of $U_1\cup U_2$. As before we refer to the refinement as  $M_0$ and to its involution as  $\s_0$. 
Under $M_0$ we have a partition $\g_0^{(1)}, \ldots ,\g_0^{(r_0)}$ of $\g_0$  
with the following properties. For $j$ such that $\t(\g_0^{(j)})\in [t_0,c_0]$ the interval
$\g_{2^{k-1}}^{(j)}$ has terminal point on $[t_0,c''_0]$ and $\d$-fellow travels with  $\g_0^{(j)}$.
From the inductive hypothesis, and the fact that refinements preserve properties \ref{it:nla1},
\ref{it:nla2} and \ref{it:nla3}, $\g_{2^{k-1}}^{(j)}$ and $(\g_a^{(b)})^{-1}$ are $\d(k-1)$-fellow
travellers, where $(a,b)=\s_2(2^{k-1},j)$. Hence $\g_0^{(j)}$ and $(\g_a^{(b)})^{-1}$ are 
$\d k$-fellow travellers and we may set $\s(0,j)=\s_2(2^{k-1},j)$. Similarly,
for $j$ such that $\i(\g_0^{(j)})\in [c_0,t_1]$, if $\s_1(0,r'-r_0+j)=(a,b)$ then the intervals $\g_0^{(j)}$
and $(\g_a^{(b)})^{-1}$ are $\d k$-fellow travellers and we may set $\s(0,j)= \s_1(0,r'-r_0+j)$. 
This defines $\s$ on all pairs $(0,j)$, with $1\le j\le r_0$. If $1\le i\le 2^{k-1}$ and 
$j$ is such that $(i,j)\neq \s(0,j')$, for all $j'$, it follows from construction of $M_1$ that 
$\s_1(i,j)=(a,b)$ where either $a=0$ and ${\g'}_0^{(b)}$ has initial point in $[t_2,c'_0]$ or 
$1\le a\le 2^{k-1}$. In the former case ${\g'}_0^{(b)}$ is a subinterval of $[t_2,c'_0]$,  
which $\d(k-1)$-fellow travels with $(\g_i^{(j)})^{-1}$, 
$\s_0(0,b)=(2^{k-1},l)$, 
and the interval ${\g'}_{2^{k-1}}^{(l)}$ of $\g'_{2^{k-1}}$ is a subinterval of 
$[c''_0,t_2]$ and $\d$-fellow travels with $({\g'}_0^{(b)})^{-1}$. Furthermore, $\s_2(2^{k-1},l)=(c,d)$, for 
some $c$ with $2^{k-1}+1\le c\le 2^k$, and   ${\g'}_{2^{k-1}}^{(l)}$ and $(\g_c^{(d)})^{-1}$ are $\d(k-1)$-fellow travellers.
Hence, in this case, $\g_i^{(j)}$ and $(\g_c^{(d)})^{-1}$ are $2\d(k - 1)$-fellow travellers; and we may set
$\s(i,j)=(c,d)$. In the latter case $\s_1(i,j)=(a,b)$, where $1\le a\le 2^{k-1}$, so $\g_i^{(j)}$ and 
$(\g_a^{(b)})^{-1}$ are $\d(2(k-1)-1)$-fellow travellers, so we may set $\s(i,j)=(a,b)$. This defines 
$\s$ on all pairs $(i,j)$, where $1\le i\le 2^{k-1}$. A similar argument allows us to define
$\s$ on the remaining pairs $(i,j)$ where $2^{k-1}+1\le i \le 2^k$. Therefore we have
a matching $M$ which satisfies \ref{it:nla1} and \ref{it:nla2}. If this matching does
not already satisfy  \ref{it:nla3} then a simple modification results in one that does. 
\end{proof}
\subsection{Quadratic equations, genus and quadratic words}
\label{sec:gandvk}
We write $\ZD$ for the set $\{n/2:n\in \ZZ\}$. We use Stallings' 
definition of the genus of a surface: a connected sum of $g$ torii has
 \emph{genus} $g$, while a connected sum of $g$ projective planes has 
\emph{genus} $g/2$. (Hence the Euler characteristic of a compact, connected,
closed surface $\D$, of genus $g$,  is $\chi(\D)=2-2g$, irrespective of 
orientability.)

By a \emph{graph} we mean a finite directed graph. If $e$ is
an edge of a graph we write $\i(e)$ and $\t(e)$ for the initial 
and terminal vertices of $e$. We use $e^{-1}$ to denote the 
edge with $\i(e^{-1})=\t(e)$ and $\t(e^{-1})=\i(e)$. If $p$ is 
a path in a graph with edge sequence $e_1,\ldots ,e_n$ then we 
denote by $p^{-1}$ the path with edge sequence $e_n^{-1},\ldots 
,e_1^{-1}$. A \emph{labelling} of a graph is a function $\l$ from
directed edges of the graph to a set $L$ such that
if $e$ is an edge with $\l(e)=A$ then $A^{-1}\in L$ and 
$\l(e^{-1})=A^{-1}$. 
 We routinely identify the edge $e$ with its label $\l(e)$. 

Let $\mathcal{A}$ be an countably infinite alphabet. 
The \emph{support} of a word $w$ in the free monoid 
$(\cA\cup \cA^{-1})^*$ is the smallest subset $\cS$ of $\cA$ such that 
$w$ belongs to $(\cS\cup \cS^{-1})^*$, written $\supp(w)$. 
As we are concerned here only with groups, 
we refer to elements  of $(\cA\cup \cA^{-1})^*$ as  \emph{words over} $\cA$. 
A word in $(\cA\cup \cA^{-1})^*$ 
is \emph{(freely) reduced} if it contains no subword $x^\e x^{-\e}$, where $x\in \cA\cup \cA^{-1}$,
 $\e=\pm 1$,  and 
\emph{(freely) cyclically reduced}
if $w$ and all its cyclic permutations are reduced. 
We  use the term {\emph{cyclic word}} to mean the equivalence class $[w]$ of a
word $w$ under the relation which relates two words if one is a cyclic
permutation of the other. 
When we talk about a \emph{ cyclic word} $w$ we mean any element $v$ of $[w]$.

\begin{definition}\label{def:qw}
Let $w_1,\ldots, w_t$ be a $t$-tuple of words over $\mathcal{A}$ 
and let $W=w_1\cdots w_t$ (as a word in $(\cA\cup \cA^{-1})^*$). 
Then  $w_1,\ldots, w_t$ 
is said to be \emph{quadratic} if
each element of $\supp(W)$  appears exactly twice in 
$W$.  
In this case, let $x\in \supp(W)$ and 
let $W=W_0 x^\e W_1 x^\d W_2$, where $\e,\d\in \{\pm 1\}$. Then the
\emph{signature} $\s(x)$ of $x$ \emph{in} $w_1,\ldots, w_t$ is 
$\s(x)=(\e,\d)$. Define $o(x)=-\e\d$, where $\s(x)=(\e,\d)$. 
If $o(x)=1$  then we say that the letter
 $x$ is \emph{alternating}, in $w_1,\ldots, w_t$.  If 
every 
element $x$ of $\supp(W)$  is alternating then $w_1,\ldots, w_t$
is said to be \emph{orientable}.  The \emph{signature} of a quadratic tuple $w_1,\ldots, w_t$ is
the map $\s$ from $\supp(W)$ to  $\{\pm 1\}\times\{\pm 1\}$.
\end{definition}

Let $X$ and $\cA$ be disjoint sets and let $w$ be an element of $F(X)*F(\cA)$. The expression
$w=1$ is called an \emph{equation}, with variables $\cA$.  
 If $H$ is a group with presentation $\la X|S\ra$ then a homomorphism 
$\phi$ from $F(X)*F(\cA)$ to $H$ is  an $H$\emph{-map} if the restriction of $\phi$ to $F(X)$ induces the identity
map on $H$.
 A \emph{solution} to the equation $w=1$ over $H$ is an $H$-map $\phi: F(X)*F(\cA)\maps H$ such that $\phi(w)=_H 1$. 
Given $w\in F(X)*F(\cA)$, write $w=c_0a_1c_1\ldots c_{n-1}a_n$, 
where $a_i\in F(\cA)$ and $c_i\in F(X)$, with $c_i\neq 1$, $i>0$,
 and $a_i\neq 1$, $i<n$. Then the equation $w=1$ is said to have \emph{coefficients} $c_0,\ldots c_{n-1}$. 
The equation $w=1$, with $w=c_0a_1c_1\ldots c_{n-1}a_n$ as above, is \emph{quadratic} if $a_1,\ldots, a_n$ 
is a quadratic $n$-tuple over $\cA$, \emph{orientable} if   $a_1,\ldots, a_n$ is orientable, and 
\emph{non-orientable} otherwise. 

To a quadratic equation $w=1$, with coefficients $c_0,\ldots, c_{n-1}$,  we associate a surface  
$\S$ as follows. 
 Take a copy $D$ of the disk $D^2$ and   divide its
boundary into $|w|$ segments (the length taken in $F(\cA\cup X)$). Write $w$ anti-clockwise around the
boundary of a disk, labelling segments consecutively with the letters
of $\supp(w)$, and directing segments anti-clockwise when a letter occurs with
exponent $1$, and clockwise when the labelling letter has exponent $-1$. For example 
if $w=ABCA^{-1}B^{-1}C^{-1}$, where $A, B, C\in \cA$,  we obtain the labelling at the top left  of Figure \ref{gammaU}. 
 Now identify the
segments labelled by the same letters,
respecting orientation. 
We obtain a compact surface $\S_w$ of genus $g$,
for some $g\ge 0$, $g\in \ZD$,  with $n$ boundary components, 
which is orientable if and only if $w=1$ is orientable. 
After identification the oriented segments on the boundary
of the disk become the edges of  a directed, labelled graph $\G_w$ on the surface $\S_w$;  
with boundary if $n-1>0$.
We call this graph the \emph{graph associated to} $w$. 
 (See Example \ref{ex:wicks}.)  
Every edge $e$ of $\G_w$ is labelled with an element of $\supp(w)$ and 
directed edges are in one to one correspondence with $\supp(w)$; so we
identify edges with $\supp(w)$. Edges on the boundary of $\S$ are labelled
by elements of $X$ while two-sided edges are labelled by elements of $\cA$. 
The equation $w=1$ is said to have \emph{genus} $g$ if $\S$ is a surface 
of genus $g$. 

Now consider the following types of quadratic equation.  
The orientable, genus $g$, 
quadratic 
 equation  
\begin{equation}\label{eq:qgo}
\prod_{i=1}^{t}v_i^{-1}c_iv_i\prod_{i=1}^g[x_i,y_i]=1,
\end{equation}
with variables $v_i,x_i,y_i$, 
 and the non-orientable, genus $g$, 
quadratic 
 equation  
\begin{equation}\label{eq:qgn}
\prod_{i=1}^{t}v_i^{-1}c_iv_i\prod_{i=1}^{2g}z_i^2=1,
\end{equation}
with variables $v_i,z_i$, where the 
 coefficients are the $c_i$ in both cases, are called \emph{standard} quadratic equations. A quadratic equation
$w=1$ is equivalent to a standard quadratic equation: in the  sense that there is an efficient procedure to transform 
$w=1$ into one of \eqref{eq:qgo} or \eqref{eq:qgn}, and to transform a solution $\phi$ of $w=1$ into a solution of the resulting equation;  and
vice-versa (see for example \cite{GrigorchukLysionok}[Proposition 2.2]). 

Now let $w=1$ be the equation \eqref{eq:qgo} or \eqref{eq:qgn} and suppose $\psi$ is a solution to
$w=1$ over $H$. In the 
 construction of the surface $\S_w$ above, we labelled the boundary of a disk $D$ with the word 
$w$. Replacing the label $a$ of each directed edge of $D$ with $\psi(a)$ we obtain a disk
with boundary label $\psi(w)$. As $\psi(w)=_H 1$,   there exists a van Kampen diagram on $D$, with boundary label $\psi(w)$, over 
$H$; and  this gives  a van Kampen diagram on $\S_w$. 

Conversely, consider a van Kampen diagram $K$ on a  genus $g$, surface $\S$  with $t$ boundary components. 
To construct a solution to one of \eqref{eq:qgo} or \eqref{eq:qgn} we 
allow van Kampen diagrams to contain \emph{null-cells}: that is $2$-cells with boundary  
labels of the form $1\cdot x \cdot 1\cdot x^{-1}$, $x\in X$. Given a $2$-cell $R$ of $K$ we may replace 
each directed edge of $R$ with boundary label $x$ by a null-cell; as on the left hand side of Figure 
\ref{gammaU}. We call the result a \emph{thickening} of $R$. Whenever we do this it
 gives us a new van Kampen diagram, on $\S$ with the same boundary labels. 
Returning to $K$, we wish to show that we may cut $\S$ 
along a closed curve $c$ which  separates $\S$ into two surfaces, such that capping each surface
 off with a disk glued along its copy of $c$, one surface $\S_g$ is of closed of genus $g$ and the
other is a sphere $S$ with $t$ boundary components, labelled  $c_1,\ldots ,c_t$, when read with 
the appropriate orientation of $S$. Moreover, we would like to choose $c$ to be a subgraph of  the $1$-skeleton  $K_1$ of $K$. 
Since $\S$ is the $2$-skeleton of $K$, the graph $K_1$ is connected, and we may choose
a minimal connected subgraph $T$ of $K_1$ which contains every boundary component. Thicken every
$2$-cell which meets $T$. The union of the null-cells of (this thickened version of) 
$K$ meeting $T$ is then a genus $0$ sub-surface $\D$ 
of $\S$ containing all  boundary components. The boundary of $\D$ has no self intersections and
is a path in $K_1$. 
If 
$\S$ is orientable and the boundary components are  labelled 
$c_1,\ldots ,c_t$,  when read with some chosen orientation of $\S$  then 
the boundary components of $\D$ are also labelled $c_1,\ldots ,c_t$; and so we may take $c=\d\D$. 
 
In the case that $\S$ is non-orientable and $t>0$, our assumption on  the boundary 
components means that each boundary component may be assigned an orientation 
in such a way that, reading each boundary component's label, with its chosen
orientation, from a suitable base point $b_i$, the boundary labels are $c_1,\ldots ,c_t$. 
This means in turn that, reading   the labels of the boundary components of the disk $\D$ constructed above,
according to some fixed orientation of $\D$, we obtain the list  
 $c_1^{\e_1}, \ldots , c_t^{\e_t}$, with $\e_i=\pm 1$ (ignoring the boundary label of $c$ itself). 
To  modify this disk so that the $\e_i$ are the same, for all $i$, 
 we use the non-orientability of $\S$. 
First choose a vertex $*$ of $K$ on the boundary $c$ of the disk $D$, and a base point $b_i$ on  the boundary
component labelled $c_i$, for each $i$ 
(so that the boundary label may be read from $b_i$). Then, after thickening some $2$-cells if necessary,  we may choose 
paths $p_i$  in $D\cap K_1$, from $*$ to $p_i$, for $i=1,\ldots, t$,  so that the $p_i$ have disjoint interiors and 
the boundary component of the surface obtained by cutting $\S$ along the $p_i$ is  $\prod_{i=1}^t w_ic_i w_i^{-1}$, where
$w_i$ is the label of $p_i$, as in Figure \ref{fig:spray0}. 

\begin{figure}[htp]
\begin{center}
\begin{subfigure}[b]{.3\columnwidth}
\begin{center}
\psfrag{p1}{{\scriptsize $p_1$}}
\psfrag{pi}{{\scriptsize $p_i$}}
\psfrag{pt}{{\scriptsize $p_t$}}
\psfrag{c1}{{\scriptsize $c_1$}}
\psfrag{ci}{{\scriptsize $c_i$}}
\psfrag{ct}{{\scriptsize $c_t$}}
\psfrag{v1}{{\scriptsize $v_1$}}
\psfrag{v2}{{\scriptsize $v_2$}}
\psfrag{q1}{{\scriptsize $q_1$}}
\psfrag{q2}{{\scriptsize $q_2$}}
\psfrag{q}{{\scriptsize $q$}}
\psfrag{D}{{\scriptsize $D$}}
\includegraphics[scale=0.6]{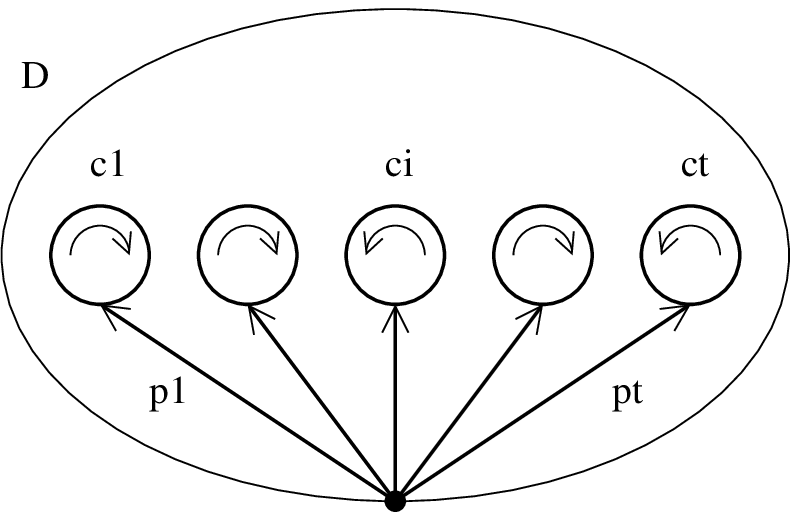}
\end{center}
\caption{}\label{fig:spray0}
\end{subfigure}
\quad
\begin{subfigure}[b]{.3\columnwidth}
\begin{center}
\psfrag{p1}{{\scriptsize $p_1$}}
\psfrag{pi}{{\scriptsize $p_i$}}
\psfrag{pt}{{\scriptsize $p_t$}}
\psfrag{c1}{{\scriptsize $c_1$}}
\psfrag{ci}{{\scriptsize $c_i$}}
\psfrag{ct}{{\scriptsize $c_t$}}
\psfrag{v1}{{\scriptsize $v_1$}}
\psfrag{v2}{{\scriptsize $v_2$}}
\psfrag{q1}{{\scriptsize $q_1$}}
\psfrag{q2}{{\scriptsize $q_2$}}
\psfrag{q}{{\scriptsize $q$}}
\psfrag{D}{{\scriptsize $D$}}
\psfrag{.}{{\scriptsize $\cdots$}}
\includegraphics[scale=0.6]{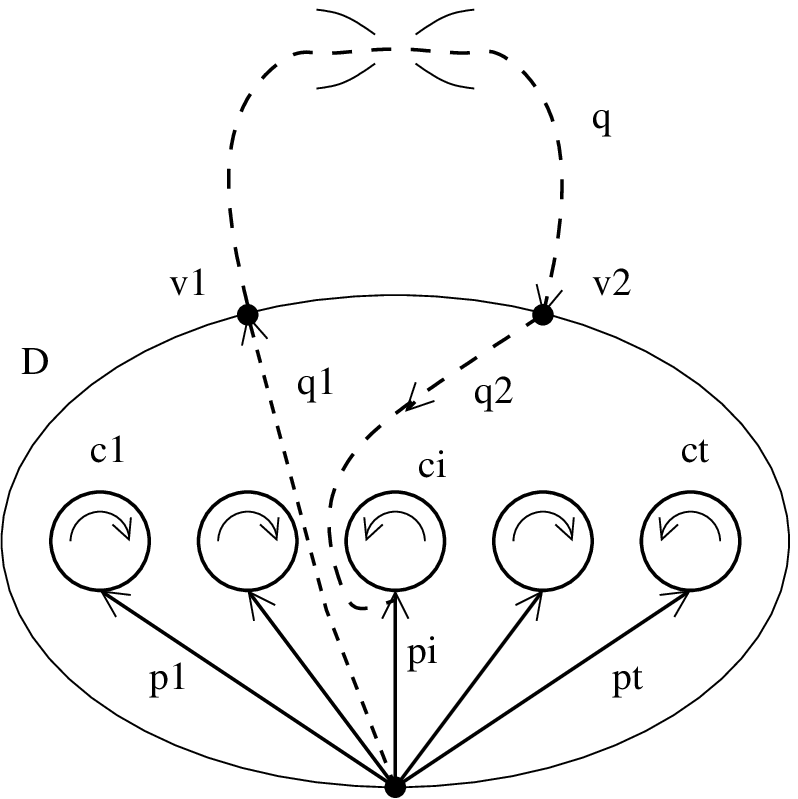}
\end{center}
\caption{}\label{fig:spray1}
\end{subfigure}
\quad
\begin{subfigure}[b]{.3\columnwidth}
\begin{center}
\psfrag{p1}{{\scriptsize $p_1$}}
\psfrag{pt}{{\scriptsize $p_t$}}
\psfrag{c1}{{\scriptsize $c_1$}}
\psfrag{ci}{{\scriptsize $c_i$}}
\psfrag{ct}{{\scriptsize $c_t$}}
\psfrag{v1}{{\scriptsize $v_1$}}
\psfrag{v2}{{\scriptsize $v_2$}}
\psfrag{q1}{{\scriptsize $q_1$}}
\psfrag{q2}{{\scriptsize $q_2$}}
\psfrag{q}{{\scriptsize $q$}}
\psfrag{D}{{\scriptsize $D$}}
\includegraphics[scale=0.6]{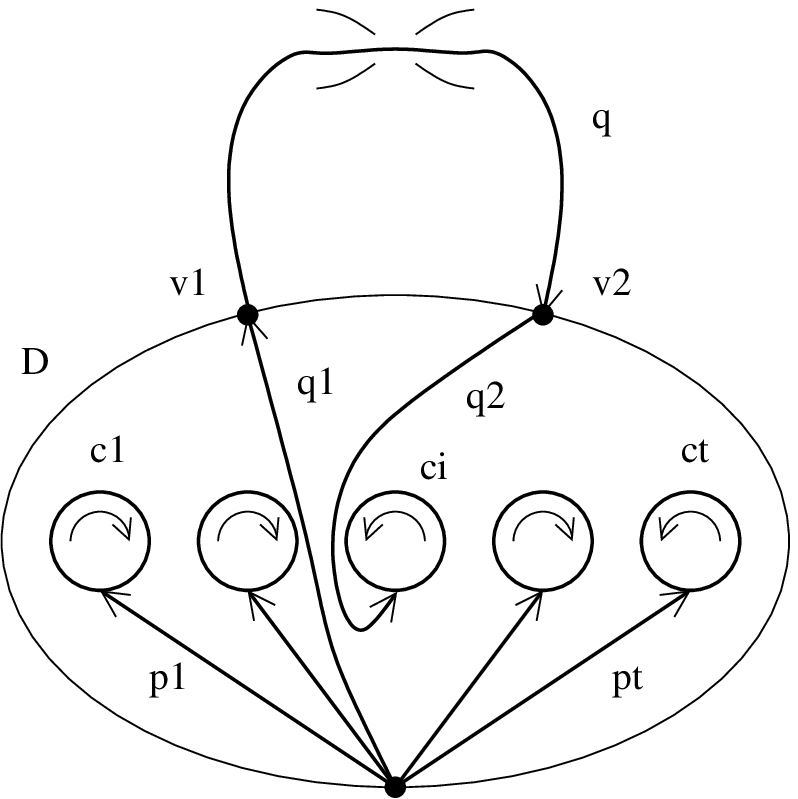}
\end{center}
\caption{}\label{fig:spray2}
\end{subfigure}
\caption{Reversing orientation of a boundary component}\label{fig:spray}
\end{center}
\end{figure}

Suppose that $\e_1=1$ and $i$ is minimal such that $\e_i=-1$. 
Then choose  vertices $v_1$ and $v_2$ on $c$ and paths $q_1$, from $*$ to $v_1$,  and $q_2$, from $v_2$ to  $b_i$, so that 
the path $q_1^{-1}p_iq_2^{-1}$ cuts a disk from $D$, not meeting any boundary component $c_j$ and so that the $q_1$ 
meets $p_j$ only at $*$, for all $j$, and $q_2$ meets $p_i$ only at $b_i$ and meets no other $p_j$; 
as in Figure \ref{fig:spray1}. Now let $q$ be a simple, orientation reversing,  
path  on $\S$ from $v_1$ to $v_2$ such that $q$ meets $D$ only at $v_1$ and $v_2$, as in Figure \ref{fig:spray2}. 
Such a path exists since $\S$ is non-orientable. 
After thickening the $2$-cells meeting 
these paths we may assume that $q$ and $q_i$ are edge paths in $K_1$. 
Replace the path $p_i$ with $p'_i=q_1qq_2$. Now repeat the construction of $D$ using the tree $p_1\cup \cdots \cup p'_i \cup \cdots\cup p_t$ 
instead of $T$. This time the boundary components have labels  $c_1^{\e_1}, \ldots , c_t^{\e_t}$, where 
$\e_1=\cdots =\e_i=1$. Continuing this way, eventually all the $\e_i$ are equal.

Thus, in all cases, there is a closed edge path $c$ in $K_1$ with the required properties. Let $C$ be
 the label of $c$. The disk $\D$ gives  a solution to the equation $\prod_{i=1}^t v_i^{-1}c_iv_i=C$ and 
the surface $\S\backslash\textrm{int}(\D)$ gives a solution to the equation $\prod_{i=1}^g[x_i,y_i]=C^{-1}$, if 
$\S$ is orientable, and $\prod_{i=1}^{2g} z_i^2=C^{-1}$, otherwise. Combining these we obtain a solution to 
\eqref{eq:qgo} or \eqref{eq:qgn} as appropriate.
Thus there exists a solution to $w=1$ if and only if there exists a van Kampen diagram on a surface $\S$, of  genus $g$, 
with $t$ boundary components, labelled (with appropriate orientation) $c_1,\ldots ,c_t$. 
 This motivates the following definition.

\begin{definition} Let $G$ be a group and let $c_1,\ldots ,c_t$ be 
elements of $G$. 
\be
\item The \emph{orientable genus} of $(c_1,\ldots ,c_t)$, denoted $\genus^+_G(c_1,\ldots
,c_t)$, is the
smallest integer $k$ such that there exists a solution to the equation \eqref{eq:qgo} over $G$. 
\item The \emph{non-orientable genus} of $(c_1,\ldots ,c_t)$, 
denoted $\genus^-_G(c_1,\ldots
,c_t)$, is the
smallest 
element $k\in \ZD$ such that  
 there exists a solution to the equation \eqref{eq:qgo} over $G$. 
\ee
In all cases, if there is no $k$ satisfying
the given conditions then the corresponding genus is defined 
to be infinite. 
\end{definition}
When no ambiguity arises, or we wish to make statements covering both orientable and non-orientable
genus we use $\genus_G(c_1,\ldots ,c_t)$  instead of  $\genus^+_G(c_1,\ldots ,c_t)$ or  $\genus^-_G(c_1,\ldots ,c_t)$.
\eqref{eq:qgo} holds, and similarly for $\genus_G^-$. 
\subsection{Wicks forms}\label{sec:wicks}

Consider a quadratic  word $U$ over   $\mathcal{A}$ and the surface $\S_U$ obtained as above. 
 The signature $\s$ and orientability $o$ of letters of $U$ induce a signature 
and orientability on the graph $\G_U$: namely if $\l(e)=A\in \supp(U)$ then
$\s(e)=\s(A)$ and  $o(e)=-\e\d$, where $\s(A)=(\e,\d)$.  
The edge $e$ is \emph{alternating} if $o(e)=1$.
 To obtain a van Kampen diagram over $F(\cA)$ (as distinct from a van Kampen diagram over $H$ as above) on $\S_U$  with
boundary label $U$, 
 thicken the $2$-cell $D$ to obtain a disk with an interior region $D_0$ labelled $U$, as in 
Figure \ref{gammaU}. 
 Remove the interior of $D_0$ from $\S_U$ leaving  a van Kampen diagram,
$\D_U$, 
over $F(\cA)$ on a surface of genus $g$, with one boundary
component labelled $U$.  
It follows 
from \cite{Culler81} 
that $\genus_{F(\cA)}(U)=g$. 

\begin{example}\label{ex:wicks}
Suppose that we have the quadratic orientable word $U=ABCA^{-1}B^{-1}C^{-1}$ we
construct $\Gamma_U$  and $\D_U$ as shown in Figure \ref{gammaU}.
\end{example}
\begin{figure}[htp]
\begin{center}
\psfrag{A}{{\scriptsize $A $}}
\psfrag{B}{{\scriptsize $B$}}
\psfrag{C}{{\scriptsize $C$}}
\psfrag{X1}{{\scriptsize $X_1$}}
\psfrag{X2}{{\scriptsize $X_2$}}
\psfrag{U}{{\scriptsize $\Gamma_U $}}
\psfrag{D}{{\scriptsize $\D_U $}}
\psfrag{SU}{{\scriptsize $\S_U $}}
\includegraphics[scale=0.55]{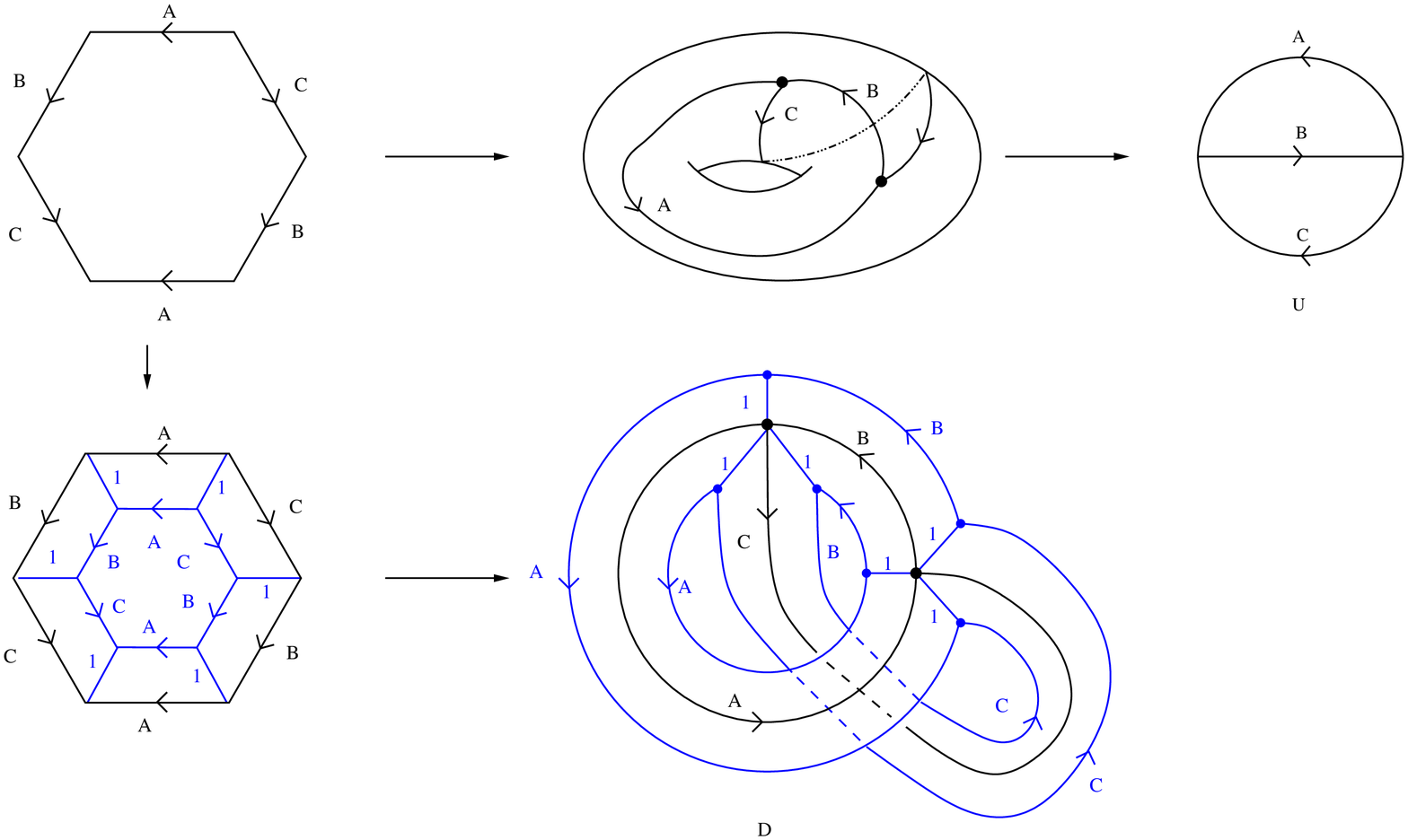}
\caption{Constructing $\Gamma_U$ and $\D_U$}\label{gammaU}
\end{center}
\end{figure}

This construction may be carried out in the same way starting with
a  quadratic  $t$-tuple of words $U_1,\ldots, U_t$. This
time let $U$ be the quadratic word 
$U=U_1\cdots, U_t$, label the boundary of $D$ with $U$, as before, 
define  $\S_{U_1,\ldots, U_t}$ to be $\S_U$ and the associated 
graph $\G_{U_1,\ldots,U_t}$ to 
be $\G_U$. To  construct a van Kampen diagram on a surface 
 with $t$ boundary components labelled  $U_1$, ..., $U_t$ 
take $t$ letters, $X_1$, ..., $X_t$, of $\cA\bs(\cup_{i=1}^t\supp(U_i))$, and 
let $V=X_1^{-1}U_1X_1\cdots X_t^{-1}U_tX_t$. As before let $D_0$ be 
a disk embedded in $\S_{U_1,\ldots, U_t}$ and not meeting $\G_{U_1,\ldots,U_t}$. 
Label the boundary 
of $D_0$ with $V$ and join the ends of corresponding
letters of $U_i$ on the boundaries of $D$ and $D_0$ with disjoint, 
properly embedded arcs, labelled $1$. 
Remove the interior of $D_0$ and identify edges labelled $X_i$ with each
other, respecting orientation.  (See Example \ref{ex:wicks1}.) The result
is the required van Kampen diagram $\D_{U_1,\ldots, U_t}$.
  It follows once more,
from \cite{ols89}[Sections 2.3 and 2.4],  
that $\genus_{F(\cA)}(U_1,\ldots, U_t)=\genus(\S_{U_1,\ldots, U_t})$. 

\begin{example}\label{ex:wicks1}
Suppose that we have the quadratic orientable pair of  words $U_1,U_2$, where 
$U_1=AB$, and $U_2=CA^{-1}B^{-1}C^{-1}$. Then $U=ABCA^{-1}B^{-1}C^{-1}$ as in 
Example \ref{ex:wicks}.   We
construct $\S_{U_1,U_2}=\S_{U}$ and $\Gamma_{U_1,U_2}=\G_U$ as in 
shown in Figure \ref{gammaU}. The van Kampen diagram $\D_{U_1,U_2}$ is 
obtained by identifying the edges around the outside hexagon on the left
of Figure \ref{gammaU1}, as well as those labelled $X_i$ on the inside
decagon. 
 \begin{figure}[htp]
\begin{center}
\psfrag{A}{{\scriptsize $A $}}
\psfrag{B}{{\scriptsize $B$}}
\psfrag{C}{{\scriptsize $C$}}
\psfrag{U}{{\scriptsize $\Gamma_U $}}
\psfrag{DU}{{\scriptsize $\D_U $}}
\psfrag{SU}{{\scriptsize $\S_U $}}
\psfrag{X1}{{\scriptsize $X_1$}}
\psfrag{X2}{{\scriptsize $X_2$}}
\includegraphics[scale=0.75]{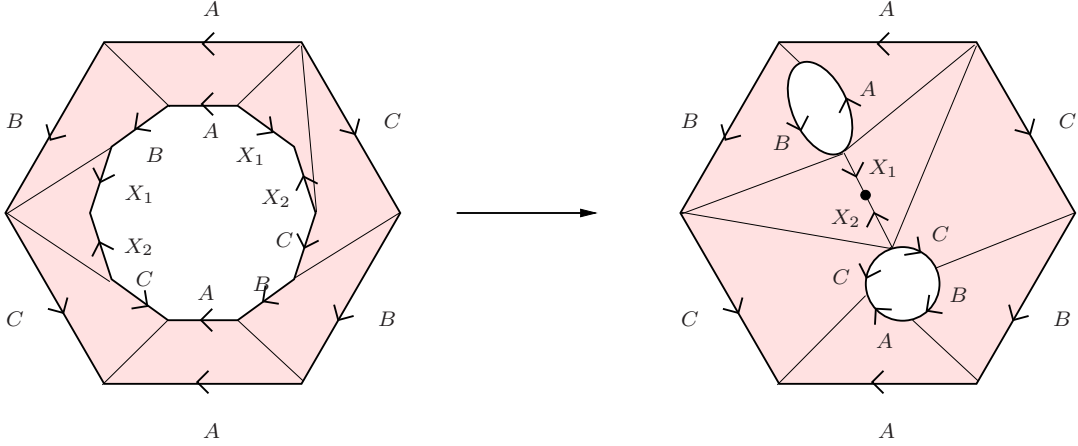}
\caption{$U_1=AB$, $U_2=CA^{-1}B^{-1}C^{-1}$ and $\D_{U_1,U_2}$ is obtained from the right hand diagram by
identifying edges on the outside boundary according to labels.}\label{gammaU1}
\end{center}
\end{figure}
The result is a van Kampen diagram on the torus, with two boundary
components, labelled $AB$ and $CA^{-1}B^{-1}C^{-1}$.
\end{example}

Let $\Gamma$ be any directed, connected graph with signature $\s$.
An \emph{Eulerian circuit} in $\G$ is 
 a
circuit $C$ such that, 
\begin{itemize}
\item $C$ 
traverses every edge of $\G$ exactly twice; and 
\item 
for all edges $e$, if $\s(e)=(\e,\d)$ then (some cyclic permutation of) the 
  edge sequence of $C$  contains the
subsequence $e^\e, e^\d$. 
\end{itemize}
By construction the graph $\G_U$ of a quadratic word $U$ contains an
Eulerian circuit, namely the word $U$, and a similar statement applies
to a quadratic tuple of words.

~\\

Let $\Gamma$ be any graph with an
 Eulerian circuit, $T$
say. Write $T$ around the boundary of a disk and identify the edges,
respecting orientation, to obtain a surface $S$. Then the Euler
characteristic $\chi(S)$ of $S$ is given by the formula $v-e+1$, where $v$
and $e$ are the  number of vertices and edges, respectively, of 
$\Gamma$ and $\chi(S)$ is equal to $2-2\genus(S)$. We 
define $\genus(\Gamma)$, the  \emph{genus} of $\G$, 
to be equal to $\genus(S)$: that is
\begin{equation}\label{gengra}
\genus(\Gamma)=\frac{1-v+e}{2}.
\end{equation}

As noted above, from \cite{ols89},  if $U_1,\ldots, U_t$ is a quadratic $t$-tuple of words
 over $F(\cA)$ then $\genus_{F(\cA)}(U_1,\ldots, U_t)=\genus(\S_{U_1,\ldots, U_t})$, so in addition 
$\genus_{F(\cA)}(U_1,\ldots, U_t)=\genus(\G_{U_1,\ldots, U_t})$.

\begin{definition}\label{def:redun}
 A  quadratic 
$t$-tuple $w_1,\ldots, w_t$ of  words  is said to be \emph{redundant}
if there are $x$ and $y$ in $(\cup_{i=1}^t\supp(w_i))^{\pm 1}$ and 
disjoint (no overlap) subwords $s_1$ and $s_2$ of the cyclic words
$w_i$ and $w_j$, $1\le i\le j\le t$,  
such that $s_1=xy$ and $s_2=(xy)^{\pm 1}$. 
Otherwise $w_1,\ldots, w_t$ is said to be
\emph{irredundant}.
\end{definition}

\begin{definition}[Wicks Form] Let $W$ be a  quadratic 
 word in $F(\cA)$.  
Then $W$ is called a \emph{Wicks
    form over $\cA$} if  the following conditions hold.
\begin{enumerate}[(i)]
\item\label{it:wicks1} $W$ is freely cyclically reduced and
\item\label{it:wicks2} $W$ is irredundant.
\end{enumerate}
\end{definition}

Let $H$ be a group with presentation $\la X| S\ra$. A 
(monoid) homomorphism
$\lambda$ 
from $(\cA\cup \cA^{-1})^*$ to $F(X)$, 
such that $\lambda(a^{-1})=_H \lambda(a)^{-1}$, is called a \emph{labelling function}. 
If $H$ is the free group on $X$ and $h$ is an element of $H$, of genus $g$,
 then, as shown in \cite{Culler81},   there is a Wicks form $W=A_1\ldots A_n$ over $\cA$, 
and a labelling function $\psi$, such that $h$ is conjugate to $\psi(W)$ and
$\psi(W)=\psi(A_1)\cdots \psi(A_n)$ is reduced as written. Indeed Culler 
(\emph{loc. cit.}) 
uses this fact to give a description of all solutions of either
equation \eqref{eq:qgo} or \eqref{eq:qgn}, when $t=1$ and 
$G=F(X)$.

If $W$ is a Wicks form of genus not equal to $1/2$ then 
 the graph $\Gamma _W$, associated to $W$,  
contains no vertices of degree $1$ or $2$ 
(as if it did then either condition  \ref{it:wicks1} or \ref{it:wicks2}, in the
definition of a Wicks form, would be violated).  There is only one 
Wicks form of genus $1/2$, namely $W=X^2$, and it has graph $\G_W$
consisting of a single vertex and a single edge, lying on the projective plane 
$\S_W$. 
Consequently,  there is a bound on the maximal length of a Wicks form 
of genus $n$, given explicitly in the following lemma of M.~Culler \cite{Culler81}.
\begin{lemma}[{\cite[Theorem 3.1]{Culler81}}]\label{cull}
Let $V$ be a Wicks form over $\mathcal{A}$ such
that $\genus_{F(\mathcal{A})}(V)=m>\frac{1}{2}$. Then the length of $V$ is at most $12m-6$.
\end{lemma}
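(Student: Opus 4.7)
The plan is a straightforward Euler-characteristic count, exploiting the vertex-degree bound recorded in the paragraph immediately preceding the lemma. Write $L$ for the length of $V$ as a word and let $v,e$ be the number of vertices and edges of the associated graph $\Gamma_V$. Since $V$ is quadratic each letter of $\supp(V)$ appears exactly twice, so $L=2e$; it therefore suffices to show $e\le 6m-3$.

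First I would recall the genus formula \eqref{gengra}, which gives
\[
2m \;=\; 2\,\genus(\G_V) \;=\; 1-v+e,
\]
so that $v = 1+e-2m$. Next I would use the observation, made just before the statement of the lemma, that because $\genus(V)\ne 1/2$ the graph $\G_V$ has no vertex of degree $1$ or $2$ (any such vertex would force either a free reduction or a redundancy, contradicting conditions (i) or (ii) of the Wicks form definition). Hence every vertex has degree at least $3$.

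Summing degrees over vertices counts each edge twice, so
\[
3v \;\le\; \sum_{w\in V(\G_V)} \deg(w) \;=\; 2e.
\]
Combining this with $v=1+e-2m$ yields
\[
3(1+e-2m) \;\le\; 2e,
\]
which rearranges to $e\le 6m-3$, and therefore $L=2e\le 12m-6$, as required.

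There is no real obstacle here: the only step requiring care is the invocation of the degree bound, which depends on both clauses of the Wicks form definition (cyclic reducedness rules out degree $1$, irredundancy rules out degree $2$) and on the hypothesis $m>1/2$ to exclude the anomalous form $X^2$. Everything else is a one-line application of Euler's formula in the form already recorded in \eqref{gengra}.
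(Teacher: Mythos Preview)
Your argument is correct. Note, however, that the paper does not actually give a proof of this lemma: it is stated with a citation to \cite[Theorem~3.1]{Culler81} and used as a black box, so there is no ``paper's own proof'' to compare against. Your Euler-characteristic count, using the degree bound established in the paragraph preceding the lemma together with \eqref{gengra}, is exactly the standard argument and fills in the omitted details cleanly.
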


\begin{lemma}\label{lem:wicks}
Let $h_1,\ldots ,h_t$ be  elements of $F(X)$, $t\ge 1$. Then $\genus_H(h_1,\ldots ,h_t)\le m$ if and only if there exists
 a Wicks form $V$ over $\cA$, of genus $m$, and a labelling function
$\psi$ such that some product $r_1h_1r_1^{-1}\cdots r_th_tr_t^{-1}$, with $r_i\in F(X)$,  is equal in $H$ to $\psi(V)$. 
\end{lemma}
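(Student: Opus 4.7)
My plan is to prove both implications by translating directly between solutions of the standard quadratic equations \eqref{eq:qgo}, \eqref{eq:qgn} and labellings of one concrete Wicks form of genus $m$, rather than by any surface surgery. No extraction of a Wicks form from a given diagram is needed: a single standard form suffices.

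For the backward direction, assume $V$ is a Wicks form of genus $m$ and $\psi(V) =_H r_1h_1r_1^{-1}\cdots r_th_tr_t^{-1}$. By definition of genus, in the orientable case $V$ is, in $F(\cA)$, conjugate to a product $\prod_{i=1}^m[\xi_i,\eta_i]$; applying the homomorphism induced by $\psi$ on $F(\cA)$ (the condition $\psi(a^{-1})=_H \psi(a)^{-1}$ makes this well defined modulo $H$), one sees that $\psi(V)$ is, in $H$, a product of $m$ commutators of elements of $H$. Combining with the hypothesis and setting $v_i := r_i^{-1}$, one reads off a solution to \eqref{eq:qgo} of genus $m$, giving $\genus^+_H(h_1,\ldots,h_t)\le m$. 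The non-orientable case is entirely parallel, using $2m$ squares and \eqref{eq:qgn}.

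For the forward direction, I will exhibit a single concrete Wicks form $V$ of genus $m$ that works. Since $\genus_H(h_1,\ldots,h_t)\le m$, any solution of genus $g\le m$ may be padded with trivial commutators $[1,1]$ (respectively trivial squares $1^2$) to a solution of genus exactly $m$, supplying $v_i, x_j, y_j \in H$ with $\prod_{i=1}^t v_i^{-1}h_iv_i =_H \prod_{j=m}^1 [y_j,x_j]$. Choose pairwise distinct letters $A_j,B_j\in\cA$ and take
\[
V := [B_m,A_m][B_{m-1},A_{m-1}]\cdots[B_1,A_1].
\]
The associated graph $\Gamma_V$ has a single vertex (standard identification of $m$ commutator squares) and $2m$ two-sided edges, so by \eqref{gengra} its genus is $m$. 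Moreover $V$ is freely cyclically reduced, and since its $4m$ positions carry pairwise distinct letters of $\cA$, no two disjoint length-two subwords can coincide or be inverses of one another; hence $V$ is irredundant, and therefore a Wicks form. Defining $\psi(A_j)$ and $\psi(B_j)$ to be $F(X)$-representatives of $x_j$ and $y_j$ and setting $r_i:=v_i^{-1}$ gives $\psi(V) =_H r_1h_1r_1^{-1}\cdots r_th_tr_t^{-1}$. The non-orientable case uses $V = A_1^2A_2^2\cdots A_{2m}^2$ and \eqref{eq:qgn} in the same way.

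The only mildly delicate point is ensuring that $V$ has genus \emph{exactly} $m$ and not merely at most $m$; this is handled by the padding step, whose extra commutators or squares are assigned trivial $F(X)$-words under $\psi$ and so do not disturb the defining equality in $H$. Everything else is a direct unpacking of the definitions of labelling function, irredundant quadratic word, and the standard form of the genus-$m$ quadratic equation.
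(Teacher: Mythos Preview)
Your proof is correct. The forward direction is essentially the same as the paper's: both of you exhibit the standard Wicks form $\prod_i[A_i,B_i]$ (respectively $\prod_i C_i^2$), padding a genus-$g$ solution out to genus $m$ by sending the extra letters to the identity. Your irredundancy argument is phrased slightly loosely (the $4m$ positions carry distinct elements of $\cA^{\pm 1}$, not of $\cA$), but the conclusion is standard and correct.

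The backward direction, however, is genuinely different. The paper argues geometrically: from $\prod_j r_jh_jr_j^{-1}=_H\psi(V)$ it builds a van Kampen diagram over $H$ on a sphere with $t+1$ holes, then relabels the free-group diagram $\Delta_V$ on $\Sigma_V$ via $\psi$ to get a second diagram over $H$ on a genus-$m$ surface with one hole, and glues the two along the common boundary $\psi(V)$ to obtain a diagram on a genus-$m$ surface with $t$ boundary components labelled $h_1,\ldots,h_t$. You instead observe that $\psi$ induces a group homomorphism $F(\cA)\to H$, that any genus-$m$ Wicks form $V$ is already conjugate in $F(\cA)$ to a standard product of $m$ commutators (or $2m$ squares), and hence that $\psi(V)$ is such a product in $H$; then you read off a solution of \eqref{eq:qgo} or \eqref{eq:qgn} directly. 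Your route is shorter and more elementary, avoiding the diagram-gluing machinery entirely; the paper's route has the virtue of fitting the van Kampen-diagram framework used throughout Sections~\ref{sec:gandvk} and~\ref{sec:genlen}, where the surface picture is what drives the later arguments.
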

\begin{proof}
Suppose that $\genus_H(h_1,\ldots ,h_t)=g$. Then there exist elements $r_j\in H$, $j=1,\ldots ,t$, and 
either (i) elements 
$a_i, b_i\in H$, $i=1,\ldots, g$, 
such that $\prod_{j=1}^tr_jh_jr_j^{-1}=\prod_{i=1}^g[a_i,b_i]$; or (ii) elements $c_i\in H$,
such that $\prod_{j=1}^tr_jh_jr_j^{-1}=\prod_{i=1}^{2g} c_i^2$. Let $A_i, B_i, C_i\in \cA$.
In case (i), for 
all integers $m\ge g$, 
$V=\prod_{i=1}^m[A_i,B_i]$ is a Wicks form, of genus $m$. Define 
$\psi$ by $\psi(A_i)=a_i$, $\psi(B_i)=b_i$, for $i=1,\ldots, g$,  
and $\psi(X)=1$, if $X\in \cA$, $X\neq A_i$ or
$B_i$, with $1\le i\le g$. Then $\psi(V)=\prod_{j=1}^tr_jh_jr_j^{-1}$, as required.
In case (ii), for all integers  $m\ge 2g$, 
$V=\prod_{i=1}^{m}C_i^2$ is a Wicks form, of genus $m/2$. Define 
$\psi$ by $\psi(C_i)=c_i$, for $i=1,\ldots, 2g$,  
and $\psi(X)=1$, if $X\in \cA$, $X\neq C_i$, with  $1\le i\le 2g$. 
Again $\psi(V)=\prod_{j=1}^tr_jh_jr_j^{-1}$, as required.

Conversely, suppose that $V$ is a Wicks form of genus $m$, that 
$\psi$ is a labelling function  and that $\prod_{j=1}^tr_jh_jr_j^{-1}$ is equal in $H$ to
$\psi(V)$, for some $r_j\in H$. Without loss of generality, 
we may assume that $h_j$ is freely cyclically reduced. As $\prod_{j=1}^tr_jh_jr_j^{-1}$ is
equal to $\psi(V)$ there is a van Kampen diagram $D_1$, 
over $H$, on a surface of genus $0$, with $t+1$ boundary components $\b_1,\ldots ,\b_t, \b$, with $\b_j$ labelled 
$h_j$ and    $\b$, 
labelled $\psi(V)$.
Moreover, as in Section \ref{sec:gandvk}, there is a  
van Kampen diagram $\D_V$ over $F(\cA)$, on a surface 
$\S_V$, of genus $m$, with one boundary component $\b^\prime$, 
labelled $V$. 
Relabelling $\D_V$, using the labelling function $\psi$, 
we may construct a van Kampen diagram over $F(X)$ with boundary
label $\psi(V)$. In more detail:
 every non-trivial region $R$ of $\D_V$ has boundary label 
a cyclic word $a\cdot 1\cdot  a^{-1}\cdot 1$, for some $a\in \cA$. Divide the
edge labelled $a$ into $|\psi(a)|$ segments and label the resulting edge
path $\psi(a)$. The region $R$ can now also be  divided into $\psi(a)$
parallel regions, each with boundary label  $x\cdot 1 \cdot x^{-1}\cdot 1$,
for some $x\in \supp(\psi(a))$. Repeating over all edges and regions results
in a van Kampen diagram, over $F(X)$, on $\S_V$, with boundary 
component $\b^\prime$, labelled  
$\psi(V)$. 
As this is a diagram over $F(X)$ it is
 {\it a fortiori} a diagram  over $H$.  
 Thus, attaching $D_1$ to $D_2$ by gluing the boundary components $\b$ and $\b^\prime$
together, using their labels, we obtain a van Kampen diagram over 
$H$, on a surface of genus $m$, with boundary label $h$. 
Hence $\genus_H(h_1,\ldots ,h_t)\le m$. 
\end{proof}

\subsection{Extension of a quadratic  word over a Hyperbolic group $H$}\label{sec:ext}
Let $U$ be a non-empty quadratic  word of genus $k$ (not necessarily reduced or irredundant) over the  countably infinite 
alphabet
$\mathcal{A}$ and let $\Gamma _U$ be its associated 
graph on the surface $\S_U$. 
Then $\G_U$ has genus $k$ and signature $\s$,  
and its edges are labelled  with
letters of $U$. It is notationally convenient 
to identify a directed 
edge $e$ of $\G_U$ with its label $\l(A)\in \cA^{\pm 1}$  (and
 $e^{-1}$ with $\l(A)^{-1}$) and we shall 
do so from now on. Thus  
 $U$ itself is an Eulerian
circuit of $\G_U$. Let $H=\la X|S\ra$ be a hyperbolic group in
which geodesic triangles in $\Gamma _X(H)$ are $\delta$-thin. We 
now describe a procedure which, when applied to $\Gamma_U$, results 
in a new graph 
called an \emph{extension} of the quadratic  word $U$ over $H$. This
will be carried out in three steps, but first we need to set up 
notation. 

Roughly speaking, we define the star of a vertex $v$ of $\G_U$ to be a sufficiently small closed 
disk, on $\S_U$, containing $v$. 
To make this precise,  let 
$K$ be the second barycentric subdivision of   
the cell complex consisting of $\G_U$ and $D$ on
$\S_U$. The \emph{star} of $v$ is the convex-hull of the star of $v$
as a $0$-cell of the simplicial complex $K$ (the subcomplex of $K$
consisting 
of all simplices meeting $v$, together with their faces).  
As $\S_U$ is a surface the star of $v$ is a disk and we define
the \emph{link} of $v$ to be the boundary of this disk. 
%
  Then we may view $\lk(v)$  as a cycle graph, with
vertices the points where it meets edges of $\G_U$ (necessarily incident to $v$) 
and edges the closures of connected components  
of $\lk(v)\backslash E(\G_U)$. Let $x$ be a point of intersection
of $\lk(v)$ and an edge $e$ of $\G_U$. If $v=\t(e)$ (and $[x,v]$ is contained
in the star of $v$) then we denote the vertex 
$x$ of $\lk(v)$ by $e$ and if $v=\i(e)$ we denote $x$ 
by $e^{-1}$. (When $e$ is a loop then both $e$ and $e^{-1}$ are
vertices of $\lk(v)$.) Thus the union, over all vertices $v$ of $\G_U$, 
of the graphs $\lk(v)$ is isomorphic to the star graph of the word $U$ 
(the graph with  vertices $A^{\pm 1}$, for all $A\in \supp(U)$, 
and an edge joining 
$A^\e$ to $B^{-\d}$ for each subword $A^\e B^\d$ of the cyclic word $U$). 
  
 Suppose that a vertex $v$ of $\G_U$ has degree $d$. Fix an orientation of $v$ and 
renumber the edges 
incident to $v$ (temporarily, for the current purposes) so that the link of $v$, read
according to the chosen orientation,  has vertex
sequence $e_1^{\e_1}, e_2^{\e_2},\ldots ,e_d^{\e_d}$. 
Then, for all $i$, the cyclic word $U$ contains the 
subword $e_i^{\e_i}e_{i+1}^{-\e_{i+1}}$ or $e_{i+1}^{\e_{i+1}}e_i^{-\e_i}$. 
\begin{definition}\label{def:vinc}
Let $v$ be a vertex of $\G_U$, with a chosen orientation, and assume that the 
link of $v$ has vertex
sequence $e_1^{\e_1}, e_2^{\e_2},\ldots ,e_d^{\e_d}$. 
Define the \emph{incidence sequence of} $v$ to be $O_1(v), \ldots ,O_d(v)$, 
where 
\[
O_1(v)=
\begin{cases}
1 & \textrm{ if $U$ contains $e_1^{\e_1}e_2^{-\e_2}$}\\
-1 &\textrm{ otherwise}
\end{cases}
\]
and 
\[O_{i+1}(v)=O_i(v) o(e_{i+1}), 
\]
for 
$i=2,\ldots d$ (see Definition \ref{def:qw}). 
\end{definition}
When the vertex $v$ is fixed we write $O_q$ instead of $O_q(v)$.  
Thus $U$ contains $(e_i^{\e_i}e_{i+1}^{-\e_{i+1}} )^{O_i}$, for 
$i=1,\ldots ,d$. It follows that $v$ is incident to an even number
of non-alternating edges. Hence, 
 once the orientation of $v$ has been fixed and the 
first edge $e_1$ chosen, the incidence sequence is  both 
well-defined 
  and uniquely determined.
 (Note that, in the case $U$ is orientable, $o(e_i)=1$, for all $i$, so 
$O_1=\cdots =O_d$ and the orientation of $v$ may be chosen so that 
$O_i=1$, for all $i$.)

 To keep track of the orientation of letters we define 
 book-keeping functions: $\mu$ and $\nu$, from $\{\pm 1\}$ to $\{1,2\}$, and 
$l$ and $r$, from $ \{\pm 1\}^2$ to $\{\pm 1\}$, as follows.
\[
\begin{array}{ll}
\mu(1)=1, & \mu(-1)=2\\
\nu(1)=2, & \nu(-1)=1\\
l(x,y)=\nu(xy), & r(x,y)=\mu(xy).
\end{array}
\]

\begin{definition}\label{def:vori}
Assume $v$ has  link with vertex sequence $e_1^{\e_1}, e_2^{\e_2},\ldots ,e_d^{\e_d}$, as in the previous definition. Define $\mu_q=\mu_q(v)$, 
$\nu_q=\nu_q(v)$, $l_q=l_q(v)$ and $r_q=r_q(v)$ as follows, 
for $q=1,\ldots ,d$.
\be[(i)]
\item
If $o(e_q)=1$ then 
\[
\mu_q=\mu(\e_q), \nu_q=\nu(\e_q), l_q=l(O_q,\e_q) \textrm{ and } 
r_q=r(O_q,\e_q).  
\]
\item\label{it:vori2}
Assume that $o(e_q)=-1$ and  $\s(e_q)=(\e,\e)$. In this case $O_{q-1}=-O_q$. 
\be
\item 
 If the first occurrence in $U$ of the 
letter $e_q^\e$ is the one occurring in the subword $(e_{q-1}^{\e_{q-1}}
e_q^{-\e_q})^{-O_q}$ then $l_q=1$ and $r_q=2$. 
\item
Otherwise the first occurrence in $U$ of the 
letter $e_q^\e$ is the one occurring in the subword $(e_{q}^{\e_{q}}
e_{q+1}^{-\e_{q+1}})^{O_q}$ and $l_q=2$, $r_q=1$. 
\item \label{it:voric}
If $O_q=1$ then 
$\mu_q=r_q$ and 
$\nu_q=l_q$. 
\item  \label{it:vorid}
Otherwise $\mu_q=l_q$ and $\nu_q=r_q$.  
\ee
(If $d=1$ then $e_1$ is necessarily alternating; $o(e_1)=-1$, and $O_1$ is $1$ if and only if the chosen orientation is that 
induced by reading  $U$.  
In  this case $l_1=\mu(O_1)$, $r_1=\nu(O_1)$, with $\mu_1$ and $\nu_1$ as in 
\ref{it:voric} and \ref{it:vorid} above). 
\ee 
\end{definition}

\begin{example}\label{ex:vori}
Let $U=ABA^{-1}B$, so $\G_U$ is a graph with one vertex and two edges, labelled $A$ and $B$, on the
Klein bottle. We have 
 $\s(A)=(1,-1)$, 
$\s(B)=(1,1)$, $o(A)=1$ and $o(B)=-1$. The star of the vertex $v$ intersects $\G_U$ as shown in Figure \ref{fig:vori}(\subref{fig:voria}). 
Orienting anti-clockwise, $v$ has vertex sequence $A^{-1}$, $B^{-1}$, $A$, $B$. 
\begin{figure}[htp]
\begin{center}
\psfrag{A1}{{\scriptsize $A_1$}}
\psfrag{A2}{{\scriptsize $A_2$}}
\psfrag{B1}{{\scriptsize $B_1$}}
\psfrag{B2}{{\scriptsize $B_2$}}
\psfrag{C1}{{\scriptsize $C_1$}}
\psfrag{C2}{{\scriptsize $C_2$}}
\psfrag{A}{{\scriptsize $A$}}
\psfrag{B}{{\scriptsize $B$}}
\psfrag{C}{{\scriptsize $C$}}
\psfrag{v}{{\scriptsize $v$}}
\begin{subfigure}[b]{.3\columnwidth}
\begin{center}
\includegraphics[scale=0.7]{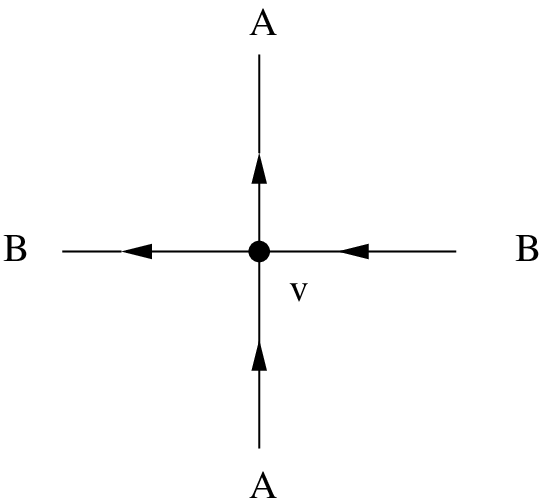}
\caption{}
\label{fig:voria}
\end{center}
\end{subfigure}
\begin{subfigure}[b]{.3\columnwidth}
\begin{center}
\includegraphics[scale=1]{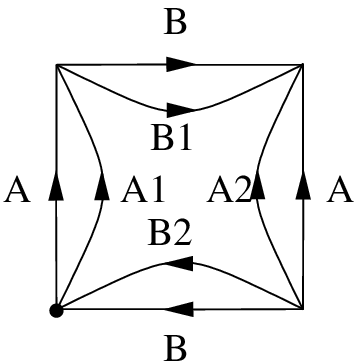}
\caption{}
\label{fig:vorib}
\end{center}
\end{subfigure}
\begin{subfigure}[b]{.3\columnwidth}
\begin{center}
\includegraphics[scale=0.7]{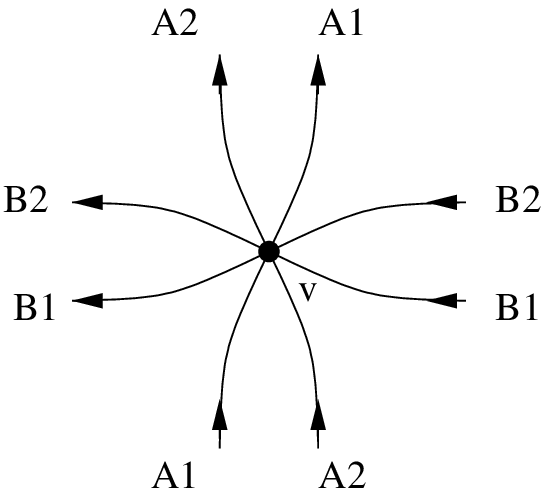}
\caption{}
\label{fig:voric}
\end{center}
\end{subfigure}
\\
\caption{}\label{fig:vori}  
\end{center}
\end{figure}
The following table shows the values of the functions of Definition \ref{def:vori} in this case.
\begin{center}
\begin{tabular}{c|cccc}
$q $ &$1 $&$2 $&$3 $&$4 $\\\hline\\[-.5em]
$e_q $&$A $&$B $&$A $&$B $\\
$\e_q $&$-1 $&$-1 $&$1 $&$1 $\\
$o(e_q) $&$1 $&$-1 $&$1 $&$-1 $\\
$O_q $&$1 $&$-1 $&$-1 $&$1 $\\
$u_q $&$2 $&$2 $&$1 $&$2 $\\
$v_q $&$1 $&$1 $&$2 $&$1 $\\
$l_q $&$1 $&$2 $&$1 $&$1 $\\
$r_q $&$2 $&$1 $&$2 $&$2 $
\end{tabular}
\end{center}
\end{example}

For non-orientable words, in order to construct an extension of $U$ we
first fix a particular representative $U$ of $[U]$. For each 
vertex of $\G_U$ we 
also fix an orientation and initial incident edge, so that the incidence
sequence is defined, for all vertices.  
\begin{enumerate}[label=\textbf{Step \arabic*.},leftmargin=0cm,itemindent=1.6cm,labelwidth=\itemindent,labelsep=0cm,align=left]
\item 
Let $e$ be a directed edge  in $\Gamma _U$ with $u=\iota(e)$
  and $v=\tau(e)$ (note that $u$ may be the same vertex as $v$). In 
this step we  replace $e$ by
  two new edges $e_1$ and $e_2$, where 
$e_1$, $e_2$ are in $(\mathcal{A}\bs\supp(U))$, 
  such that $\iota(e_1)=\iota(e_2)=u$ and $\tau(e_1)=\tau(e_2)=v$; as 
follows. 
As in Section \ref{sec:gandvk}, consider a disk $\D$ with
boundary divided into $|U|$ consecutive, directed intervals, $I_1, \ldots, I_{|U|}$,
 labelled with elements of $\supp(U)$, such that, read in
 a clockwise direction, from the point $I_1\cap I_{|U|}$,  the boundary label of $\D$ is $U$. 
Let $a_1,\ldots , a_{|U|}$ be properly embedded directed arcs 
in $\D$, such that $\i(a_i)=\i(I_i)$ and $\t(a_i)=\t(I_i)$; 
and the interiors of $a_i$ and
$a_j$ are disjoint, for $i\neq j$. There are precisely two 
indices $i,j$ such that 
$I_i$ and $I_j$, read
 with their given directions, are labelled 
$e$. Assume first that  $o(e)=1$.  In this case one of $I_i$, $I_j$ is
oriented clockwise and the other anti-clockwise. If  
$I_i$ is directed clockwise then the directed 
 arc $a_i$ is labelled $e_1$ and $a_j$ is labelled $e_2$. 
Otherwise $a_i$ is labelled $e_2$ and $a_j$ labelled $e_1$. 
(See Figure \ref{fig:extstep1}(\subref{fig:extstep1a}).)
\begin{figure}[htp]
\begin{center}
\psfrag{U}{{\scriptsize $U$}}
\psfrag{e1}{{\scriptsize $e_1$}}
\psfrag{e2}{{\scriptsize $e_2$}}
\psfrag{e11}{{\scriptsize $e_1^1$}}
\psfrag{e12}{{\scriptsize $e_2^1$}}
\psfrag{e21}{{\scriptsize $e_1^2$}}
\psfrag{e22}{{\scriptsize $e_2^2$}}
\psfrag{ed2}{{\scriptsize $e_2^d$}}
\psfrag{ed1}{{\scriptsize $e_1^d$}}
\psfrag{ed}{{\scriptsize $e^{d}$}}
\begin{subfigure}[b]{.45\columnwidth}
\begin{center}
\psfrag{e}{{\scriptsize $e$}}
\psfrag{e1}{{\scriptsize $e_1$}}
\psfrag{e2}{{\scriptsize $e_2$}}
\includegraphics[scale=0.3]{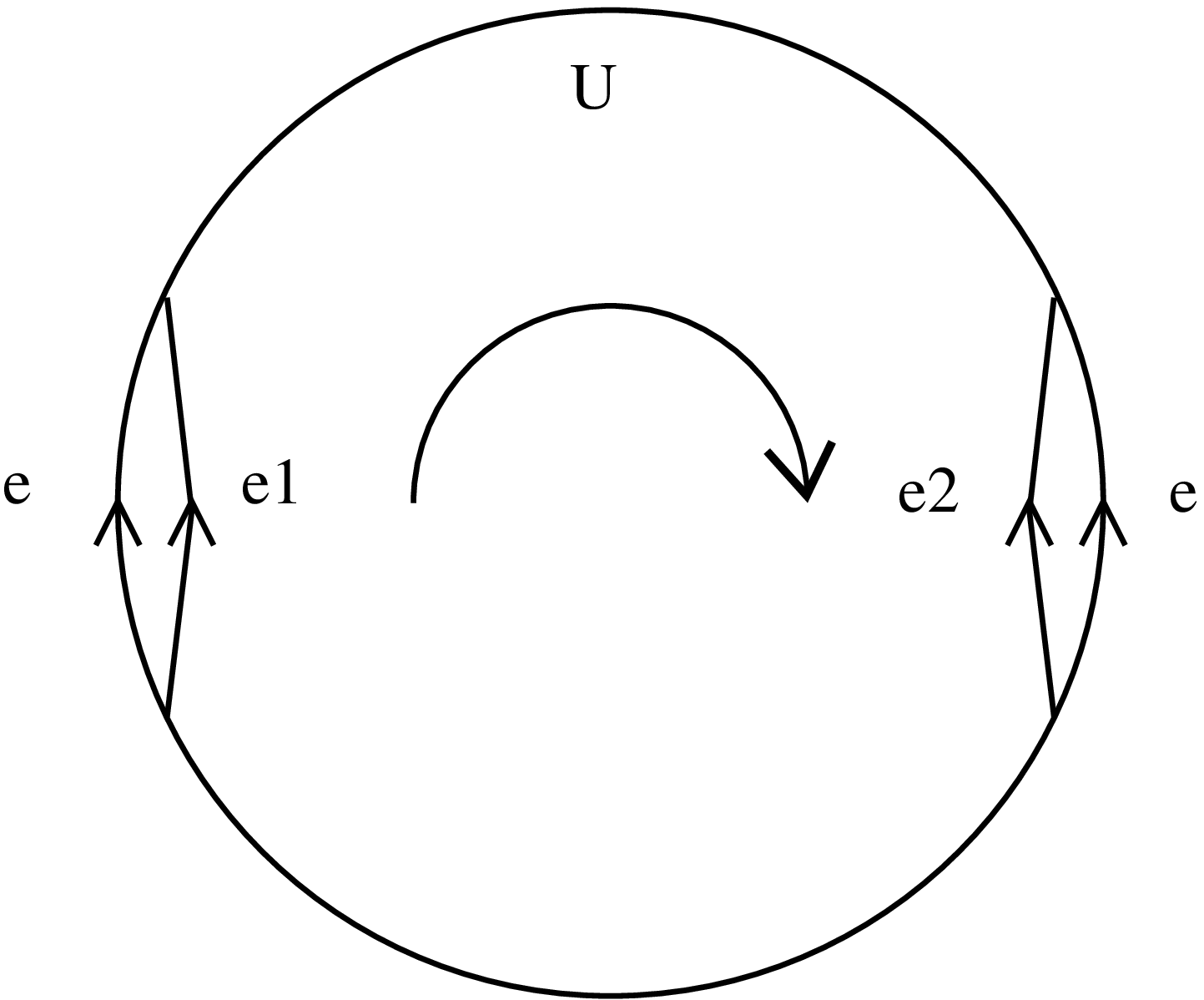}
\caption{}
\label{fig:extstep1a}
\end{center}
\end{subfigure}
\quad\quad
\begin{subfigure}[b]{.45\columnwidth}
\begin{center}
\psfrag{e}{{\scriptsize $e^\e$}}
\psfrag{e1}{{\scriptsize $e_1^\e$}}
\psfrag{e2}{{\scriptsize $e_2^\e$}}
\includegraphics[scale=0.3]{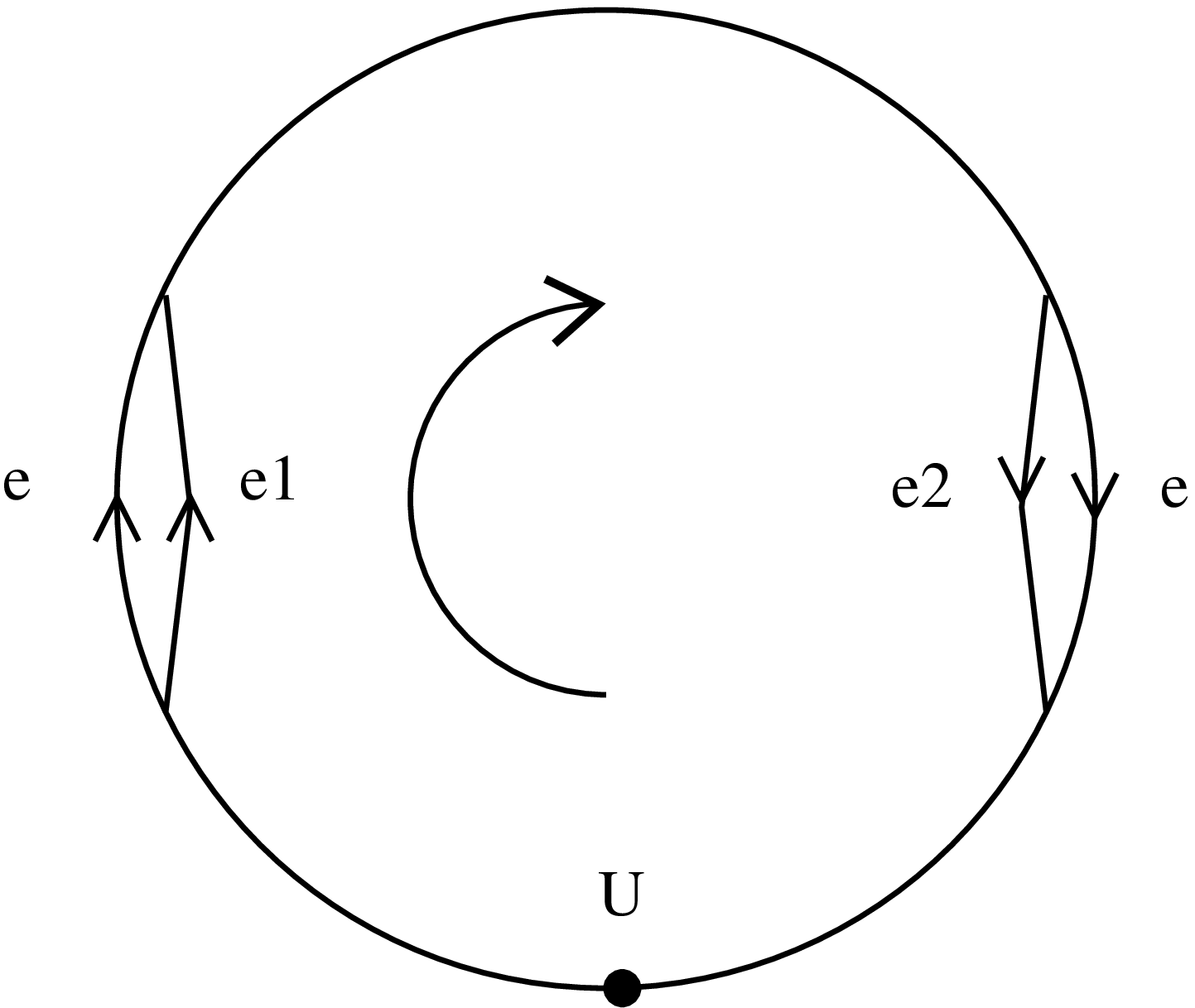}
\caption{}
\label{fig:extstep1b}
\end{center}
\end{subfigure}
\caption{}\label{fig:extstep1}
\end{center}
\end{figure}
 Now suppose that $o(e)=-1$.  If $i<j$ then 
$a_i$ is labelled     
with $e_1$. Otherwise $a_i$ is labelled $e_2$.  
(See Figure \ref{fig:extstep1}(\subref{fig:extstep1b}).)
  We do this for every edge in
  $\Gamma_U$ and form a new graph $\G^\prime_{U}$ on $\S_U$ by deleting
the edges of $\G_U$. 
We denote by $U'$  the word obtained by reading the label of $a_i$ instead
of that of $I_i$, when reading round the boundary of $\D$,  
and note that 
$U'$ is a circuit in $\G^\prime_{U}$. 
For this circuit we have the following lemma. 
\begin{lemma}\label{lem:vind}
Let $v$ be a vertex of $\G_U$ with link which has vertex sequence $e_1^{\e_1}, \ldots ,e_d^{\e_d}$
and incidence sequence $O_1,\ldots , O_d$. Then 
$v$ is a vertex of $\G'_U$ of degree $2d$, 
 the link of $v$ in $\G'_U$ has 
vertex sequence $e_{1,l_1}^{\e_1},e_{1,r_1}^{\e_1} , \ldots 
,e_{d,l_d}^{\e_d},
e_{d,r_d}^{\e_d}$ 
and (the cyclic word) 
$U'$ contains the 
subwords $(e_{q,r_q}^{\e_q}e_{q+1,l_{q+1}}^{-\e_{q+1}})^{O_q}$, 
for $1\le q\le d$. (See Figure \ref{fig:extstep2}(\subref{fig:extstep2a}) and (\subref{fig:extstep2b}).)
\end{lemma}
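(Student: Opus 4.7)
The strategy is a direct local analysis inside the star of $v$, following the effect of Step~1 on each edge meeting $v$. The three claims (degree $2d$; specified link cyclic order; specified subwords of $U'$) decouple naturally, so I would address them in that order.

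First, the degree count. In the disk model $\D$ used in Step~1, each directed interval $I_i$ on $\partial\D$ is replaced by a properly embedded arc $a_i$ with the same endpoints. At the vertex $v$, the original edge $e_q$ meets $v$ at a single point which, after the replacement, becomes the two distinct endpoints of the two parallel arcs $a_i, a_j$ labelled $e_{q,1}$ and $e_{q,2}$. Hence the $d$ vertices of $\operatorname{lk}_{\G_U}(v)$ on the edges $e_q$ become $2d$ vertices on $\operatorname{lk}_{\G'_U}(v)$, and $v$ has degree $2d$ in $\G'_U$. Moreover, the two endpoints at $v$ coming from a single $e_q$ are adjacent in the cyclic order of $\operatorname{lk}_{\G'_U}(v)$, since the arcs $a_i,a_j$ run parallel in a small neighbourhood of $v$ and no other edge intrudes between them. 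Thus the cyclic order at $v$ is obtained from $e_1^{\e_1},\dots,e_d^{\e_d}$ by replacing each symbol $e_q^{\e_q}$ by an ordered pair $e_{q,*}^{\e_q},e_{q,\star}^{\e_q}$ with $\{*,\star\}=\{1,2\}$.

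The content of the lemma is the identification of $(*,\star)=(l_q,r_q)$. This is a local computation at $v$ in each of the cases of Definition~\ref{def:vori}. When $o(e_q)=1$, the two arcs $a_i,a_j$ are oriented oppositely on $\partial\D$, so one (oriented clockwise on $\partial\D$) is $e_{q,1}$ and the other $e_{q,2}$ by Step~1. The direction in which the chosen orientation of $v$ traverses $\operatorname{lk}(v)$ relative to the clockwise orientation of $\D$ is determined by the sign $O_q\e_q$: the first copy encountered at $v$ is therefore $e_{q,\nu(O_q\e_q)}^{\e_q}=e_{q,l_q}^{\e_q}$, and the second is $e_{q,r_q}^{\e_q}$, as required. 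When $o(e_q)=-1$, the two arcs $a_i,a_j$ have the same label $e_q^\e$ on $\partial\D$, and the labelling convention of Step~1 assigns $e_{q,1}$ to whichever occurs first in $U$. The position (left or right) of this first occurrence, relative to the corner at $v$ lying between $e_{q-1}^{\e_{q-1}}$ and $e_q^{-\e_q}$ or between $e_q^{\e_q}$ and $e_{q+1}^{-\e_{q+1}}$, is exactly the distinction made in Definition~\ref{def:vori}\,\ref{it:vori2}. Combining with the adjustment by $O_q$ in parts \ref{it:voric}, \ref{it:vorid} then yields the claimed order $e_{q,l_q}^{\e_q},e_{q,r_q}^{\e_q}$.

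Finally, the subword claim for $U'$ is essentially a restatement of Step~1 once the link structure is fixed. The cyclic word $U$ is read around $\partial\D$; by construction $U'$ is the cyclic word obtained from the same boundary reading but with each $I_i$ replaced by $a_i$. The corner of $\D$ at $v$ sitting between the intervals whose labels are $e_q^{\e_q}$ and $e_{q+1}^{-\e_{q+1}}$ in $U$ (up to the sign $O_q$) is precisely the corner whose two bounding arcs meet $v$ at the two consecutive link vertices $e_{q,r_q}^{\e_q}$ and $e_{q+1,l_{q+1}}^{-\e_{q+1}}$ (or the reverse, according to $O_q$). Therefore the subword $(e_q^{\e_q}e_{q+1}^{-\e_{q+1}})^{O_q}$ of $U$ becomes $(e_{q,r_q}^{\e_q}e_{q+1,l_{q+1}}^{-\e_{q+1}})^{O_q}$ in $U'$. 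The main obstacle is purely bookkeeping in the non-alternating case $o(e_q)=-1$: one must align the Step~1 convention (labels $1,2$ assigned by order of appearance in $U$) with the link convention (labels $l_q,r_q$ assigned by position in $\operatorname{lk}(v)$) after the orientation twist recorded by $O_q$. The rest reduces to reading off the combinatorics once the correct local picture of $v$ in $\G'_U$ has been drawn.
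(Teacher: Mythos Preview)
Your proposal is correct and follows the same approach as the paper, which simply states that the lemma ``follows from the definition and the fact that $U$ contains $(e_{q}^{\e_q}e_{q+1}^{-\e_{q+1}})^{O_q}$.'' You have spelled out the case analysis that the paper leaves implicit: the alternating case $o(e_q)=1$, where the Step~1 convention (clockwise versus anti-clockwise on $\partial\D$) interacts with $O_q\e_q$ to give $l_q=\nu(O_q\e_q)$ as in Definition~\ref{def:vori}, and the non-alternating case $o(e_q)=-1$, where the ``first occurrence in $U$'' rule of Step~1 must be matched against the corner-based definition of $l_q,r_q$ in Definition~\ref{def:vori}\ref{it:vori2}. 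This detailed unpacking is exactly what the paper's one-line proof is asserting; there is no genuine difference in method.
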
 
\begin{proof} 
 ~This follows from the  definition and the fact that $U$ contains 
$(e_{q}^{\e_q}e_{q+1}^{-\e_{q+1}})^{O_q}$.
\end{proof}
\begin{example}\label{ex:vori-cont}
Applying Step 1 to the word $U$ of example \ref{ex:vori} we construct the graph shown in 
Figure \ref{fig:vori}(\subref{fig:vorib}); 
where edges labelled $A$ and $B$ are identified. Deletion of edges $A$ and $B$ then results in  a graph $\G_{U'}$ 
with one vertex $v$, the link of 
which intersects $\G_{U'}$ as shown in Figure \ref{fig:vori}(\subref{fig:voric}). Here $U'=A_1B_1A_2^{-1}B_2$. 
\end{example}
\item In this step we replace vertices of $\G^\prime_U$ by cycle graphs,
as follows. 
For each vertex $v$ of $\G_U$ choose a subset $\cA_v$ of $\cA$ such that 
\begin{itemize}
\item ~$|\cA_v|=\deg_{\G_U}(v)$,
\item ~$\cA_u\cap \cA_v=\nul$, if $u\neq v$, and 
\item ~$\cA_v\cap(\supp(U)\cup \supp(U^\prime))=\nul$.
\end{itemize}
For each $v$, choose a cyclic word $C_v=c_1\cdots c_d$, where 
$d=\deg_{\G_U}(v)$ and  
$\cA_v=\{c_1,\ldots, c_d\}$. We shall regard $C_v$ either as a cyclic
word or as a directed, labelled, cycle graph, with edges $c_i$, as expedient. 
Now   
let $v$ be  a
  vertex of $\Gamma_U$ of  degree
  $d$, 
 with link which has vertex sequence $e_1^{\e_1}, \ldots ,e_d^{\e_d}$
and incidence sequence $O_1,\ldots , O_d$. 
Then the link of $v$ in $\G'_U$ has 
vertex sequence $e_{1,l_1}^{\e_1},e_{1,r_1}^{\e_1} , \ldots 
,e_{d,l_d}^{\e_d},
e_{d,r_d}^{\e_d}$. 
Now 
assume that the cycle $C_v$ is $c_1\cdots c_d$, with vertices 
$\i(c_i)=v_i$ and $\t(c_i)=v_{i+1}$, subscripts modulo $d$.  
 Remove the vertex $v$ from $\G^\prime_{U}$ and replace it with
$C_v$; according to the following scheme.
 If $d\ge 2$ then 
\begin{eqnarray*}
\tau(e_{d,r_d}^{\e_d})  = \tau(e_{1,l_1}^{\e_1}) & = & v_1\\
\textrm{and}\qquad \tau(e_{q-1,r_{q-1}}^{\e_{q-1}})  =  
\tau(e_{q,l_q}^{\e_q}) & = & v_q,\qquad
\textrm{for }q=2,\ldots ,d.
\end{eqnarray*}
 See Figure  \ref{fig:extstep2}.  
 \begin{figure}[htp]
\begin{center}
\psfrag{w1}{{\scriptsize $c_1$}}
\psfrag{w2}{{\scriptsize $c_2$}}
\psfrag{e11}{{\scriptsize $e_1^1$}}
\psfrag{e12}{{\scriptsize $e_2^1$}}
\psfrag{e21}{{\scriptsize $e_1^2$}}
\psfrag{e22}{{\scriptsize $e_2^2$}}
\psfrag{ed2}{{\scriptsize $e_2^d$}}
\psfrag{ed1}{{\scriptsize $e_1^d$}}
\psfrag{wd}{{\scriptsize $c_{d}$}}
\begin{subfigure}[b]{.45\columnwidth}
\begin{center}
\psfrag{v}{{\scriptsize $v$}}
\psfrag{e1}{{\scriptsize $e_1^{\e_{1}}$}}
\psfrag{e2}{{\scriptsize $e_2^{\e_{2}}$}}
\psfrag{e3}{{\scriptsize $e_3^{\e_{3}}$}}
\psfrag{ed}{{\scriptsize $e_d^{\e_{d}}$}}
\psfrag{ed1}{{\scriptsize $e_{d-1}^{\e_{d-1}}$}}
\includegraphics[scale=1.2]{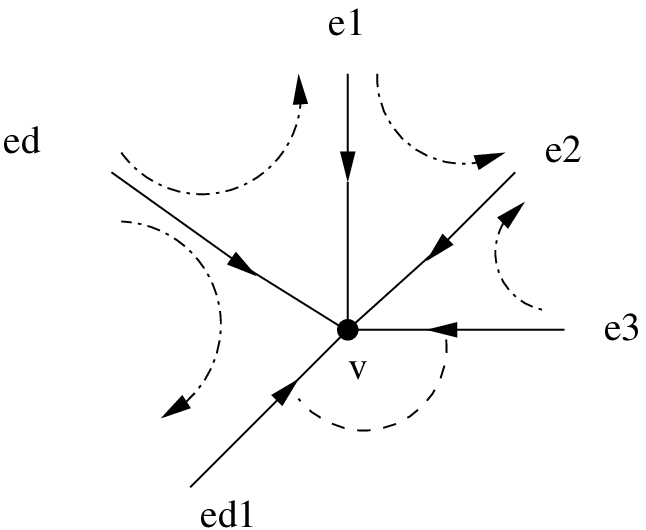}
\caption{}
\label{fig:extstep2a}
\end{center}
\end{subfigure}
\raisebox{9em}{~
$\longrightarrow$~
}
\begin{subfigure}[b]{.45\columnwidth}
\begin{center}
\psfrag{v}{{\scriptsize $v$}}
\psfrag{e1}{{\scriptsize $e_{1,l_1}^{\e_{1}}$}}
\psfrag{e2}{{\scriptsize $e_{2,l_2}^{\e_{2}}$}}
\psfrag{e3}{{\scriptsize $e_{3,l_3}^{\e_{3}}$}}
\psfrag{ed}{{\scriptsize $e_{d,l_d}^{\e_{d}}$}}
\psfrag{ed1}{{\scriptsize $e_{d-1,l_{d-1}}^{\e_{d-1}}$}}
\psfrag{e1r}{{\scriptsize $e_{1,r_1}^{\e_{1}}$}}
\psfrag{e2r}{{\scriptsize $e_{2,r_2}^{\e_{2}}$}}
\psfrag{e3r}{{\scriptsize $e_{3,r_3}^{\e_{3}}$}}
\psfrag{edr}{{\scriptsize $e_{d,r_d}^{\e_{d}}$}}
\psfrag{ed1r}{{\scriptsize $e_{d-1,r_{d-1}}^{\e_{d-1}}$}}
\includegraphics[scale=1.2]{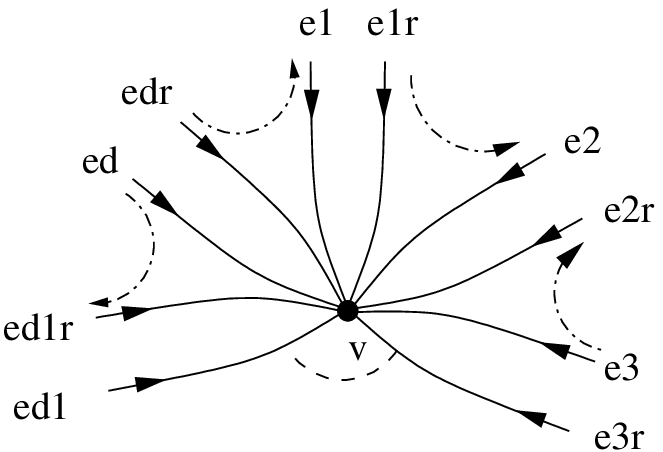}
\caption{}
\label{fig:extstep2b}
\end{center}
\end{subfigure}
\begin{subfigure}[b]{.9\columnwidth}
\begin{center}
\psfrag{v1}{{\scriptsize $v_1$}}
\psfrag{v2}{{\scriptsize $v_2$}}
\psfrag{v3}{{\scriptsize $v_3$}}
\psfrag{v4}{{\scriptsize $v_4$}}
\psfrag{vd1}{{\scriptsize $v_{d-1}$}}
\psfrag{vd}{{\scriptsize $v_d$}}
\psfrag{c1}{{\scriptsize $c_1$}}
\psfrag{c2}{{\scriptsize $c_2$}}
\psfrag{c3}{{\scriptsize $c_3$}}
\psfrag{cd}{{\scriptsize $c_d$}}
\psfrag{e1}{{\scriptsize $e_{1,l_1}^{\e_{1}}$}}
\psfrag{e2}{{\scriptsize $e_{2,l_2}^{\e_{2}}$}}
\psfrag{e3}{{\scriptsize $e_{3,l_3}^{\e_{3}}$}}
\psfrag{ed}{{\scriptsize $e_{d,l_d}^{\e_{d}}$}}
\psfrag{ed1}{{\scriptsize $e_{d-1,l_{d-1}}^{\e_{d-1}}$}}
\psfrag{e1r}{{\scriptsize $e_{1,r_1}^{\e_{1}}$}}
\psfrag{e2r}{{\scriptsize $e_{2,r_2}^{\e_{2}}$}}
\psfrag{e3r}{{\scriptsize $e_{3,r_3}^{\e_{3}}$}}
\psfrag{edr}{{\scriptsize $e_{d,r_d}^{\e_{d}}$}}
\psfrag{ed1r}{{\scriptsize $e_{d-1,r_{d-1}}^{\e_{d-1}}$}}
~\\~\\
\hspace*{9em}$\swarrow$\\
\includegraphics[scale=1.25]{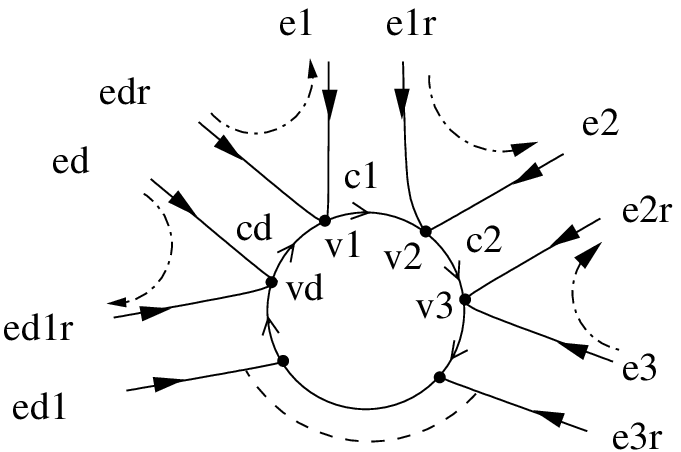}
\caption{}
\label{fig:extstep2c}
\end{center}
\end{subfigure}
\end{center}
\caption{$o(e_1)=1$, $o(e_2)=o(e_d)=-1$, $O_1=1$}
\label{fig:extstep2}
\end{figure} 
If $v$ is a vertex of $\G_U$ of degree one, then the
edge incident to $v$ must be alternating  and $U$ must
contain the subword $e^\e e^{-\e}$, $\e=\pm 1$. In this case   add a
loop  with the label $c_1$ to $v$, as in  
 Figure \ref{extstep2b-1}. On the surface $\S_U$ we require that
this loop lies in the disk which is bounded by $e_1$ and $e_2$ and  which
meets no other edge of $\G^\prime_U$, as illustrated. 
 \begin{figure}[htp]
\begin{center}
\psfrag{w}{{\scriptsize $c_1$}}
\psfrag{e11}{{\scriptsize $e_{\mu(\e)}^\e$}}
\psfrag{e12}{{\scriptsize $e_{\nu(\e)}^\e$}}
\includegraphics[scale=0.5]{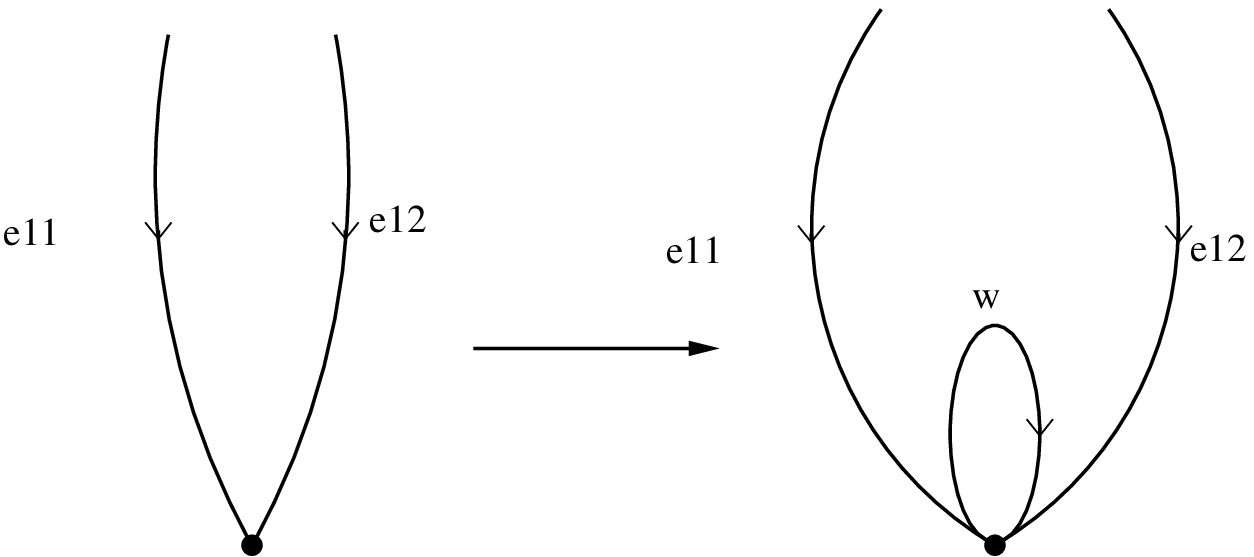}
\\
\vspace{0.38cm}                                                                  
\refstepcounter{figure}\label{extstep2b-1}                                             
Figure \thefigure                  
\end{center}
\end{figure} 
In all cases we say that $v$ has been \emph{extended by} $C_v$. 
After each vertex has been extended 
we
obtain a  new graph which we shall call $\Gamma^{\prime\prime}_{U}$.  We
can still read the closed path $U'$ in this graph. In fact it is now a Hamiltonian
cycle. We shall call $U'$ the \emph{Hamiltonian cycle associated with $U$}.
\item In this step we choose a labelling function for the new graph, which
allows us (eventually) to regard it as a van Kampen diagram over $H$.  
Consider a directed edge $e$ of the original graph $\Gamma _U$. In Step
  1  this was 
  replaced by a pair of edges $(e_1,e_2)$. Then in Step 2 we extended the end
  points of these edges to a  subgraph of $\Gamma^{\prime\prime}
  _{U}$ of the form shown in Figure \ref{fig:extstep3}, 
 \begin{figure}[htp]
\begin{center}
\psfrag{x}{{\scriptsize $x$}}
\psfrag{y}{{\scriptsize $y$}}
\psfrag{e1}{{\scriptsize $e_1$}}
\psfrag{e2}{{\scriptsize $e_2$}}
\includegraphics[scale=0.5]{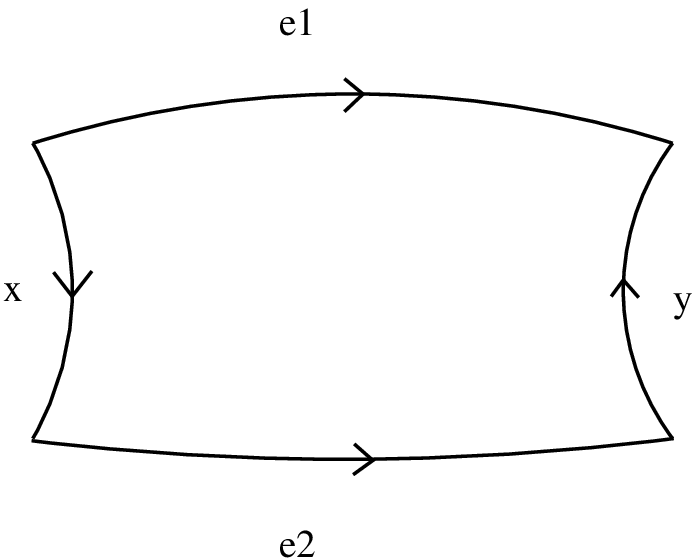}
\\
\vspace{0.38cm}                                                                  
\refstepcounter{figure}\label{fig:extstep3}                                             
Figure \thefigure                  
\end{center}
\end{figure}   
where $x$ and $y$ are 
letters occurring in $C_{\i(e)}$ and $C_{\t(e)}$, respectively. 
Let $\psi:(\cA\cup\cA^{-1})^*\maps F(X)$ be a labelling function such that 
\be[labelsep=.5em]
\item $\psi(a)$ is $H$-minimal, for all $a\in \cA$,
\item $\psi(e_1)=_H\psi(x)\psi(e_2)\psi(y)$, for all pairs of 
directed edges
$(e_1,e_2)$ corresponding to an edge $e$ of $\G_U$, with $x$ and $y$ as in Figure \ref{fig:extstep3}, and 
\item 
if $U^\prime=a_1\cdots a_r$ is the Hamiltonian cycle associated to $U$,
where  $a_i\in \cA^{\pm 1}$, then $\psi(a_1)\cdots \psi(a_r)$ is freely
 cyclically reduced (as written). 
\ee
Then the resulting pair $(\G^{\prime\prime}_U,\psi)$  
is called an {\emph{extension
    of $U$ over the group $H$}}. 
\end{enumerate}
\subsection{Genus and length of an extension of $U$ over $H$}\label{sec:genlen}
Suppose that we have an extension  $(\G^{\prime\prime}_U,\psi)$ 
 of a non-empty quadratic  word $U$ of genus
$k$. We define the {\emph{length of the extension}} to be the sum of
the lengths, in $F(X)$, of the labels of the cyclic words added in Step 2 of the extension. That
is, if $v_1,v_2,\ldots ,v_m $ are the vertices of $\Gamma _{U}$,  and these
are extended by cyclic words $C_1,C_2, \ldots ,C_m$ respectively, then the
length of the extension is  
\begin{equation*}
\sum_{i=1}^m|\psi(C_i)|.
\end{equation*}
\begin{definition}\label{defn:ext}
Let $(\G^{\prime\prime}_U,\psi)$  be an extension of $U$ over $H$, 
let $v_1,\ldots ,v_t$ be a subset of the vertices of $\Gamma_U$  such that 
$v_i$ is  
 extended by a cyclic word $C_i$ and  let $w_i=\psi(C_i)$, for $i=1,\ldots , t$. Then
we say that $(\G^{\prime\prime}_U,\psi)$ is a {\emph{genus $g$ joint extension}} 
\emph{on}  $(v_1,\ldots ,v_t)$ \emph{by words} $(w_1,\ldots ,w_t)$, if 
\be[(i)]
\item\label{it:ext1} 
$g=\genus_H(w_1,\ldots, w_t)+t-1$ and  
\item\label{it:ext2} if $t=1$ and $v_1$ has degree $1$ or $2$ then 
either 
\be
\item $g\ge \frac{1}{2}$ or 
\item $U=A^{\pm 2}$, for some $A\in \cA$.
\ee 
\ee 
If $\genus_H^+(w_1,\ldots ,w_t)=g-t+1$, in \ref{it:ext1} above, then
we say that  $(\G^{\prime\prime}_U,\psi)$ is an \emph{orientable} 
extension on $(v_1,\ldots, v_t)$ and otherwise that it is \emph{non-orientable}
 on $(v_1,\ldots, v_t)$. 
\end{definition}
(Note that condition \ref{it:ext2} implies that if $U\neq A^{\pm 2}$ and 
$g=0$, which means that
$t$ must be $1$, then $v_1$ has degree at least $3$.) 

\begin{definition}
Partition the vertices of $\Gamma_U$ into $p$ sets
$V_1,\ldots ,V_p$.  If 
\be[(i)]
\item 
$(\G^{\prime\prime}_U,\psi)$ is  
 a genus $g_i$ joint extension 
on the vertices in the set $V_i$, 
for all $i=1,\ldots ,p$, and 
\item 
$\sum_{i=1}^pg_i=g$, 
\ee 
then $(\G^{\prime\prime}_U,\psi)$ is called a \emph{genus $g$ extension of $U$ over $H$}. If $U$ is orientable and $(\G^{\prime\prime}_U,\psi)$ is 
orientable on $V_i$, for $1\le i\le p$, then this extension is called
\emph{orientable}. Otherwise it is said to be \emph{non-orientable}.
\end{definition}
\begin{example}\label{ex:gg3}{\bf{(A genus $3$ extension)}}\\
Consider the quadratic orientable word $U=ABCC^{-1}B^{-1}A^{-1}$ of genus $0$. Let
$u_1,u_2,v_1,v_2$ be the vertices of the associated graph
$\Gamma_U$, as shown in Figure \ref{extex}. We  construct a  genus $2$ joint extension on the vertices
$u_1$ and $u_2$, by words $w_1$ and $w_2$ respectively, and a genus $1$ joint
extension on the vertices
$v_1$and $v_2$, by words $z_1$ and $z_2$ respectively (see Figure \ref{extex}).
\begin{figure}[htp]
\begin{center}
\psfrag{A1}{{\scriptsize $A_1$}}
\psfrag{A2}{{\scriptsize $A_2$}}
\psfrag{B1}{{\scriptsize $B_1$}}
\psfrag{B2}{{\scriptsize $B_2$}}
\psfrag{C1}{{\scriptsize $C_1$}}
\psfrag{C2}{{\scriptsize $C_2$}}
\psfrag{A}{{\scriptsize $A$}}
\psfrag{B}{{\scriptsize $B$}}
\psfrag{C}{{\scriptsize $C$}}
\psfrag{u1}{{\scriptsize $u_1$}}
\psfrag{u2}{{\scriptsize $u_2$}}
\psfrag{v1}{{\scriptsize $v_1$}}
\psfrag{v2}{{\scriptsize $v_2$}}
\psfrag{w1}{{\scriptsize $c_1$}}
\psfrag{w21}{{\scriptsize $c_{21}$}}
\psfrag{w22}{{\scriptsize $c_{22}$}}
\psfrag{z11}{{\scriptsize $d_{11}$}}
\psfrag{z12}{{\scriptsize $d_{12}$}}
\psfrag{z2}{{\scriptsize $d_2$}}
\includegraphics[scale=0.6]{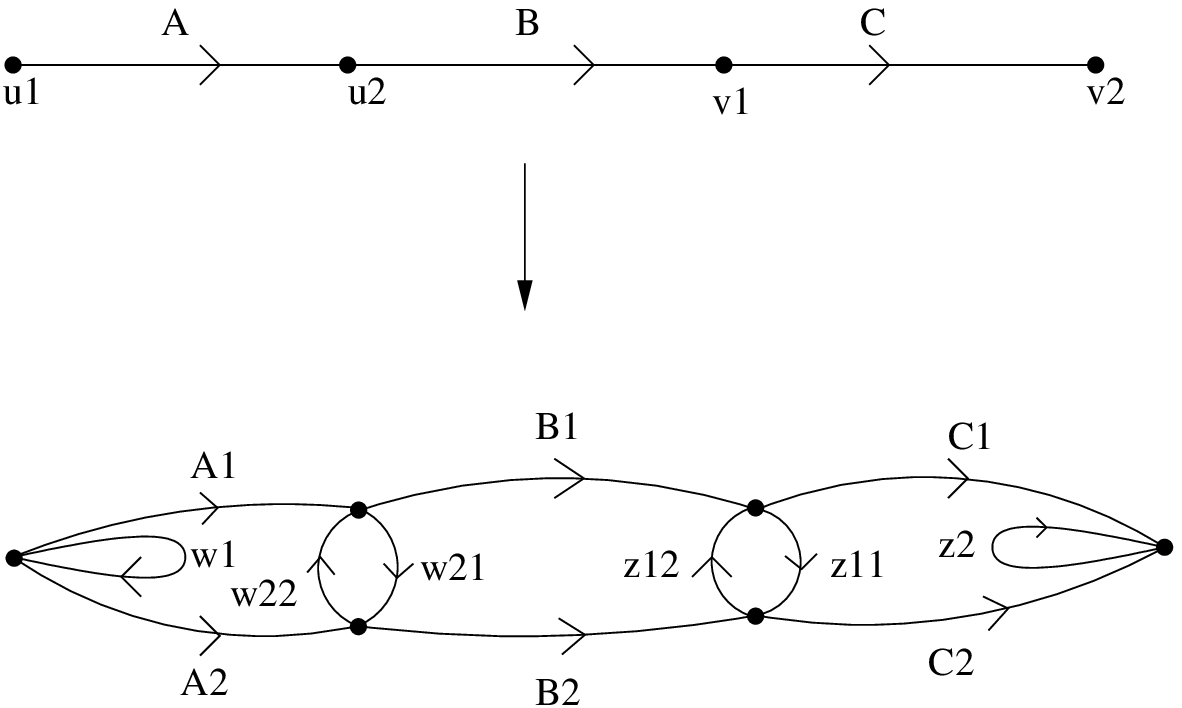}
\\
\vspace{0.38cm}                                                                  
\refstepcounter{figure}\label{extex}                                             
Figure \thefigure                  
\end{center}
\end{figure}
Here all words in the generators of $H$ which are mentioned are assumed to  be $H$-minimal, 
 $w_2=w_{21}w_{22}$ and $z_1=z_{11}z_{12}$. The labelling function
$\psi$ is such that $\psi(c_1)=w_1$, $\psi(d_2)=z_2$, $\psi(c_{ij})=
w_{ij}$, $\psi(d_{ij})=z_{ij}$, 
\begin{eqnarray*}
\psi(A_1) & =_H & w_1\psi(A_2)w_{22},\\
\psi(B_1) & =_H & w_{21}\psi(B_2)z_{12},\\
\psi(C_1) & =_H & z_{11}\psi(C_2)z_2,
\end{eqnarray*} 
$\psi(A_1)\psi(B_1)\psi(C_1)\psi(C_2^{-1})\psi(B_2^{-1})\psi(A_2^{-1})$
is freely cyclically reduced, 
$w_1w_2$ is a commutator in $H$ and  $z_1z_2=1$  in $H$. 
\end{example}

\begin{lemma}\label{lem:ggext}
Let $(\G^{\prime\prime}_U,\psi)$ be a genus $g$ extension of the quadratic
 word $U$ of genus $k$ and let $U^\prime$ be the Hamiltonian
cycle associated to $U$. Let $h_1,\ldots, h_s$ be elements of $H$  
such that $\prod_{j=1}^tr_jh_jr_j^{-1}=_H \psi(U^\prime)$, for some $r_j\in H$.  
If  $(\G^{\prime\prime}_U,\psi)$ 
is orientable, then $\genus_H^+(h_1,\ldots, h_s)\le g+k$ and otherwise 
 $\genus_H^-(h_1,\ldots, h_s)\le g+k$. 
\end{lemma}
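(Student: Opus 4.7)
The plan is to interpret the extension $(\G''_U,\psi)$ geometrically as a van Kampen diagram over $H$ built on a cell refinement of $\S_U$, to fill in the cycles added at vertices by invoking the joint extensions, and then to read off the genus bound via the correspondence between solutions of the standard quadratic equations \eqref{eq:qgo}, \eqref{eq:qgn} and van Kampen diagrams on surfaces described at the end of Section~\ref{sec:gandvk}.

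First I would observe that $\G''_U$ refines $\S_U$ (which has genus $k$) into three kinds of $2$-cells: one ``outer'' face coming from the original disk $D$, whose boundary reads the Hamiltonian cycle $U'$; one digonal face per edge $e$ of $\G_U$, bounded by the paired edges $e_1,e_2$ introduced in Step 1; and one circular face per vertex $v$, bounded by the cycle $C_v$ added in Step 2. After converting the $\cA$-labels to $F(X)$-words via $\psi$ (and subdividing), condition~(b) of Step 3, namely $\psi(e_1)=_H\psi(x)\psi(e_2)\psi(y)$, says precisely that each digonal face admits a van Kampen diagram filling over $H$.

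Next, for each block $V_i=\{v_{i,1},\ldots,v_{i,t_i}\}$ of the vertex partition, Definition~\ref{defn:ext} gives $\genus_H(w_{i,1},\ldots,w_{i,t_i})=g_i-t_i+1$, where $w_{i,j}=\psi(C_{i,j})$ is the cycle label at $v_{i,j}$. By the correspondence from Section~\ref{sec:gandvk} this yields a surface $\S_i$ of genus $g_i-t_i+1$, with $t_i$ boundary components labelled $w_{i,1},\ldots,w_{i,t_i}$, carrying a van Kampen diagram over $H$. Excise the $t_i$ circular faces corresponding to $V_i$ from $\S_U$ and glue $\S_i$ in along the matching boundary labels. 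A direct Euler characteristic calculation shows that this replacement increases the genus by exactly $g_i$; carrying it out for every $V_i$ produces a surface $\S$ of genus $k+\sum_ig_i=k+g$ with a van Kampen diagram over $H$ and a single boundary component labelled $\psi(U')$. Since $\psi(U')=_H\prod_j r_jh_jr_j^{-1}$, attach to this boundary a planar van Kampen diagram implementing the equality, converting the single boundary into $s$ boundary components labelled $h_1,\ldots,h_s$. The resulting surface still has genus $k+g$ and, by the correspondence once more, certifies $\genus_H(h_1,\ldots,h_s)\le k+g$. For orientability: if the extension is orientable then $U$, and hence $\S_U$, is orientable, and each $\S_i$ is orientable by hypothesis, so $\S$ is orientable and the bound applies to $\genus_H^+$; otherwise $\S$ is non-orientable and the bound applies to $\genus_H^-$.

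The main obstacle is the bookkeeping at the gluings: one must check that the cyclic reading of each digon's boundary, which depends on whether $o(e)=1$ or $-1$ in Step 1 and on which of $e_1,e_2$ corresponds to the first occurrence of $e^\e$, is compatible with the relation in condition~(b) of Step 3; and that the boundary orientations of $\S_i$ match the cycles $C_{v_{i,j}}$ as they appear in $\G''_U$. Once these matchings are verified from Definitions~\ref{def:vori} and~\ref{defn:ext}, the Euler characteristic computation and the final appeal to the surface/equation correspondence are routine.
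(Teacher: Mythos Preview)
Your proposal is correct and follows essentially the same approach as the paper: both arguments realise $\G''_U$ as a cell decomposition of $\S_U$, fill the digonal regions via condition~(b) of Step~3, cut out the vertex cycles and glue in the surfaces $T_i$ (your $\S_i$) supplied by the joint extensions, and attach a planar diagram witnessing $\psi(U')=_H\prod r_jh_jr_j^{-1}$ to obtain the $s$ boundary components. The only cosmetic difference is the order of operations (the paper attaches the planar diagram before gluing the $T_i$, you do it after), and you make the orientability check explicit where the paper leaves it implicit.
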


\begin{proof} 
The graphs $\G_U$ and $\G^{\prime\prime}_U$ are embedded on a closed surface $\S_U$, by construction. As $\S_U$ is constructed by identifying segments of the
boundary of the disk $D$, 
the Hamiltonian circuit $U^\prime$, associated to $U$, labels
 a closed path in $\G^{\prime\prime}_U$ which
 bounds an open disk  embedded in $\S_U$ and not meeting $\G_U$. 
Remove this disk from $\S_U$, to leave  a closed surface $\S_{U,0}$, with one boundary competent $\b_0$. Then $\S_{U,0}$ is the
convex-hull of the  $2$-complex, which has $1$-skeleton $\G_{U''}$ and $2$-cells the closures of components of 
 $\S_{U,0}\bs \G_{U''}$, and moreover $\b_{0}$ is  labelled
$\psi(U^\prime)$.  As $\prod_{j=1}^sr_jh_jr_j^{-1}=_H\psi(U^\prime)$ 
there exists a van Kampen diagram $\D_H$, over $H$,  
on surface of genus $0$, 
with $s+1$ boundary components $\b_1,\ldots ,\b_s, \b$, with $\b_j$ labelled 
$h_j$ and    $\b$ labelled $\psi(U^\prime)$. 
Identifying $\b^\prime$ with $\b_{0}$, along $\psi(U^\prime)$, results in a surface $\S_U^{\prime}$, 
with $s$ boundary components $\b_1,\ldots ,\b_s$, labelled 
 with the $h_j$'s. Indeed we may regard $\S'_U$ as  
obtained from $\S_U$ by removing the interior of $s$ disks. Moreover 
$\G^{\prime\prime}_U$ is embedded in $\S_U^{\prime}$ and does not meet $\b$. 

Now assume the vertices of $\G_U$ were $v_1,\ldots, v_t$ and assume that $v_i$ has been extended by the 
cycle graph $C_i$. For each $i$, the cycle $C_i$ bounds a 
disk $D_i$ in $\S'_U$, the interior of which does not meet $\G^{\prime\prime}_U$. 
For all $i$, remove the interior of $D_i$ from $\S'_U$ 
to leave a surface $\S_U^{\prime\prime}$ with
$t+s$ boundary components labelled $h_1,\ldots, h_s$ and $\psi(C_1),\ldots, \psi(C_t)$. From the conditions
imposed on the labelling function $\psi$ it follows that the labelled graph $\G_U^{\prime\prime}$, together
with the diagram $\D_H$, form a van Kampen diagram over $H$, on the surface  $\S_U^{\prime\prime}$. 

Now suppose that the vertices of $\G_U$ are partitioned into sets $V_1$, ..., $V_p$ in such a way that
$(\G_U^{\prime\prime},\psi)$ forms a genus $g_i$ joint extension on $V_i$, with $\sum_{i=1}^p g_i=g$. Let 
$w_i=\psi(C_i)$, for all $i$. Then, if $V_i=\{v_{i,1}\ldots, v_{i,t_i}\}$, 
by definition we have $\genus_H(w_{i,1}\ldots, w_{i,t_i})=g_i-t_i+1$. Hence, there exists a van Kampen diagram, 
over $H$,  on a
compact surface $T_i$ of genus $g_i-t_i+1$, with $t_i$ boundary components labelled $w_{i,1}\ldots, w_{i,t_i}$.
Attach the boundary component of $T_i$ labelled $w_{i,j}$ to the boundary component of $\S_U^{\prime\prime}$
labelled $w_{i,j}$, for $j=1,\ldots , t_i$. In this way we attach a handle, 
or cross-cap, of genus 
$t_i-1+\genus(T_i)= g_i$ to $\S_U^{\prime\prime}$. Repeat the process for  all
sets $V_i$ of the  partition of the 
 vertices of $\G_U$. As $\S_U$
has genus $k$, so does $\S_U^{\prime\prime}$, so the result is a surface 
$\S$ of genus
$k+\sum_{i=1}^p g_i=k+g$. We have now constructed a van Kampen diagram, over $H$, on $\S$, with 
$s$ boundary components, labelled $h_1,\ldots , h_s$. Hence 
$\genus_H(h_1,\ldots , h_s)\le g+k$.
\end{proof}
\begin{example}\label{ex:gg3_cont}
In Example \ref{ex:gg3} we constructed a genus $3$ extension consisting of a joint genus $2$ extension
by words $w_1,w_2$, and a joint genus $1$ extension by $z_1$ and $z_2$. To construct the corresponding
van Kampen diagram on a surface of genus $3$ first embed the graph $\G^{\prime\prime}$ in the disk, with
boundary labelled $A_1B_1C_1C_2^{-1}B_2^{-1}A_2^{-1}$. Then remove the interior of the 
disks labelled $c_1$, $c_{21}c_{22}$, $d_{11}d_{12}$ and $d_2$. Finally, 
attach a torus with two boundary components,
labelled $c_1$ and $c_{21}c_{22}$ and a sphere with two boundary components labelled $d_2$ and $d_{11}d_{12}$,
in the obvious way: see Figure \ref{fig:exextvk}.
\begin{figure}[htp]
\begin{center}
\psfrag{A1}{{\scriptsize $A_1$}}
\psfrag{A2}{{\scriptsize $A_2$}}
\psfrag{B1}{{\scriptsize $B_1$}}
\psfrag{B2}{{\scriptsize $B_2$}}
\psfrag{C1}{{\scriptsize $C_1$}}
\psfrag{C2}{{\scriptsize $C_2$}}
\psfrag{A}{{\scriptsize $A$}}
\psfrag{B}{{\scriptsize $B$}}
\psfrag{C}{{\scriptsize $C$}}
\psfrag{u1}{{\scriptsize $u_1$}}
\psfrag{u2}{{\scriptsize $u_2$}}
\psfrag{v1}{{\scriptsize $v_1$}}
\psfrag{v2}{{\scriptsize $v_2$}}
\psfrag{w1}{{\scriptsize $c_1$}}
\psfrag{w21}{{\scriptsize $c_{21}$}}
\psfrag{w22}{{\scriptsize $c_{22}$}}
\psfrag{z11}{{\scriptsize $d_{11}$}}
\psfrag{z12}{{\scriptsize $d_{12}$}}
\psfrag{z2}{{\scriptsize $d_2$}}
\psfrag{SU0}{{\scriptsize $\G_{U,0}$}}
\psfrag{Dh}{{\scriptsize $\D_{h}$}}
\psfrag{h}{{\scriptsize $h$}}
\psfrag{+}{{$\mathbf{+}$}}
\begin{subfigure}[b]{.9\columnwidth}
\begin{center}
\includegraphics[scale=0.6]{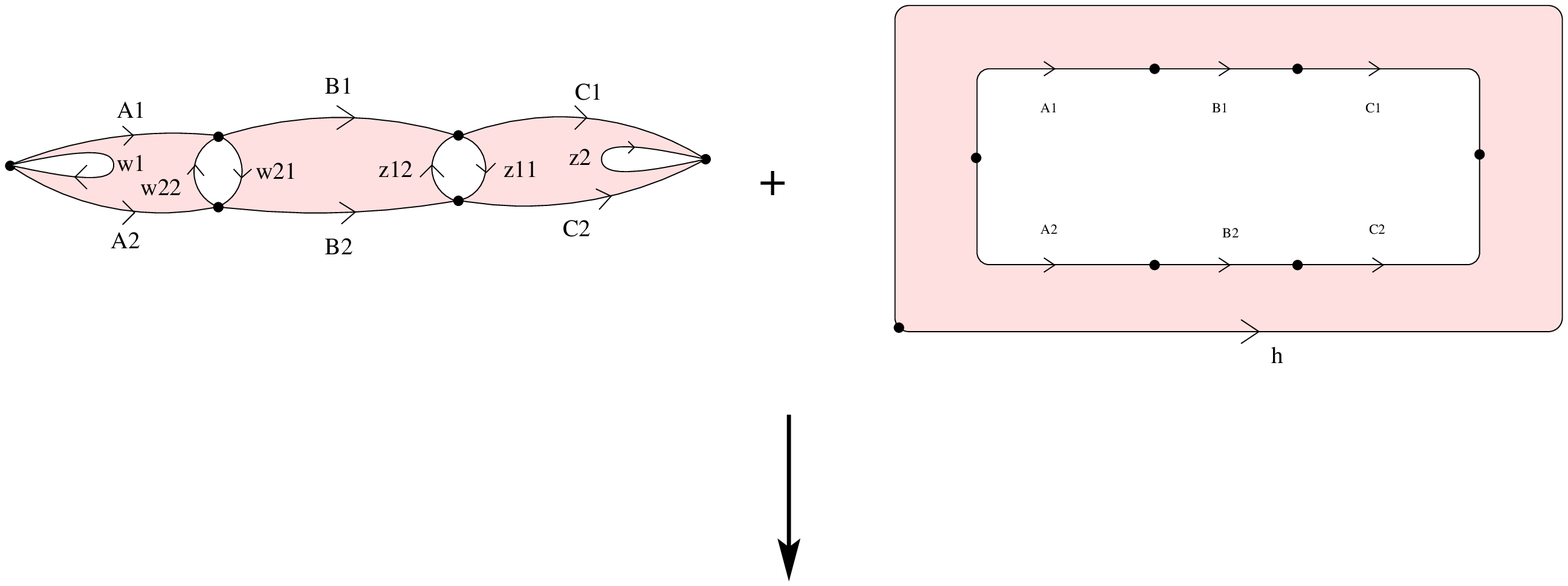}
~\\[1em]
\end{center}
\end{subfigure}
~\\[1em]
\begin{subfigure}[b]{.9\columnwidth}
\begin{center}
\includegraphics[scale=0.6]{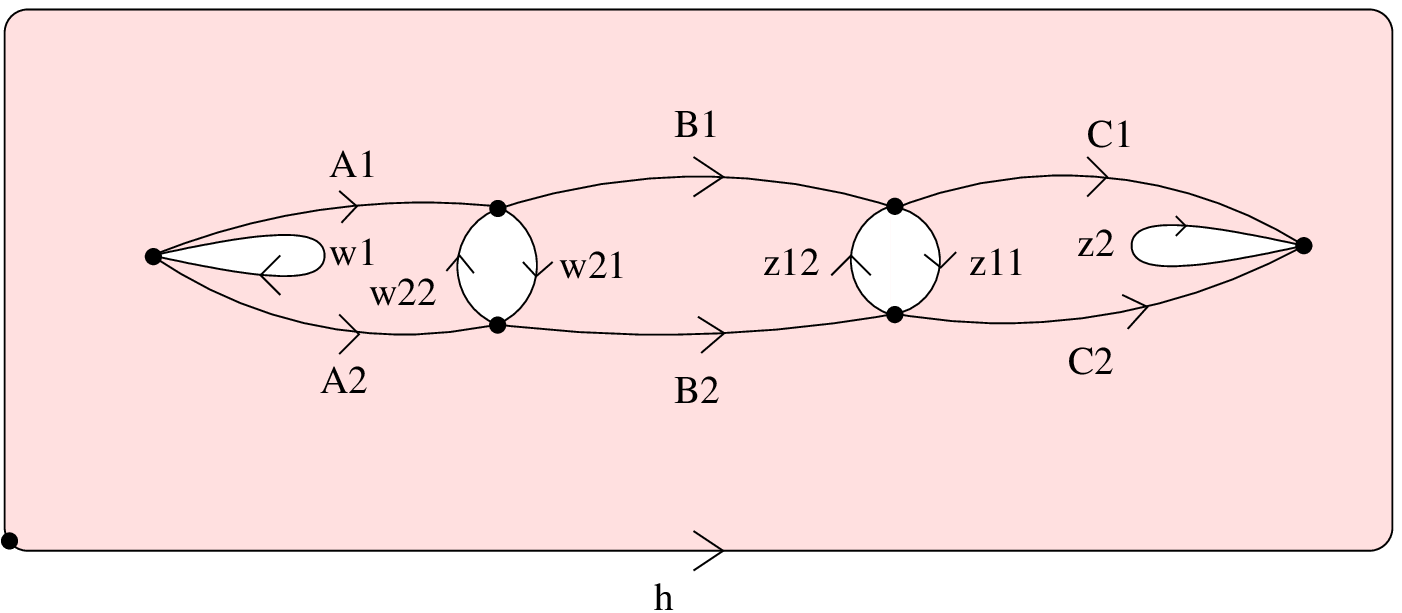}
\end{center}
\end{subfigure}
~\\[1em]
\begin{subfigure}[b]{.45\columnwidth}
\begin{center}
\includegraphics[scale=0.45]{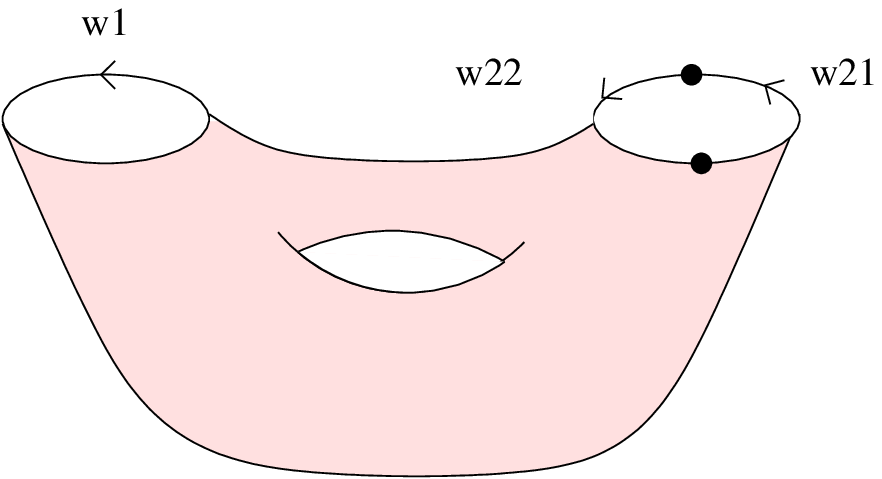}
\end{center}
\end{subfigure}
\begin{subfigure}[b]{.45\columnwidth}
\begin{center}
\includegraphics[scale=0.45]{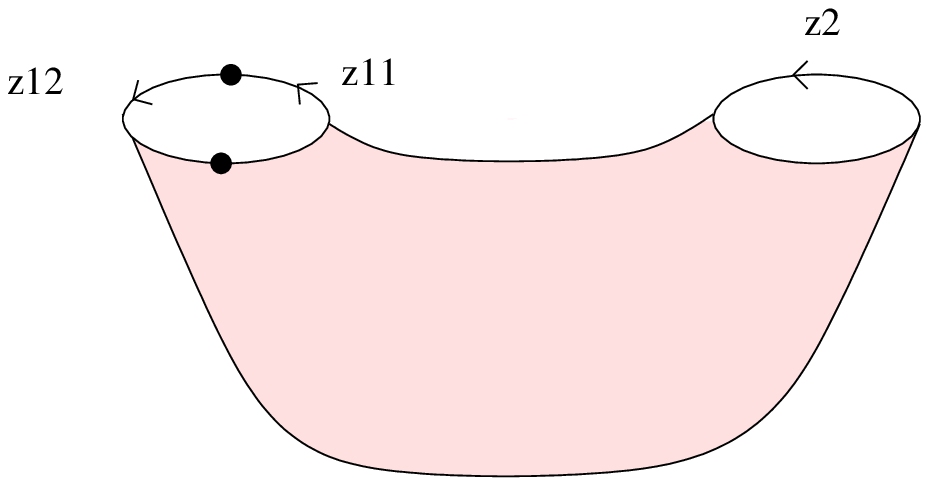}
\end{center}
\end{subfigure}
\caption{A van Kampen diagram constructed from an extension of $U$}
\label{fig:exextvk}                                             
\end{center}
\end{figure}
\end{example}
\subsection{The main theorems}\label{sec:mainthm}
We are now in a position to state the main results of this paper, which give a
method of constructing forms for elements of (orientable or non-orientable) genus $n$ in $H$.
\begin{definition}\label{defn:constants}
Let $H$ be a $\d$-hyperbolic group, with respect to the generating set $X$. 
For an integer $k\ge 0$, let $B_H(k)$ be the set of elements of $H$
represented by  words of length at most
$k$ in $F(X)$.
Define $M=|B_H(4\d)|$: and,   for a positive element $n\in \ZZ[\frac{1}{2}]$, define 
$K(n)=12n-6$, $n\ge 1$, $K(1/2)=2$, $K(0)=0$ and 
$l(n)=\delta(\log _2(K(n))+1)$.
\end{definition}
Assume that $h\in H$, that  the orientable genus of 
$h$ is  $\genus_H^+ (h)=g$,  that $U$ is an orientable Wicks form, 
over $\cA$,  
of genus $g$, and that $\phi$ is  a labelling
 function such that $\phi(U)$ is conjugate to $h$.  
Then $\phi(U)$ has orientable genus $g$ in $F(X)$. 
(It cannot have orientable genus less than $g$, otherwise
$h$ would, as well.) It follows,  from \cite{Culler81}, that 
there exists an orientable Wicks form $W$ of 
genus $g$ over $\cA$, and a labelling function
$\psi$ such that $\psi(W)$ is conjugate, in $F(X)$, to $\phi(U)$, 
and the cyclic word obtained by  
substituting $\psi(X)$ for $X$, for all  $X\in \supp(W)$, is reduced as written, as 
a word in $F(X)$. That is $\psi(W)$ is freely
cyclically reduced and $|\psi(W)|=
2\sum_{X\in\supp(W)}|\psi(X)|$. Call such a labelling function \emph{cancellation free} on $W$.  
The same holds for the non-orientable genus: replacing  ``orientable'' by ``non-orientable'' in the paragraph
above. 

Let $W$ be a Wicks form and let $D$ be a proper subset of $\supp(W)$. The word $U$ obtained from $W$ by
setting all elements of $D$ equal to $1$ is called a \emph{specialisation} of $W$. For example, up to permutation
of letters in the support, the Wicks form $ABCA^{-1}B^{-1}C^{-1}$ has specialisations $AA^{-1}$, $ABA^{-1}B^{-1}$ and $ABCA^{-1}B^{-1}C^{-1}$. 
\begin{theorem}\label{Thexp}
Let $h$ be a word in $X\cup X^{-1}$, let $n$ be a positive integer and let
  $l=l(n)$. Then  $\genus_H^+(h)=n$ if and only 
if $n$ is the minimal integer such that the following holds. There exist 
words $F,R\in F(X)$ such that  
$h=_H RFR^{-1}$, and an orientable Wicks form $W$ over $\cA$, of genus $n$, satisfying   either 
\ref{it:exp1} or \ref{it:exp2} below.
\begin{enumerate}
\item\label{it:exp1} $F=\theta(W)$, where 
 $\theta$ is a cancellation free labelling on $W$, 
 $|\theta(E)|\leq 12l+M+4$, for all $E\in \supp(W)$ (so
$|F|\leq (12n-6)(12l+M+4)$) and $|R|\le |h|/2+6l+3M/2+2\d+7/2$. 
\item\label{it:exp2}  $F$ is $H$-minimal and $F=_H\psi(U^\prime)$, where $U^\prime$ 
is the Hamiltonian cycle of an orientable, genus
  $g$  extension $(\G^{\prime\prime}_U,\psi)$, of length at most $2(12n-6)(12l+M+4)$, of some 
  specialisation $U$ of $W$, of  genus $k$ over $\cA$, where $n=g+k$. 
In this
case $|R|\le |h|/2+2\d$. 
\end{enumerate}
\end{theorem}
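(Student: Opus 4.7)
The plan is to prove the two directions of the biconditional separately, with most of the work concentrated in producing a Wicks form and labelling of the claimed form from an arbitrary genus $n$ expression for $h$. For the easier direction, assume one of \ref{it:exp1} or \ref{it:exp2} holds for some integer $m$. Under \ref{it:exp1}, Lemma \ref{lem:wicks} applied to the singleton tuple $(h)$ with Wicks form $W$, labelling $\theta$ and conjugator $R$ gives $\genus_H^+(h) \le m$ directly. Under \ref{it:exp2}, Lemma \ref{lem:ggext} applied to the orientable extension $(\Gamma_U^{\prime\prime},\psi)$ of a genus $k$ specialisation of $W$ gives $\genus_H^+(\psi(U')) \le g+k = m$, and since $h$ is conjugate to $F =_H \psi(U')$ in $H$ we conclude $\genus_H^+(h) \le m$. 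The asserted minimality of $n$ is then forced: any smaller $m$ satisfying the same conditions would contradict $\genus_H^+(h) = n$.

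For the harder direction, assume $\genus_H^+(h) = n$. By the discussion preceding the theorem there exists an orientable Wicks form $W$ of genus $n$ over $\cA$ and a cancellation free labelling $\phi$ with $\phi(W)$ conjugate in $H$ to $h$. Among all such triples $(W,\phi,R)$ with each $\phi(E)$ taken $H$-minimal, choose one minimising first $\sum_{E \in \supp(W)} |\phi(E)|$ and then $|R|$. A direct application of Lemma \ref{lb} to the conjugation between $h$ and $\phi(W)$, combined with Lemma \ref{cull} which bounds $|W| \le K(n) = 12n-6$, yields the stated estimate on $|R|$ in case \ref{it:exp1}; in case \ref{it:exp2}, the $H$-minimality of $F$ sharpens this to $|h|/2 + 2\delta$.

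The dichotomy between \ref{it:exp1} and \ref{it:exp2} is governed by the longest edge label. If $|\phi(E)| \le 12l + M + 4$ for every $E \in \supp(W)$, we take $F = \phi(W)$, $\theta = \phi$, and \ref{it:exp1} holds; the bound $|F| \le (12n-6)(12l+M+4)$ follows from Lemma \ref{cull} since $W$ contains $|W| \le K(n)$ letters. Otherwise some edge $E_0$ has $|\phi(E_0)| > 12l + M + 4$. The labelled graph $\Gamma_W$ then embeds in $\Gamma_X(H)$ as a closed polygon with at most $K(n)$ geodesic sides, so Lemma \ref{lem:la} produces a matching whose pairwise fellow-traveller constant is at most $\delta\lceil \log_2 K(n) \rceil \le l$. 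Hence $\phi(E_0)$ is partitioned into subsegments each $l$-fellow-travelling with subsegments of other sides, and a pigeon-hole argument using $|B_H(4\delta)| = M$ (as in Lemma \ref{lb}) produces short bridging paths of length at most $M$ between the endpoints of these subsegments whenever $|\phi(E_0)|$ exceeds $12l + M + 4$. Using these bridging paths as the cyclic words $C_v$ prescribed in Step 2 of the extension construction in Section \ref{sec:ext}, and specialising $W$ to $U$ by setting to $1$ each edge whose label has been entirely absorbed into the bridges, one obtains an orientable extension $(\Gamma_U^{\prime\prime},\psi)$ whose Hamiltonian cycle labels a word $H$-equal to a conjugate of $h$; this is the content of \ref{it:exp2}.

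The hardest part will be verifying the quantitative and structural claims on the extension: namely that its total length is at most $2(12n-6)(12l + M + 4)$, that the genus bookkeeping gives $g + k = n$, and that orientability is preserved so $(\Gamma_U^{\prime\prime},\psi)$ is orientable on each block of the vertex partition. The length bound follows by summing $|\psi(C_v)|$ over the (at most $2|W| \le 2K(n)$) incidences, using the $(12l+M+4)$ estimate on each bridge. The genus identity requires tracking how each letter set to $1$ in the specialisation $U$ reduces the genus of $\S_U$ relative to $\S_W$, exactly compensating for the genus contributed by the joint extensions at each vertex block. Orientability is preserved because $W$ is orientable, so every edge $e$ of $\Gamma_W$ satisfies $o(e) = 1$ and the functions $\mu_q, \nu_q, l_q, r_q$ of Definition \ref{def:vori} consistently direct the two copies $e_1,e_2$ produced in Step 1, inducing compatible orientations on every $C_v$. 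A final subtlety, proving minimality of $n$ when only \ref{it:exp2} is realised, is handled by reapplying the ``if'' direction to any hypothetical smaller solution and invoking the hypothesis $\genus_H^+(h) = n$.
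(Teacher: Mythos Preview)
Your proposal captures the overall architecture correctly --- the easy direction via Lemmas~\ref{lem:wicks} and~\ref{lem:ggext}, and the hard direction by choosing a minimal cancellation-free pair $(W,\theta)$ and splitting into ``all edges short'' versus ``some long edge'' --- but there are two substantive gaps in the execution.

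First, the bound on $|R|$ in case~\ref{it:exp1} does \emph{not} follow from Lemma~\ref{lb}. That lemma gives $|R|\le \tfrac12(|h|+|F|)+M+1$, and since $|F|$ may be as large as $(12n-6)(12l+M+4)$ this is far weaker than the claimed $|h|/2+6l+3M/2+2\d+7/2$. The paper obtains the sharp bound in Section~\ref{sec:Rbound} by a separate argument: one considers the quadrilateral $F R^{-1} h^{-1} R$, locates a near-midpoint $p$ of $F$, and shows (using minimality of $R$ over \emph{cyclic permutations} of $W$) that $p$ must $2\d$-fellow-travel a point of $h$, from which the bound follows by re-conjugating. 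Your ``minimise $|R|$ secondarily'' is the right instinct, but it is exploited through this re-cycling argument, not through Lemma~\ref{lb}. The same holds in case~\ref{it:exp2}: $H$-minimality of $F$ alone does not give $|R|\le |h|/2+2\d$; one needs that each point $v(U_i)=\t(F_i)$ in the partition of $F$ lies in the segment fellow-travelling $h$, again argued by contradiction against minimal conjugators over permutations ending in a long edge.

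Second, and more seriously, the construction of the extension when a long edge exists is missing its central mechanism. Applying Lemma~\ref{lem:la} to the polygon $\theta(W)$ alone only tells you that a long edge $\theta(A)$ fellow-travels \emph{some} collection of other sides; it does not single out $F$ or the second occurrence of $A$. What the paper does (Lemma~\ref{lem:l1} and its corollaries) is use the \emph{minimality of $|\theta(W)|$} in a cut-and-paste way: if an interior subsegment of $\theta(A)$ were close to some $\theta(E_j)$ with $E_j\ne A^{\pm1}$, one could surger $W$ along an arc to produce a new genus~$n$ Wicks form $W'$ and labelling $\psi$ with $|\psi(W')|<|\theta(W)|$. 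This is how one proves (Corollary~\ref{cor:N0}, Lemma~\ref{lem:chouv}) that away from its ends, $\theta(A)$ fellow-travels only $F$ or $\theta(A^{-1})$, which is exactly what is needed to partition $F$ into segments $\a_i(A)$ and build the extension. Your ``pigeon-hole argument using $|B_H(4\d)|=M$'' does not supply this; the constant $M$ enters instead through Corollary~\ref{cor:N3} (via Lemma~\ref{lb}) to bound the length of the overlap of $\theta(A)$ with $\theta(A^{-1})$. Relatedly, the specialisation $U$ is not obtained by setting to $1$ edges ``absorbed into the bridges'': it is the \emph{long factorisation} of $W$, obtained by deleting all \emph{short} edges, and the labels $\theta(S_i)$ of those deleted short-edge blocks become the $s_j(A)$ that feed into the cycle words $\psi(C_v)$. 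The genus identity $g+k=n$ (Lemma~\ref{gi}) also relies on minimality: one must show $\genus_H(\theta(W^i_j))=\genus_{F(\cA)}(W^i_j)$ for each boundary word, and the inequality $\le$ would fail the minimality of $n$ if it were strict.
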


\begin{theorem}\label{Thexp-}
Let $h$ be a word in $X\cup X^{-1}$, let $n\in \ZD$, $n>0$ and  let  
  $l=l(n)$. Then  $\genus_H^-(h)=n$ if and only 
if $n$ is the minimal positive element  of $\ZD$ such that the following hold. There exist 
words $F,R\in F(X)$ such that 
$h=_H RFR^{-1}$, and a non-orientable Wicks form $W$ over $\cA$, of genus $n$, satisfying either 
\ref{it:exp-1} or \ref{it:exp-2} below.
\begin{enumerate}
\item\label{it:exp-1} $F=\theta(W)$, where 
  $\theta$ is a  cancellation free labelling on $W$, 
$|\theta(E)|\leq 12l+M+4$, for all $E\in \supp(W)$; so
$|F|\leq K(n)(12l+M+4)$. In this case $|R|\le |h|/2+6l+3M/2+2\d+7/2$.
\item\label{it:exp-2}  $F$ is $H$-minimal and $F=_H\psi(U^\prime)$, where $U^\prime$ 
is the Hamiltonian cycle of a non-orientable, genus
  $g$  extension $(\G^{\prime\prime}_U,\psi)$, of length at most $2K(n)(12l+M+4)$, of some
 specialisation   
 $U$ of $W$ of  genus $k$ over $\cA$, where  $n=g+k$. 
  In this
case $|R|\le |h|/2+2\d$. 
\end{enumerate}
\end{theorem}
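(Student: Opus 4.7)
The plan is to mirror the proof of Theorem \ref{Thexp}, replacing commutator (orientable) decompositions with the square-based (non-orientable) ones, so that the same two alternatives emerge. Both directions run parallel to the orientable case; the delicate work is in the forward direction, case \ref{it:exp-2}.

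For the sufficiency direction, assume $W$, $F$, $R$ are given as in the statement. In case \ref{it:exp-1}, $F = \theta(W)$ with $W$ a non-orientable Wicks form of genus $n$ and $\theta$ a labelling function, so Lemma \ref{lem:wicks} applied with $t=1$, $h_1 = h$, $r_1 = R$ yields $\genus^-_H(h) \le n$ immediately. In case \ref{it:exp-2}, $F =_H \psi(U')$ where $U'$ is the Hamiltonian cycle of a non-orientable genus $g$ extension $(\Gamma''_U,\psi)$ of a specialisation $U$ of $W$ of genus $k$, with $n = g + k$; Lemma \ref{lem:ggext} then gives $\genus^-_H(h) \le g + k = n$. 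Minimality of $n$ in the hypothesis matches $\genus^-_H(h) = n$ in either case.

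For necessity, start from $h$ with $\genus^-_H(h) = n$. Using Lemma \ref{lb} conjugate $h$ to a cyclically reduced $H$-minimal word (the conjugator contributing at most $|h|/2 + 2\d$ to $|R|$). By Lemma \ref{lem:wicks} there exist a non-orientable Wicks form $W$ of genus $n$ over $\cA$ and a labelling $\phi$ with $\phi(W) =_H F$, and by the free-group result of Culler we may arrange $\phi$ to be cancellation-free on $W$. If every letter of $\supp(W)$ satisfies $|\phi(E)| \le 12l + M + 4$, take $\theta = \phi$; the length bound $|F| \le K(n)(12l+M+4)$ then follows from Lemma \ref{cull}, placing us in case \ref{it:exp-1}. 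Otherwise some letter has a long label, and we enter case \ref{it:exp-2}: draw $\phi(W)$ as a closed polygonal path $q$ in $\Gamma_X(H)$ of $|W| \le K(n)$ geodesic sides, invoke Lemma \ref{lem:la} to obtain partitions of the sides and an involution $\sigma$ whose paired sub-geodesics $l$-fellow travel, and use these identifications to drive the extension construction of Section \ref{sec:ext}. Each matched pair of sub-geodesics on an edge $e$ of $\Gamma_W$ triggers the split into $e_1,e_2$ of Step~1; the intermediate paths separating consecutive matched pairs around a vertex $v$ become the labels of the cyclic word $C_v$ of Step~2; and the $H$-minimal words along those sub-geodesics furnish the labels in Step~3. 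Letters of $W$ completely consumed by identifications are set to $1$, passing $W$ to a specialisation $U$ of genus $k \le n$, and the vertex partition of $\Gamma_U$ into joint-extension classes has total genus $g = n - k$. The Hamiltonian cycle $U'$ of the resulting extension satisfies $\psi(U') =_H F$, with $F$ still $H$-minimal, so $h =_H RFR^{-1}$ holds with $|R| \le |h|/2 + 2\d$.

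The main obstacles are twofold. First, one must check that the fellow-travelling constants supplied by Lemma \ref{lem:la} are consistent with the length bound $2K(n)(12l+M+4)$ on the total extension length, by counting total path length against $|W| \le K(n)$ and the per-edge bound $12l + M + 4$ from the failure of case \ref{it:exp-1}. Second, one must verify that the constructed extension genuinely satisfies Definition \ref{defn:ext}, in particular condition \ref{it:ext2} concerning degree $1$ and $2$ vertices; the only exception that occurs, $U = A^{\pm 2}$, corresponds precisely to the unique non-orientable Wicks form of genus $1/2$ and is handled separately. Finally, the larger bound $|R| \le |h|/2 + 6l + 3M/2 + 2\d + 7/2$ in case \ref{it:exp-1} absorbs the additional price of cyclic rotation and of conjugating to the free-group reduction $\theta(W)$; this estimate is the content of Section \ref{sec:Rbound}.
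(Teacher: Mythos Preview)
Your outline follows the paper's strategy, which indeed proves Theorems \ref{Thexp} and \ref{Thexp-} simultaneously; the sufficiency direction via Lemmas \ref{lem:wicks} and \ref{lem:ggext} is exactly right. However, the necessity argument as you describe it has a genuine gap and two smaller errors.

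The gap is that you never invoke \emph{minimality} of the pair $(W,\theta)$. The paper works not with an arbitrary cancellation-free labelling but with one for which $|\theta(W)|$ is minimal over the whole family $\mathcal{F}(h)$ of genus~$n$ non-orientable Wicks forms with labellings mapping to a conjugate of $h$. This minimality is the engine behind Lemma \ref{lem:l1}: by swapping $(W,\theta)$ for a competing pair $(W',\psi)$ obtained by cutting and regluing the disk, one deduces the inequalities that force a point deep inside a long edge $\theta(A)$ to be close only to $F$ or to the other occurrence of $\theta(A)$, never to a different long edge (Corollary \ref{cor:N0}). Lemma \ref{lem:la} alone does not give this: it tells you sub-intervals of each side fellow-travel \emph{something}, but without minimality that something could be any other side of the polygon. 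It is Lemma \ref{lem:l1} that upgrades this to the clean partition $F=F_1\cdots F_f$ of Lemma \ref{lem:chouv}, one interval per long edge, which is what makes the extension construction and its length bound go through.

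Two related corrections. First, the closed path to which Lemma \ref{lem:la} is applied is $\theta(W)F^{-1}$, where $F$ is a geodesic from $1$ to the endpoint of the broken path $\theta(W)$; the word $\theta(W)$ itself is conjugate to $h$, not trivial, so it does not label a closed path. Second, the specialisation $U$ is not obtained by deleting ``letters completely consumed by identifications'': it is the \emph{long factorisation}, obtained by deleting exactly those letters $E\in\supp(W)$ with $|\theta(E)|\le 12l+M+4$. The short-edge words survive as the data $s_j(A)$ of Definition \ref{def:Ffact} and reappear in the cycle labels $\psi(C_v)$; it is Lemma \ref{lem:genext} together with Lemma \ref{gi} (the latter using $\genus^-_H(h)=n$, not minimality) that identifies the genus of the extension as exactly $g=n-k$.
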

The proof of these two theorems is the content of Section \ref{sec:proof}.

\section{Applications}\label{sec:applications}
\subsection{Forms for Commutators}\label{sec:commutators}
We use Theorem \ref{Thexp} to obtain a full list of all
possible forms for commutators in $H=\la X|S\ra$. Since in this
case $n=1$, it follows that  $l=\delta(\log _2(6)+1)$.
\begin{proposition}\label{propos}
An element $h\in H$ is a commutator  if and only if  there are  
words
$R$ and $F$ in $F(X)$, such that $h=_H RFR^{-1}$,  where 
 $F$ takes one of the following forms,  $|R|\le |h|/2+6l+3M/2+2\d+7/2$  in case \ref{it:propos1}, 
and $|R|\le |h|/2+2\d$ in cases \ref{it:propos2}, \ref{it:propos3} and \ref{it:propos4}.
\begin{enumerate}
\item\label{it:propos1} $F= XYZX^{-1}Y^{-1}Z^{-1}$, for 
$X,Y,Z\in F(X)$ with  
$|X|$, $|Y|$, $|Z|\leq 12l+M+4$.
\item\label{it:propos2} $F=A_1A_2^{-1}$ with $A_1=_H \xi _1^{-1}A_2 \xi _2$, where $|\xi
_1|+|\xi _2|\leq 12(12l+M+4)$ and $\xi _1$ is conjugate to $\xi _2$ in
$H$.  
\item\label{it:propos3} $F=A_1B_1A_2 ^{-1}B_2 ^{-1}$ with $A_1=_H \xi _1A_2\xi _3$, $B_1=_H
\xi _4B_2\xi _2$, where $|\xi _1|+|\xi _2|+|\xi _3|+|\xi _4|\leq
12(12l+M+4)$   and
$\xi _1\xi _2\xi _3\xi _4=_H 1$.
\item\label{it:propos4} $F=A_1B_1C_1A_2^{-1}B_2^{-1}C_2^{-1}$ with $A_1=_H \xi _1A_2\rho
  _1$,  $B_1=_H
\rho _2B_2\xi _2$ and $C_1=_H \xi _3C_2\rho _3$, where $|\xi _1|+|\xi
_2|+|\xi _3|+|\rho _1|+|\rho _2|+|\rho _3|\leq 12(12l+M+4)$ and   $\xi _1\xi
_2\xi _3 =_H\rho _1\rho _2\rho _3=_H 1$. 
\end{enumerate}
\end{proposition}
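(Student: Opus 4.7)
The plan is to apply Theorem~\ref{Thexp} with $n=1$, so that $l=l(1)=\delta(\log_2 6+1)$. Up to relabelling of $\cA$, the unique orientable Wicks form of genus $1$ is $W=ABCA^{-1}B^{-1}C^{-1}$ (see \cite{Culler81}), so the theorem reduces to saying that $h$ is a commutator if and only if $h=_H RFR^{-1}$ with $F$ of one of the two forms in case~\ref{it:exp1} or case~\ref{it:exp2} of the theorem, built from this $W$. Case~\ref{it:exp1} applied to $W$ with the cancellation-free labelling $\theta$ produces $F=\theta(A)\theta(B)\theta(C)\theta(A)^{-1}\theta(B)^{-1}\theta(C)^{-1}$, each factor of length at most $12l+M+4$; this is exactly case~\ref{it:propos1} of the proposition, and the bound on $|R|$ is inherited directly from the theorem.

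For case~\ref{it:exp2}, I would enumerate the specialisations of $W$ obtained by setting subsets of $\{A,B,C\}$ to $1$. Up to relabelling, the distinct non-empty cases are $U=W$ (genus $k=1$), $U=ABA^{-1}B^{-1}$ (genus $k=1$), and $U=AA^{-1}$ (genus $k=0$), requiring extensions of genus $g=1-k$ equal to $0$, $0$, and $1$ respectively. For each $U$ I would carry out Steps~1--3 of the extension construction and enumerate all orientable genus-$g$ extensions using Definition~\ref{defn:ext}. For $U=W$ the graph $\G_U$ has two vertices $u,v$ of degree~$3$; a joint extension on $\{u,v\}$ would force $\genus_H^+(w_1,w_2)=-1$, so the only option is two singleton joint extensions of genus~$0$, each forcing $\psi(C_u)=_H 1=\psi(C_v)$, and reading the Hamiltonian cycle $U'$ and applying the labelling constraints yields case~\ref{it:propos4}. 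For $U=ABA^{-1}B^{-1}$ the graph has one vertex of degree~$4$, the only possibility is a single joint extension of genus~$0$ with $\psi(C_v)=_H 1$, yielding case~\ref{it:propos3}. For $U=AA^{-1}$ both vertices have degree~$1$, so condition~\ref{it:ext2} of Definition~\ref{defn:ext} rules out a partition into singletons (each would require orientable genus $\ge 1$, making the total at least $2$), leaving only the joint genus-$1$ extension on $\{u,v\}$, which forces $\genus_H^+(\psi(c_1),\psi(d_1))=0$; this is equivalent to $\psi(c_1)$ being conjugate in $H$ to the inverse of $\psi(d_1)$, and after accounting for the orientation convention used to read each loop around its vertex on the surface $\S_U$ yields the conjugacy in case~\ref{it:propos2}. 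In each case the bound $|\xi_1|+\cdots\le 12(12l+M+4)$ follows from the extension-length bound $2K(1)(12l+M+4)=12(12l+M+4)$ of Theorem~\ref{Thexp}, and the bound on $|R|$ is inherited directly from the theorem.

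The converse direction is straightforward: in case~\ref{it:propos1} the word $F$ is the image under a labelling function of the genus-$1$ Wicks form $W$, so Lemma~\ref{lem:wicks} gives $\genus_H^+(F)\le 1$; in cases~\ref{it:propos2}--\ref{it:propos4}, $F=_H\psi(U')$ for an explicit extension of genus $g$ of a specialisation of $W$ of genus $k$ with $g+k=1$, so Lemma~\ref{lem:ggext} gives $\genus_H^+(F)\le 1$. In each case $F$, and hence $h=_H RFR^{-1}$, is a commutator in $H$.

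The main obstacle is the careful bookkeeping in Step~2 of the extension construction: tracking, via the functions $l_q,r_q$ of Definition~\ref{def:vori}, exactly where on each cycle graph $C_v$ the two edges $A_1,A_2$ arising from a given edge $A$ of $\G_U$ land, and hence in what order the cycle letters $\xi_i,\rho_i$ appear on each side of the equations $A_1=_H\cdots A_2\cdots$ in cases~\ref{it:propos2}--\ref{it:propos4}. It is this bookkeeping that pins down the precise conjugacy relation in case~\ref{it:propos2} and the orders and signs in the products equal to $1$ in cases~\ref{it:propos3} and~\ref{it:propos4}.
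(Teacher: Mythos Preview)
Your proposal is correct and follows essentially the same route as the paper: apply Theorem~\ref{Thexp} with $n=1$, read off case~\ref{it:propos1} from case~\ref{it:exp1}, and for case~\ref{it:exp2} enumerate the three relevant specialisations $AA^{-1}$, $ABA^{-1}B^{-1}$, $ABCA^{-1}B^{-1}C^{-1}$ with their extensions to obtain cases~\ref{it:propos2}--\ref{it:propos4}. Your write-up is in fact slightly more explicit than the paper's in two places: you justify why the ``wrong'' partitions are excluded (joint on the two degree-$3$ vertices would need $\genus^+_H(w_1,w_2)=-1$; singletons on the degree-$1$ vertices are blocked by Definition~\ref{defn:ext}\ref{it:ext2}), and you spell out the converse via Lemmas~\ref{lem:wicks} and~\ref{lem:ggext}. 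One small slip: $ABCA^{-1}B^{-1}C^{-1}$ is not the \emph{unique} orientable genus-$1$ Wicks form (there is also $ABA^{-1}B^{-1}$), but this is harmless since the shorter form and its specialisations are already among the specialisations of the longer one, so restricting attention to $W=ABCA^{-1}B^{-1}C^{-1}$ loses nothing.
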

\begin{proof}
By Theorem \ref{Thexp}, $h$
is conjugate to a word $F$ which
either has form $1$
above or is $H$-minimal and obtained by an orientable, genus $g$ extension, of length at most $12(12l+M+4)$,
 of some specialisation $U$ of the Wicks form $ABCA^{-1}B^{-1}C^{-1}$, 
of genus $k$, 
where  $g+k=1$. The bounds
on $|R|$ also follow directly from the theorem. 
There are only
three  specialisations 
which can have a suitable
extension. 
\begin{enumerate}[(i)]
\item If $k=0$ and $g=1$,  take a quadratic orientable word $U=AA^{-1}$, of length $2$ and genus $0$, with a joint genus
  $1$ extension constructed on the two vertices of $\Gamma_U$.
\item If $k=1$ and $g=0$ there are two possibilities.
\be[(a)]
\item
The orientable word $U=ABA^{-1}B^{-1}$ of genus $1$ with a genus
  $0$ extension constructed on the only vertex of $\Gamma_U$.
\item The orientable word $U=ABCA^{-1}B^{-1}C^{-1}$ of genus $1$ with a
  genus $0$ extension constructed on both of the vertices of $\Gamma_U$.
\ee
\end{enumerate}
{\flushleft{(i)}} We extend the graph $\Gamma_U$ as shown in Figure \ref{form2pic}. 
\begin{figure}[htp]
\begin{center}
\psfrag{A}{{\scriptsize $A$}}
\psfrag{A1}{{\scriptsize $A_1$}}
\psfrag{A2}{{\scriptsize $A_2$}}
\psfrag{w1}{{\scriptsize $\xi _1$}}
\psfrag{w2}{{\scriptsize $\xi _2$}}
\includegraphics[scale=0.6]{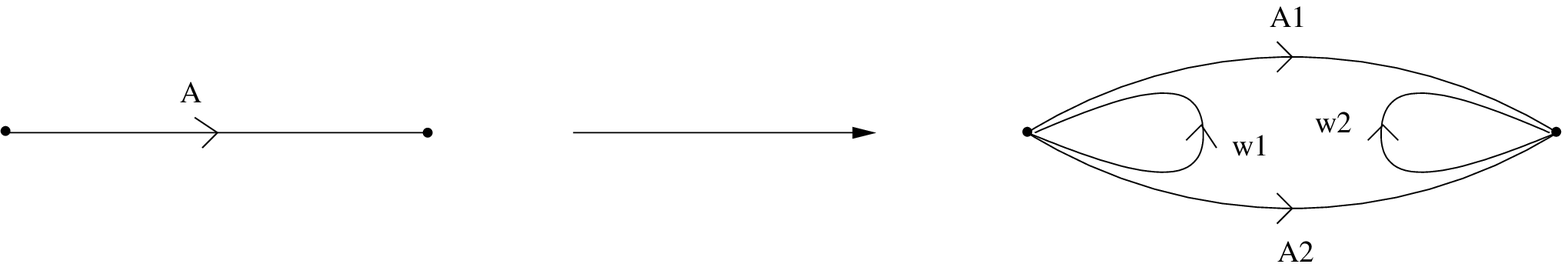}      
\caption{}\label{form2pic}
\end{center}
\end{figure} 
By Theorem \ref{Thexp}, $F$ takes the form of the Hamiltonian cycle
$A_1A_2^{-1}$ in the extended graph and from the nature of the
extension constructed on $U$, it is clear that we have form $2$. 

{\flushleft{(ii)(a)}} We extend the graph $\Gamma_U$ as shown in Figure
\ref{form3pic}.
\begin{figure}[htp]
\begin{center}
\psfrag{A}{{\scriptsize $A$}}
\psfrag{A1}{{\scriptsize $A_1$}}
\psfrag{A2}{{\scriptsize $A_2$}}
\psfrag{B}{{\scriptsize $B$}}
\psfrag{B1}{{\scriptsize $B_1$}}
\psfrag{B2}{{\scriptsize $B_2$}}
\psfrag{w1}{{\tiny $\xi _1$}}
\psfrag{w2}{{\tiny $\xi _2$}}
\psfrag{w3}{{\tiny $\xi _3$}}
\psfrag{w4}{{\tiny $\xi _4$}}
\includegraphics[scale=0.6]{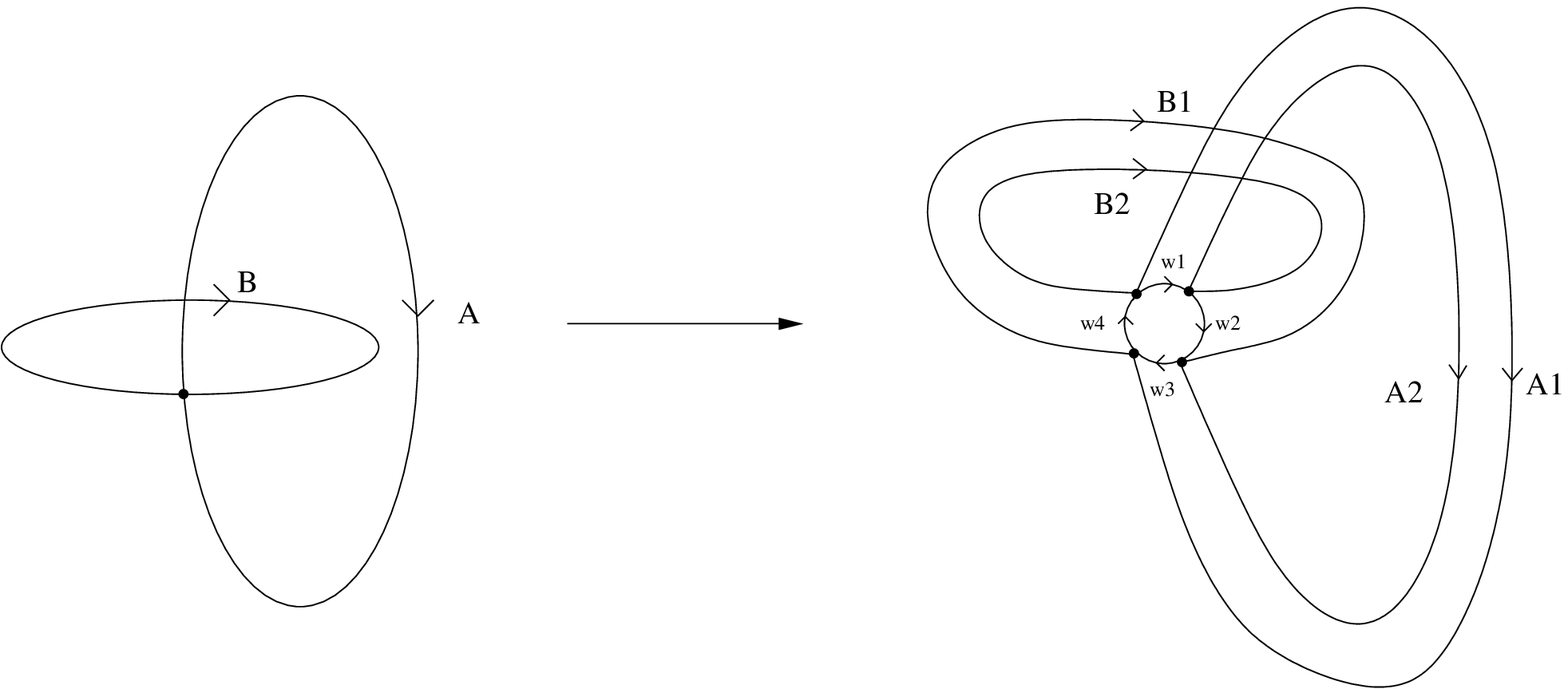}
\\
\vspace{0.38cm}                                                                  
\refstepcounter{figure}\label{form3pic}                                             
Figure \thefigure                  
\end{center}
\end{figure} 
By Theorem \ref{Thexp}, $F$ takes the form of the Hamiltonian cycle
$A_1B_1A_2^{-1}B_2^{-1}$ in the extended graph and from the nature of the
extension constructed on $U$, it is clear that we have form $3$. 

{\flushleft{(ii)(b)}}  Finally, we extend the graph $\Gamma_U$ as shown in Figure
\ref{form4pic}.
\begin{figure}[htp]
\begin{center}
\psfrag{A}{{\scriptsize $A$}}
\psfrag{A1}{{\scriptsize $A_1$}}
\psfrag{A2}{{\scriptsize $A_2$}}
\psfrag{B}{{\scriptsize $B$}}
\psfrag{B1}{{\scriptsize $B_1$}}
\psfrag{B2}{{\scriptsize $B_2$}}
\psfrag{C}{{\scriptsize $C$}}
\psfrag{C1}{{\scriptsize $C_1$}}
\psfrag{C2}{{\scriptsize $C_2$}}
\psfrag{z1}{{\tiny $\rho _1$}}
\psfrag{z2}{{\tiny $\rho _2$}}
\psfrag{z3}{{\tiny $\rho _3$}}
\psfrag{w1}{{\tiny $\xi _1$}}
\psfrag{w2}{{\tiny $\xi _2$}}
\psfrag{w3}{{\tiny $\xi _3$}}
\includegraphics[scale=0.35]{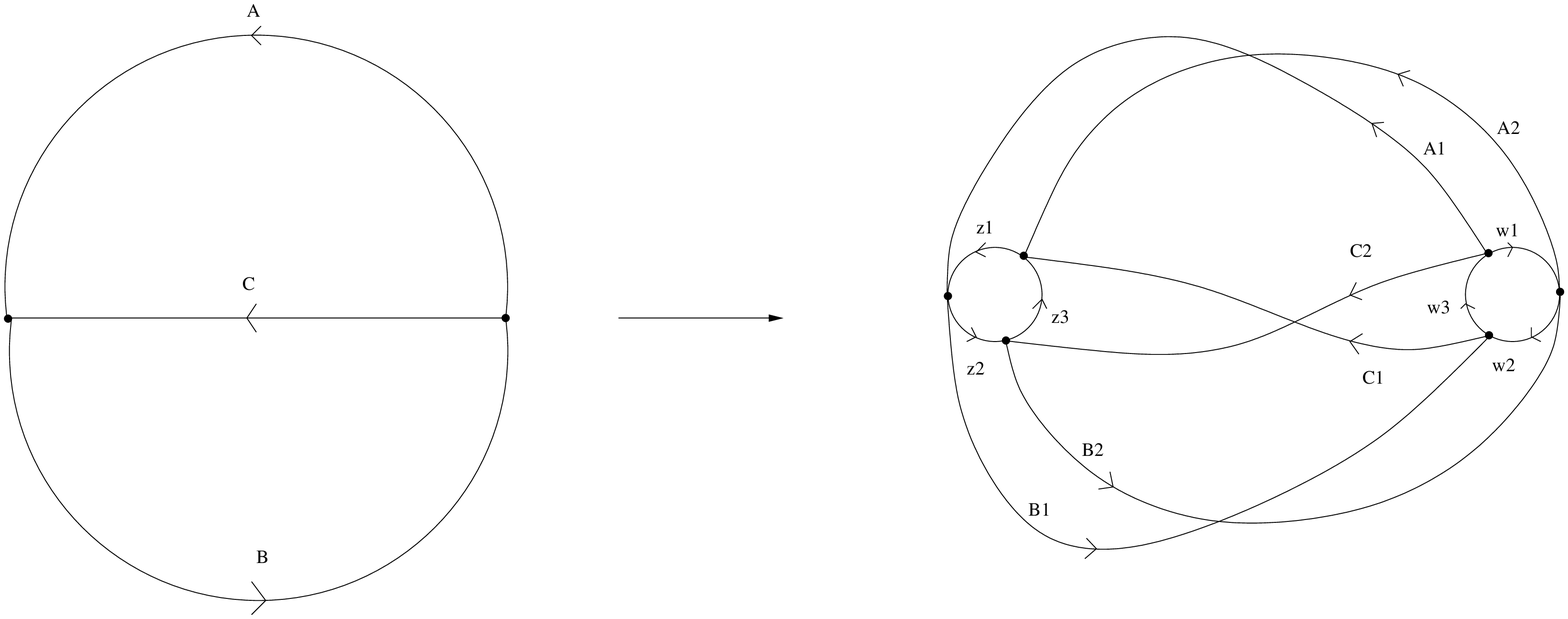}
\\
\vspace{0.38cm}                                                                  
\refstepcounter{figure}\label{form4pic}                                             
Figure \thefigure                  
\end{center}
\end{figure} 
Again, by Theorem \ref{Thexp}, $F$ takes the form of the Hamiltonian cycle
$A_1B_1C_1A_2^{-1}B_2^{-1}C_2^{-1}$ in the extended graph and from the
nature  of the
extension constructed on $U$, it is clear that we have form $4$. 
Hence $h$ is conjugate to some $F$ which takes one of the required forms. 

\end{proof}

\subsection{Forms for Squares}\label{sec:squares}
For a list of squares we use Theorem \ref{Thexp-}. 
 In this
case $n=1/2$, so   $l=3\delta$.
\begin{proposition}\label{prop:squares}
An element $h\in H$ is a square if and only if  there are  
words
$R$ and $F$ in $F(X)$, such that $h=_H
RFR^{-1}$,  where 
either 
\begin{enumerate}
\item $F= X^{2}$, for 
$X\in F(X)$ with  
$|X|\leq 12l+M+4$ and $|R|\leq |h|/2+12l +3M/2+ 2\delta+7/2$; or
\item $F=A_1A_2$ with $A_1=_H \xi A_2 \xi$, where 
$|\xi|\leq 5l+M+4$  and $|R|\le |h|/2 +2\d$. 
\end{enumerate}
\end{proposition}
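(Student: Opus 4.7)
My plan is to apply Theorem \ref{Thexp-} with $n = 1/2$ and enumerate the resulting forms, exactly in the spirit of Proposition \ref{propos}. The crucial simplification is that, as noted in the discussion preceding Lemma \ref{cull}, the only Wicks form over $\cA$ of genus $1/2$ is $W = X^2$, so the case analysis reduces to a single Wicks form (in contrast to the three appearing in the commutator case). Case \ref{it:exp-1} of Theorem \ref{Thexp-} immediately yields $F = \theta(W) = \theta(X)^2$, and relabelling $X := \theta(X)$ gives form (1) with the length bound on $|X|$ read off directly from the theorem; the stated bound on $|R|$ is likewise inherited.

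For Case \ref{it:exp-2} I would first identify the admissible extensions. Since $\supp(W) = \{X\}$ admits only $\emptyset$ as a proper subset, the sole specialisation of $W$ is $U = W = X^2$, so $k = 1/2$ and the extension genus must satisfy $g = 0$. The graph $\Gamma_U$ has a single vertex $v$ of degree $2$ on the projective plane $\S_U$, incident to a loop labelled $X$, and any genus $0$ extension must be a genus $0$ joint extension on $\{v\}$ alone. Definition \ref{defn:ext}\ref{it:ext2} permits this precisely because $U = X^{\pm 2}$, and condition \ref{it:ext1} of that definition, with $t = 1$ and $g = 0$, forces the extending word $w_1 =_H 1$. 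Step 2 of the extension blows $v$ up to a cycle $C_v = c_1 c_2$ with $\psi(c_1)\psi(c_2) =_H 1$; setting $\xi := \psi(c_1)$ gives $\psi(c_2) =_H \xi^{-1}$. After Step 1 the loop $X$ splits into $X_1, X_2$ and the Hamiltonian circuit is $U' = X_1 X_2$, which traverses no $c_i$, so $F =_H \psi(U') = A_1 A_2$ with $A_i := \psi(X_i)$.

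The principal task, and the main obstacle, is to extract the constraint on $A_1, A_2, \xi$ from the Step 3 labelling condition $\psi(X_1) =_H \psi(x)\psi(X_2)\psi(y)$, where $x, y$ are the letters of $C_v$ adjacent to $X_1$ in $\Gamma''_U$. This is where the non-orientability is essential: because $o(X) = -1$, Definition \ref{def:vori}(ii) assigns the indices $l_q, r_q$ so that the two endpoints of $X_1$ meet $C_v$ across a cross-cap identification rather than a handle. I would compute $l_q, r_q$ explicitly for $U = X^2$ using the link data $\varepsilon_1 = 1$, $\varepsilon_2 = -1$, $O_1 = 1$, $O_2 = -1$ and verify that the letters of $C_v$ encountered at both endpoints of $X_1$ represent the same element $\xi$ of $H$, yielding $A_1 =_H \xi A_2 \xi$. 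This is in deliberate contrast to the conjugation relation $A_1 =_H \xi A_2 \xi^{-1}$ that arises in the commutator case of Proposition \ref{propos}; making sure the signs come out on the same side, rather than on opposite sides, is the only delicate point of the argument.

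The length bound on $|\xi|$ follows from the extension-length bound in Theorem \ref{Thexp-}\ref{it:exp-2} applied to $C_v$, which under an $H$-minimal $\psi$ has total length $2|\xi|$; the bound on $|R|$ is inherited from the theorem in each case. The converse is immediate: form (1) is visibly a square, and in form (2) the relation $A_1 =_H \xi A_2 \xi$ rewrites $F$ as $A_1 A_2 =_H \xi A_2 \xi A_2 = (\xi A_2)^2$, so $h = RFR^{-1}$ is a square in $H$ in both cases.
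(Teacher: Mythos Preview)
Your approach is the same as the paper's: apply Theorem \ref{Thexp-} with $n=1/2$, note that $W=X^2$ is the unique genus-$1/2$ Wicks form, read off form (1) from case \ref{it:exp-1}, and in case \ref{it:exp-2} observe that the only specialisation is $U=X^2$ itself, forcing $k=1/2$, $g=0$, and hence a trivial extending word at the sole vertex. Your plan to extract $A_1=_H\xi A_2\xi$ from the Step~3 relation is correct and would go through; this is exactly what Figure~\ref{form5pic} and the paper's computation encode.

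The one genuine gap is the bound $|\xi|\le 5l+M+4$. Applying the extension-length bound from Theorem~\ref{Thexp-}\ref{it:exp-2} as a black box gives only
\[
2|\xi|=|\psi(C_v)|\le 2K(1/2)(12l+M+4)=4(12l+M+4),
\]
so $|\xi|\le 2(12l+M+4)$, which is strictly weaker than the bound stated in the proposition. The paper does not obtain $5l+M+4$ from the theorem statement either: it explicitly says ``Following through the proof of Proposition~\ref{prop:ext}'' and uses the specific structure of the construction in Section~\ref{sec:proof}. In this case there are no short edges, so $s_1(A)=s_2(A)=1$; moreover $b_1(A)=a_2(A)=1$ (the latter because the path ends at $\tau(F)$), and $a_1(A)=b_2(A)$ has length at most $5l+M+4$ by Lemma~\ref{lem:chouv}. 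This forces $z_1(A)=_H z_2(A)=_H a_1(A)$, whence $|\xi|=|a_1(A)|\le 5l+M+4$. You would need to invoke this internal computation, not just the theorem, to match the stated constant.
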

\begin{proof}
By Theorem \ref{Thexp-}, $h$
is conjugate to a 
word $F$ which
either has form $1$
above or is obtained by a non-orientable, genus $g$ extension, of length at most $4(12l+M+4)$, 
 of some specialisation of the Wicks form $A^2$, 
 of genus $k$, 
where $g+k=1/2$.
There is 
one possible specialisation which can have a suitable
extension, namely the word $U=A^{2}$, of genus $k=1/2$. Therefore we
have a genus $0$ extension  over the unique vertex $v$ of $\G_U$. 
We extend the graph $\G_U$ as shown in Figure \ref{form5pic}. 
\begin{figure}[htp]
\begin{center}
\psfrag{A}{{\scriptsize $A$}}
\psfrag{A1}{{\scriptsize $A_1$}}
\psfrag{A2}{{\scriptsize $A_2$}}
\psfrag{w1}{{\scriptsize $\xi _1$}}
\psfrag{w2}{{\scriptsize $\xi _2$}}
\includegraphics[scale=0.6]{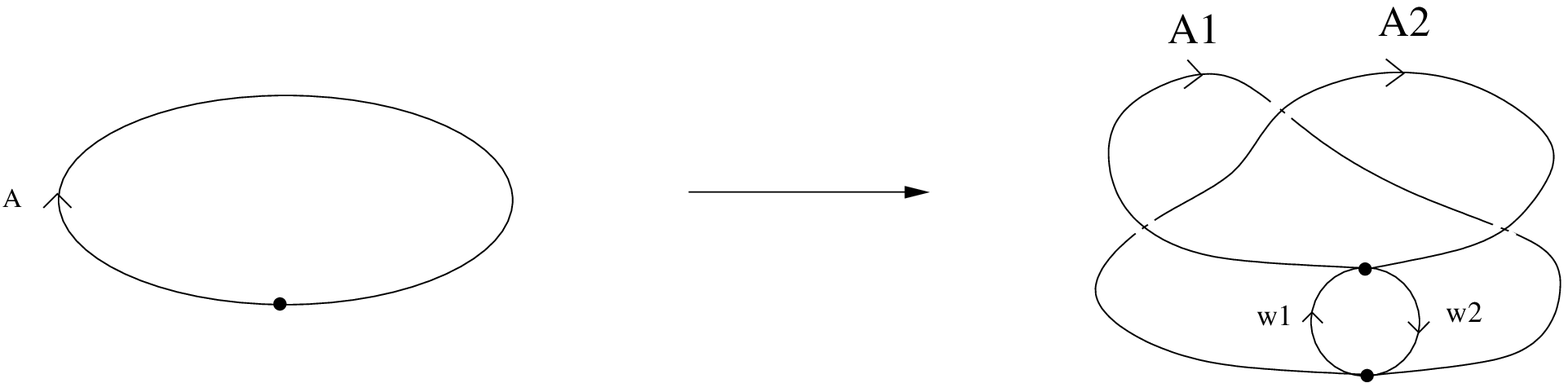}      
\caption{}\label{form5pic}
\end{center}
\end{figure} 
Following through the proof of Proposition \ref{prop:ext}, we see
that the length of this extension is at most $2(5l+M+4)$, since in this
case $v$ is an internal vertex and 
the words $b_1(A)=a_2(A)=1$, while $a_1(A)=b_2(A)$ has
length at most $5l+M+4$.  Also, as the extension is of genus $0$ we
have $\xi=\xi_2=_H \xi_1^{-1}$, and $\xi A_2 \xi A_1^{-1}=_H 1$, as 
required. 
\end{proof}
\subsection{Solutions of quadratic equations in hyperbolic groups}\label{sec:sqe}

In this section we extend Theorems \ref{Thexp} and \ref{Thexp-} to apply to quadratic tuples of elements 
of $H$. That is we show that, if $\genus_H(c_1,\ldots ,c_t)\le g$ then there exist conjugators $r_i$, linearly bounded
in terms of the sum of lengths of the $c_i$, such that $\prod_{i=1}^t c_i^{r_i}$ has one of the forms given in 
 Theorem \ref{Thexp} or \ref{Thexp-}, as appropriate. 
 In the process we give bounds on the lengths of images of variables in (minimal) solutions of quadratic equations
over hyperbolic groups. These are  similar to those found over free groups by Kharlampovich and Vdovina \cite{KV} and by Lysenok and Myasnikov \cite{LysenokMyasnikov}. For a torsion-free hyperbolic group $H$, Kharlampovich, Mohajeri, Taam and Vdovina \cite{KMTV} founds such bounds which are in somewhat different from 
those we give below. In fact  they compute a number $N$, 
dependent only $H$, such that 
if a quadratic equation $Q$ is solvable in $H$ then there is a solution in
which the variables are replaced by words of length at most $NL^d$, where 
$L$ is the length of the the equation $Q$,  as a word in the generators of 
$H$ and the  variables of $Q$, and $d=3$, if $Q$ is orientable, and $4$ otherwise. In contrast the bounds given belown depend on the genus of the equation,
but are linear (and torsion  is not excluded). 
 As usual, suppose that $H$  is $\d$-hyperbolic with respect to the presentation $\la X|S\ra$. 
  First we use a result from \cite{ols89} to bound the length of solutions of quadratic equations
over $H$.  
\begin{theorem}[\emph{cf.} {\cite{KMTV}[Theorem 1]}, {\cite{LysenokMyasnikov}[Theorem 1.1]}]\label{thm:qesol}
Let $H$ be a $\d$-hyperbolic group, with respect to the presentation $\la X|S\ra$, let $g\in \ZZ[\frac{1}{2}]$ and    
let $c_1,\ldots ,c_t$ be elements of $F(X)$. Then there exist constants $A$ and $B$, dependent only on $\d$, $|X|$, $g$ 
and $t$ such that the following hold. 
\be
\item\label{it:qesol1} If $\genus^+(c_1,\ldots ,c_t)_H\le g$ then there exists a solution $\phi$ to equation \eqref{eq:qgo} such that 
\[|\phi(v_l)|,|\phi(x_i)|,|\phi(y_i)|\le A\sum_{j=1}^t|c_j| +B,\] 
$i=1,\ldots, g$, $l=1,\ldots ,t$. 
\item\label{it:qesol2} If $\genus^-(c_1,\ldots ,c_t)_H\le g$ then there exists a solution $\phi$ to equation \eqref{eq:qgn} such that 
\[|\phi(v_l)|,|\phi(z_i)|\le A\sum_{j=1}^t|c_j| +B,\] 
$i=1,\ldots, 2g$, $l=1,\ldots ,t$.
\ee    
\end{theorem}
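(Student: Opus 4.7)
The plan is to combine Theorems \ref{Thexp} and \ref{Thexp-} with a minimality argument, driven by the thin-polygon decomposition of Lemma \ref{lem:la}, to bound each variable of a solution.

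I will sketch the orientable case \ref{it:qesol1}; the non-orientable case \ref{it:qesol2} is identical, with Theorem \ref{Thexp-} and squares in place of Theorem \ref{Thexp} and commutators. First I would fix a solution $\phi$ to \eqref{eq:qgo} that minimises the total length $\sum_l|\phi(v_l)|_H+\sum_i(|\phi(x_i)|_H+|\phi(y_i)|_H)$; such a minimiser exists since by hypothesis the set of solutions is nonempty. Set $h=\prod_l\phi(v_l)^{-1}c_l\phi(v_l)$, so that $h=_H\prod_i[\phi(x_i),\phi(y_i)]^{-1}$ and hence $\genus_H^+(h)\le g$. Applying Theorem \ref{Thexp} to $h$ gives, after conjugation by a word $R$ of length at most $|h|/2+6l+3M/2+2\delta+7/2$, a cancellation-free Wicks-form representative of $h$ whose letters each have length at most $12l+M+4$, where $l=l(g)$.

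The crux is then to bound each $|\phi(v_l)|_H$ linearly in $\sum_j|c_j|$. I would form the closed polygon $P$ in $\Gamma_X(H)$ whose consecutive sides are geodesic representatives of $\phi(v_1)^{-1},c_1,\phi(v_1),\dots,\phi(v_t)^{-1},c_t,\phi(v_t),\phi(x_1),\phi(y_1),\phi(x_1)^{-1},\phi(y_1)^{-1},\dots$, with $n=3t+4g$ sides. Lemma \ref{lem:la} supplies matching partitions whose paired sub-arcs $k\delta$-fellow travel, with $k=\lceil\log_2 n\rceil$. If $|\phi(v_l)|_H>A\sum_j|c_j|+B$, for constants $A,B$ depending only on $\delta,|X|,g,t$, then a sufficiently long sub-arc of the side labelled $\phi(v_l)$ must be matched to a sub-arc of another variable-labelled side, since the coefficient sides together have total length $\sum_j|c_j|$ and so cannot absorb the excess. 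Splicing in a short diagonal of length at most $k\delta$ across the fellow-travelling region (using that there are at most $|B_H(k\delta)|$ such diagonals) yields a new solution $\phi'$ of strictly smaller total length, contradicting minimality of $\phi$. This produces the required linear bound on each $|\phi(v_l)|_H$. Once the conjugators are bounded, $|h|$ is linearly bounded in $\sum_j|c_j|$, so the Wicks-form expression for $h$ given by Theorem \ref{Thexp} has total length linear in $\sum_j|c_j|$; a parallel fellow-travelling/minimality argument applied to the $4g$-gon portion of $P$ (regarded as the boundary of a van Kampen diagram for $h$ on a genus-$g$ surface) bounds each $|\phi(x_i)|_H$ and $|\phi(y_i)|_H$ linearly in $\sum_j|c_j|$, completing the proof.

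The main obstacle is making the simultaneous shortening moves rigorous: one must show that shortening one variable does not inadvertently lengthen another. The partition structure of Lemma \ref{lem:la} is essential here, since each matched pair is a local combinatorial object on $P$, so the rewriting that shortens an over-long variable changes only a bounded number of other sides and then only by an additive correction drawn from $B_H(k\delta)$. Polynomial-type bounds of the kind appearing in \cite{ols89}, \cite{KMTV} and \cite{LysenokMyasnikov} already follow without this care; pushing the estimate down to a linear bound requires the full strength of the logarithmic thin-polygon lemma together with the Wicks-form description of Theorems \ref{Thexp} and \ref{Thexp-}.
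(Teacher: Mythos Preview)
Your proposal takes a genuinely different route from the paper, and while the underlying idea (minimal solution plus thin-polygon shortening) is standard and could in principle be made to work, the argument as written has two real gaps.

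First, your invocation of Theorem~\ref{Thexp} is not quite right: that theorem says $h$ is conjugate to $F$ satisfying \emph{either} case~\ref{it:exp1} (a short cancellation-free Wicks form) \emph{or} case~\ref{it:exp2} (the Hamiltonian cycle of an extension). Only in case~\ref{it:exp1} do all letters have length at most $12l+M+4$; in case~\ref{it:exp2} the word $F$ can be as long as $|h|$. So you cannot conclude that $h$ has a uniformly short Wicks representative, and in fact this step plays no role in the rest of your argument.

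Second, and more seriously, the shortening step is only a sketch, as you yourself flag. Showing that splicing across a fellow-travelling pair yields another \emph{solution} of \eqref{eq:qgo} requires real work: you must check that the new assignment still satisfies the equation in $H$, not merely that it shortens one side of the polygon. The two occurrences of each variable in the polygon must be altered compatibly, and when a long sub-arc of $\phi(v_l)$ fellow-travels its own inverse occurrence, or a sub-arc of some $\phi(x_i)$, the rewriting you need is a band move on the surface diagram, not just a diagonal in the polygon. This is exactly the delicate part of the Ol'shanskii / Rips--Sela machinery, and it is not supplied here.

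By contrast, the paper's proof avoids all of this. It takes a van Kampen diagram $\mathcal{K}$ on the genus-$g$ surface with minimal number of regions, invokes \cite{ols89}[Lemma~11] to cut $\mathcal{K}$ into a disk whose boundary has length at most $\sum_j|c_j|+C$ (with $C=C(\delta,|X|,g,t)$), applies the linear Dehn function of $H$ to bound the number of regions of $\mathcal{K}$ by $D\sum_j|c_j|+E'$, and then observes that each $\phi(v_l),\phi(x_i),\phi(y_i)$ labels a simple edge-path in $\mathcal{K}$ and hence has length at most $4\delta$ times the number of regions. This is a two-line argument once Ol'shanskii's lemma is in hand, and it makes no use of Theorems~\ref{Thexp} or~\ref{Thexp-}; indeed in the paper Theorem~\ref{thm:qesol} is an input to Theorem~\ref{thm:mgen}, not a consequence of the main theorems.
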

\begin{proof}
First note that we may assume that $R$ contains elements of length at most $8\d$ (see for example \cite{AlonsoBrady}). 
Let $c_1,\ldots ,c_t$ be 
orientable and let $X_i=\phi(x_i)$, $Y_i=\phi(y_i)$ and $V_l=\phi(v_l)$ be a solution to the equation \eqref{eq:qgo}.  
As in
  Section \ref{sec:gandvk} 
there is a van Kampen
diagram $\cK$ on a surface $\S$ of genus $g$, with boundary components labelled $c_i$, obtained from a disk 
with boundary labelled $\prod_{i=1}^{t}V_i^{-1}c_iV_i\prod_{i=1}^g[X_i,Y_i]$. We assume that the number of regions
of $\cK$ is minimal, among all such van Kampen diagrams. From \cite{ols89}[Lemma 11] we may 
cut $\cK$ along edges of $2$-cells to obtain a van Kampen diagram on the disk; in such a way that the 
 boundary label of the disk has length
at most $\sum_{j=1}^t|c_j|+C$, for some constant $C$ dependent only on $\d$, $|X|$, $g$ 
and $t$. As $H$ is hyperbolic it has a linear Dehn function $f(n)=Dn+E$, for some constants $D$ and $E$, dependent 
only on $\d$,  so
the van Kampen diagram on the disk has at most $D(\sum_{j=1}^t|c_j|+C)+E=D\sum_{j=1}^t|c_j|+E'$ regions. By construction
the same is true of $\cK$. Each of $X_i$, $Y_i$ and $V_l$ on $\cK$ corresponds to a simple path on $\cK$, formed
from a union of boundary components of $2$-cells. As the $2$-cells of $\cK$ have boundaries of length at most $8\d$, we may
assume that 
 each of  $V_l,X_i$ and $Y_i$ has length bounded by $4\d(D\sum_{j=1}^t|c_j|+E')=A\sum_{j=1}^t|c_j|+B$, with 
$A=4D\d$, $B=4E'\d$. This proves \ref{it:qesol1} and \ref{it:qesol2} follows similarly.  
\end{proof}

A \emph{system} of equations is a set of equations $(w_i=1)_{i\in I}$, where $I$ is an indexing set, and a 
 \emph{solution} to this system of equations over $H$ is an $H$-map $\phi: F(X)*F(\cA)\maps H$ such that $\phi(w_i)=_H 1$, 
for all $i\in I$. 
\begin{corollary}[\emph{cf.} {\cite{KV}[Theorem 2]}]\label{cor:qsys}
Let $\mbf w =w_1,\ldots ,w_t$ be a quadratic tuple of words over $\cA$ of genus $g\in \ZZ[\frac{1}{2}]$ and
let $c_1,\ldots, c_t$ be elements of $F(X)$ such that the system of equations $(w_ic_i=1)_{i=1}^t$ has solution in $H$. 
Let $d=\sum_{i=1}^t|c_i|.$ Then there exist constants $A$ and $B$, dependent only on $\d$, $|X|$, $g$ and $t$, such 
that the following hold. 
\be
\item \label{it:qsys1}
If $\mbf w$ is orientable then there exists a solution $\phi$ to the equation \eqref{eq:qgo}  over $H$ such that 
$|\phi(a)|\le Ad+B$, for all $a\in \{x_i,y_i\,|\, 1\le i\le g\}\cup \{v_i\,|\,1\le i\le t\}$.   
\item \label{it:qsys2} 
If $\mbf w$ is non-orientable then there exists a solution $\phi$ to the equation \eqref{eq:qgn}  over $H$ such that 
$|\phi(a)|\le Ad+B$, for all $a\in \{z_i\,|\, 1\le i\le 2g\}\cup \{v_i\,|\,1\le i\le t\}$.   
\ee
\end{corollary}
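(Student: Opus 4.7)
The plan is to reduce this corollary directly to Theorem \ref{thm:qesol}. The key observation is that the existence of a solution in $H$ to the quadratic system $(w_ic_i=1)_{i=1}^t$ immediately forces $\genus_H(c_1,\ldots,c_t)\le g$; once this is in hand, a solution to \eqref{eq:qgo} (or \eqref{eq:qgn}) with the required length bounds is supplied by Theorem \ref{thm:qesol}.

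Suppose first that $\mbf w$ is orientable and let $\phi$ be a solution in $H$ to the system, so $\phi(w_i)=_H c_i^{-1}$ for each $i$. As constructed in Section \ref{sec:wicks}, the tuple $\mbf w$ gives rise to a van Kampen diagram $\D_{\mbf w}$ over $F(\cA)$ on the surface $\S_{\mbf w}$ of genus $g$ with $t$ boundary components labelled $w_1,\ldots,w_t$. Substituting $\phi(a)$ for each $a\in \supp(\mbf w)$, exactly as in the second half of the proof of Lemma \ref{lem:wicks}, converts $\D_{\mbf w}$ into a van Kampen diagram over $H$ on a surface of genus $g$ whose $t$ boundary components are labelled $c_1^{-1},\ldots,c_t^{-1}$. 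This witnesses $\genus^+_H(c_1^{-1},\ldots,c_t^{-1})\le g$, and reversing the orientation of each boundary component gives $\genus^+_H(c_1,\ldots,c_t)\le g$.

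By the definition of $\genus^+_H$, there is a solution to \eqref{eq:qgo} with genus parameter $\genus^+_H(c_1,\ldots,c_t)$. If this is strictly less than $g$, the remaining commutators $[x_i,y_i]$ in \eqref{eq:qgo} are made trivial by the assignment $x_i=y_i=1$, yielding a solution with parameter equal to $g$. Theorem \ref{thm:qesol}\ref{it:qesol1} then supplies a solution $\phi$ to \eqref{eq:qgo} satisfying
\[
|\phi(v_l)|,\ |\phi(x_i)|,\ |\phi(y_i)| \le A\sum_{j=1}^t|c_j| + B \;=\; Ad + B,
\]
for constants $A,B$ depending only on $\d,|X|,g,t$. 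The non-orientable case is handled in exactly the same way, using the non-orientable version of Lemma \ref{lem:wicks}, the standard equation \eqref{eq:qgn}, and Theorem \ref{thm:qesol}\ref{it:qesol2}.

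There is essentially no obstacle; the argument is a short bookkeeping reduction. The only point requiring mild care is that an $H$-solution of the system really does yield a van Kampen diagram over $H$ on $\S_{\mbf w}$, but this is precisely the edge-subdivision and null-cell construction already carried out in the converse direction of Lemma \ref{lem:wicks}.
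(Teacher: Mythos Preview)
Your proof is correct and follows essentially the same route as the paper: establish $\genus_H^{\pm}(c_1,\ldots,c_t)\le g$ by producing a van Kampen diagram over $H$ on a genus-$g$ surface with $t$ boundary components, then invoke Theorem~\ref{thm:qesol}. The only difference is packaging: the paper builds the surface by hand from disks labelled $w_iv_i^{-1}c_iv_i$ and identifies $\cA$-edges, whereas you invoke the ready-made diagram $\D_{\mbf w}$ from Section~\ref{sec:wicks} and relabel via $\phi$. One small imprecision: after substitution the boundary labels are $\phi(w_i)$, which equal $c_i^{-1}$ in $H$ but not literally as words, so an annular van Kampen diagram for $\phi(w_i)c_i=_H1$ must be glued on to make the boundary read $c_i^{-1}$; this is implicit in your appeal to Lemma~\ref{lem:wicks} and causes no trouble.
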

\begin{proof}
Let $\psi$ be a solution to $(w_ic_i=1)_{i=1}^t$. For each $i$ label the boundary of a disk $D_i$ with $w_iv_i^{-1}c_iv_i$, where $v_i\in \cA$, 
$v_i\notin \supp(w_j)$, for all $j$, and $v_i\neq v_j$, $i\neq j$. Setting $\psi(v_i)=1$, for all $i$, the system of equations 
$(w_iv_i^{-1}c_iv_i=1)_{i=1}^t$ also has solution $\psi$. 
 Let 
$D$ be the disjoint union of these disks. 
The quotient of $D$ obtained by identifying directed edges, according to their label 
in $\cA$, is a surface $\S$ (not necessarily connected) with  $t$ boundary components.
Collapsing the arcs labelled $v_i^{-1}c_iv_i$, on the boundary of each $D_i$, to a point collapses all boundary components of
$\S$ to  a point and, as noted above Definition \ref{def:redun}, as $\mbf w$ has genus $g$,   the surface obtained from $\S$, by this collapse, has genus $g$. Therefore
 $\S$ also has genus $g$. 
 If $\mbf w$ is orientable then so is  $\S$ and, read with an appropriate orientation of $\S$, the  
 the boundary labels of $\S$ are $c_1,\ldots ,c_t$ (see Section \ref{sec:gandvk}). 
If  $\mbf w$ is non-orientable, then with an appropriate choice of orientation of each boundary component, the 
 boundary labels of $\S$  are again $c_1,\ldots ,c_t$. 

Now relabel each directed edge on $D_i$ by applying $\psi$: that is an edge labelled  $a\in \cA$ is relabelled  
 $\psi(a)$. This results in a disk with boundary label $\psi(w_i)c_i=_H 1$, for each $i$. Hence there
is a van Kampen diagram over $H$ on each disk $D_i$. Thus there is also a van Kampen diagram on $\S$. 
Therefore there is a corresponding solution
$\phi$ to the equation \eqref{eq:qgo} in the case where $\mbf w$ is orientable and to \eqref{eq:qgn} otherwise. 
That is $\genus_H^{\pm}(c_1,\ldots ,c_t)\le g$ and the result follows from Theorem \ref{thm:qesol}.
\end{proof}
\begin{theorem}\label{thm:mgen}
With the hypotheses of Theorem \ref{thm:qesol},
 there exist constants $A'$ and $B'$, dependent only on $\d$, $|X|$, $g$ 
and $t$ such that the following hold.   If $\genus^{\pm}(c_1,\ldots ,c_t)_H = g$ there exist 
$F, r_1,\ldots ,r_t\in F(X)$ such that 
\[\prod_{i=1}^t r_i^{-1}c_ir_i=F\,\textrm{ and }\,  |r_i|\le  A'\sum_{j=1}^t|c_j|+B',\] 
for $i=1,\ldots ,t$, where  
\be
\item\label{it:mgen1} if $\genus^+(c_1,\ldots ,c_t)_H = g$ then 
$F$ satisfies \ref{it:exp1} or \ref{it:exp2} of Theorem \ref{Thexp} and
\item\label{it:mgen2} if $\genus^-(c_1,\ldots ,c_t)_H= g$ then $F$ 
satisfies \ref{it:exp1} or \ref{it:exp2} of Theorem \ref{Thexp-}.
\ee    
\end{theorem}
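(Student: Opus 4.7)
The plan is to reduce the tuple case to the single-element case already handled by Theorems \ref{Thexp} and \ref{Thexp-}; the two items \ref{it:mgen1} and \ref{it:mgen2} are treated in parallel, with Theorem \ref{thm:qesol}\ref{it:qesol1} feeding into Theorem \ref{Thexp} for the orientable case, and Theorem \ref{thm:qesol}\ref{it:qesol2} feeding into Theorem \ref{Thexp-} for the non-orientable case. I write out the orientable version; the non-orientable one is identical after the obvious replacements.

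Assuming $\genus_H^+(c_1, \ldots, c_t) = g$, I apply Theorem \ref{thm:qesol}\ref{it:qesol1} to obtain a solution $\phi$ of equation \eqref{eq:qgo} such that, writing $V_i := \phi(v_i)$, each of $V_i$, $\phi(x_i)$ and $\phi(y_i)$ has length at most $A\sum_{j=1}^t|c_j| + B$, with $A, B$ depending only on $\d, |X|, g, t$. Set
\[h \;:=\; \prod_{i=1}^t V_i^{-1} c_i V_i \;\in\; F(X).\]
Equation \eqref{eq:qgo} yields $h =_H \bigl(\prod_{i=1}^g [\phi(x_i), \phi(y_i)]\bigr)^{-1}$, so $\genus_H^+(h) \le g$, and the triangle inequality gives $|h| \le (2tA + 1)\sum_{j=1}^t |c_j| + 2tB$.

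Setting $n := \genus_H^+(h) \le g$ and applying Theorem \ref{Thexp} to $h$ produces $F, R \in F(X)$ together with an orientable Wicks form $W$ of genus $n$ such that $h =_H R F R^{-1}$ and $F$ satisfies \ref{it:exp1} or \ref{it:exp2}. The conjugator bound $|R| \le |h|/2 + C$ from Theorem \ref{Thexp}, where $C = 6l(n) + 3M/2 + 2\d + 7/2$ or $C = 2\d$ in the two respective cases, is (since $n \le g$) bounded by a constant depending only on $\d, |X|, g$. Substituting the bound on $|h|$ gives $|R| \le A_0\sum_{j=1}^t |c_j| + B_0$ with $A_0, B_0$ depending only on the allowed parameters.

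The decisive step is now the choice $r_i := V_i R$, for which the consecutive factors $R R^{-1}$ telescope freely:
\[\prod_{i=1}^t r_i^{-1} c_i r_i \;=\; \prod_{i=1}^t R^{-1} V_i^{-1} c_i V_i R \;=\; R^{-1}\biggl(\prod_{i=1}^t V_i^{-1} c_i V_i\biggr) R \;=\; R^{-1} h R \;=_H\; F.\]
Combining $|r_i| \le |V_i| + |R|$ with the estimates above produces $|r_i| \le A'\sum_{j=1}^t |c_j| + B'$, as required. The intellectual content is concentrated in Theorem \ref{thm:qesol} (which relies on Ol'shanskii's cutting argument and the linear isoperimetric inequality for $H$); once that bounded solution is in hand, the present theorem is a formal combination of it with Theorems \ref{Thexp} and \ref{Thexp-}, glued together by the telescoping conjugator $r_i = V_i R$, so no serious obstacle should arise beyond carefully assembling the length bounds.
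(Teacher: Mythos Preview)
Your proof is correct and follows essentially the same route as the paper: apply Theorem \ref{thm:qesol} to get bounded conjugators $V_i$, form $h=\prod V_i^{-1}c_iV_i$, apply Theorem \ref{Thexp} (or \ref{Thexp-}) to $h$, and set $r_i=V_iR$. One small point: the paper notes that $\genus_H^+(h)$ equals $g$ exactly (not merely $\le g$), since if $h$ were a product of fewer commutators then so would $\prod V_i^{-1}c_iV_i$, contradicting $\genus_H^+(c_1,\ldots,c_t)=g$; this pins down that $F$ satisfies \ref{it:exp1} or \ref{it:exp2} of Theorem \ref{Thexp} with $n=g$, matching the statement of Theorem \ref{thm:mgen}.
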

\begin{proof}
Let $c_1,\ldots ,c_t$ be 
orientable and let $\phi$ be a solution to equation \eqref{eq:qgo}. From Theorem \ref{thm:qesol}, there exist
constants  $A$ and $B$  such that $|\phi(a)|\le A\sum_{i=1}^t |c_i|+B$, 
for all variables $a$ in the equation. Then 
$h=\prod_{i=1}^t V^{-1}_ic_iV_i$ has orientable genus $g$, where $V_i=\phi(v_i)$, and 
 $|h|$ is bounded by $(2tA+1)\sum_{i=1}^t|c_i|+2tB$. From Theorem \ref{Thexp} there
exist $R,F\in F(X)$ such that $h=RFR^{-1}$ and $F$ and $R$  satisfy  \ref{it:exp1} or \ref{it:exp2} of the theorem. 
Here $R$ is bounded by $|h|/2 +C'$, for some constant 
$C'$ dependent only on  $\d$, $|X|$ and $g$, so we have $F=\prod_{i=1}^t r_i^{-1}c_ir_i$, where $r_i= R^{-1}V_iR$; and 
$|r_i|\le A'\sum_{i=1}^t|c_i|+B'$, for constants $A'$ and $B'$ dependent only on $\d$, $|X|$, $g$ and $t$. This proves
\ref{it:mgen1} and \ref{it:mgen2} follows similarly. 
\end{proof}
\section{Proof of Theorems \ref{Thexp} and \ref{Thexp-}}\label{sec:proof}
\subsection{Decomposition over short and long edges}\label{sec:pre-proof}
The proofs of Theorems \ref{Thexp} and \ref{Thexp-}
 involve consideration of  a genus $n$ Wicks form $W$ over $\cA$  and a homomorphism 
$\theta$ from $F(\cA)$ to $F(X)$ such that 
\begin{itemize}
\item $\theta(W)$  is conjugate to $h$ in $H$ and 
\item  amongst all such  Wicks forms (of the appropriate
orientation) and maps, $W$ and $\theta$ are minimal, in a sense
made precise below.
\end{itemize}
In outline: we shall  first establish some 
preliminary lemmas involving Wicks forms and then show 
that $W$ and $\theta$ either satisfy condition 1 of our theorems,  or $W$ can be
reduced to a quadratic  word $U$, by setting certain
letters equal to $1$, and from  $U$ an extension satisfying condition 2 may be constructed. 

We abbreviate notation to allow discussion of both theorems simultaneously. 
We write $\genus_H$ to mean $\genus_H^+$ or $\genus_H^-$, and 
``genus'' instead of ``orientable genus'' or ``non-orientable genus'', whenever possible. 
We  omit ``orientable'' and 
``non-orientable'' when talking of Wicks forms; the context should make
it clear which is meant. \\[1em]
\begin{proof}[Proof of Theorems \ref{Thexp} and \ref{Thexp-}.]\label{startthm}
From Lemmas \ref{lem:wicks} and \ref{lem:ggext} it follows that if the conditions of either  Theorem hold then
$\genus_H(h)=n$. It remains to prove the converse. 
In the orientable case (Theorem \ref{Thexp})  
we 
note that, from Lemma \ref{lem:wicks}, there exists no genus $m$ Wicks form $V$, $m<n$, with a
labelling function $\psi$ such that $\psi(V)$ is conjugate to $h$ in $H$.   
By hypothesis, the genus of $h$ is equal to $n$ in $H=\la X|S\ra$. Therefore, in the case of Theorem \ref{Thexp}, $n\in \ZZ$ and 
there exist  words $a_i,b_i\in F(X)$, for $i=1,\ldots , n$, such that 
\begin{equation*}
h=_H [a_1,b_1][a_2,b_2]\cdots [a_n,b_n].
\end{equation*}
The quadratic orientable word $U=[A_1,B_1]\cdots
[A_n,B_n]$, over $\cA$, is a genus $n$ Wicks form. Let
$\phi:F(\cA)\to F(X)$
be the labelling function defined by
 $\phi(A_i)=a_i$ and $\phi(B_j)=b_j$, for all $i,j=1,\dots ,n$, and $\phi(X)=1$, otherwise. 
Then $\phi(U)=h$. Moreover, $\phi(U)$ has genus $n$ in $F(X)$. 
 In this (orientable) case let $\mathcal{F}=\mathcal{F}(h)$ be the set of pairs $(U,\phi)$ where $U$ is an orientable
 genus
$n$ Wicks form and $\phi$ is a labelling function 
such that
$\phi(U)$ is conjugate to $h$ in $H$. 

In the non-orientable case, Theorem \ref{Thexp-}, 
$n\in \ZZ[1/2]$ and 
there exist  words $c_i\in F(X)$, for $i=1,\ldots , 2n$, such that 
\begin{equation*}
h=_H c_1^2\cdots c_{2n}^2.
\end{equation*}
The non-orientable  quadratic word $U=C_1^2\cdots
C_{2n}^2$, over $\cA$, is a genus $n$ Wicks form. In this case let 
$\phi:F(\cA)\to F(X)$
be the labelling function given by 
 $\phi(C_i)=c_i$, $i = 1,\ldots ,2n$, and $\phi(X)=1$, otherwise. 
Then $\phi(U)=h$ and  $\phi(U)$ has genus $n$ in $F(X)$. 
 In this (non-orientable) case, 
let $\mathcal{F}=\mathcal{F}(h)$ be the set of pairs $(U,\phi)$ where $U$ is a 
non-orientable
 genus
$n$ Wicks form and $\phi$ is a labelling function 
such that
$\phi(U)$ is conjugate to $h$ in $H$. 

In both orientable and non-orientable cases,
we call  a pair $(W,\theta)\in \cF$ \emph{minimal}  if
$|\theta(W)|$ is minimal amongst all pairs in
$\mathcal{F}$. Since we have shown that, in both cases, 
there is at least one pair in
$\mathcal{F}$ there is always a minimal pair. As above, from \cite{Culler81}, 
if $(U,\phi)$ is a minimal pair then we 
may choose a cancellation free pair $(W,\psi)$ such that $\psi(W)$ is conjugate to $\phi(U)$ in $F(X)$. Then
$(W,\psi)$ is necessarily minimal. Hence $\cF$ contains a cancellation free minimal pair. 
Moreover, if $(W,\psi)$ is minimal and cancellation free then, 
for each element $E$ of $\supp(W)$, it follows that $\psi(E)$ is $H$-minimal.

Now,  let $(W,\theta)$ be a minimal cancellation free pair in $\mathcal{F}$. 
If $p$ is a geodesic path between $2$ vertices of the Cayley graph 
$\G_X(H)$ and $p$ has label $\a$ then, by  abuse of notation,
we refer to the path $p$ as $\a$. In particular, for $(W,\theta)\in \cF$, 
if $W=A_1\cdots A_n$, where $A_i\in \cA^{\pm 1}$, then we 
consider $\theta({W})=\theta(A_1)\cdots \theta(A_n)$ 
as the concatenation of paths $\theta(A_i)$  in the Cayley graph 
$\Gamma_X(H)$. This path contains a geodesic subpath with label
$\theta(A_i)$, which we refer to as the path $\theta(A_i)$. On the
other hand there is a geodesic path in $\G_X(H)$ from $1$ to $\theta(W)$,
with label an $H$-minimal word $F$ such that $F=_H \theta(W)$. We
call such a  path a \emph{geodesic for} $(W,\theta)$ and refer to it as $F$. 
 
Now suppose that, for each letter $E$ of the Wicks form $W$, 
we have  $|\theta(E)|\leq L=12l+M+4$. In this case, in the light of Lemma \ref{cull},  
condition 1 of the appropriate Theorem is satisfied, apart from the bound on 
the length of $R$, which is deferred to Section \ref{sec:Rbound}. 
  Therefore, from now on, we shall assume
that there is at  least one letter of $W$ which is labelled by a word
of length
greater  than $L$ in $F(X)$. (This
 implies that there are two since each letter appears twice.)

For a pair $(W, \theta)\in \cF$ and $A\in \supp(W)$, we say that the occurrences
of  $A$ and $A^{-1}$ in $W$ are \emph{long edges of} $(W,\theta)$ (or just
$W$) if 
$|\theta(A)|>L$, as a word in $F(X)$. 
All other letters of $W$ are called {\emph{short edges}}.
  For
convenience in the proof, we may, without loss of generality, 
assume that the 
last letter
of  ${W}$ is a long edge.

 We 
state a number of preliminary lemmas, which we need later. The first
 is a version of Lemma 14 of  \cite{lysionok89}, 
and   establishes bounded length properties of subwords of $\theta(A)$,
where $A$ is any letter in  $\supp(W)$. 
Let $\Gamma_W$ be the genus $n$ graph associated to $W$.
\begin{lemma}\label{lem:l1}
Let $(W,\theta)$ be a minimal cancellation free pair in $\cF$,
let $A\in \supp(W)$, with signature $\s(A)=(\e,\d)$, and let 
$E_1,\ldots ,E_r,A^\e,E_{r+1},\ldots ,E_s,A^{\d},E_{s+1},\ldots ,E_t$
be the cyclic  sequence of letters in the  Eulerian circuit
$W$ in $\Gamma _W$. Let $\theta(A^\e)=a_1a_2$ and $\theta(A^\d)=a'_1a'_2$,
 where $a_1, a'_1, a_2$ and $a'_2$ are
subwords of the $H$-minimal words
$\theta(A^\e)$ and $\theta(A^\d)$. 
 Similarly, let $\theta(E_i)=e_{i1}e_{i2}$, 
for $1\leq i\leq t$. Then 
\begin{enumerate}[(i)]
\item\label{it:l1-1} $|a_1|\leq |e_{i2}\theta(E_{i+1}\cdots E_r)a_1|_H$, for
  $1\leq i\leq r$;
\item\label{it:l1-2} $|a_2|\leq |a_2\theta(E_{r+1}\cdots E_{j-1})e_{j1}|_H$,
  for $r+1\leq j\leq s$; 
\item\label{it:l1-3} $|a_1|\leq |a_2\theta(E_{r+1}\cdots
  E_sA^{\d}E_{s+1}\cdots E_{k-1})e_{k1}|_H$, for $s+1\leq k\leq t$;  
\item\label{it:l1-4} if $\d=\e$ and $|a_1|\ge |a'_1|$ then 
$|a_2|\le |a_2\theta(E_{r+1}\cdots E_s)a'_1|_H$ and 
\item\label{it:l1-5} if $\d=\e$ and $|a_2|\ge |a'_2|$ then 
  $|a_1|\le |a_2\theta(E_{r+1}\cdots E_s)a'_1|_H$.
\end{enumerate}   
\end{lemma}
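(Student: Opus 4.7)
The plan is to establish each of the five inequalities by a minimality argument applied to the pair $(W,\theta)\in\cF$. In every case I would assume the inequality fails strictly, then use the resulting shortcut in the Cayley graph $\Gamma_X(H)$ to construct a new pair $(W',\theta')\in\cF$ with $|\theta'(W')|<|\theta(W)|$, contradicting the minimality of $(W,\theta)$.

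For parts \ref{it:l1-1}, \ref{it:l1-2}, \ref{it:l1-3}, the three claims have the same structure: the length of an interior subword ($a_1$ or $a_2$) of the $H$-minimal label $\theta(A^\e)$ must be at most the $H$-length of a specified subpath of $\theta(W)$ running between two ``midpoints'' lying inside certain edges of $\Gamma_W$. Consider part \ref{it:l1-1}: suppose for contradiction $|a_1|>|e_{i2}\theta(E_{i+1}\cdots E_r)a_1|_H$, and pick an $H$-minimal word $\xi$ equal in $H$ to $e_{i2}\theta(E_{i+1}\cdots E_r)a_1$, so $|\xi|<|a_1|$. I would modify $W$ by the following surgery on the associated graph $\Gamma_W\subset\S_W$: split the edge $E_i$ at the point where $e_{i1}$ and $e_{i2}$ meet, split $A$ at the point where $a_1$ and $a_2$ meet, and introduce a new edge running directly between these two new vertices, to be labelled (under $\theta'$) by $\xi$. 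Reading the modified graph as an Eulerian circuit yields a new Wicks form $W'$ of the same genus and orientability type, with $\theta'(W')$ conjugate to $h$ and $|\theta'(W')|<|\theta(W)|$, contradicting minimality. Parts \ref{it:l1-2} and \ref{it:l1-3} are handled by entirely analogous surgeries: the shortcut is inserted between the midpoint of $A^\e$ and the midpoint of an edge $E_j$ with $r+1\le j\le s$, or, respectively, straddling the second occurrence $A^\d$ and terminating in the midpoint of an edge $E_k$ with $s+1\le k\le t$.

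Parts \ref{it:l1-4} and \ref{it:l1-5} require the additional hypothesis $\d=\e$, which forces $\theta(A^\e)=\theta(A^\d)$; thus $a_1a_2$ and $a'_1a'_2$ are two factorisations of the same $H$-minimal word. The hypothesis $|a_1|\ge|a'_1|$ in \ref{it:l1-4} (respectively $|a_2|\ge|a'_2|$ in \ref{it:l1-5}) allows one to align the midpoints of the two copies of $\theta(A)$ consistently: by splitting $A$ into two new letters in $W$ so that both occurrences break at the \emph{same} internal position, one obtains a Wicks form of the same genus in which the same type of shortcut surgery as above produces a shorter pair in $\cF$, again contradicting minimality.

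The main obstacle will be verifying that, after each surgery, the new pair $(W',\theta')$ genuinely lies in $\cF$. Specifically, $W'$ must remain a Wicks form of the same genus $n$ and orientability type as $W$, and $\theta'(W')$ must remain conjugate to $h$ in $H$. Preserving genus amounts to an Euler-characteristic computation on the modified surface, while the Wicks form conditions (free cyclic reducedness and irredundancy, as in Definition of a Wicks form) may require further reductions, such as contracting vertices of degree $1$ or $2$ in $\Gamma_{W'}$ or merging an edge that becomes redundant. Once these bookkeeping steps are shown not to destroy the genus-$n$ structure, the strict decrease $|\theta'(W')|<|\theta(W)|$ forces the desired inequalities in all five cases.
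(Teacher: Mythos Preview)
Your overall strategy—use minimality of $(W,\theta)$ by constructing a competing pair $(W',\theta')\in\cF$—matches the paper exactly, and you have correctly identified that the real work lies in checking $(W',\theta')\in\cF$. However, the surgery you describe does not do what you claim.

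If you merely split $E_i$ and $A$ at their midpoints and \emph{add} a new edge labelled $\xi$ inside the complementary disk, two things go wrong. First, every edge still appears twice in the Eulerian circuit of the new graph, and $\xi$ now appears twice as well, so $|\theta'(W')|=|\theta(W)|+2|\xi|>|\theta(W)|$; nothing has been shortened. Second, the new edge cuts the single complementary $2$--cell into two, so the Euler-characteristic count gives a quadratic word of genus $n+\tfrac12$, not $n$; the pair would not lie in $\cF$ at all. The paper's proof avoids both problems by a cut-and-glue on the disk: one draws the arc $A'$, cuts along it to obtain two disks, and then \emph{re-glues along the old edge $A$}. This eliminates $A$ from the support and replaces it by $A'$, with $\psi(A')=a_1t_1$ where $t_1$ is an $H$-minimal representative of the relevant subpath. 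The resulting $W'$ is again a genus-$n$ Wicks form, and comparison of lengths gives $|\theta(W)|\ge|\psi(W')|+2|a_2|-2|t_1|$; minimality then yields $|t_1|\ge|a_2|$ directly (no contradiction needed). The missing ingredient in your plan is precisely this gluing step that removes $A$.

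For parts \ref{it:l1-4} and \ref{it:l1-5} your outline is too vague to evaluate; the paper does not simply ``break both occurrences at the same point'' but performs the analogous cut-and-glue with the arc ending at an endpoint (not a midpoint) of the second $A$, and then has to insert an auxiliary letter $E_0$ to keep the result cyclically reduced. The hypothesis $|a_1|\ge|a'_1|$ (resp.\ $|a_2|\ge|a'_2|$) is used to factor $a_1=a'_1a_3$ (resp.\ $a_2=a_4a'_2$) so that the new edge can be labelled $a_3t_1$ (resp.\ $a_4^{-1}t_1$), and the length comparison goes through as before.
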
 
\begin{proof}
Without loss of generality we may assume that $\e=1$ so $\s(A)=(1,\d)$. 
First we shall prove statement \ref{it:l1-2}. 
 As in Section \ref{sec:gandvk} consider a disk $D$ with
its boundary divided into $t+2$ segments, 
labelled by the word $W=E_1\cdots E_t$. 
Suppose that  
 $E_j=X^\g$, for some $X\in \cA$ and $\g=\pm 1$. Then there is 
(unique) $i\neq j$
 such that $E_i=X^{\pm 1}$; say $E_i=E_j^{\xi}$, where $\xi =\pm 1$.  
Bisect $E_j$ into
two new edges, 
the  first new edge  denoted $E_{j1}$ and the
second  $E_{j2}$, where $E_{j1},E_{j2}$ are elements of
$\mathcal{A}^{\pm 1}$ not occurring in $W$.  Also replace the edge  
$E_i$ by the two edge path 
$E_{i1}E_{i2}$, where $E_{i1}=E_{j1}$ and $E_{i2}=E_{j2}$, if $\xi=1$, 
and  $E_{i1}=E_{j2}^{-1}$ and $E_{i2}=E_{j1}^{-1}$,
if $\xi=-1$. That is, 
$E_{i1}=E^{\xi}_{j\mu(\xi)}$ and $E_{i2}=E^{\xi}_{j\nu(\xi)}$. 

Now consider a properly embedded arc in $D$, 
with end points $\i(A)$ and $\t(E_{j1})$. Direct this arc 
from $\i(A)$ to
$\t(E_{j1})$ and label it $A^\prime$. (See Figure \ref{fig:ocut1a}.) 
\noindent
\begin{figure}[htp]
\begin{center}
\psfrag{E1}{{\scriptsize $E_1$}}
\psfrag{A}{\scriptsize $A$}
\psfrag{Ad}{\scriptsize $A^\d$}
\psfrag{A'}{\scriptsize $A'$}
\psfrag{Er}{{\scriptsize $E_{r}$}}
\psfrag{Er1}{{\scriptsize $E_{r+1}$}}
\psfrag{Ej-1}{{\scriptsize $E_{j-1}$}}
\psfrag{Ej1}{{\scriptsize $E_{j1}$}}
\psfrag{Ej2}{{\scriptsize $E_{j2}$}}
\psfrag{Ej+1}{{\scriptsize $E_{j+1}$}}
\psfrag{Es}{{\scriptsize $E_{s}$}}
\psfrag{Es+1}{{\scriptsize $E_{s+1}$}}
\begin{subfigure}[b]{.3\columnwidth}
\begin{center}
\includegraphics[scale=0.3]{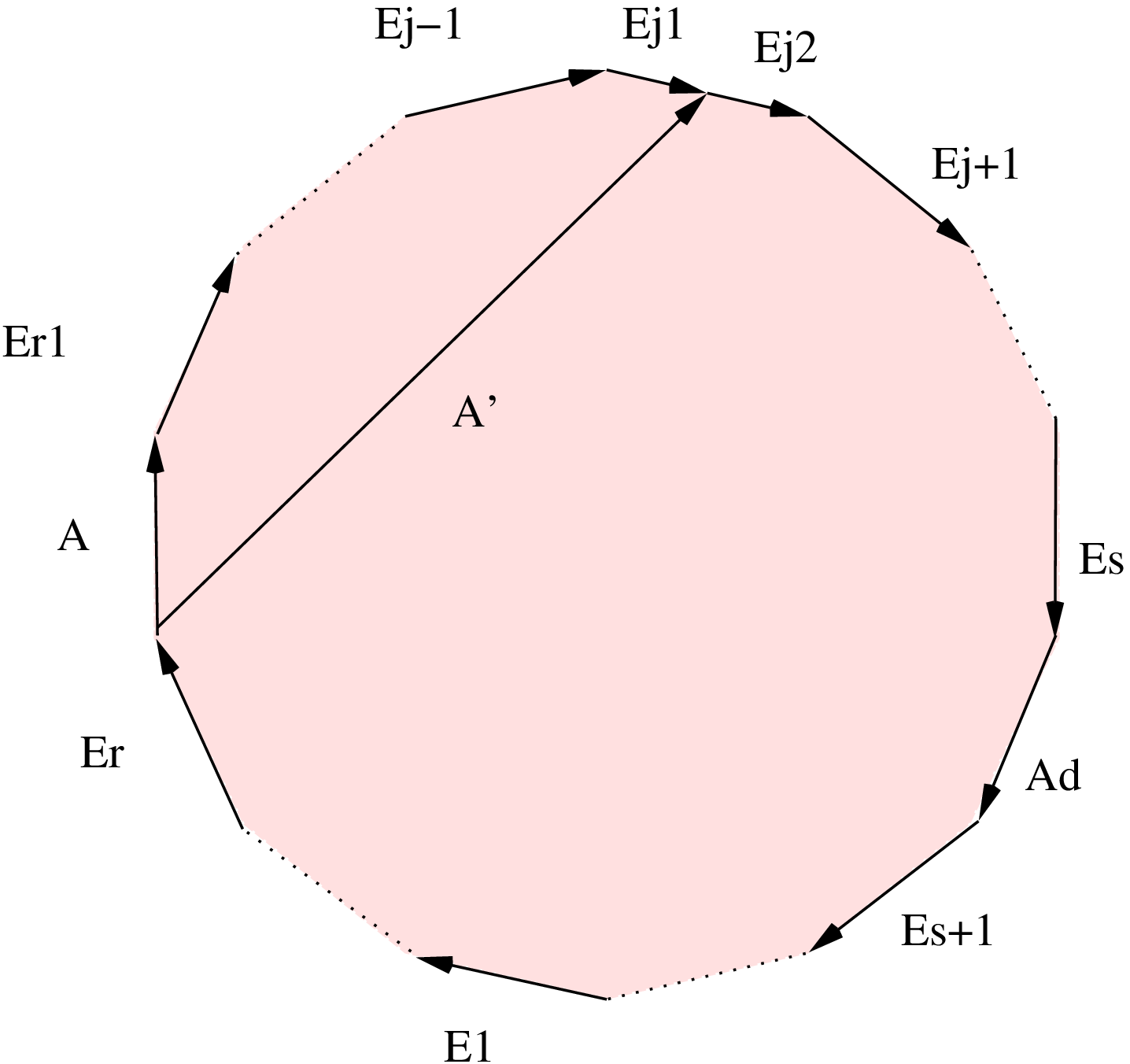}
\caption{}
\label{fig:ocut1a}
\end{center}
\end{subfigure}
%
\begin{subfigure}[b]{.3\columnwidth}
\begin{center}
\includegraphics[scale=0.3]{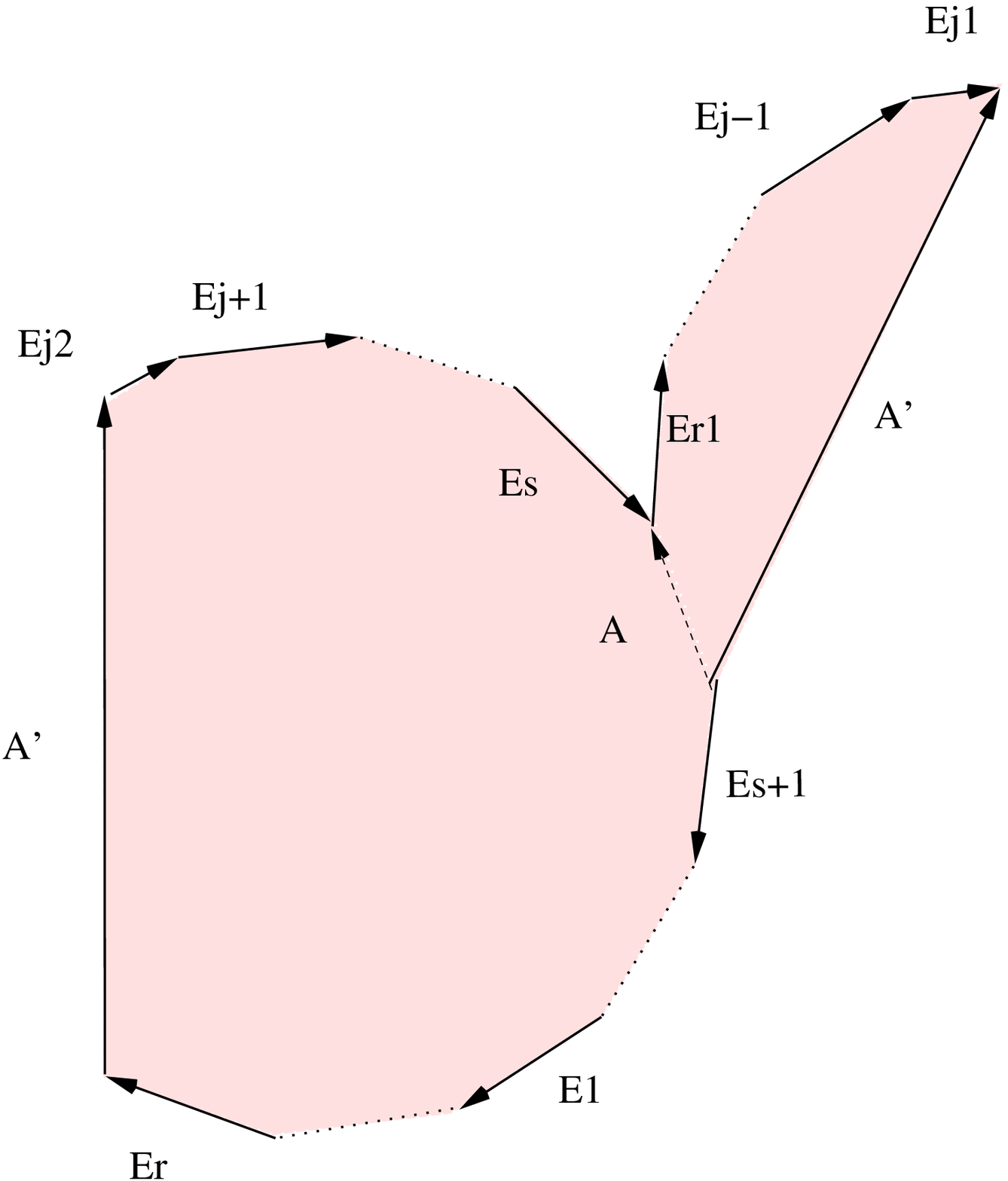}
\caption{$\d=-1$}
\label{fig:ocut1b}
\end{center}
\end{subfigure}
\begin{subfigure}[b]{.3\columnwidth}
\begin{center}
\includegraphics[scale=0.3]{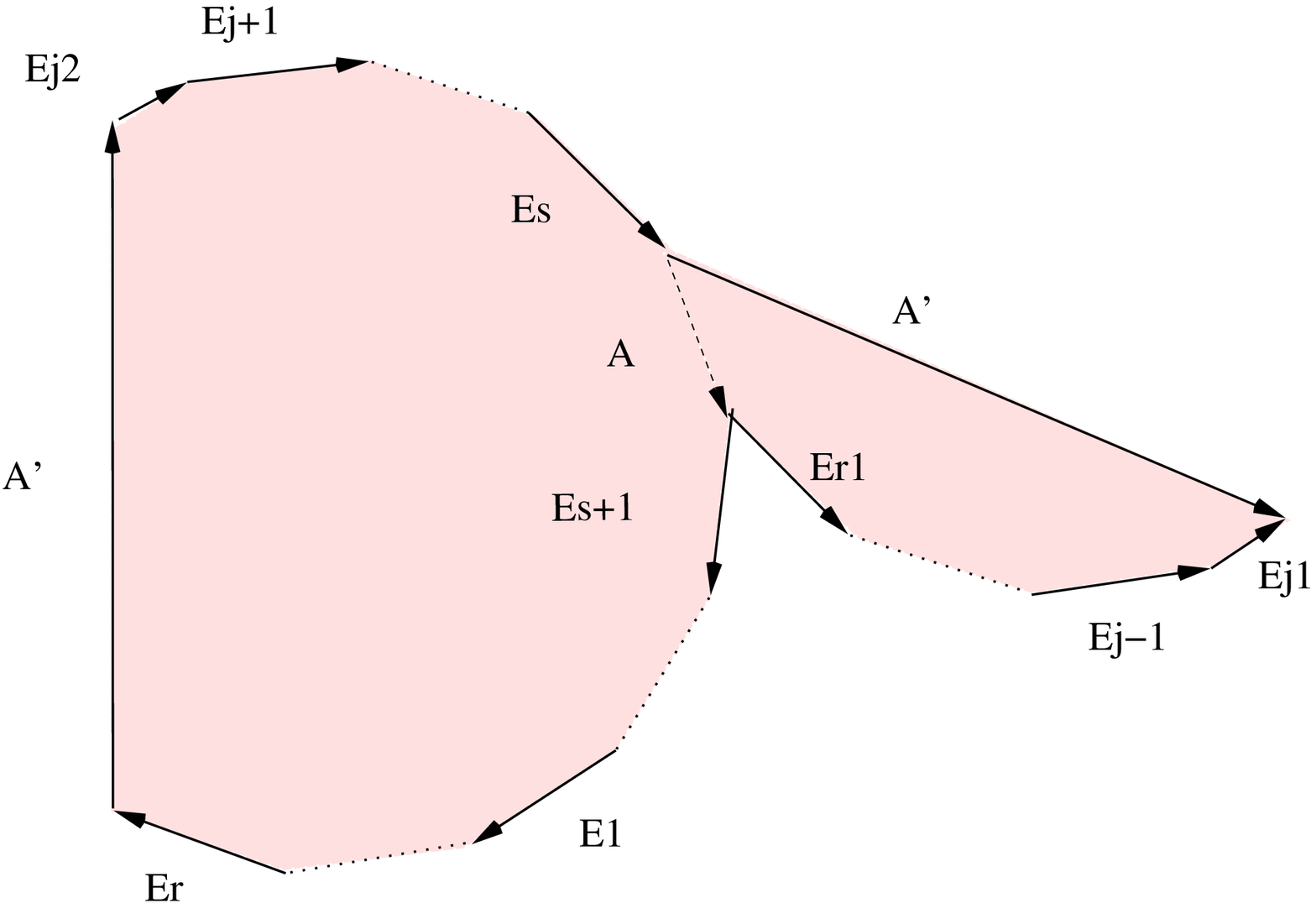}
\caption{$\d=1$}
\label{fig:ocut1c}
\end{center}
\end{subfigure}
\caption{}\label{fig:ocut1}
\end{center}
\end{figure}
Cut the disk $D$ along 
$A^\prime$ to 
give two disks $D_1$ and $D_2$. Identify the segments of the
boundaries of $D_1$ and $D_2$ labelled $A$ to form a new disk $D'$
with boundary labelled with the quadratic word 
\[W'= E_1\cdots E_r A' E_{j2} \cdots E_s (E_{r+1} \cdots
 E_{j1} A'^{-1})^{-\d} E_{s+1} \cdots E_t,\]
as shown in Figures \ref{fig:ocut1b} and \ref{fig:ocut1c}, for 
the cases $\d=-1$ and $\d=1$, respectively. 
By considering links of vertices
we see that $W'$ is irredundant unless it contains, as a subword, 
two occurrences
of 
either $(E_sE_{r+1})^{\pm 1}$, in the case $\d=-1$; or 
$(E_{r+1}^{-1}E_{s+1})^{\pm 1}$, in the case $\d=1$. 
If this is the case, say $E_aE_b=E_{r+1}^{-1}E_s^{-1}$, 
then we replace the subword $E_sE_{r+1}$ by a new letter $E_u$ and
 the subword $E_aE_b$ by $E_u^{-1}$. All other possibilities
are dealt with similarly.  Thus we may assume that 
$W'$ is an irredundant quadratic  word. 
As the surface $\S_W$, obtained by identifying edges of $D$, 
according to their  labels, 
is the same as the surface $\S_{W'}$, the quadratic word $W'$ is 
a genus $n$ Wicks form. 
Let $\cL=\supp(W')$
and  let $t_1$ be an $H$-minimal word in $F(X)$, 
such that $t_1=_H a_2\theta(E_{r+1}\cdots E_{j-1})e_{j1}$. 
 Define a  homomorphism $\psi:F(\cL)\to F(X)$
in the following way.
\begin{equation*}
\psi(E)= \left
\{\begin{array}{ll}
a_1t_1 & \textrm{if $E=A'$}\\
e_{j1} & \textrm{if $E=E_{j1}$}\\
e_{j2} & \textrm{if $E=E_{j2}$}\\[.5em]
e_{j\mu(\xi)}^\xi & \textrm{if $E=E_{i1}$}\\[.5em]
e_{j\nu(\xi)}^\xi & \textrm{if $E=E_{i2}$}\\[.5em]
\theta(E) & \textrm{otherwise.}
\end{array}
\right. ,
\end{equation*}
with the obvious adjustments if the word $W'$ had to be modified to 
make it irredundant.  
Let 
\begin{align*}
w_0&=\theta(E_1\cdots E_r)=\psi(E_1\cdots E_r),\\
w_1&=\theta(E_{r+1}\cdots E_{j-1})e_{j1}=\psi(E_{r+1}\cdots E_{j-1}E_{j1}),\\
w_2&=e_{j2}\theta(E_{j+1}\cdots E_s)=\psi(E_{j2}E_{j+1}\cdots E_s)\textrm{ and}\\
w_3&=\theta(E_{s+1}\cdots E_t)=\psi(E_{s+1}\cdots E_t).
\end{align*}
Then, writing $E_i$ for $E_{i1}E_{i2}$, as before, so $\psi(E_i)
=(e_{j1}e_{j2})^{\g\xi}=e_j^{\g\xi}$, we have 
\begin{align*}
\psi(W') & =_H \psi(E_1\cdots E_rA'E_{j2}\cdots E_s(E_{r+1}\cdots
E_{j1}A'^{-1})^{-\d}E_{s+1}\cdots E_t)\\
         & =_H w_0a_1t_1w_2(w_1t_1^{-1}a_1^{-1})^{-\d}w_3
\end{align*}
and substituting $t_1=_H a_2w_1$ in this expression gives 
\[\psi(W')=_H w_0a_1a_2w_1w_2(a_2^{-1}a_1^{-1})^{-\d}w_3=_H \theta(W).\]
This implies that $\psi(W')$ is conjugate to $h$ in $H$. Thus
$(W',\psi)$ is an element of $\mathcal{F}$. Now since $|\theta(W)|$ was chosen to
be minimal over all pairs in $\mathcal{F}$, it follows that
\begin{eqnarray}\label{psitheta1}
|\psi(W')| & \geq & |\theta(W)|.
           \end{eqnarray}
Also, as $\theta$ is cancellation free on $W$, we  have
\begin{align}
|\theta(W)| &= |w_0|+  |w_1|+  |w_2|+  |w_3|+ 2|a_1|+2|a_2|\nonumber\\
&= (|w_0|+  |w_1|+  |w_2|+  |w_3|+ 2|a_1|+2|t_1|)+2|a_2|-2|t_1|\nonumber\\
 & \ge  |\psi(W')|+2|a_2|-2|t_1|.\nonumber
\end{align}
That is
\begin{equation}\label{psitheta2}
|\theta(W)|\ge  |\psi(W')|+2|a_2|-2|t_1|.
\end{equation}
Inequalities \eqref{psitheta1} and \eqref{psitheta2} imply  
\begin{equation}
|a_2\theta(E_{r+1}\cdots E_{j-1})e_{j1}|_H  = |t_1|\geq  |a_2|,                
\end{equation}
as required.
 Hence  \ref{it:l1-2} holds.

The same argument, using $(W^{-1},\theta)\in \mathcal{F}$ and the cyclic permutation
 of $W^{-1}$ beginning with $E_S^{-1}$, can be used
to show  that \ref{it:l1-1} holds. 

Next we  consider case
\ref{it:l1-3}. 
Again we label the disk $D$ with the word $W$. 
This time, bisect the edge labelled $E_k$. The first half shall be labelled by
$E_{k1}$ and the second half labelled by $E_{k2}$, where $E_{k1},
E_{k2}$ are elements of $ \mathcal{A}^{\pm 1}$ not occurring in $W$. 
As before there is an edge 
$E_i=E_k^{\eta}$, $\eta=\pm 1$, 
and we replace $E_i$ with $E_{i1}E_{i2}$,  where $E_{i1}=E_{k\mu(\eta)}^\eta$ 
and $E_{i2}=E_{k\nu(\eta)}^{\eta}$.  
Cut along an arc  $A''$ joining $\tau(A)$  to  $\tau(E_{k1})$, 
and identify the two edges labelled $A$, 
as in  Figure \ref{fig:ocut2}. 
\begin{figure}[htp]
\begin{center}
\psfrag{E1}{{\scriptsize $E_1$}}
\psfrag{A}{\scriptsize $A$}
\psfrag{Ad}{\scriptsize $A^\d$}
\psfrag{A''}{\scriptsize $A''$}
\psfrag{Er}{{\scriptsize $E_{r}$}}
\psfrag{Er1}{{\scriptsize $E_{r+1}$}}
\psfrag{Ej-1}{{\scriptsize $E_{k-1}$}}
\psfrag{Ej1}{{\scriptsize $E_{k1}$}}
\psfrag{Ej2}{{\scriptsize $E_{k2}$}}
\psfrag{Ej+1}{{\scriptsize $E_{k+1}$}}
\psfrag{Es}{{\scriptsize $E_{s}$}}
\psfrag{Es+1}{{\scriptsize $E_{s+1}$}}
\begin{subfigure}[b]{.32\columnwidth}
\begin{center}
\includegraphics[scale=0.3]{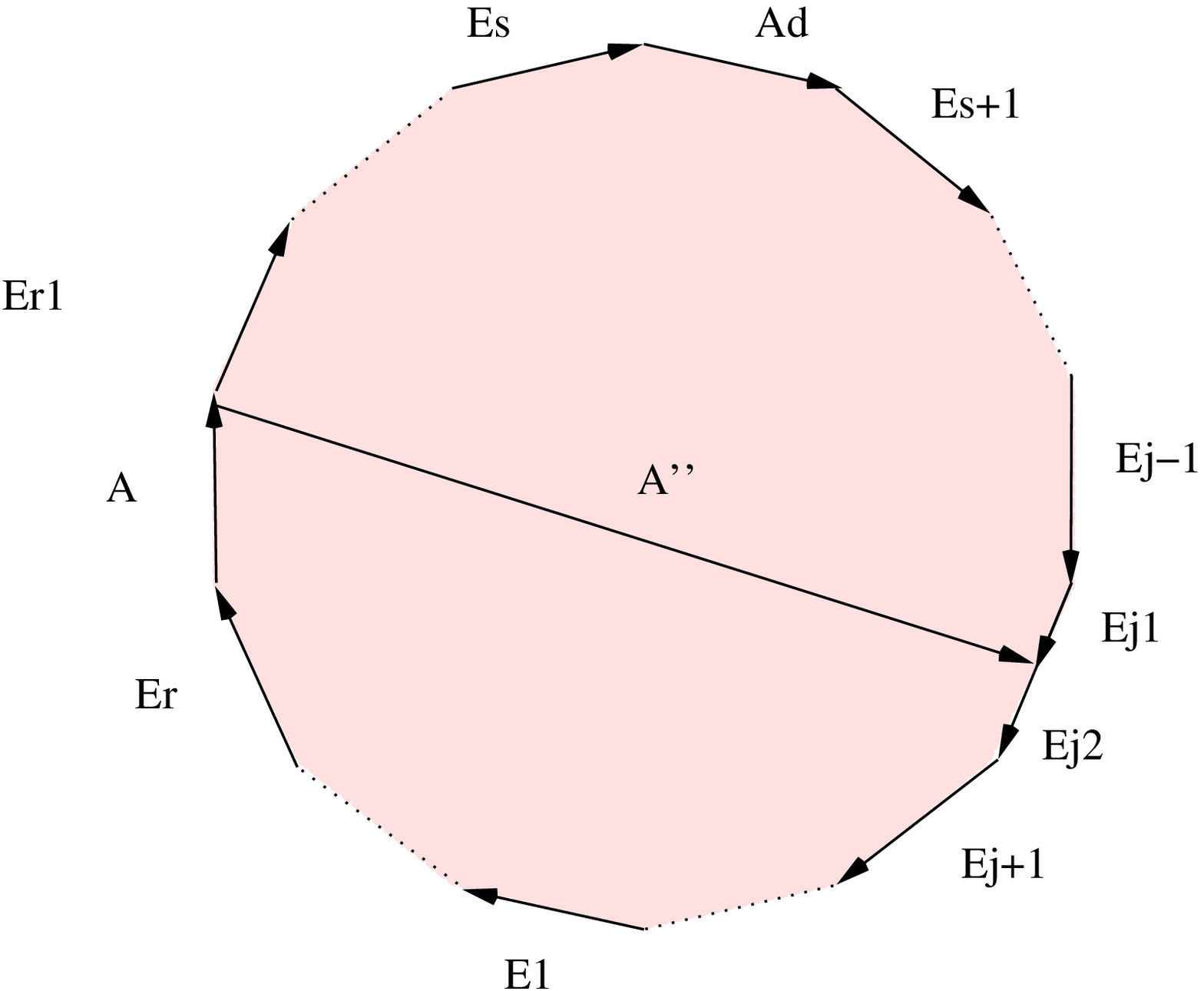}
\caption{}
\label{fig:ocut2a}
\end{center}
\end{subfigure}
\begin{subfigure}[b]{.32\columnwidth}
\begin{center}
\includegraphics[scale=0.3]{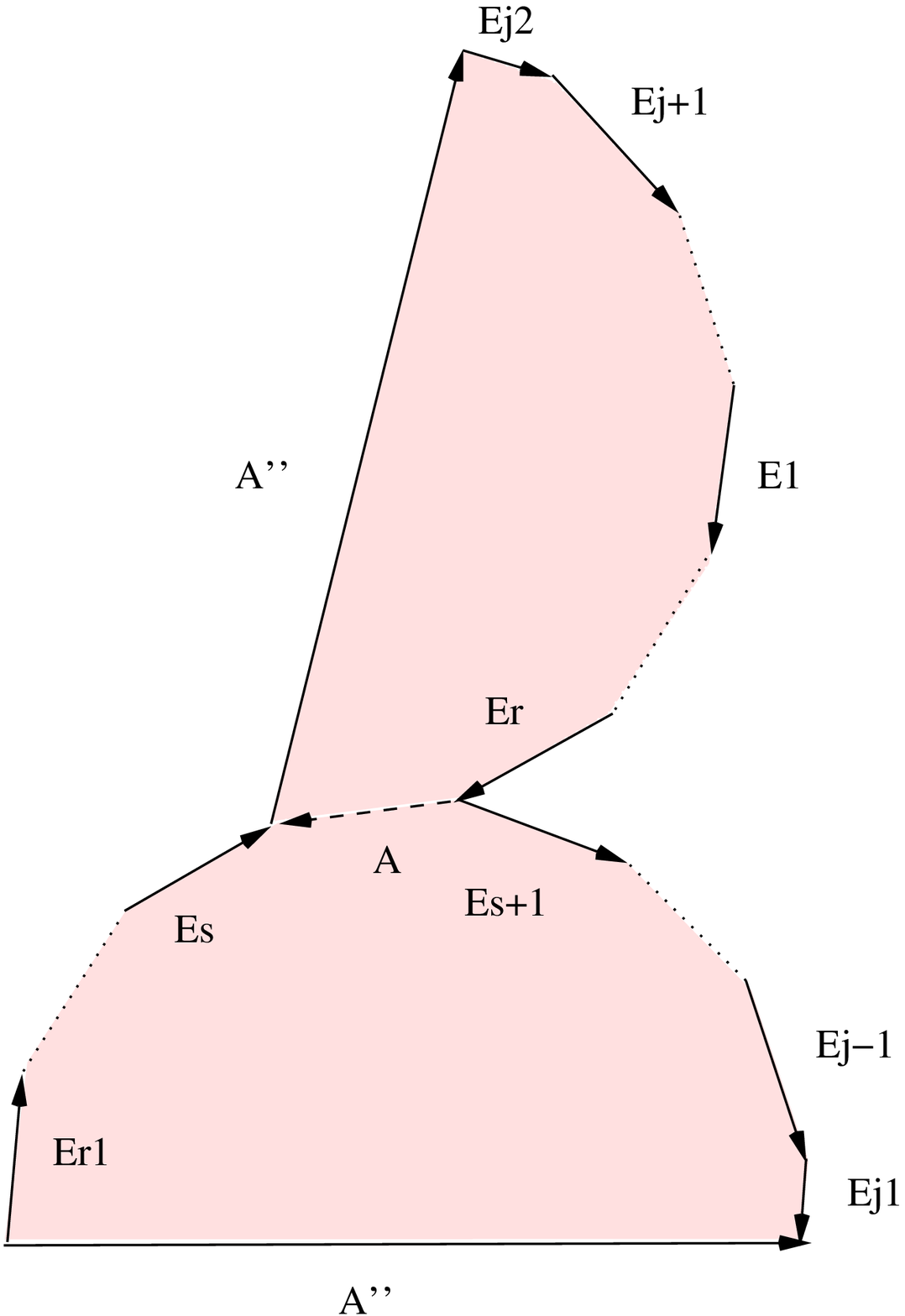}
\caption{$\d=-1$}
\label{fig:ocut2b}
\end{center}
\end{subfigure}
\begin{subfigure}[b]{.32\columnwidth}
\begin{center}
\includegraphics[scale=0.3]{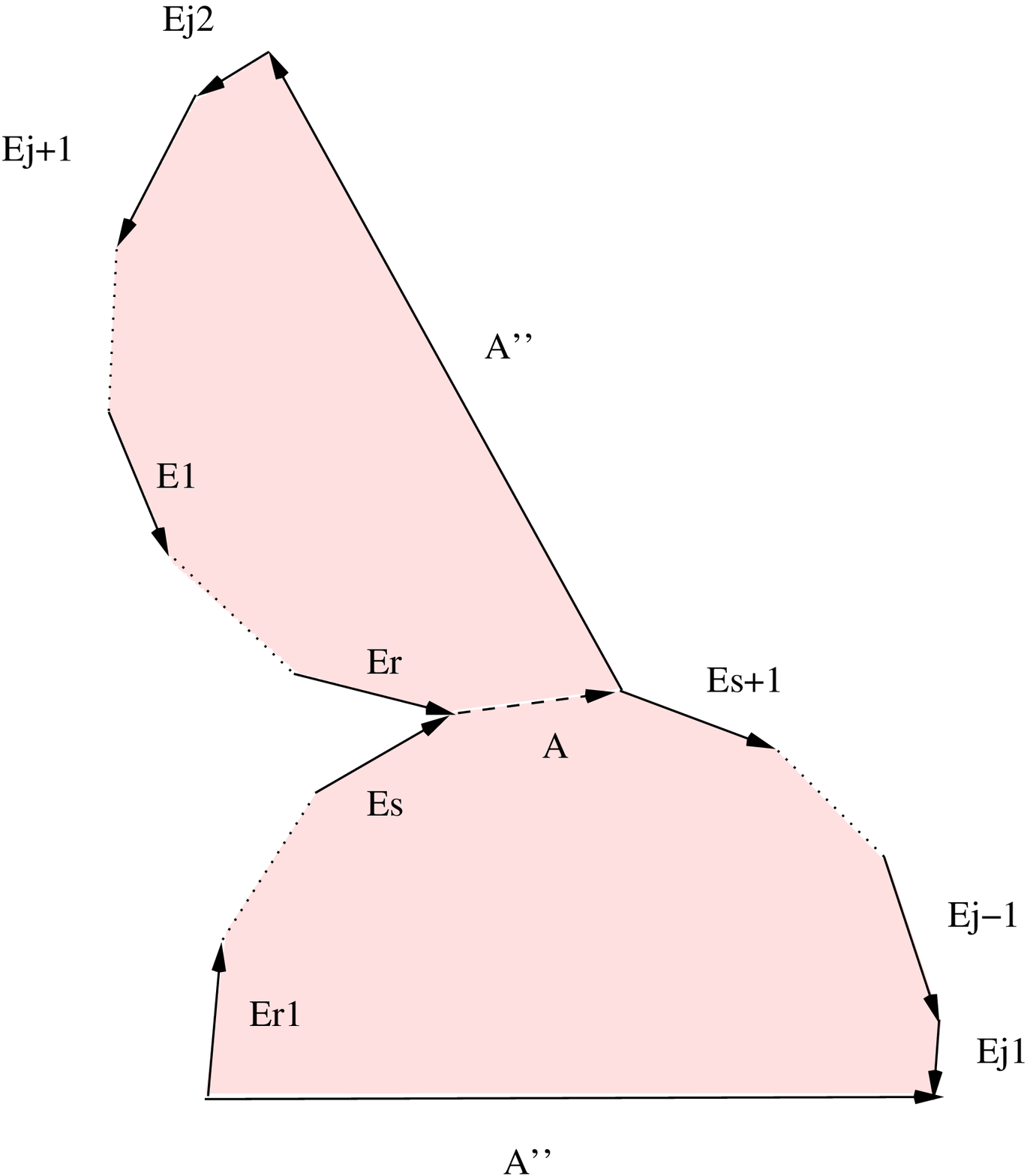}
\caption{$\d=1$}
\label{fig:ocut2c}
\end{center}
\end{subfigure}
\caption{}\label{fig:ocut2}
\end{center}
\end{figure} 
As before, if $\iota(A)$ in $\Gamma _W$ has degree $3$, and $\d=-1$ 
then we also
replace both occurrences the subword $E_{r}E_{s+1}$ 
(or its inverse) with a new edge $E_u\in
\mathcal{A}$ (or its inverse). A similar modification to the edge path
$E_rE_s^{-1}$ may be necessary when $\d=1$.  
The boundary of the new disk is now labelled 
by the irredundant  quadratic word 
\begin{equation*}
W''=E_1\cdots  E_r (E_{s+1} \cdots E_{k1} {A''^{-1}} E_{r+1}\cdots
 E_s)^{-\d} {A''} E_{k2} \cdots  E_t,
\end{equation*}
(replacing $ E_r E_{s+1}$ by $E_u$ if necessary) 
 which is 
a genus $n$ Wicks form. We shall define a labelling function
for $W''$. Let $\cK=\supp(W'')$ and let 
$t_2$ be an $H$-minimal word in $F(X)$ such that 
\begin{equation}
t_2=_H
a_2\theta(E_{r+1}\cdots E_s)(a_1 a_2)^\d\theta(E_{s+1}\cdots E_{k-1})e_{k1}.
\end{equation}
Define a  homomorphism $\phi:F(\cK)\to F(X)$
by 
\begin{equation*}
\phi(E) =\left
\{\begin{array}{ll}
a_2^{-1}t_2 & \textrm{if $E=A''$}\\
e_{k1} & \textrm{if $E=E_{k1}$}\\
e_{k2} & \textrm{if $E=E_{k2}$}\\[.5em]
e_{k\mu(\eta)}^\eta & \textrm{if $E=E_{i1}$}\\[.5em]
e_{k\nu(\eta)}^\eta & \textrm{if $E=E_{i2}$}\\[.5em]
\theta(E) & \textrm{otherwise}.
\end{array}
\right. .
\end{equation*} 

As before we find that $\phi(W'')=_H \theta(W)$, 
\begin{eqnarray}\label{psitheta5}
|\phi(W'')| & \geq & |\theta(W)|
           \end{eqnarray}
and 
\begin{eqnarray}\label{psitheta6}
|\theta(W)|
            & \geq & |\phi(W'')|-2|t_2|+2|a_1|.
\end{eqnarray}
Inequalities (\ref{psitheta5}) and (\ref{psitheta6}), imply that
\begin{equation*}
|a_2\theta(E_{r+1}\cdots E_s)a_2^{-1}a_1^{-1}\theta(E_{s+1}\cdots E_{k-1})e_{k_1}|_H  = |t_2| \geq  |a_1|,               
\end{equation*}
as required.
 Hence \ref{it:l1-3} holds. 

For \ref{it:l1-4} we modify the 
arguments above as follows. As we are assuming $\e=1$  we 
have $\d=1$. First consider the case where 
$i=1$, and follow the proof of case \ref{it:l1-2}, but  take 
$j=s$ and $E_{j1}=E_s$, $E_{j2}=1$. 
Then 
\[W'= E_1\cdots E_r A^{\prime\, 2} E_{s}^{-1} \cdots
 E_{r+1}^{-1} E_{s+1} \cdots E_t.\]
Let $E_0$ be an element of $\cA$ which does not belong to $\supp(W')$ and
 replace $W'$ with 
\[W'_0= E_1\cdots E_r E_0 A^{\prime\, 2} E_0^{-1} E_{s}^{-1} \cdots
 E_{r+1}^{-1} E_{s+1} \cdots E_t.\]
The surface obtained from identifying boundary intervals of a disk 
labelled by $W'_0$ is again $\S_W$, (as can be seen by considering
the link of the vertex incident to $A'$.  
As $|a_1|\ge |a'_1|$, we have $a_1=a'_1a_3$, for 
some terminal subword $a_3$ of $a_1$. 
Define $t_1=_H a_2\theta(E_{r+1} \cdots E_{s})a'_1$, 
let $\cK'=\cK\cup\{E_0\}$ and define $\psi: F(\cK')\maps F(X)$ by   
\begin{equation*}
\psi(E)= \left
\{\begin{array}{ll}
a_3t_1 & \textrm{if $E=A'$}\\
a'_1 & \textrm{if $E=E_0$}\\
\theta(E) & \textrm{otherwise}
\end{array}
\right. ,
\end{equation*}
$w_0$ and $w_3$ as before and 
\[w_1=\theta(E_{r+1}\cdots E_s)=\psi(E_{r+1}\cdots E_s).\]
Then $t_1=_H a_2w_1a'_1$ so 
\begin{align*}
\psi(W'_0) & =_H w_0a'_1a_3t_1a_3t_1 a^{\prime \,-1}_1 w_1^{-1}w_3\\
&=_H w_0a_1a_2w_1a_1a_2w_1a'_1 a^{\prime \,-1}_1 w_1^{-1} w_3\\
& =_H w_0a_1a_2w_1a_1a_2 w_3\\
&=_H \theta(W).
\end{align*}
As before, $W'$ is a genus $n$ Wicks form so $|\theta(W)|\le |\phi(W'_0)|$. 
 Thus 
\[|w_0|+|w_1|+|w_3|+2|a_1|+2|a_2|=\theta(W)\le \phi(W'_0)
\le |w_0|+|w_1|+|w_3|+ 2|a'_1|+2|a_3|+2|t_1|,\]
which 
 implies that $|t_1|\ge |a_2|$, so 
\ref{it:l1-4} holds. 

To prove that \ref{it:l1-5} holds, consider again the 
word $W''$, but this time take $k=s+1$, $E_{k1}=1$ and $E_{k2}=E_{s+1}$. 
Then 
\[W''= E_1\cdots  E_r  E_s^{-1} E_{r+1}^{-1} \cdots
 A^{\prime\prime\, 2} E_{s+1} \cdots  E_t.\]
Let $E_0$ be an element of $\cA$ which does not belong to $\supp(W'')$ and
 replace $W''$ with
\[ W''_0 =E_1\cdots  E_r  E_s^{-1}\cdots E_{r+1}^{-1} 
 E_0^{-1} A^{\prime\prime\, 2}E_0 E_{s+1} \cdots  E_t.\]
The surface obtained from identifying boundary intervals of a disk 
labelled by $W''_0$ is again $\S_W$. 
 As $|a_2|\ge |a'_2|$, we have $a_2=a_4a'_2$, for 
some initial subword $a_4$ of $a_2$, so $a'_1=a_1a_4$. 
Again, let $t_1=_H a_2\theta(E_{r+1} \cdots E_{s})a'_1$, 
 and  
define 
\begin{equation*}
\phi(E)= \left
\{\begin{array}{ll}
a_4^{-1}t_1& \textrm{if $E=A''$}\\
a^{\prime}_2 &\textrm{if $E=E_0$}\\ 
\theta(E) & \textrm{otherwise}
\end{array}
\right. ,
\end{equation*}
and $w_0,w_1,w_3$ as in case \ref{it:l1-4}. Then 
\begin{align*}
\phi(W''_0) &=_H w_0w_1^{-1} a_2^{\prime\,-1} (a_4^{-1} t_1)^2 a'_2 w_3 \\
&= w_0 w_1^{-1} a_2^{\prime\,-1} (a_4^{-1} a_2 w_1a'_1)^2 a'_2 w_3 \\
&= w_0 a'_1 a_4^{-1} a_2 w_1 a'_1 a'_2 w_3\\
&= \theta(W).
\end{align*} 

Again, $W''_0$ is a genus $n$ Wicks form so  $|\theta(W)|\le |\phi(W''_0)|$. 
Thus 
\[|w_0|+|w_1|+|w_3|+2|a_1|+2|a_2|=\theta(W)\le \phi(W''_0)
\le |w_0|+|w_1|+|w_3|+ 2|a'_2|+2|a_4|+2|t_1|,\]
 and consequently $|t_1|\ge |a_1|$, so 
\ref{it:l1-5} holds. 
\end{proof}

Next we record as corollaries some straightforward consequences of
Lemma \ref{lem:l1}.
\begin{corollary}\label{cor:N0}
Let $(W,\theta)$ be a cancellation free pair in $\cF$ and let 
$F$ be a geodesic for $(W,\theta)$. Let $A$ be a long edge of $W$,
with $\s(A)=(\e,\d)$. Let   $\a=\theta(A^{\e})$ and  
$\a'=\theta(A^{\d})$, let 
$x$ be a vertex of $\a$ and $k$ be a positive integer such that 
$k<d(\i(\a),x)<|\a|-k$. Then the following hold.
\be[(i)]
\item\label{it:N01} Let  $y$ be a vertex on $(\theta(W)\cup F)-\a$ such
that $d(x,y)\le k$. Then $y$ lies on $\a'\cup F$ and if 
$\d=\e$ then $y$ lies on $F$.  
\item\label{it:N02} If $k>l$ then there exists a  vertex $y$ such that
$y$ lies on $\a'\cup F$ and $d(x,y)\le l$. If $\d=\e$ then $y$ lies on $F$. 
\ee
\end{corollary}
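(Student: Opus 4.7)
For part (i) my plan is to use Lemma \ref{lem:l1} directly via a case analysis on the location of $y$ along the Eulerian circuit $W$; for part (ii) I would apply the thin polygons Lemma \ref{lem:la} to the closed polygon $\theta(W)F^{-1}$ in $\G_X(H)$ and then invoke part (i) to identify the side on which the matched vertex must lie.

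For part (i), assume $y$ lies on $\theta(W)\setminus\a$ (the case $y\in F$ is immediate). Write the cyclic word as $W=E_1\cdots E_rA^\e E_{r+1}\cdots E_sA^\d E_{s+1}\cdots E_t$, as in Lemma \ref{lem:l1}; let $y$ lie on $\theta(E_k)$ and split $\theta(E_k)=e_{k1}e_{k2}$ at $y$, $\theta(A^\e)=a_1a_2$ at $x$. The hypothesis gives $|a_1|=d(\i(\a),x)>k$ and $|a_2|>k$, while $d(x,y)\le k$ equals the $H$-length of any word labelling a Cayley graph path from $x$ to $y$. Applying parts \ref{it:l1-1}, \ref{it:l1-2} or \ref{it:l1-3} of Lemma \ref{lem:l1} according to whether $k\in[1,r]$, $[r+1,s]$ or $[s+1,t]$ yields respectively $|a_1|\le k$, $|a_2|\le k$ or $|a_1|\le k$, in each case contradicting the hypothesis. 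Hence $y\in\a'\cup F$. When $\d=\e$ I further exclude $y\in\a'$: writing $\a'=a_1'a_2'$ with split at $y$, the equality $|\a|=|\a'|$ of $H$-minimal word lengths gives $|a_1|+|a_2|=|a_1'|+|a_2'|$, so at least one of $|a_1|\ge|a_1'|$ and $|a_2|\ge|a_2'|$ holds, and part \ref{it:l1-4} or \ref{it:l1-5} of Lemma \ref{lem:l1} then gives respectively $|a_2|\le k$ or $|a_1|\le k$, once again contradicting the hypothesis; hence $y\in F$.

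For part (ii), I form the closed polygon $P=\theta(W)F^{-1}$ in $\G_X(H)$, whose $|W|+1$ geodesic sides are the $|W|$ segments $\theta(E_i)$ (including $\a$ and $\a'$) together with $F^{-1}$. Setting $\g_0=\a$ and applying Lemma \ref{lem:la} with $n=|W|$, one obtains a partition of $\a$ and an involution $\s$ under which, for each $i$, the sub-interval $\g_0^{(i)}$ is matched with some $\g_{a_i}^{(b_i)}$ lying on a side of $P$ distinct from $\a$, and $\g_0^{(i)}$ and $(\g_{a_i}^{(b_i)})^{-1}$ are $\delta\lceil\log_2|W|\rceil$-fellow travellers by part \ref{it:nla1} of that lemma. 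Since $W$ is a Wicks form of genus $n$ we have $|W|\le K(n)$, whence $\delta\lceil\log_2|W|\rceil\le l$. Choosing $i$ so that $\g_0^{(i)}$ contains $x$, the fellow-traveller correspondence supplies a point on the matched sub-interval at distance at most $l$ from $x$; passing to the nearest Cayley graph vertex produces a vertex $y$, not on $\a$, with $d(x,y)\le l$. Since $l<k$, part (i) then places $y$ on $\a'\cup F$, and on $F$ if $\d=\e$. The main point of care is the subword-length bookkeeping in the case analysis of part (i); the application of Lemma \ref{lem:la} in part (ii) is essentially routine once the polygon is set up with $\a$ as the base side so that the tighter $\delta\lceil\log_2 n\rceil$ fellow-traveller bound of part \ref{it:nla1} (rather than the $\delta(2\lceil\log_2 n\rceil-1)$ bound of part \ref{it:nla2}) is the one used.
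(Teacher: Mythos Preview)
Your proposal is correct and follows essentially the same approach as the paper, which simply says that part \ref{it:N01} ``follows from Lemma \ref{lem:l1}'' and that part \ref{it:N02} follows from Lemma \ref{lem:la}\ref{it:nla1} together with part \ref{it:N01}. You have supplied exactly the case analysis that makes the appeal to Lemma \ref{lem:l1} explicit (including the use of parts \ref{it:l1-4} and \ref{it:l1-5} for the non-alternating case $\d=\e$), and your treatment of part \ref{it:N02} via the polygon $\theta(W)F^{-1}$ with $\g_0=\a$ is precisely what the paper has in mind. One cosmetic point: you use $k$ both for the constant in the hypothesis and for the edge index $E_k$, which is confusing; a different letter for the index would be cleaner.
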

\begin{proof}
Statement \ref{it:N01} follows from Lemma \ref{lem:l1}. For statement 
\ref{it:N02} note that by Lemma \ref{lem:la}.\ref{it:nla1}, there exists a vertex
$y$ on $\theta(W)-\a$ such that $d(x,y)\le l$. As $k>l$ the statement
now follows from \ref{it:N01}.
\end{proof} 

\begin{corollary}\label{cor:x1x2}
 Let $(W,\theta)$,  
$F$, $A$, $\a$ and $\a'$ be as in Corollary \ref{cor:N0}. 
Let $x_1$ and $x_2$  be any vertices on $\alpha$ and
$\a'$, respectively, such that \mbox{$d(x_1,x_2)\leq k$} for
some constant $k$. If $x_3$ is a vertex on $\a'$ such
that $d(\iota(\a'),x_3)=d(\tau(\alpha ),x_1)$
then $d(x_2,x_3)\leq k$ and $d(x_1,x_3)\le 2k$.
\end{corollary}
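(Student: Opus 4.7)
The plan is as follows. First I would set up notation: let $d_1 = d(\iota(\alpha), x_1)$ and $d_1' = d(\iota(\alpha'), x_2)$, and write $\alpha = a_1 a_2$ for the geodesic split at $x_1$, so that $|a_1| = d_1$ and $|a_2| = |\alpha| - d_1$. By the definition of $x_3$, we have $d(\iota(\alpha'), x_3) = d(\tau(\alpha), x_1) = |a_2|$. Since $x_2$ and $x_3$ both lie on the geodesic $\alpha'$, the distance $d(x_2, x_3)$ equals $|d_1' - |a_2||$; it therefore suffices to prove $|d_1' - |a_2|| \le k$, and the bound $d(x_1, x_3) \le 2k$ will then follow immediately from $d(x_1, x_3) \le d(x_1, x_2) + d(x_2, x_3)$.

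The strategy is to bound $d_1'$ from below and above using the two cases of Lemma \ref{lem:l1}. For the lower bound, I would apply part \ref{it:l1-2} with the bisection point $\t(e_{j1})$ chosen to be $\iota(\alpha')$ (taking $j = s$ and letting $e_{j1}$ be all of $\theta(E_s)$; or, if no intermediate edge exists so that $\tau(\alpha) = \iota(\alpha')$, directly using that $\alpha$ is geodesic). This yields $|a_2| \le d(x_1, \iota(\alpha'))$. Combined with the triangle inequality $d(x_1, \iota(\alpha')) \le d(x_1, x_2) + d(x_2, \iota(\alpha')) \le k + d_1'$, this gives $d_1' \ge |a_2| - k$.

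The upper bound is symmetric: apply part \ref{it:l1-3} with the bisection point $\t(e_{k1}) = \tau(\alpha')$ (via $k = s+1$ with $e_{k1}$ trivial, or a cyclic shift of $W$ when necessary) to obtain $|a_1| \le d(x_1, \tau(\alpha'))$. The triangle inequality $d(x_1, \tau(\alpha')) \le d(x_1, x_2) + d(x_2, \tau(\alpha')) \le k + (|\alpha| - d_1')$ then rearranges to $d_1' \le |\alpha| - |a_1| + k = |a_2| + k$. Together the two inequalities yield $|d_1' - |a_2|| \le k$, which is exactly $d(x_2, x_3) \le k$.

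The main obstacle is notational rather than conceptual: one must verify that parts \ref{it:l1-2} and \ref{it:l1-3} of Lemma \ref{lem:l1} remain valid at the boundary of their parameter ranges, i.e.\ when one of the bisection halves $e_{j1}, e_{j2}$ degenerates to a point, or when $\alpha$ and $\alpha'$ are cyclically adjacent in $W$ so that no intermediate $E_i$'s lie between them. A short inspection of the cut-and-paste construction in the proof of Lemma \ref{lem:l1} shows the stated inequalities still hold in these limiting configurations, and in the fully degenerate case they follow immediately from geodesicity of $\alpha$ and $\alpha'$. The conceptual heart of the argument is simply the observation that the two parts of Lemma \ref{lem:l1} pin down $d_1'$ from opposite sides of $|a_2|$.
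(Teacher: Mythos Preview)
Your proposal is correct and follows essentially the same approach as the paper: both arguments combine Lemma~\ref{lem:l1} with the triangle inequality through $x_2$. The only organisational difference is that the paper splits into the two cases $d(\iota(\alpha'),x_2)\lessgtr d(\iota(\alpha'),x_3)$ and in the second case applies Lemma~\ref{lem:l1} to the occurrence $\alpha'$ rather than $\alpha$, whereas you avoid the case split by invoking part~\ref{it:l1-3} at its boundary instance; your remark that this limiting case needs (and survives) a separate check is exactly right.
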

\begin{proof}
The proof falls into the following two cases.
\begin{enumerate}
\item[(a)] $d(\iota(\a'),x_2)\leq d(\iota(\a'),x_3)=d(\tau(\alpha
  ),x_1)$.
\item[(b)]  $d(\iota(\a'),x_2)> d(\iota(\a'),x_3)=d(\tau(\alpha
  ),x_1)$.
\end{enumerate}
(a) By Lemma \ref{lem:l1} $d(\tau(\alpha ),x_1)\leq
d(x_1,\iota(\a'))$ and, from the hypothesis and the
triangle inequality,  it follows that
\begin{equation}\label{alp1}
d(\tau(\alpha ),x_1)\leq 
d(x_1,\iota(\a')) \leq  k+d(\iota(\a'),x_2).
\end{equation} 
Since $\alpha$ is a geodesic path we have  
\begin{equation}\label{alp2}
d(\tau(\alpha),x_1)=d(\iota(\a'),x_3) = d(\iota(
\a'),x_2) + d(x_2,x_3).
\end{equation}
It follows from equations (\ref{alp1}) and (\ref{alp2}) that $d(x_2,x_3)\leq k$.

{\flushleft{ (b)}}  By Lemma \ref{lem:l1}, $d(\iota(\a' ),x_2)\leq
d(x_2,\tau(\alpha ))$ and, from the hypothesis and the
triangle inequality,  it follows that
\begin{equation}\label{alp3}
d(\iota(\a' ),x_2)\leq d(x_2,\tau(\alpha)) \leq  k+d(\tau(\alpha),x_1).
\end{equation} 
Since $\alpha$ is a geodesic path we have  
\begin{equation}\label{alp4}
d(\tau(\alpha),x_1)=d(\iota(\a'),x_3) = d(\iota(
\a'),x_2) - d(x_2,x_3).
\end{equation}
It follows from equations (\ref{alp3}) and (\ref{alp4}) that $d(x_2,x_3)\leq k$.
Hence the lemma holds.
\end{proof}

\begin{corollary}\label{cor:N3}
Let $(W,\theta)$,  
$F$, $A$, $\a$ and $\a'$   be as in Corollary \ref{cor:N0}. 
Assume in addition that $o(A)=1$, so $\s(A)=(\e,-\e)$ and $\a'=\a^{-1}$.  
For $i=1,2$, let $u_i$ be vertices of $\a$ and $v_i$ be vertices
of $\a^{-1}$ such that 
\begin{itemize}
\item $d(\t(\a),u_1)\ge d(\t(\a),u_2)$ and 
\item for some constant $k$, $d(u_i,v_i)\le k$, $i=1,2$.  
\end{itemize}
Let $\a_1$ be the subpath $[u_1,u_2]$ of $\a$. (See Figure \ref{fig:N3}.)  
Then $|\a_1|\le 2k+M+1$.
\end{corollary}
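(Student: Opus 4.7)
My plan is first to use Corollary \ref{cor:x1x2} to replace each $v_i$ by a ``mirror vertex'' $u'_i$ on $\a'$, defined by $d(\iota(\a'),u'_i)=d(\tau(\a),u_i)$; the corollary then yields $d(u_i,u'_i)\le 2k$. Because $\a$ and $\a'$ are geodesics of equal length carrying reverse labels, the subpath $\a_1=[u_1,u_2]$ of $\a$ has some label $w\in F(X)$ with $|w|=|\a_1|$, while the subpath of $\a'$ from $u'_2$ to $u'_1$ has the same length $|\a_1|$ and label $w^{-1}$.

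Next I would extract a conjugacy relation from the quadrilateral with corners $u_1,u_2,u'_2,u'_1$. Let $\b_i$ be a geodesic from $u_i$ to $u'_i$ with $H$-minimal label $b_i$, so $|b_i|\le 2k$. The closed loop $\a_1\cdot\b_2\cdot\a'_{[u'_2,u'_1]}\cdot\b_1^{-1}$ in $\G_X(H)$ carries the label $wb_2 w^{-1}b_1^{-1}$, which must represent the identity in $H$; hence $b_1 =_H wb_2 w^{-1}$. Applying Lemma \ref{lb} to the $H$-minimal, conjugate words $b_1,b_2$ then produces a conjugator $c\in F(X)$ with $cb_2 c^{-1}=_H b_1$ and
\[
|c|\le \tfrac{1}{2}(|b_1|+|b_2|)+M+1\le 2k+M+1.
\]

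Finally I would argue by contradiction: suppose $|\a_1|=|w|>2k+M+1\ge|c|$. I would then construct a new pair $(W',\theta')\in\mathcal F(h)$ of strictly smaller total label length, contradicting the minimality of $(W,\theta)$. To do this, split each of the two occurrences of $A$ in $W$ into a concatenation of three letters, dictated by the positions $u_1,u_2$, and replace the ``middle'' piece (previously of length $|w|$) by a new letter whose $\theta'$-image is the short conjugator $c$. The identity $cb_2 c^{-1}=_H b_1=_H wb_2 w^{-1}$ then permits a compensating van Kampen surgery along the short geodesics $\b_1$ and $\b_2$, cut from the disk $D$ of $W$, so that $\theta'(W')$ remains conjugate to $h$ in $H$ and $W'$ is again a genus $n$ Wicks form. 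The net length change is $2(|c|-|w|)$ plus corrections of order $|b_1|+|b_2|\le 4k$, and the standing hypothesis makes this strictly negative, producing the desired contradiction.

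The hardest part of this proof will be the final surgery step: verifying concretely that the modified pair $(W',\theta')$ lies in $\mathcal F(h)$ and that the modification genuinely decreases the total length. One must simultaneously track the contribution of the newly short middle segment, the corrections required to preserve the conjugacy class of $\theta(W)$, and the fact that irredundancy and cancellation-freeness of the resulting Wicks form are retained (replacing letters as needed, as in the proofs of Lemma \ref{lem:l1}(i)--(iii)). This bookkeeping is directly analogous to the disk-cutting and relabelling arguments already carried out in the proof of Lemma \ref{lem:l1}.
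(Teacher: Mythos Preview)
Your overall route is exactly the paper's: pass to the mirror vertices on $\alpha'$ (the paper calls them $w_i$, with the same defining equation as your $u'_i$), use Corollary~\ref{cor:x1x2} to obtain geodesics $s_i$ from $u_i$ to $w_i$ of length at most $2k$, observe that the segment of $\alpha'$ between the $w_i$ again carries label $w^{\pm 1}$ so that $s_1=_H w\,s_2\,w^{-1}$, and invoke Lemma~\ref{lb}. The paper then tersely records ``from Lemma~\ref{lb}, $|\alpha_1|\le\tfrac12(|s_1|+|s_2|)+M+1$'', suppressing precisely the minimality step you are trying to supply.

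That step, however, is much cleaner than your sketch suggests, and your proposed ``corrections of order $|b_1|+|b_2|\le 4k$'' are both unnecessary and fatal to the argument: with such a correction you would only conclude $|\alpha_1|\le 4k+M+1$, not $2k+M+1$. Write $W=\cdots A^{\varepsilon}X_1A^{-\varepsilon}\cdots$ and $\theta(A^{\varepsilon})=p\,w\,q$, with $p,q$ the prefix and suffix cut off by $u_1,u_2$. Unwinding your quadrilateral gives $b_2=_H q\,\theta(X_1)\,q^{-1}$, so the identity $cb_2c^{-1}=_H b_1=_H wb_2w^{-1}$ reads literally as
\[
c\,q\,\theta(X_1)\,q^{-1}c^{-1}=_H w\,q\,\theta(X_1)\,q^{-1}w^{-1}.
\]
Now simply subdivide $A=A_1A_2A_3$ (so $A^{-\varepsilon}$ becomes $A_3^{-1}A_2^{-1}A_1^{-1}$) and redefine $\theta'(A_2)=c$, leaving everything else unchanged. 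The displayed identity shows $\theta'(W')=_H\theta(W)$ on the nose, with no cutting along your $\beta_i$, no ``compensating van Kampen surgery'', and no additive terms; removing redundancy from $W'$ if needed is free, just as in Lemma~\ref{lem:l1}. The total label length drops by exactly $2(|w|-|c|)$, contradicting minimality of $(W,\theta)$ whenever $|w|>|c|$. Hence $|\alpha_1|=|w|\le |c|\le \tfrac12(|s_1|+|s_2|)+M+1\le 2k+M+1$.
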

\begin{proof}
  Let $w_i$ be the vertex of $\a^{-1}$ such that $d(\t(\a^{-1}),w_i)
=d(\i(\a),u_i)$, $i=1,2$. Let $s_i$ be a geodesic from $u_i$ to $w_i$, 
for $i=1,2$. From Corollary \ref{cor:x1x2}, we have $|s_i|\le 2k$. The subpath
$[w_1,w_2]$ of $\a^{-1}$ is $\a_1^{-1}$, so from Lemma \ref{lb}, we have
\[|\a_1|\le \frac{1}{2}(|s_1|+|s_2|)+M+1\le 2k+M+1.\] 
\end{proof}

\begin{figure}[htp]
\begin{center}
\psfrag{A1}{{\scriptsize $\alpha _1$}}
\psfrag{s}{{\scriptsize $s_1$}}
\psfrag{t}{{\scriptsize $s_2$}}
\psfrag{F}{{\scriptsize $F$}}
\psfrag{u1}{{\scriptsize $u_1$}}
\psfrag{u2}{{\scriptsize $u_2$}}
\psfrag{w1}{{\scriptsize $w_1$}}
\psfrag{w2}{{\scriptsize $w_2$}}
\includegraphics[scale=0.5]{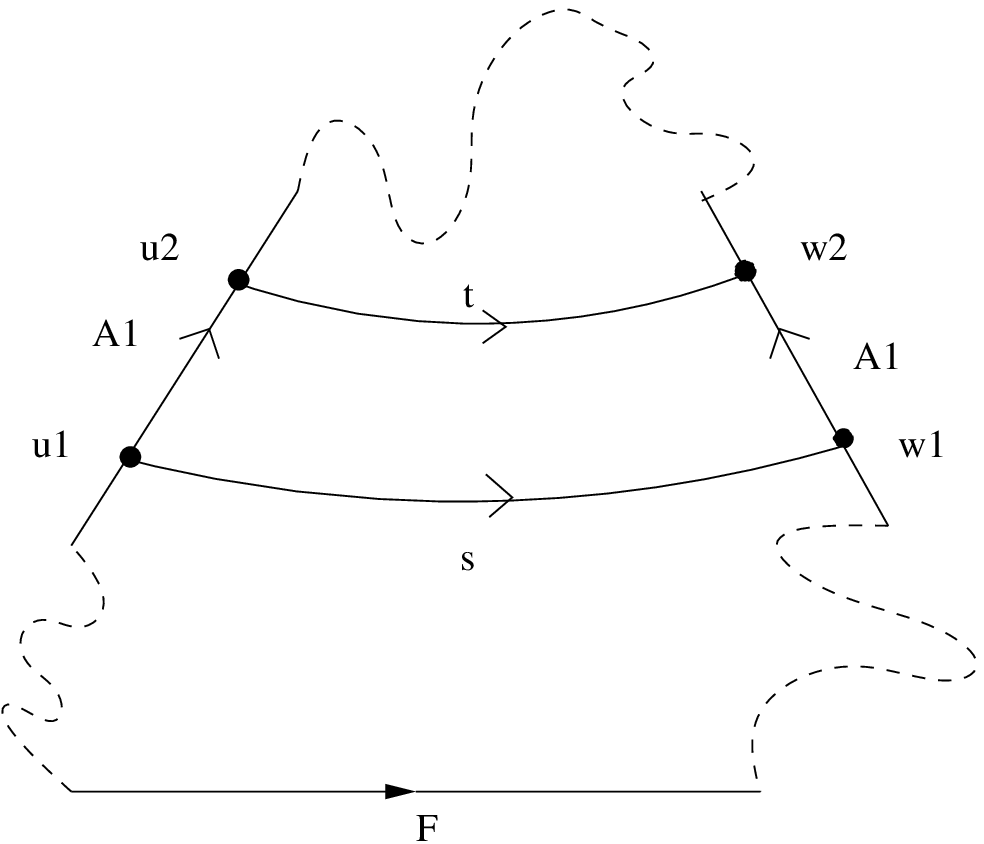}
\caption{}\label{fig:N3}
\end{center}
\end{figure} 

We shall use Corollaries \ref{cor:N0} and \ref{cor:N3} with Lemma \ref{lem:la}  
to factor $F$ into a product of subwords,
each of which corresponds to a unique long edge of $W$. To this end let
$W=E_1\cdots E_m$, where $E_i=A_i^{\e_i}$, with $A_i\in \cA$ and 
$\e_i\in \{\pm 1\}$. Let $U$ be the word obtained from $W$ by 
deleting $E_i$, whenever $E_i$ is a short edge. Then $U$ is a non-trivial 
quadratic word over $\cA$, of genus at most $n$, and length $f$ at most $K(n)$, which we call the \emph{long factorisation} of $W$ (with respect to $\theta$). 
Write $U=U_1\cdots U_f$,
where $U_i\in \{E_1,\ldots, E_m\}$, and $U_i$ is a long edge of $W$.
Then, as $W$ ends with a long edge,  we can write $W=S_1U_1S_2\cdots S_{f}U_f$, where each 
$S_i$ is a subword of $W$ consisting only of short edges (and may
be the empty word). 
By the usual abuse of notation, let 
$\theta(U_i)$ and $\theta(S_i)$ denote subpaths in $\G_X(H)$ of the path
labelled $\theta(W)$. 

\begin{lemma}\label{lem:chouv} 
 In the above notation, let $W=S_1U_1S_2\cdots S_{f}U_f$, where
 $U=U_1\cdots U_f$ is the long factorisation of $W$ and each $S_i$ is a product
of short edges. Then  
$F$ has a partition $F_1,\ldots, F_f$ such that 
\be
\item\label{it:chouv1} if $o(U_i)=-1$ then   $|F_i|\ge 8l+M+3$ and $d(\t(F_i),\t(\theta(U_i))\le 3l+1$; 
\item\label{it:chouv2} if $o(U_i)=1$ then  $|F_i|\ge 6l+2$ and, 
for the unique $j$ such that  $U_i=U_j^{-1}$, we have, assuming $i<j$, that 
$d(\t(F_i),\t(\theta(U_i))\le 5l+M+3$  and 
$d(\t(F_i),\t(\theta(U_j))\le 3l+1$,
\ee
for all $i$ such that $1\le i\le f$. 
\end{lemma}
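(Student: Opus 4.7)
The plan is to anchor the partition to the long edges of $W$: for each $i$, find a vertex $w_i$ on $F$ close to $\t(\theta(U_i))$, check that $w_1,\ldots,w_f$ appear along $F$ in the order $1,\ldots,f$ (with $w_f:=\t(F)$ and $w_0:=\iota(F)$), and set $F_i$ to be the sub-arc of $F$ from $w_{i-1}$ to $w_i$. Throughout, write $\alpha_i:=\theta(U_i)$ and $\alpha_i':=\theta(U_j)$, where $U_j$ is the other occurrence of the underlying letter.

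To construct $w_i$, fix a vertex $p_i$ on $\alpha_i$ with $d(p_i,\t(\alpha_i))=2l+1$. Since $|\alpha_i|>L=12l+M+4$, $p_i$ satisfies the hypothesis of Corollary \ref{cor:N0}\ref{it:N02} with $k=2l+1>l$, so there is a vertex $q_i$ with $d(p_i,q_i)\le l$ lying on $F\cup\alpha_i'$. If $o(U_i)=-1$, Corollary \ref{cor:N0} forces $q_i\in F$; set $w_i:=q_i$, so $d(w_i,\t(\alpha_i))\le 3l+1$, giving the bound in \ref{it:chouv1}. If $o(U_i)=1$ with $U_i=U_j^{-1}$ and $i<j$, then $\alpha_j=\alpha_i^{-1}$. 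If $q_i\in F$, take $w_i:=q_i$; otherwise $q_i\in\alpha_i^{-1}$, and we walk along $\alpha_i$ from $p_i$, away from $\t(\alpha_i)$, until we reach the first vertex $p_i^\ast$ whose nearby matched point lies on $F$. On the intervening interval every vertex is $l$-close to $\alpha_i^{-1}$, so Corollary \ref{cor:N3} bounds its length by $2l+M+1$; hence $d(w_i,\t(\alpha_i))\le(2l+1)+(2l+M+1)+l=5l+M+2$, as required. For the second occurrence, the analogous vertex $p_j$ near $\t(\alpha_j)=\iota(\alpha_i)$ cannot be matched to $\alpha_i$ without forcing a corresponding vertex deep inside $\alpha_i$ to be close to $\alpha_i^{-1}$ near its opposite end, contradicting $|\alpha_i|>L$ together with Corollary \ref{cor:N3}; so its match lies on $F$, yielding $d(w_j,\t(\alpha_j))\le 3l+1$.

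To verify that $w_1,\ldots,w_f$ appear along $F$ in this order, apply Lemma \ref{lem:la} to the closed polygon $q=\theta(S_1)\alpha_1\cdots\theta(S_f)\alpha_f F^{-1}$, based at $F^{-1}$. Each $w_i$ lies on a sub-arc of $F$ matched to a sub-arc of $\alpha_i$ near $\t(\alpha_i)$, and the refined partial-order statement of Lemma \ref{lem:la}\ref{it:nla2} forces these sub-arcs of $F$ to appear in the same cyclic order as the $\alpha_i$'s around $q$. We then set $F_i:=[w_{i-1},w_i]$, which partitions $F$ as required.

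For the length bounds, observe that most of $\alpha_i$ fellow-travels, via the matching, with an arc of $F$ contained in $F_i$. In the non-orientable case all of $\alpha_i$ outside a bounded endpoint region fellow-travels with $F$, so $|F_i|\ge|\alpha_i|$ minus a bounded overhead, giving $|F_i|\ge 8l+M+3$; in the orientable case a portion of $\alpha_i$, of length at most $2l+M+1$ by Corollary \ref{cor:N3}, may instead fellow-travel with $\alpha_i^{-1}$, giving the slightly weaker bound $|F_i|\ge 6l+2$. The main obstacle is the orientable case: one must simultaneously control the anchors $w_i$ and $w_j$ of a paired letter, keep them correctly ordered along $F$, and obtain the sharp constants in the length bound despite the possibility of $\alpha_i$ partially fellow-travelling with $\alpha_i^{-1}$ rather than with $F$.
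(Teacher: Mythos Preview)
Your overall strategy coincides with the paper's: anchor a partition of $F$ to points near the ends of the long edges, using the thin-polygon lemma and Corollaries \ref{cor:N0}--\ref{cor:N3}. The essential difference is in how the anchors are chosen. The paper applies Lemma \ref{lem:la} with $\gamma_0=\alpha_i$ and takes $v_i$ to be the \emph{initial vertex of the actual $F$-interval} $F'_c$ in the resulting partition of $F^{-1}$ that fellow-travels the middle segment $\alpha_1$ of $\alpha_i$ (where $\alpha_i=\alpha_0\alpha_1\alpha_2$ with $|\alpha_0|=|\alpha_2|=2l+1$). This choice immediately yields $|F'_c|\ge|\alpha_1|>8l+M+2$ in the non-orientable case, and $|F'_c|\ge|\alpha_1|-(2l+M+1)\ge 6l+2$ in the orientable case (the subtracted term being the Corollary \ref{cor:N3} bound on the piece of $\alpha_1$ that may instead match $\beta$). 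Since $F'_c$ then sits inside the interval between consecutive anchors, the length bound on $|F_i|$ is automatic.

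Your $w_i$, by contrast, is produced by Corollary \ref{cor:N0} as \emph{some} point of $F$ within $l$ of a vertex near $\tau(\alpha_i)$, with no tie to a specific interval of any global partition of $F$. This is where the argument develops gaps. First, your length estimate ``most of $\alpha_i$ fellow-travels with an arc of $F$ contained in $F_i$'' is never established: you have not shown that the long $F$-arc shadowing the middle of $\alpha_i$ lies between $w_{i-1}$ and $w_i$. A triangle-inequality attempt via Lemma \ref{lem:l1} gives only $d(w_{i-1},w_i)\ge|\alpha_i|-d(w_{i-1},\tau(\alpha_{i-1}))-d(w_i,\tau(\alpha_i))$, which with two worst-case bounds of $5l+M+3$ is far short of $8l+M+3$ or even $6l+2$. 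Second, your ordering argument invokes the matching of a \emph{second} application of Lemma \ref{lem:la} (based at $F^{-1}$), but your $w_i$ were not built from that matching, so ``each $w_i$ lies on a sub-arc of $F$ matched to $\alpha_i$'' needs proof. Third, the sentence ``$p_j$ near $\tau(\alpha_j)=\iota(\alpha_i)$'' conflates the two distinct occurrences of $\theta(A)^{\pm1}$ in $\theta(W)$ as paths; your contradiction via Corollary \ref{cor:N3} for the second occurrence tacitly needs a vertex near $\tau(\alpha_i)$ already matched to $\alpha_j$, which you only have when the walk for $w_i$ was non-trivial. The fix in all three places is exactly the paper's move: define the anchors as endpoints of the $F$-intervals coming from a single application of Lemma \ref{lem:la}, so that ordering and length bounds are inherited directly from the partition rather than reconstructed after the fact.
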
 
\begin{proof}
Let $A\in \supp(W)$ such that $A$ is a long edge and assume that $W=W_0U_iW_0U_jW_1$, where $U_i=A^{\e}$ and $U_j=A^{\d}$. 
Let $\a=\theta(A^{\e})$ and $\b=\theta(A^{\d})$. Then $|\a|=|\b|>12l+M+4$ and factorising as $\a=\a_0\a_1\a_2$, where
$|\a_0|=|\a_2|=2l+1$ we have $|\a_1|>8l+M+2$. 

Now, as above, $W=E_1\cdots E_m$, where $E_i=A_i^{\e_i}$, with $A_i\in \cA$ and 
$\e_i\in \{\pm 1\}$. Apply Lemma \ref{lem:la} to the word $\theta(E_1)\cdots \theta(E_m)F^{-1}$, with $\g_0=\theta(U_i)$. This implies 
 there is a partition $\g^{(1)},\ldots, \g^{(r)}$ of 
$\a$  such that, for each $j\in \{1,\ldots ,r\}$, 
the interval $\g^{(j)}$ is a $\d\lceil \log_2(m)\rceil$-fellow traveller with a geodesic $\g'$, which is  either an interval of  
a partition of $\theta(E_s)$,  
for some $E_s\neq U_i$, or of  $F^{-1}$. As $\d\lceil \log_2(m)\rceil\le l$, it follows that   $\g^{(j)}$ and $\g'$ are $l$-fellow travellers. 
 
   Assume that the partition of $F^{-1}$ given by Lemma \ref{lem:la}
is $F'_1,\ldots ,F'_t$. Suppose first that $o(A)=-1$ and  $\s(A)=(\e,\e)$. Then, as $|a_0|=|a_2|=2l+1$, 
it follows from Corollary \ref{cor:N0}, that 
$\a_1$ is  a subinterval of $\g^{(b)}$, where $1\le d\le r$ and $\g^{(b)}$ 
is an $l$-fellow traveller with $F'_c$, for some $c$, and that $|F'_c|\ge |a_1|\ge 8l+M+3$. 
In this case, a similar argument shows that a subinterval  $\g^{(b')}$ of $\b$ 
is an $l$-fellow traveller with $F'_{c'}$, for some $c'<c$, with  $|F'_c|\ge 8l+M+3$.

This leaves the case  $o(A)=1$ and  $\s(A)=(\e,-\e)$ to consider. In this case, from Corollary \ref{cor:N0}, $\a_1$ is a subinterval
of $\g^{(b)}\g^{(b+1)}$, where $\g^{b}$ is an $l$-fellow traveller with an interval $F'_c$ of ${F'}^{-1}$ and  $\g^{(b+1)}$ is 
an $l$-fellow traveller with an interval ${\g'}^{(d)}$ of the partition ${\g'}^{(1)},\ldots, {\g'}^{(r')}$  of $\b$ given by
Lemma \ref{lem:la}. Thus $|\g^{(b)}|+|\g^{(b+1)}|=|\g^{(b)}\g^{(b+1)}|\ge |\a_1|\ge 8l+M+3$. From Corollary \ref{cor:N3}, 
$|\g^{(b+1)}|\le 2l+M+1$, so $|F'_c|=|\g^{(b)}|\ge 6l+2$. Moreover, in this case the interval $\g'^{(d+1)}$ is an $l$-fellow
traveller with an interval $F'_{c'}$ of the partition of ${F'}^{-1}$, with $c'< c$, and again $|F'_{c'}|\ge 6l+2$.

Recalling that $\a=\theta(U_i)$ and $\b=\theta(U_j)$,  in both cases we denote the 
initial vertex $\i(F'_c)$ of $F'_c$  by $v_i$ and   the 
initial vertex $\i(F'_{c'})$ of $F'_{c'}$ of by $v_j$. 
By construction, if $o(A)=-1$,  then $d(v_i,\t(\theta(U_i)))\le 3l+1$ and if $o(A)=1$, then   
$d(v_i,\t(\theta(U_i)))\le 3l+1+2l+M+1=5l+M+2$, while in both cases  
$d(v_j,\t(\theta(U_j)))\le 3l+1$. 

Define $v_i$ and $v_j$ in this way for each long edge $A\in \supp(W)$. Setting $v_0=\t(F)$  results in $f+1$ points $v_0, v_1,\ldots ,v_f$ on 
$F$ such that $d(v_0,v_{i})>d(v_0,v_{i+1})$ and, with $F_{f-i}=[v_{i+1},v_{i}]$, we have a partition $F_1,\ldots ,F_f$ of $F$, where   $|F_i|\ge 6l+2$. 
\end{proof}

\begin{definition}\label{def:Ffact} 
With the notation of the paragraph above Lemma \ref{lem:chouv}, let $W=S_1U_1\cdots S_{f}U_f$, let $U=U_1\cdots U_f$ be the long factorisation of $W$,  
 let $F=F_1\cdots F_f$ be the factorisation of $F$ found in Lemma \ref{lem:chouv} and let $v(U_i)=\t(F_i)$.  
For $i=1,\ldots, l$, let $a(U_i)$ be the label of  a geodesic path from $\t(\theta(U_i))$ 
to $v(U_i)$, let $s(U_i)=\theta(S_{i})$  (and let $a(U_0)=1$ and $S_0=1$) and let $\a(U_i)=F_i$. 
For later reference let $S(U_i)=S_i$, for all $i$. 
Let $A\in \supp(U)$.
\be[(i)]
\item If $o(A)=1$,  let $i,j$ be such that $U_i=A$ and $U_j=A^{-1}$. 
\item If $o(A)=-1$ and $\s(A)=(\e,\e)$,   
let $i,j$ be such that $U_i=U_j=A^{\e}$, $i<j$. 
\ee 
Define 
\begin{gather*}
v_1(A)=v(U_i), \quad a_1(A)=a(U_i), \quad b_1(A)=a(U_{i-1}) ,\quad S_1(A)=S_{i},\quad s_1(A)=s(U_{i})\\
\textrm{ and } \a_1(A)=
\begin{cases}
\a(U_i),& \textrm{if } o(A)=1,\\
\a(U_i)^\e,& \textrm{if } o(A)=-1;
\end{cases}\\
v_2(A)=v(U_j), \quad a_2(A)=a(U_j), 
\quad  b_2(A)=a(U_{j-1}),\quad S_2(A)=S_{j},\quad  s_2(A)=s(U_{j})\\
\textrm{ and }\a_2(A)=
\begin{cases}
\a(U_j)^{-1},& \textrm{if } o(A)=1,\\
\a(U_j)^\e,& \textrm{if } o(A)=-1.
\end{cases}\\
\end{gather*}
(See Figure \ref{fig:Ffact}.) 

Finally define $H$-minimal words 
\begin{align*}
z_1(A)&=_H
\begin{cases} 
b_2(A)^{-1}s_2(A)a_1(A), & \textrm{if } o(A)=1,\\
a_2(A)^{-1}a_1(A), & \textrm{if } o(A)=-1,
\end{cases}
\textrm{ and }\\[.5em]
 z_2(A)&=_H 
\begin{cases} 
b_1(A)^{-1}s_1(A)a_2(A), & \textrm{if } o(A)=1,\\
b_1(A)^{-1}s_1(A)s_2(A)^{-1}b_2(A), & \textrm{if } o(A)=-1.
\end{cases}
\end{align*}
\end{definition}
\begin{figure}[htp]
\begin{center}
\begin{subfigure}[b]{.45\columnwidth}
\begin{center}
\psfrag{A}{{\scriptsize $\theta(A)$}}
\psfrag{A1}{{\scriptsize $\theta(A)$}}
\psfrag{A2}{{\scriptsize $\theta(A)$}}
\psfrag{a1}{{\scriptsize $a_1(A)$}}
\psfrag{b1}{{\scriptsize $b_1(A)$}}
\psfrag{b2}{{\scriptsize $b_2(A)$}}
\psfrag{a2}{{\scriptsize $a_2(A)$}}
\psfrag{s1}{{\scriptsize $s_1(A)$}}
\psfrag{s2}{{\scriptsize $s_2(A)$}}
\psfrag{v1}{{\scriptsize $v_1(A)$}}
\psfrag{v2}{{\scriptsize $v_2(A)$}}
\psfrag{f1}{{\scriptsize $\a_1(A)$}}
\psfrag{f2}{{\scriptsize $\a_2(A)$}}
\includegraphics[scale=0.35]{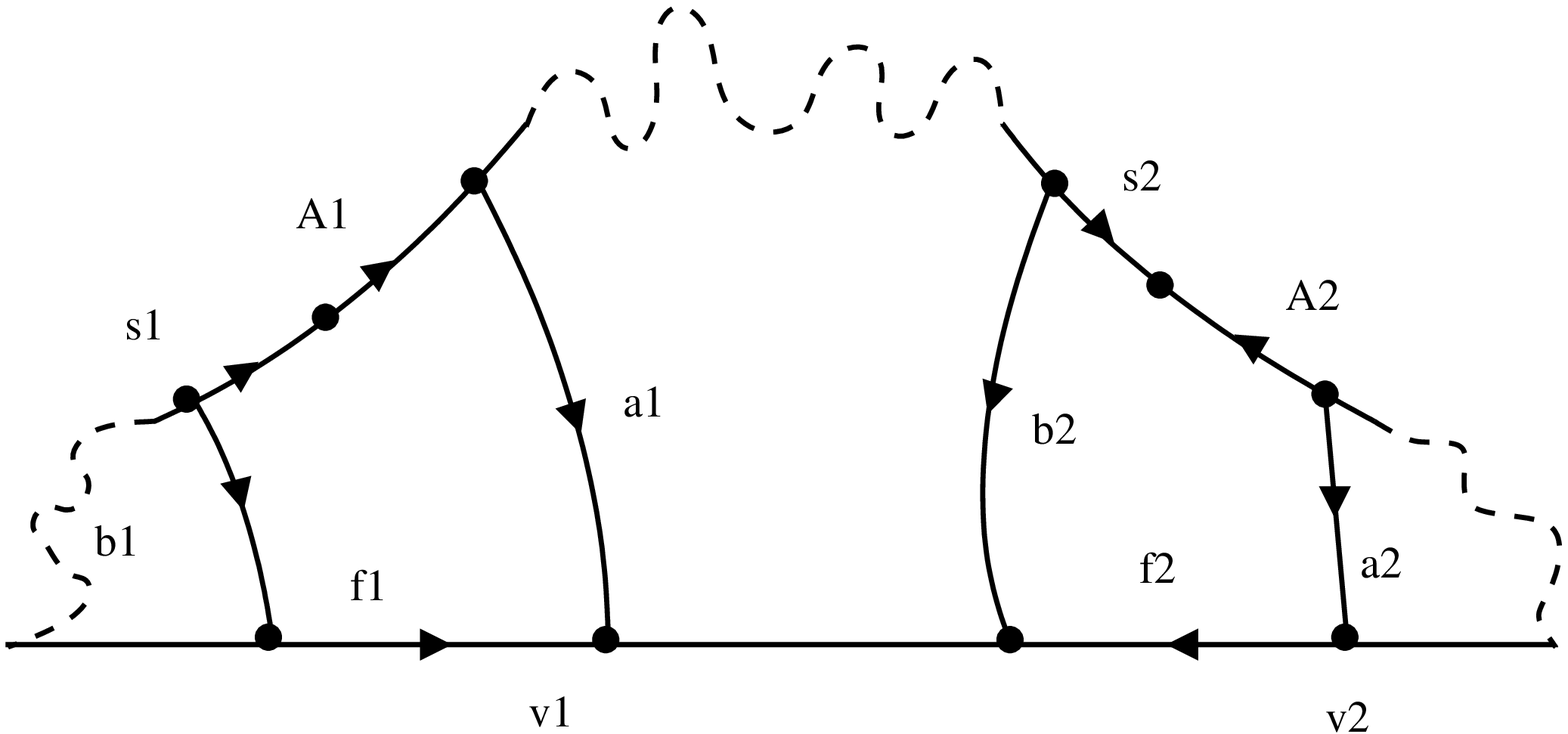}
\caption{$o(A)=1$, $A=U_i$, $A^{-1}=U_j$, $i<j$}\label{fig:Ffact1}
\end{center}
\end{subfigure}
\quad
\begin{subfigure}[b]{.45\columnwidth}
\begin{center}
\psfrag{A}{{\scriptsize $\theta(A)$}}
\psfrag{A1}{{\scriptsize $\theta(A)$}}
\psfrag{A2}{{\scriptsize $\theta(A)$}}
\psfrag{a1}{{\scriptsize $a_2(A)$}}
\psfrag{b1}{{\scriptsize $b_2(A)$}}
\psfrag{b2}{{\scriptsize $b_1(A)$}}
\psfrag{a2}{{\scriptsize $a_1(A)$}}
\psfrag{s1}{{\scriptsize $s_2(A)$}}
\psfrag{s2}{{\scriptsize $s_1(A)$}}
\psfrag{v1}{{\scriptsize $v_2(A)$}}
\psfrag{v2}{{\scriptsize $v_1(A)$}}
\psfrag{f1}{{\scriptsize $\a_2(A)$}}
\psfrag{f2}{{\scriptsize $\a_1(A)$}}
\includegraphics[scale=0.35]{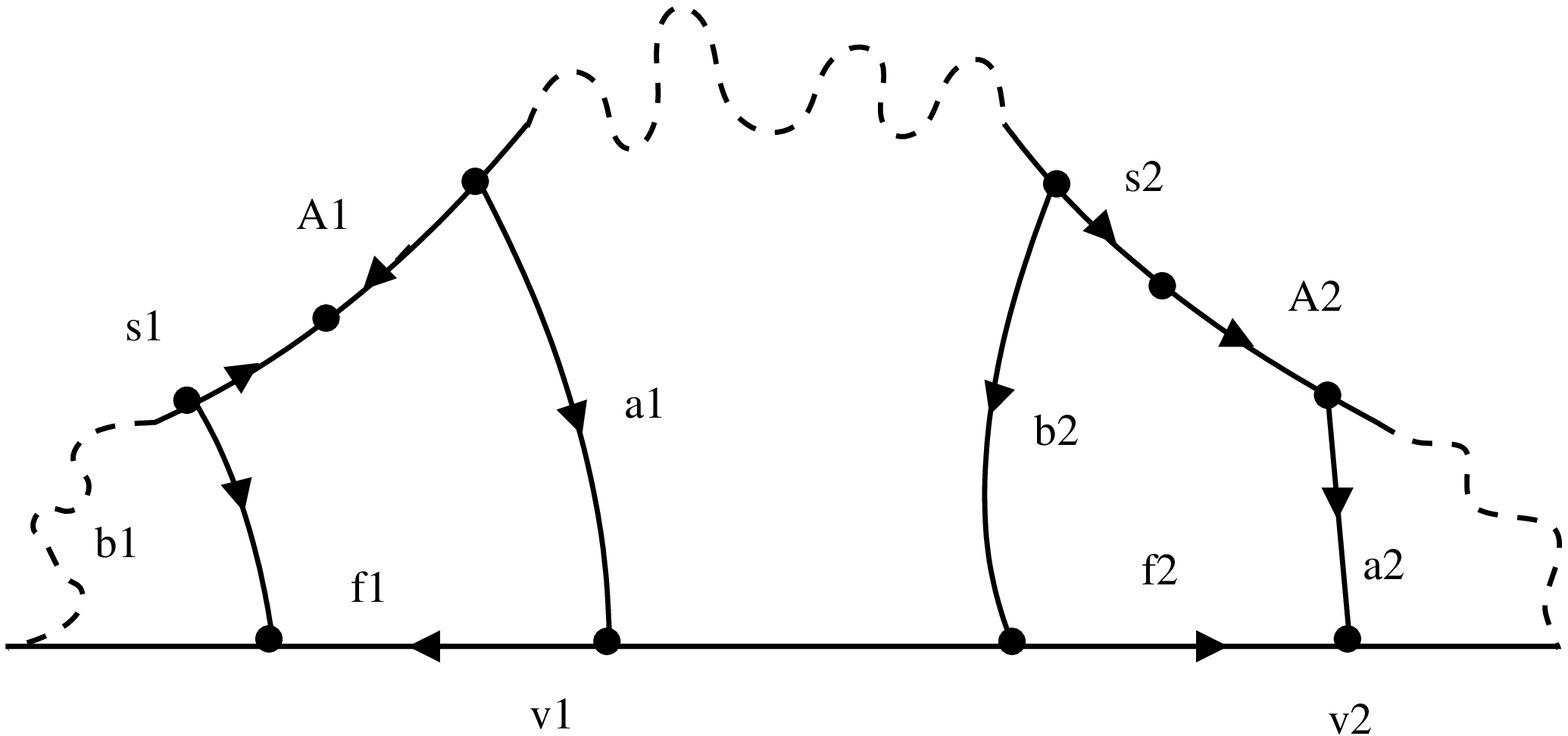}
\caption{$o(A)=1$, $A=U_i$, $A^{-1}=U_j$, $i>j$}\label{fig:Ffact2}
\end{center}
\end{subfigure}
~\\[1em]
\begin{subfigure}[b]{.45\columnwidth}
\begin{center}
\psfrag{A}{{\scriptsize $\theta(A)$}}
\psfrag{A1}{{\scriptsize $\theta(A)$}}
\psfrag{A2}{{\scriptsize $\theta(A)$}}
\psfrag{a1}{{\scriptsize $a_1(A)$}}
\psfrag{b1}{{\scriptsize $b_1(A)$}}
\psfrag{b2}{{\scriptsize $b_2(A)$}}
\psfrag{a2}{{\scriptsize $a_2(A)$}}
\psfrag{s1}{{\scriptsize $s_1(A)$}}
\psfrag{s2}{{\scriptsize $s_2(A)$}}
\psfrag{v1}{{\scriptsize $v_1(A)$}}
\psfrag{v2}{{\scriptsize $v_2(A)$}}
\psfrag{f1}{{\scriptsize $\a_1(A)$}}
\psfrag{f2}{{\scriptsize $\a_2(A)$}}
\includegraphics[scale=0.35]{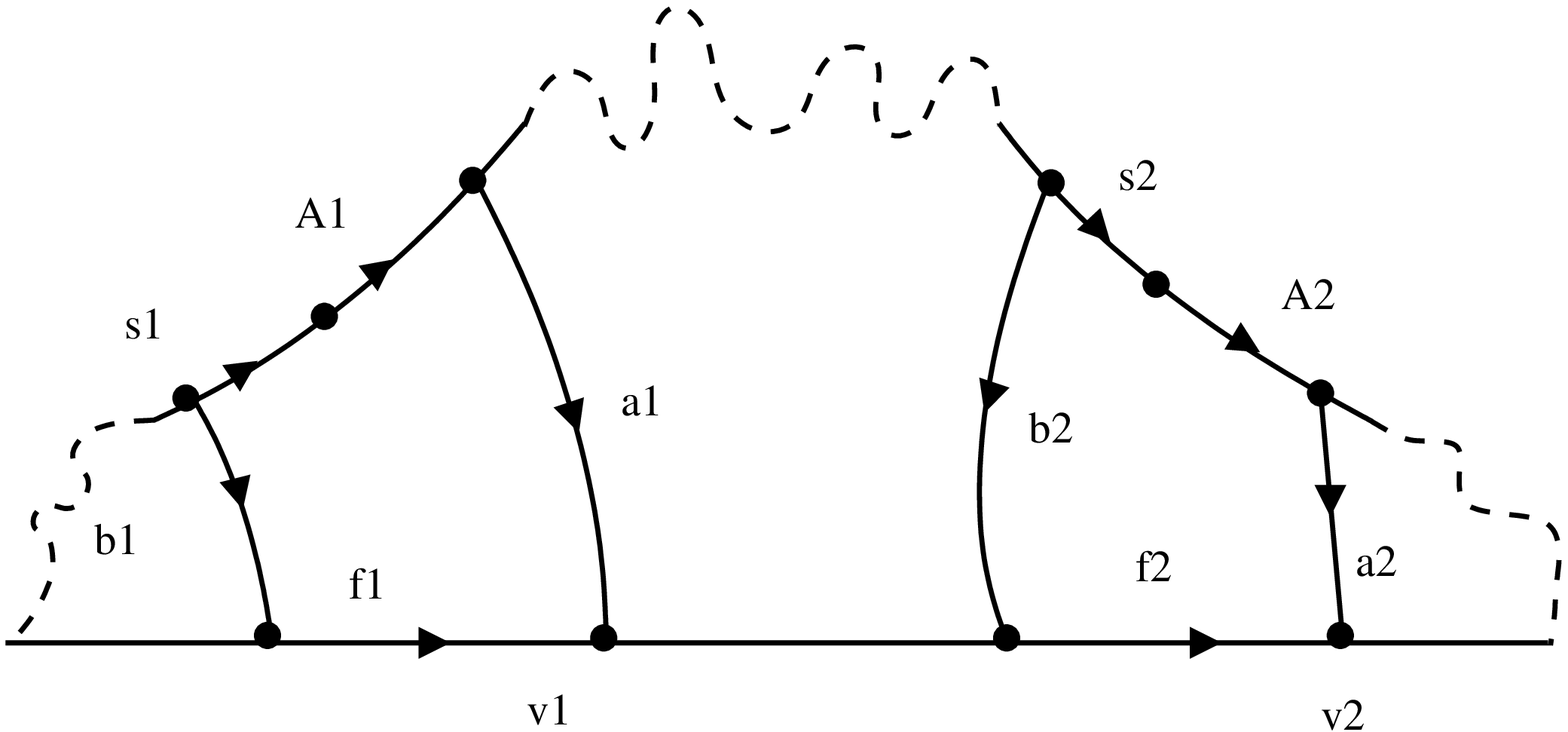}
\caption{$o(A)=-1$, $A=U_i=U_j$}\label{fig:Ffact3}
\end{center}
\end{subfigure}
\quad
\begin{subfigure}[b]{.45\columnwidth}
\begin{center}
\psfrag{A}{{\scriptsize $\theta(A)$}}
\psfrag{A1}{{\scriptsize $\theta(A)$}}
\psfrag{A2}{{\scriptsize $\theta(A)$}}
\psfrag{a1}{{\scriptsize $a_1(A)$}}
\psfrag{b1}{{\scriptsize $b_1(A)$}}
\psfrag{b2}{{\scriptsize $b_2(A)$}}
\psfrag{a2}{{\scriptsize $a_2(A)$}}
\psfrag{s1}{{\scriptsize $s_1(A)$}}
\psfrag{s2}{{\scriptsize $s_2(A)$}}
\psfrag{v1}{{\scriptsize $v_1(A)$}}
\psfrag{v2}{{\scriptsize $v_2(A)$}}
\psfrag{f1}{{\scriptsize $\a_1(A)$}}
\psfrag{f2}{{\scriptsize $\a_2(A)$}}
\includegraphics[scale=0.35]{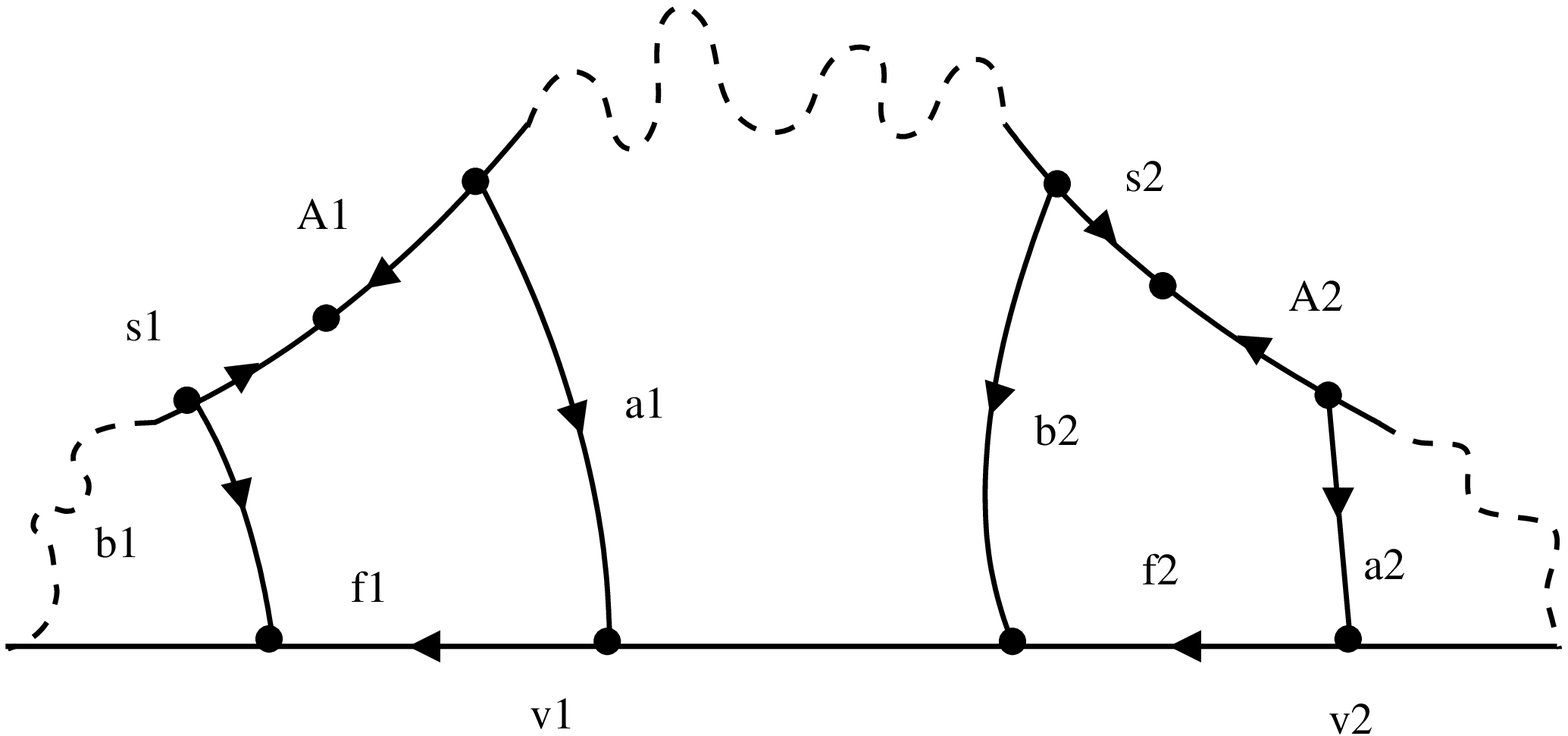}
\caption{$o(A)=-1$, $A^{-1}=U_i=U_j$}\label{fig:Ffact4}
\end{center}
\end{subfigure}
\end{center}
\caption{}\label{fig:Ffact}
\end{figure}  
\begin{example}\label{ex:U}
Suppose that ${W}=ABC^{-1}A^{-1}B^{-1}C^{-1}$ where $A$ and $C$ are
long edges and $B$ is a short edge. Then $U=AC^{-1}A^{-1}C^{-1}$
and 
we have the 
 paths in 
$\Gamma_X(H)$ shown in Figure \ref{exABC}. Here  
 $|a_1(A)|$, $|a_2(A)|$ and $|a_1(C)|$ are at most $5l+M+3$ and $|a_2(C)|=0$.
(As in our standing assumption, the last letter of $W$ is long).
\begin{figure}[htp]
\begin{center}
\psfrag{A}{{\scriptsize $\theta(A)$}}
\psfrag{C}{{\scriptsize $\theta(C)$}}
\psfrag{B}{{\scriptsize $\theta(B)$}}
\psfrag{a1}{{\scriptsize $a_1(A)$}}
\psfrag{c1}{{\scriptsize $a_1(C)$}}
\psfrag{a2}{{\scriptsize $a_2(A)$}}
\psfrag{A1}{{\scriptsize $\a_1(A)$}}
\psfrag{C1}{{\scriptsize $\a_1(C)$}}
\psfrag{A2}{{\scriptsize $\a_2(A)$}}
\psfrag{C2}{{\scriptsize $\a_2(C)$}}
\psfrag{u1}{{\scriptsize $v_1(A)$}}
\psfrag{v1}{{\scriptsize $v_1(C)$}}
\psfrag{u2}{{\scriptsize $v_2(A)$}}
\psfrag{v2}{{\scriptsize $v_2(C)$}}
\includegraphics[scale=0.6]{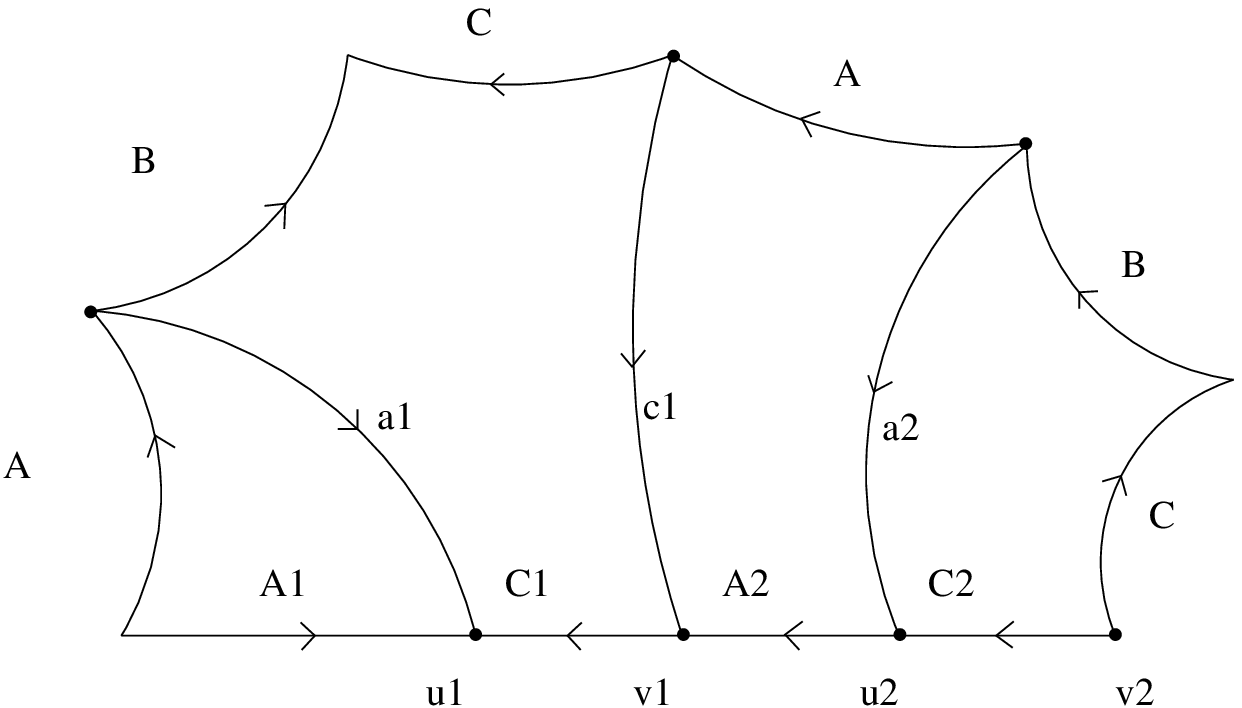}
\caption{}\label{exABC}
\end{center}
\end{figure}  
In this case $S_0=S_2= S_4=1$, $S_1=B$ and $S_3=B^{-1}$, so 
$b_1(A)=1$, $b_2(A)=a_1(C)$, $b_1(C)=a_1(A)$ and
$b_2(C)=a_2(A)$, $s_i(A)=1$, $s_1(C)=\theta(B)$ and
$s_2(C)=\theta(B)^{-1}$. As $o(A)=1$ and $o(C)=-1$, 
$z_i(A)$ and $z_i(C)$ are $H$-minimal
elements of $F(X)$ such that 
\[z_1(A)=(a_1(C))^{-1}a_1(A), \quad z_2(A)=a_2(A),\quad z_1(C)= 
a_1(C) \textrm{ and } z_2(C)=(a_1(A))^{-1}\theta(B)^2 a_2(A).\]
\end{example} 

We shall need the following lemma in the next subsection.
\begin{lemma}\label{lem:extedge}
Let $A$ be a long edge of $W$. Then 
\be[(i)]
\item \label{it:extedge1} 
$z_2(A)\a_2(A)z_1(A)\a_1(A)^{-1}=_H 1$, if $o(A)=1$, and 
\item\label{it:extedge2} 
$z_2(A)\a_2(A)^{\e}z_1(A)\a_1(A)^{-\e}=_H 1$,
 if $o(A)=-1$ and $\s(A)=(\e,\e)$. 
\ee
\end{lemma}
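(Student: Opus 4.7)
The plan is to reduce both identities to direct computations using a single ``per-index'' identity derived from the closed loop $\theta(W)F^{-1}$ in $\G_X(H)$. I would first set up notation: for $1\le k\le f$, let $p_k=\i(\theta(U_k))$, $q_k=\t(\theta(U_k))$, and $w_k=v(U_k)=\t(F_k)$, with the conventions $w_0=\i(F)=1$, $q_0=1$, and $a(U_0)=1$. Each of $a(U_k)$, $s(U_k)=\theta(S_k)$, and $\a(U_k)=F_k$ is then the label of a path between specified vertices of $\G_X(H)$ and represents a well-defined element of $H$.

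Next, I would establish the per-index identity
\[
\theta(U_k) =_H s(U_k)^{-1}\, a(U_{k-1})\, \a(U_k)\, a(U_k)^{-1},\quad 1\le k\le f.
\]
This is immediate: the concatenation $s(U_k)\,\theta(U_k)\,a(U_k)\, F_k^{-1}\, a(U_{k-1})^{-1}$ is a closed path in $\G_X(H)$ based at $q_{k-1}$, hence trivial in $H$.

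For part (i), I would apply the per-index identity at $k=i$ to obtain $\theta(A) =_H s_1(A)^{-1} b_1(A)\, \a_1(A)\, a_1(A)^{-1}$, and at $k=j$ followed by inversion, using $\theta(A^{-1})=_H\theta(A)^{-1}$ and $\a(U_j)=\a_2(A)^{-1}$ from Definition \ref{def:Ffact}, to obtain $\theta(A) =_H a_2(A)\,\a_2(A)\, b_2(A)^{-1}\, s_2(A)$. Substituting the definitions of $z_1(A)$ and $z_2(A)$ into $z_2(A)\,\a_2(A)\, z_1(A)\,\a_1(A)^{-1}$, the second expression collapses the central block $a_2(A)\,\a_2(A)\,b_2(A)^{-1}\,s_2(A)$ to $\theta(A)$, and the first then collapses $s_1(A)\,\theta(A)\,a_1(A)$ to $b_1(A)\,\a_1(A)$, leaving $b_1(A)^{-1}b_1(A)\,\a_1(A)\,\a_1(A)^{-1}=_H 1$; the same derivation works if $i>j$. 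For part (ii), since $U_i=U_j=A^\e$, applying the per-index identity at both indices yields
\[
s_1(A)^{-1} b_1(A)\, F_i\, a_1(A)^{-1} =_H s_2(A)^{-1} b_2(A)\, F_j\, a_2(A)^{-1};
\]
using $\e^2=1$ so that $\a_2(A)^\e=F_j$ and $\a_1(A)^{-\e}=F_i^{-1}$, substituting $z_1(A)$ and $z_2(A)$ into the target expression, and replacing the central block $s_2(A)^{-1} b_2(A)\, F_j\, a_2(A)^{-1}$ by $s_1(A)^{-1} b_1(A)\, F_i\, a_1(A)^{-1}$ produces the required cancellation.

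I expect no substantial obstacle: once the per-index identity is isolated, both parts reduce to mechanical substitution. The only delicate points are the orientation convention of Definition \ref{def:Ffact}—in particular, that $\a_2(A)$ is defined as $\a(U_j)^{-1}$ in case (i) but as $\a(U_j)^\e$ in case (ii)—and the sign bookkeeping associated to $\e$ in case (ii), which must be tracked carefully when identifying $\a_i(A)^{\pm\e}$ with $F_i^{\pm 1}$.
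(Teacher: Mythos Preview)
Your proposal is correct and follows essentially the same approach as the paper's proof: both isolate the ``per-index'' closed-path identity in $\G_X(H)$ (the paper writes it as $\a_k(A)^\e a_k(A)^{-1}\theta(A)^{-\e}s_k(A)^{-1}b_k(A)=_H 1$, for $k=1,2$) and then substitute the definitions of $z_1(A)$ and $z_2(A)$ to obtain the cancellation. Your writeup is slightly more explicit about the algebraic substitution, and your handling of the $\e$-bookkeeping via $\a_2(A)^\e=F_j$, $\a_1(A)^{-\e}=F_i^{-1}$ is clean.
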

\begin{proof}
By definition, for $1\le i\le l$ the Cayley graph $\G_X(H)$ contains
 a closed path, based at $v(U_{i-1})$ on $F$, consisting of the 
concatenation of paths  labelled 
$\a(U_i)$, $a(U_i)^{-1}$, $\theta(U_i)^{-1}$, $\theta(S_{i-1})^{-1}$ and
$a(U_{i-1})$. (See Figure \ref{fig:glue}). Thus, for $A\in \supp(U)$ 
with $o(A)=1$, we have both  
\[\a_1(A)a_1(A)^{-1}\theta(A)^{-1}s_1(A)^{-1}b_1(A)=_H 1 \textrm{ and }
\a_2(A)^{-1}a_2(A)^{-1}\theta(A)s_2(A)^{-1}b_2(A)=_H 1.
\]
Similarly, for $A\in \supp(U)$ with $o(A)=-1$, we have 
\[\a_k(A)^\e a_k(A)^{-1}\theta(A)^{-\e}s_k(A)^{-1}b_k(A)=_H 1, \textrm{ for }
k=1,2.
\]
The result follows from these two facts and the definition of $z_i(A)$.
\begin{figure}[htp]
\begin{center}
\begin{subfigure}[b]{.45\columnwidth}
\begin{center}
\psfrag{A}{{\scriptsize $\theta(A)$}}
\psfrag{A1}{{\scriptsize $\a_1(A)$}}
\psfrag{A2}{{\scriptsize $\a_2(A)$}}
\psfrag{a1}{{\scriptsize $a_1(A)$}}
\psfrag{ba1}{{\scriptsize $b_1(A)$}}
\psfrag{ba2}{{\scriptsize $b_2(A)$}}
\psfrag{a2}{{\scriptsize $a_2(A)$}}
\psfrag{s1}{{\scriptsize $s_1(A)$}}
\psfrag{s2}{{\scriptsize $s_2(A)$}}
\psfrag{z1}{{\scriptsize $z_1(A)$}}
\psfrag{z2}{{\scriptsize $z_2(A)$}}
\includegraphics[scale=0.45]{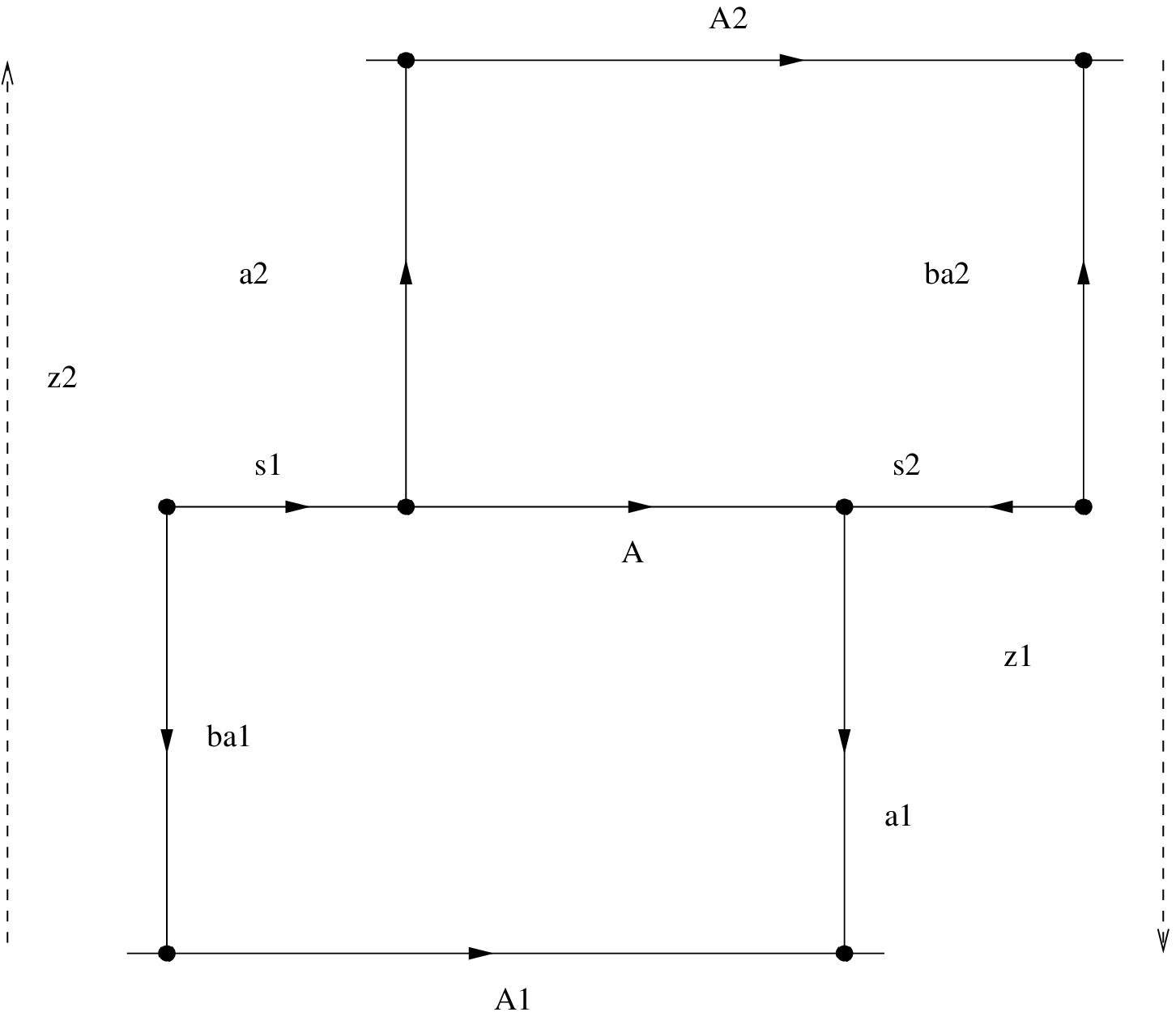}
\caption{$o(A)=1$, $A=U_i$, $A^{-1}=U_j$}\label{fig:glueo}
\end{center}
\end{subfigure}
\quad\quad
\begin{subfigure}[b]{.45\columnwidth}
\begin{center}
\psfrag{A}{{\scriptsize $\theta(A)^\e$}}
\psfrag{A1}{{\scriptsize $\a_1(A)^\e$}}
\psfrag{A2}{{\scriptsize $\a_2(A)^\e$}}
\psfrag{a1}{{\scriptsize $a_1(A)$}}
\psfrag{ba1}{{\scriptsize $b_1(A)$}}
\psfrag{ba2}{{\scriptsize $b_2(A)$}}
\psfrag{a2}{{\scriptsize $a_2(A)$}}
\psfrag{s1}{{\scriptsize $s_1(A)$}}
\psfrag{s2}{{\scriptsize $s_2(A)$}}
\psfrag{z1}{{\scriptsize $z_1(A)$}}
\psfrag{z2}{{\scriptsize $z_2(A)$}}
\includegraphics[scale=0.45]{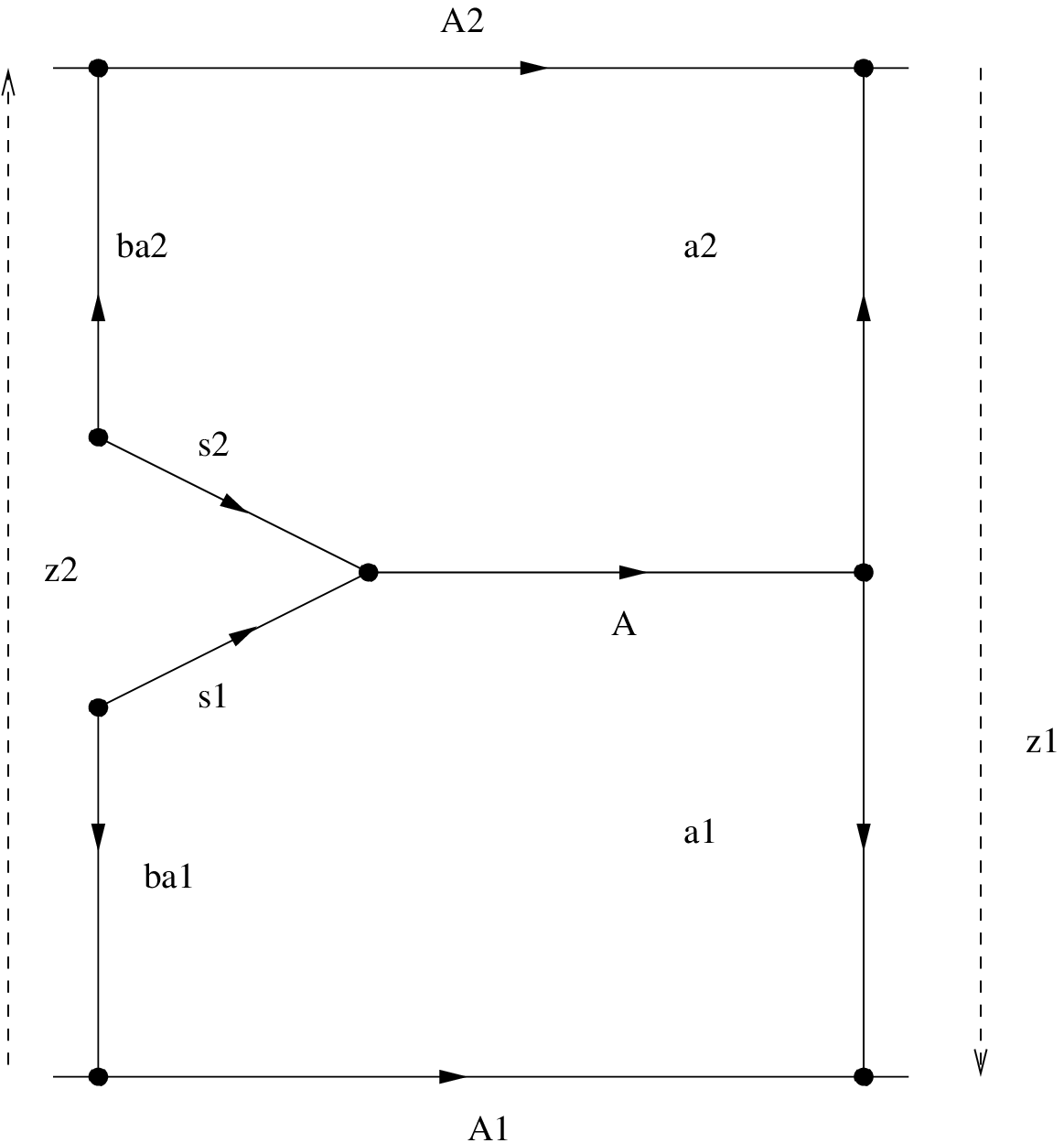}
\caption{$o(A)=-1$, $\s(A)=(\e,\e)$}\label{fig:gluen}
\end{center}
\end{subfigure}
\end{center}
\caption{}\label{fig:glue}
\end{figure}  
\end{proof}
\subsection{Construction of an extension of $U$}\label{stext}

Write ${W}$ around the boundary of a disk (i.e.~divide the
boundary up into $|{W}|$ segments, assigning a letter to each) and identify the
long edges, respecting orientation. This results in a surface $S$ of genus
$k\leq n$ with $Q$ holes. The boundary of the disk becomes a graph on
this surface, which we denote by $\Gamma_S$. Thus $\G_S$ 
consists of  short edges, all of which are written
around the boundary components of $S$,  and long edges, all of which are 
properly embedded on $S$. 

Let 
$C$ be the  set of cyclic words, over $\cA$,
labelling the $Q$ boundary components of $S$. Define $\sim$ to be
the equivalence relation on $C$ generated by the relation which
consists of all pairs $(U,V)$ of words in $C$ such that $\supp_{F(\cA)}(U)
\cap \supp_{F(\cA)}(V)\neq \nul$.  Let $B_1,\ldots, B_p$ 
be the equivalence classes of $\sim$. Then each 
 set $B_i$
consists of a $t_i$-tuple of cyclic words $(W_1^i,\ldots ,W_{t_i}^i)$, where
$W_j^i$ is a cycle forming a boundary component of $S$, and by definition
of $B_i$,   this $t_i$-tuple is quadratic.
\begin{definition}\label{def:sgenus}
 The
$S$-\emph{genus} of $B_i$ is  
$g_i=\genus_H(\theta(W_1^i),\ldots ,\theta(W_{t_i}^i))+t_i-1$, 
for $i=1,\ldots ,p$.
\end{definition} 
\begin{lemma}\label{gi} 
Let $g=\sum _{i=1}^{p} g_i$, where $g_i$ is the $S$-genus of $B_i$, and let $\genus(S)=k$.
For each equivalence class $B_i$ consisting of  a $t_i$-tuple $(W_1^i,\ldots
,W_{t_i}^i)$ we have
\begin{equation*}
\genus_H(\theta(W_1^i),\ldots ,\theta(W_{t_i}^i))=\genus_{F(\mathcal{A})}(W_1^i,\ldots
  ,W_{t_i}^i),\quad \textrm{for}\,\,i=1,\ldots ,p.
\end{equation*}
It follows that $g+k=n$.
\end{lemma}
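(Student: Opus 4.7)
The plan is to prove the lemma in two steps: first an Euler-characteristic computation that relates $n$, $k$, and the $F(\cA)$-genera of the pieces; second, an argument by contradiction using the minimality of $(W,\theta)$ in $\cF$ that forces the $H$-genus to agree with the $F(\cA)$-genus on each equivalence class $B_i$.

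For the first step, I would observe that the closed surface $\S_W$ of genus $n$ admits the decomposition $\S_W = S \cup \S_1 \cup \cdots \cup \S_p$, where $\S_i = \S_{W_1^i,\ldots,W_{t_i}^i}$ is the surface associated (as in Section \ref{sec:wicks}) to the quadratic $t_i$-tuple $(W_1^i,\ldots,W_{t_i}^i)$, glued to $S$ along the $t_i$ matching boundary circles. Since $\genus(\S_i) = \genus_{F(\cA)}(W_1^i,\ldots,W_{t_i}^i) =: \gamma_i$ (by the result of \cite{ols89} already used in Section \ref{sec:wicks}) and $Q = \sum_i t_i$, additivity of Euler characteristic under gluing along circles gives
\[
2 - 2n \;=\; (2 - 2k - Q) + \sum_{i=1}^{p}(2 - 2\gamma_i - t_i),
\]
which rearranges to $n = k + \sum_{i=1}^{p}(\gamma_i + t_i - 1)$.

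For the second step, the inequality $\genus_H(\theta(W_1^i),\ldots,\theta(W_{t_i}^i)) \le \gamma_i$ is immediate: apply the labelling $\theta$ edge-by-edge to a van Kampen diagram over $F(\cA)$ on $\S_i$ realising $\gamma_i$, exactly as in the proof of Lemma \ref{lem:wicks}. For the reverse inequality I would argue by contradiction: suppose some $i_0$ satisfies $m := \genus_H(\theta(W_1^{i_0}),\ldots,\theta(W_{t_{i_0}}^{i_0})) < \gamma_{i_0}$. By (the tuple version implicit in the proof of) Lemma \ref{lem:wicks} there exist a Wicks form $V$ over $\cA$ of genus $m$, a labelling $\psi$, and conjugators $r_j$ with $\prod_j r_j\theta(W_j^{i_0})r_j^{-1} =_H \psi(V)$. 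Topologically, this produces a van Kampen diagram over $H$ on a surface of genus $m + t_{i_0}-1$ with $t_{i_0}$ boundary components labelled $\theta(W_j^{i_0})$. Substituting this combinatorial piece for the $\S_{i_0}$-piece of $\S_W$ should yield a new pair $(W',\theta') \in \cF$ with $|\theta'(W')|$ strictly smaller than $|\theta(W)|$ (after applying the cancellation-free reduction from \cite{Culler81}), contradicting the minimality of $(W,\theta)$.

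The main obstacle will be the bookkeeping required to construct $W'$ in the contradiction step: the boundary labels of $\S_{i_0}$ are cyclic words over $\cA$ while the replacement diagram has labels in $F(X)$, so one cannot simply glue along boundaries. Concretely I would introduce fresh letters of $\cA$ for each edge of the replacement graph, define $\theta'$ to agree with $\theta$ on the edges of $W$ outside $W_1^{i_0}\cup\cdots\cup W_{t_{i_0}}^{i_0}$ and to realise the labels of the replacement (composed with conjugation by the $r_j$'s) on the new edges, and then verify that the single 2-cell boundary of the resulting complex gives a cyclic quadratic word over $\cA$ which, after free reduction and removal of redundancy per Definition \ref{def:redun}, is a Wicks form of genus at most $k + \sum_{i\ne i_0}(\gamma_i+t_i-1) + (m+t_{i_0}-1) < n$. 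Once this is in place the two steps combine: we obtain $\genus_H(\theta(W_1^i),\ldots,\theta(W_{t_i}^i)) = \gamma_i$ for every $i$, so $g_i = \gamma_i + t_i - 1$, and summing yields $k + g = k + \sum_i(\gamma_i + t_i - 1) = n$, completing the proof.
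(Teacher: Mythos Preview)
Your first step (the Euler-characteristic computation) is correct and matches the paper's argument: decomposing $\S_W$ as $S$ glued to the pieces $\S_i$ along the boundary circles gives $n=k+\sum_i(\gamma_i+t_i-1)$ with $\gamma_i=\genus_{F(\cA)}(W_1^i,\ldots,W_{t_i}^i)$. The easy inequality $\genus_H(\theta(W_1^i),\ldots)\le\gamma_i$ is also fine.

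The gap is in your contradiction step. You aim at the wrong target. You claim that replacing the $\S_{i_0}$-piece by a smaller-genus $H$-diagram produces a pair $(W',\theta')\in\cF$ with $|\theta'(W')|<|\theta(W)|$, contradicting minimality. But:
\begin{itemize}
\item By your own computation the resulting Wicks form $W'$ has genus strictly less than $n$, so $(W',\theta')\notin\cF$ at all (membership in $\cF$ requires genus exactly $n$).
\item Nothing in your construction forces the \emph{length} to drop: the replacement involves conjugators $r_j$ and a new form $V$, and cancellation-free reduction controls only the form of the word, not its size relative to $|\theta(W)|$.
\end{itemize}
So neither the set membership nor the length inequality you invoke is available.

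The correct contradiction --- and this is what the paper does --- is simply $\genus_H(h)<n$. Once you have, for each $i$, a van Kampen diagram over $H$ on a surface of genus $g_i-t_i+1$ with boundaries labelled $\theta(W_1^i),\ldots,\theta(W_{t_i}^i)$ (this is the definition of $g_i$), glue these directly to $S$ along the $Q$ boundary circles. No passage back to $\cA$, no new Wicks form, no bookkeeping with fresh letters: you obtain a van Kampen diagram over $H$ on a closed surface of genus $k+\sum_i g_i$, minus the disk bounded by $\theta(W)$. If some $g_{i_0}<\gamma_{i_0}$ then $k+\sum_i g_i<k+\sum_i(\gamma_i+t_i-1)=n$, so $\genus_H(h)\le k+\sum_i g_i<n$, contradicting the standing hypothesis $\genus_H(h)=n$. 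Minimality of $(W,\theta)$ plays no role in this lemma.
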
  
\begin{proof}
Let $h_i= \genus_{F(\cA)}(W_1^i,\ldots
,W_{t_i}^i)+t_i-1$. Then $h_i\ge g_i$, for all $i$.   
Assume that $h_j>g_j$, for some
$1\leq j\leq p$. Now, since $W$ has
genus  $n$ in
$F(\mathcal{A})$, if we identify all
the  short edges on the genus $k$ surface $S$, respecting orientation,
we obtain the closed compact surface $\S_W$ of genus $n$.  
On the other hand we may  construct a surface homeomorphic to $\S_W$ as 
follows. For each $B_i$ there exists a van Kampen diagram $K_i$, over 
$F(\cA)$, on a surface $\S_i$ of genus $h_i-t_i+1$, with $t_i$ boundary
components, $W_{1}^j,\ldots ,W_{t_i}^j$. Identifying boundary components
of $\S_i$ to the $t_i$ boundary components of $S$, with the same 
boundary cycles, for all $i$, we obtain a closed surface of genus
$k+\sum_{i=1}^p [(h_i-t_i+1)+t_i-1]=k+\sum_{i=1}^p h_i$. By construction
this surface is homeomorphic to $\S_W$. 

Therefore, 
\begin{eqnarray*}
n &=&k+\sum_{i=1}^{p}h_i\\
& > &k+\sum_{i=1}^{p}g_i.
\end{eqnarray*}
But this implies that $\genus_H(h)=\genus_H(\theta(W))\leq
k+\sum_{i=1}^{p}g_i<n$, a contradiction. Thus $g_i=h_i$, for all
$i=1,\ldots ,p$,  and
$n=k+\sum g_i =k+g$.  Hence the lemma holds.   
\end{proof}
Consider once more the surface $S$  with $Q$ holes with the embedded graph
$\Gamma_S$ consisting of  long and
short edges. If we paste a disk onto each of the $Q$
boundary components and contract the cyclic word of short edges to a
point, we obtain a graph, consisting  of long edges only, 
on a closed  compact surface
of genus $k=\genus(S)$. The quadratic  word associated with this graph is
the long factorisation $U$ of $W$. As usual we denote this graph $\Gamma_U$. 
 We shall construct an extension of $U$ over $H$. 

As above, let the long factorisation of $W$ be $U=U_1\cdots U_l$. As in Section \ref{sec:ext}, 
let $U^\prime$ the Hamiltonian
cycle associated to $U$: that is  the word obtained
from $U$ by replacing each  occurrence $U_i$ by $U_{i1}$ or 
$U_{i2}$, as described in Section \ref{sec:ext}. 
 Thus the word $U=AC^{-1}A^{-1}C^{-1}$ 
in Example \ref{ex:U} is replaced by $U^\prime = A_1C_1^{-1}A_2^{-1}C_2^{-1}$. 
The graph obtained from $\G_U$ by replacing the directed edge 
$U_i$ with edges $U_{i1}$ and $U_{i2}$ is called $\G^\prime_U$, as
before. Now define a labelling function $\psi$ on the set $\{A_1,A_2:
A\in \supp(U)\}$, and by extension on the free group these letters
generate,  by 
\[\psi(A_1)=\a_1(A)\textrm{ and } \psi(A_2)=\a_2(A).\]
By definition of $\a_1$ and $\a_2$ we have then
\begin{equation}\label{eq:Ffac}
F=\psi(U^\prime).
\end{equation}
For instance, in Example \ref{ex:U} we have 
$\psi(U^\prime)=\a_1\g_1^{-1}\a_2^{-1}
\g_2^{-1}=F.$ 

Let $v_{j}^i$ be the vertex of $\G_U$ obtained by collapsing the 
boundary component $W_j^i$ of $S$ to a point. 
Let 
$B_i'=\{v_1^i,\ldots ,v_{t_i}^j\}$. \label{def:Bi'}
We shall construct a genus $g_i$ extension of $\G_U$ on 
$B_i'$, for appropriate $g_i$, for all $i$ (see Definition \ref{defn:ext}). Each internal vertex  
of $\G_S$ becomes a vertex of $\G_U$ and we shall construct a genus
$0$ extension on each such vertex.

Let $v$ be a vertex of $\G_U$ of degree $d$  and assume that, 
after suitable relabelling of edges, the vertex sequence of $\lk(v)$ is
 $E^{\e_1}_1,\ldots
,E^{\e_d}_d$, and the incidence sequence of $v$ is $O_1,\ldots , O_d$. 
Then the 
cyclic word $U$ contains the subwords $(E_q^{\e_q}E_{q+1}^{\e_{q+1}})^{O_q}$,
for $q=1,\ldots, d$, as in Figure \ref{fig:extstep1a} (with ``$e$'' replaced
throughout by ``$E$''.)

Now extend each vertex $v$ of $\G_U$ by a cycle graph $C_v$, as 
in Step 2 of the 
definition of an extension in Section \ref{sec:ext}. 
 As in the definition, 
if  the cycle $C_v=c_1\cdots c_d$, 
where $d$ is the degree of $v$ and $\i(c_q)=v_q$, $\t(c_q)=v_{q+1}$, then 
using the notation above, we shall have
 $v_q=
\t((E^q_{\nu_q})^{\e_q})
=\t(
(E^{q-1}_{\mu_{q-1}})^{\e_{q-1}}
)$, 
for $q=1,\ldots d$ (superscripts modulo $d$), 
as in Figure \ref{fig:extstep2c}.  
  The resulting graph is $\G_U''$. 

To complete Step 3 of the construction of the extension we must
 extend the labelling function $\psi$, defined above on $A_1$ and $A_2$, 
for $A\in \supp(U)$, to the edges of the cycles added to form $\G''_U$. 
For the cycle $C_v$ above we define, for $q=1,\ldots, d$, 
\begin{equation}\label{eq:psidefo}
\psi(c_q)=z_{\mu_q}(E_q)^{O_q}, \textrm{ if } o(E_q)=1, 
\end{equation}
and 
\begin{equation}\label{eq:psidefn}
\psi(c_q)=z_{\mu(O_q)}(E_q)^{\g_q}, \textrm{ if } o(E_q)=-1, 
\end{equation}
where 
\[\g_q=O_q\nu^{-1}(l_q)=
\begin{cases}
-O_q, & \textrm{if } l_q=1\\
O_q, & \textrm{if } l_q=2.
\end{cases}
.\]
\begin{lemma}\label{lem:uext}
The pair $(\G''_U,\psi)$ is an extension of $U$ over $H$. 
\end{lemma}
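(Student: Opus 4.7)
The plan is to verify the three defining conditions of an extension of $U$ over $H$ from Step 3 of Section \ref{sec:ext}: (a) for each $a \in \cA$ appearing in $\G''_U$, the word $\psi(a)$ is $H$-minimal; (b) for every edge $e$ of $\G_U$ which is split into $(e_1,e_2)$ as in Step 1 and flanked by cycle-edges $x \in C_{\i(e)}$ and $y \in C_{\t(e)}$ as in Figure \ref{fig:extstep3}, the relation $\psi(e_1) =_H \psi(x)\psi(e_2)\psi(y)$ holds; and (c) the concatenation $\psi(a_1)\cdots \psi(a_r)$ along the Hamiltonian cycle $U'$ is freely cyclically reduced as written.

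For (a): every $A \in \supp(U)$ is a long edge, and by Definition \ref{def:Ffact} the labels $\psi(A_1) = \a_1(A)$ and $\psi(A_2) = \a_2(A)$ are subpaths of the geodesic $F$ obtained via Lemma \ref{lem:chouv}; any subpath of a geodesic is $H$-minimal. For the cycle-edges $c_q$ on some $C_v$, the definitions \eqref{eq:psidefo} and \eqref{eq:psidefn} give $\psi(c_q)$ as (a power of) some $z_i(E_q)$, and the words $z_i(A)$ were declared $H$-minimal in Definition \ref{def:Ffact}.

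For (b): this is the content of Lemma \ref{lem:extedge}, which records precisely the two closed-path identities $z_2(A)\a_2(A)z_1(A)\a_1(A)^{-1} =_H 1$ (if $o(A)=1$) and $z_2(A)\a_2(A)^\e z_1(A)\a_1(A)^{-\e} =_H 1$ (if $o(A)=-1$, $\s(A)=(\e,\e)$). What remains is to check, on a case-by-case basis in $o(A)$ and $\s(A)$, that the specific cycle-edges $x$ and $y$ of $C_{\i(A)}$ and $C_{\t(A)}$ adjacent to $A_1$ in $\G''_U$ are precisely the $c_q$'s to which $\psi$ assigns $z_2(A)^{\pm}$ and $z_1(A)^{\pm}$. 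This requires carefully tracing the bookkeeping functions $\mu_q, \nu_q, l_q, r_q$ from Definition \ref{def:vori}, the incidence sequence of Definition \ref{def:vinc}, and the attachment recipe at the end of Step 2 against the sign conventions $O_q$ and $\g_q$ used in \eqref{eq:psidefo}--\eqref{eq:psidefn}. This is the step I expect to be the main obstacle: it is not deep, but it is a genuine combinatorial verification that must handle the orientable and non-orientable local pictures at $v$ uniformly.

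For (c): by \eqref{eq:Ffac} and the factorisation in Lemma \ref{lem:chouv}, the product $\psi(a_1)\cdots \psi(a_r)$ equals, letter-for-letter, the concatenation $F_1 \cdots F_f = F$. Since $F$ is a geodesic, it is freely reduced, so no cancellation can occur at any of the interior joints between consecutive $\psi(a_i)$. Cyclic reducedness requires, in addition, no cancellation between $\psi(a_r)$ and $\psi(a_1)$; this follows from our standing assumption that the last letter of $W$ is a long edge (so the cycle $U'$ closes up with a factor $F_1$ that is a non-trivial segment of the geodesic $F$), together with the observation that, after conjugating $h$ appropriately and absorbing the conjugator into $R$, we may take $F$ to be cyclically $H$-minimal; the cost of this adjustment on $|R|$ is accounted for in Section \ref{sec:Rbound}. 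Together, (a), (b), (c) establish that $(\G''_U,\psi)$ is an extension of $U$ over $H$ in the sense of Section \ref{sec:ext}.
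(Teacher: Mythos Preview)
Your proposal is correct and follows the same approach as the paper: verify (a), (b), (c) of Step~3, using Lemma~\ref{lem:extedge} for the key relation in (b) and a case analysis on $o(A)$ and the incidence data at $\i(A)$ and $\t(A)$ to match the cycle-edge labels $\psi(c_q)$ with the appropriate $z_i(A)^{\pm 1}$; the paper carries out exactly this case split (choosing coordinates so that $E_1^{\e_1}=A^{-1}$ at $u=\i(A)$, $G_1^{\d_1}=A$ at $v=\t(A)$, and $O_1=1$) and then appeals to Lemma~\ref{lem:extedge}\ref{it:extedge1},\ref{it:extedge2}. Your treatment of (c) is, if anything, more careful than the paper's one-line ``As $\psi(U')=F$, (c) holds'': you correctly observe that $H$-minimality of $F$ gives free reducedness but not automatically cyclic reducedness, and your proposed fix (replace $F$ by a cyclically $H$-minimal representative, absorbing the adjustment into $R$) is sound and consistent with what happens later in Section~\ref{sec:Rbound}.
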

\begin{proof}
We must verify that (a), (b) and (c), of Step 3 of the definition
of extension, hold. By definition of $\psi$, (a) holds. 
To see that (b) holds, consider a long edge $A$ of $\G_U$ and suppose that 
$A$ is replaced by the pair $(A_1,A_2)$, in forming  $\G''_U$. We have
defined $\psi(A_i)=\a_i(A)$, for $i=1,2$. Assume that $\i(A)=u$ and 
$\t(A)=v$. Without loss of generality we may assume that the 
links of $u$ and $v$ have vertex sequences $E_1^{\e_1}, E_2^{\e_2},
\ldots ,E_d^{\e_d}$ and $G_1^{\d_1},G_2^{\d_2},\ldots, G_f^{\d_f}$, 
with $E_1^{\e_1}=A^{-1}$ and $G_1^{\d_1}=A$. Write
$O_q=O_q(u)$ and $O'_q=O_q(v)$, and similarly write $r_q$, $l_q$, $\mu_q$
and $\nu_q$, for $r_q(u)$, $l_q(u)$, $\mu_q(u)$
and $\nu_q(u)$; and $r'_q$, $l'_q$, $\mu'_q$
and $\nu'_q$, for $r_q(v)$, $l_q(v)$, $\mu_q(v)$
and $\nu_q(v)$. 
We may assume (by reversing orientation of all vertices if necessary) that
$O_1=1$. 
 From Lemma \ref{lem:vind}, the cyclic word $U'$ contains
\begin{equation}\label{eq:ulk}
A_{r_1}^{-1}E_{2,l_2}^{-\e_2}\textrm{ and }
(E_{d,r_d}^{\e_d}A_{l_1})^{O_d}
 \end{equation}
and 
\begin{equation}\label{eq:vlk}
(A_{r'_1}G_{2,l'_2}^{-\d_2})^{O'_1}\textrm{ and }
(G_{f,r'_f}^{\d_f}A_{l'_1}^{-1})^{O'_f}.
 \end{equation}
Assume further that $u$ and $v$ are extended by cycles $C_u$ and $C_v$, 
where $C_u=c_1\cdots c_d$ and $C_v=c'_1\cdots c'_f$, with 
\[\i(c_q)=v_{q}, \t(c_q)=v_{q+1}, \i(c'_q)=v'_{q}\textrm{ and }
\t(c'_{q+1})=v'_{q+1}.\]
Then 
\[\i(A_{l_1})=v_1=\i(c_1), \i(A_{r_1})=v_2=\t(c_1), 
\t(A_{l'_1})=v'_1=\i(c'_1)\textrm{ and }\t(A_{r'_1})=v'_2=\t(c'_1).
\]
 Consider first the case $o(A)=1$. 
As $\e_1=-1$ and $\d_1=1$, we have $\mu_1=2$, $\nu_1=1$, 
$\mu'_1=1$ and $\nu'_1=2$.  
 Then 
\begin{equation}\label{eq:altphi1}
\psi(c_1)=z_{\mu_1}(A)=z_2(A)
\end{equation}
 and 
\[l_1=\nu_1=1, r_1=\mu_1=2, \i(A_1)=\i(c_1)
\textrm{ and }
\i(A_2)=\t(c_1).\]
 If $O'_f=1$ then  
\begin{equation}\label{eq:altphi2}
\psi(c'_1)=z_{\mu'_1}(A)=z_1(A)
\end{equation}
 and 
\[l'_1=\nu'_1=2, r'_1=\mu'_1=1, \t(A_2)=\i(c'_1)
\textrm{ and }
\t(A_1)=\t(c'_1).\]
We therefore have a closed path $c_1A_2c'_1A_1^{-1}$ and, 
from Lemma 
\ref{lem:extedge} \ref{it:extedge1}, 
\[\psi(c_1)\psi(A_2)\psi(c'_1)\psi(A_1^{-1})=
z_2(A)\a_2(A)z_1(A)\a_1(A)^{-1}=_H 1,\] 
so 
 (b) holds in this case.  
If  $O'_f=-1$ then 
\begin{equation}\label{eq:altphi3}
\psi(c'_1)=z_{\mu'_1}(A)^{-1}=z_1(A)^{-1}
\end{equation}
and a similar argument shows that we have a closed path 
$c_1A_2c_1^{\prime\, -1}A_1^{-1}$; so (b) also holds in this case.

Now suppose that $o(A)=-1$. Then, from \eqref{eq:ulk}, $U'$ contains
$A^{-1}$, so $\s(A)=(-1,-1)$. From \eqref{eq:vlk}, then $O'_1=-1$. Hence
\begin{equation}\label{eq:nonaltphi}
\psi(c_1)=z_1(A)^{\nu^{-1}(l_1)}\textrm{ and }
\psi(c'_1)=z_2(A)^{-\nu^{-1}(l'_1)}.
\end{equation}
If $r_1=l'_1$ and $l_1=r'_1$ then 
\[\psi(c'_1)=z_2(A)^{-\nu^{-1}(r_1)}=z_2(A)^{\nu^{-1}(l_1)},\]
and 
\[\t(A_{r_1})=\t(A_{l'_1})=\i(c'_1) \textrm{ and } \t(A_{l_1})=
\t(A_{r'_1})=\t(c'_1),\]
so we have a closed path $c_1A_{r_1}c'_1A_{l_1}^{-1}$ and 
\[\psi(c_1)\psi(A_{r_1})\psi(c'_1)\psi(A_{l_1}^{-1})
=z_1(A)^{\nu^{-1}(l_1)}\a_{r_1}(A)z_2(A)^{\nu^{-1}(l_1)}\a_{l_1}(A)^{-1}.\]
Thus, as $\e=-1$,  if $l_1=1$ we have 
\[\psi(c_1)\psi(A_{r_1})\psi(c'_1)\psi(A_{l_1}^{-1})
=z_1(A)^{-1}\a_{2}(A)^{-\e}z_2(A)^{-1}\a_1(A)^{\e}\]
and if $l_1=2$ then 
\[\psi(c_1)\psi(A_{r_1})\psi(c'_1)\psi(A_{l_1}^{-1})
=z_1(A)^{-1}\a_{1}(A)^{-\e}z_2(A)\a_1(A)^{\e}\]
and in both cases (b) follows from Lemma 
\ref{lem:extedge} \ref{it:extedge2}.
The case $r_1=r'_1$ and $l_1=l'_1$ follows in the same way, 
so (b) holds in all cases. 

As $\psi(U')=F$, (c) holds. Therefore
we have an extension of $U$ over $H$, as required.
\end{proof}

\subsection{Length and genus of $(\G''_U,\psi)$}
The next two 
Lemmas prepare the ground for  the proof of Lemma \ref{lem:genext}.  
\begin{lemma}\label{lem:bdryword}
Let $v_{j}^i$ be the vertex of $\G_U$ obtained by collapsing the 
boundary component $W_j^i$ of $S$ to a point. Let $E_1^{\e_1}, \ldots, E_d^{\e_d}$  
be
the vertex sequence of $\lk(v_{j}^i)$, and $O_1,\ldots, O_d$  the 
incidence sequence of $v_{j}^i$. Then $\theta(W_j^i)$ is conjugate
to $u_1^{O_1} \cdots u_d^{O_d}$, where 
\[u_q=
\begin{cases}
s_{l_{q+1}}(E_{q+1}),&  \textrm{if }O_q=1\\
s_{r_q}(E_q),& \textrm{if } O_q=-1.
\end{cases}
\]
\end{lemma}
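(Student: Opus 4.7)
My plan is to trace around the boundary $W_j^i$ on the surface $S$ and decompose it into arcs, each of which lies between two consecutive long‑edge endpoints at the vertex $v_j^i$, and then to identify each arc's label with the appropriate $s_k(E)$ from Definition \ref{def:Ffact}. First I would establish the geometric picture: $S$ is obtained from the disk $D$ (boundary labeled $W$) by identifying only long edges, so the short edges of $\partial D$ become the boundary of $S$, and a boundary component is a cyclic concatenation of short‑edge paths glued at endpoints of identified long edges. The vertex $v_j^i$ is obtained by collapsing $W_j^i$, and its link is a cycle whose vertices are the long‑edge ends meeting $W_j^i$, with cyclic order induced by the embedding. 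Once the orientation of $v_j^i$ is fixed so that the link reads $E_1^{\e_1},\ldots,E_d^{\e_d}$, traversing $W_j^i$ in the induced orientation encounters these endpoints in the same cyclic order, so $\theta(W_j^i)$ is (up to cyclic permutation) the product $\theta(\sigma_1)\cdots\theta(\sigma_d)$, where $\sigma_q$ is the short‑edge path on $\partial S$ from the endpoint of $E_q^{\e_q}$ to the endpoint of $E_{q+1}^{\e_{q+1}}$.

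Next I would identify $\sigma_q$ with a subword of $W$. Since by Definition \ref{def:vinc} $U$ contains $(E_q^{\e_q}E_{q+1}^{-\e_{q+1}})^{O_q}$, $W$ must contain the corresponding subword $(E_q^{\e_q}S^q E_{q+1}^{-\e_{q+1}})^{O_q}$, where $S^q$ is the (short‑edge) word sitting between these two long‑edge occurrences in $W$. This subword labels a sub‑arc of $\partial D$ whose short portion descends to exactly the segment of $\partial S$ between the two long‑edge endpoints under consideration; comparing orientations gives $\sigma_q = S^q$ when $O_q=1$ and $\sigma_q=(S^q)^{-1}$ when $O_q=-1$, so that $\theta(\sigma_q)=\theta(S^q)^{O_q}$.

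Finally I would match $\theta(S^q)$ with the correct $s_k(E)$. By Definition \ref{def:Ffact}, $s_k(E)=\theta(S_i)$ where $S_i$ is the short word immediately preceding the relevant occurrence of $E^{\pm1}$ in the long factorisation $W=S_1U_1\cdots S_fU_f$. When $O_q=1$, $S^q$ sits just before the occurrence $E_{q+1}^{-\e_{q+1}}$, so I must check that the index $l_{q+1}$ of Definition \ref{def:vori} selects precisely this occurrence; I would verify this case by case on $o(E_{q+1})$ and $\e_{q+1}$, using that $\nu(1)=2,\nu(-1)=1$ together with the conventions of Definition \ref{def:Ffact}. Similarly, when $O_q=-1$, the relevant short word sits between $E_{q+1}^{\e_{q+1}}$ and $E_q^{-\e_q}$ in $W$, hence just before the occurrence $E_q^{-\e_q}$, which is matched to $s_{r_q}(E_q)$ by an analogous verification.

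The main obstacle is the bookkeeping encoded in Definitions \ref{def:vinc} and \ref{def:vori}: the indexing $s_1,s_2$ of occurrences of $E$ is defined differently depending on whether $o(E)=1$ or $o(E)=-1$, because in the non‑orientable case the two occurrences of $E$ in $W$ cannot be distinguished by exponent but only by their relative position in $W$. The role of $\mu,\nu,l_q,r_q$ is precisely to package this case distinction, and the technical heart of the proof is checking that each combination of $O_q$, $\e_q$, $\e_{q+1}$, $o(E_q)$ and $o(E_{q+1})$ makes the occurrence selected by $l_{q+1}$ (respectively $r_q$) coincide with the occurrence bordering $S^q$ in $W$. Once this tabulation is completed, concatenating the identifications around the link yields $\theta(W_j^i)=u_1^{O_1}\cdots u_d^{O_d}$ (cyclically), as claimed.
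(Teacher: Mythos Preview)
Your proposal is correct and follows essentially the same approach as the paper's own proof: identify the short-edge segments $S'_q$ of $W$ lying between consecutive long-edge ends at the vertex, note that their signs in the boundary cycle are governed by $O_q$, and then match each $S'_q$ to the appropriate $S_{l_{q+1}}(E_{q+1})$ or $S_{r_q}(E_q)$ by a case analysis. The only difference is that where you plan to verify the index matching directly from Definitions~\ref{def:vori} and~\ref{def:Ffact}, the paper shortcuts this step by quoting Lemma~\ref{lem:vind} (which records that $U'$ contains $(E_{q,r_q}^{\e_q}E_{q+1,l_{q+1}}^{-\e_{q+1}})^{O_q}$), from which the identification of occurrences is immediate. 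One small notational point to watch: the paper writes the subword of $W$ as $(E_q^{\e_q}S_q'^{O_q}E_{q+1}^{-\e_{q+1}})^{O_q}$ rather than your $(E_q^{\e_q}S^q E_{q+1}^{-\e_{q+1}})^{O_q}$, so that $S'_q$ is always the literal short word appearing in $W$; this makes the sign-tracking in the $O_q=-1$ case slightly cleaner.
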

\begin{proof}
By definition of $O_q$, the cyclic word $U$ contains 
$(E_{q}^{\e_q} E_{q+1}^{-\e_{q+1}})^{O_{q}}$, and hence 
$W$ contains  $(E_{q}^{\e_q}S_q^{\prime O_q}E_{q+1}^{-\e_{q+1}})^{O_{q}}$,
where 
\[S'_q=\begin{cases}
S(E_{q+1}^{-\e_{q+1}}),& \textrm{if } O_q=1,\\
S(E_{q}^{-\e_{q}}),& \textrm{if } O_q=-1,
\end{cases}
\]
 for $q=1,\ldots, d$. Moreover, from Lemma \ref{lem:vind}, 
the cyclic word $U'$ contains $(E_{q,r_q}^{\e_q} E_{q+1,l_{q+1}}^{-\e_{q+1}})^{O_{q}}$,
 in place of $(E_{q}^{\e_q} E_{q+1}^{-\e_{q+1}})^{O_{q}}$. Hence, by
 considering all the possible values when $O_q=\pm 1$, $o(E_{q+1})=\pm 1$ and 
$\e_{q+1}=\pm 1$, we see that 
if $O_q=1$ then $S'_q=S_{l_{q+1}}( E_{q+1})$, while if $O_q=-1$ then 
$S'_q=S_{r_{q}}( E_{q})$. 
\end{proof}
In the following lemma we abbreviate 
the notation of Definition \ref{def:Ffact},
 writing $a_{l_q}$, $b_{l_q}$, $s_{l_q}$, $a_{r_q}$,   $b_{r_q}$ and  $s_{r_q}$ for
$a_{l_q}(E_q)$, $b_{l_q}(E_q)$, $s_{l_q}(E_q)$, $a_{r_q}(E_q)$,   $b_{r_q}(E_q)$ and  $s_{r_q}(E_q)$, respectively. 
\begin{lemma}\label{lem:cyccanc}
Let $v$ be a vertex of $\G_U$ with incidence sequence $O_1,\ldots, O_d$ and 
link  $E_1^{\e_1}, \ldots, E_d^{\e_d}$. Let $C_v=c_1\cdots c_d$ be 
the cycle extending $v$, 
let $V_q=(o(E_q),O_q;o(E_{q+1}),O_{q+1})$, and $t_q=\psi(c_q)\psi(c_{q+1})$, for $q=1,\ldots, d$. 
Then, for $q=1,\ldots, d$,
\[\a(c_q)=
\begin{cases}
b_{l_q}^{-1}s_{l_q}a_{r_q},&\textrm{if } o(E_q)=1\textrm{ and } O_q=1\\
a_{l_q}^{-1}s_{r_q}^{-1}b_{r_q},&\textrm{if } o(E_q)=1\textrm{ and } O_q=-1\\
a_{l_q}^{-1}a_{r_q},&\textrm{if } o(E_q)=-1\textrm{ and } O_q=1\\
b_{l_q}^{-1}s_{l_q}s_{r_q}^{-1}b_{r_q},&\textrm{if } o(E_q)=-1\textrm{ and } O_q=-1\\
\end{cases}
,
\]
\[V_q=(\e,\d:\g,\d\g),\textrm{ for some } \e,\d, \g=\pm 1,\]
and
\be[(i)]
\item\label{it:cyccanc1} $t_q=b^{-1}_{l_q}s_{l_q}s_{l_{q+1}}a_{r_{q+1}}$, if $V_q=(1,1;1,1)$, 
\item $t_q=a^{-1}_{l_q}s_{r_q}^{-1}s_{r_{q+1}}^{-1}b_{r_{q+1}}$, if $V_q=(1,-1;1,-1)$,
\item $t_q=b^{-1}_{l_q}s_{l_q}s_{l_{q+1}}s_{r_{q+1}}^{-1}b_{r_{q+1}}$, if 
$V_q=(1,1;-1,-1)$,
\item $t_q=a^{-1}_{l_q}s_{r_q}^{-1}a_{r_{q+1}}$, if $V_q=(1,-1;-1,1)$,
\item $t_q=a^{-1}_{l_q}s_{l_{q+1}}a_{r_{q+1}}$, if $V_q=(-1,1;1,1)$,
\item $t_q=b^{-1}_{l_q}s_{l_q}s_{r_q}^{-1}s_{r_{q+1}}^{-1}b_{r_{q+1}}$, if 
$V_q=(-1,-1;1,-1)$,
\item $t_q=a^{-1}_{l_q}s_{l_{q+1}}s_{r_{q+1}}^{-1}b_{r_{q+1}}$, if $V_q=(-1,1;-1,-1)$ and 
\item $t_q=b^{-1}_{l_q}s_{l_q}s_{r_q}^{-1}a_{r_{q+1}}$, if 
$V_q=(-1,-1;-1,-1)$.
\ee
\end{lemma}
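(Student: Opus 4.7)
The lemma breaks into three essentially computational parts: a formula for $\psi(c_q)$, the structural constraint $V_q = (\e,\d;\g,\d\g)$, and eight formulae for $t_q = \psi(c_q)\psi(c_{q+1})$. For the first of these I would substitute \eqref{eq:psidefo} or \eqref{eq:psidefn} for $\psi(c_q)$ according to the value of $o(E_q)$, and then the expressions for $z_1(E_q)$ and $z_2(E_q)$ from Definition \ref{def:Ffact}, in each of the four cases $(o(E_q), O_q) \in \{\pm 1\}^2$. Using Definition \ref{def:vori} to relate $\mu_q, \nu_q$ to $l_q, r_q$, and noting that in each case $\{l_q, r_q\} = \{1,2\}$, the formula collapses to the claimed expression independently of the residual binary choice. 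For instance, when $o(E_q) = 1$ and $O_q = 1$ one has $\mu_q = r_q$, hence $\psi(c_q) = z_{r_q}(E_q) = b_{l_q}^{-1}s_{l_q}a_{r_q}$ whether $r_q = 1$ or $r_q = 2$; the sign $\g_q = O_q\nu^{-1}(l_q)$ appearing in \eqref{eq:psidefn} gets absorbed analogously when $o(E_q) = -1$.

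The second claim is immediate from the rule $O_{q+1} = O_q \cdot o(E_{q+1})$ in Definition \ref{def:vinc}: with $O_q = \d$ and $o(E_{q+1}) = \g$ one obtains $O_{q+1} = \d\g$. For the third part, I would compute $t_q$ in each of the eight cases by multiplying the two expressions from the first part and cancelling. The cancellation rests on a single geometric observation which I would extract as a preliminary identity: since $U$ contains the subword $(E_q^{\e_q} E_{q+1}^{-\e_{q+1}})^{O_q}$, Lemma \ref{lem:vind} shows that in $U'$ the two extended letters $E_{q,r_q}^{\e_q}$ and $E_{q+1,l_{q+1}}^{-\e_{q+1}}$ (when $O_q = 1$), respectively $E_{q+1,l_{q+1}}^{\e_{q+1}}$ and $E_{q,r_q}^{-\e_q}$ (when $O_q = -1$), are consecutive. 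Consecutive letters of $U'$ meet at a common point $v(U_k) \in F$, and Definition \ref{def:Ffact} makes the $b$-word of the second equal to the $a$-word of the first. This yields the key identities $a_{r_q}(E_q) = b_{l_{q+1}}(E_{q+1})$ when $O_q = 1$ and $b_{r_q}(E_q) = a_{l_{q+1}}(E_{q+1})$ when $O_q = -1$. Substituting and cancelling in each of the eight cases then gives the stated form directly.

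The main obstacle will be bookkeeping rather than any conceptual difficulty. Tracking the interplay between the local data at $v$ (encoded by $l_q, r_q, \mu_q, \nu_q$ and the sign $\g_q$) and the global data on $F$ (encoded by $a, b, s$) requires careful case analysis, and the indexing conventions of Definition \ref{def:vori} split each $(o(E_q), O_q)$-case into two sub-cases based on which of $l_q, r_q$ equals $1$. I would therefore present the proof as a case table with one row per case in the third part, verifying once and for all the invariance under the sub-case split in a short preamble for the first part. The algebraic manipulations are then entirely routine once the key cancellation identity has been recorded.
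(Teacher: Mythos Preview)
Your proposal is correct and follows essentially the same route as the paper. The paper computes $\psi(c_q)$ from \eqref{eq:psidefo}, \eqref{eq:psidefn} and Definition \ref{def:Ffact}, observes that $V_q$ is determined by its first three entries via Definition \ref{def:vinc}, and derives the key cancellation identities $a_{r_q}=b_{l_{q+1}}$ (for $O_q=1$) and $a_{l_{q+1}}=b_{r_q}$ (for $O_q=-1$) from Lemma \ref{lem:vind}, then verifies case \ref{it:cyccanc1} explicitly and declares the remaining seven similar --- exactly the strategy you outline.
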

\begin{proof}
If $o(E_q)=1$ then 
\[\psi(c_q)=z_{\mu_q}(E_q)^{O_q}=\left[b_{\nu_q}^{-1}s_{\nu_q}a_{\nu_q} 
\right]^{O_q},\]
and the expressions of $\a(c_q)$ when $o(E_q)=1$, follow. When $o(E_q)=-1$ 
then 
\[\psi(c_q)=z_{\mu(O_q)}(E_q)^{O_q\nu^{-1}(l_q)}
=\begin{cases}
\left[a_2(E_q)^{-1}a_1(E_q) \right]^{\nu^{-1}(l_q)}, &\textrm{if } O_q=1,\\
\left[b_1(E_q)^{-1}s_1(E_q) s_2(E_q)^{-1}b_1(E_q)\right]^{-\nu^{-1}(l_q)}, &\textrm{if } O_q=-1,\\
\end{cases}
\]
and the given expressions for $\psi(c_q)$ when $o(E_q)=-1$ follow, on
considering the two possibilities, $l_q=1$ and $l_q=2$.
That $V_q$ is determined by its first three entries, as claimed, follows
from the definition of $O_q$. 

To compute $t_q$, observe that, from Lemma \ref{lem:vind}, 
if $O_q=1$ then $U'$ contains $E_{q,l_q}^{\e_q}E_{q+1,l_{q+1}}^{-\e_{q+1}}$, 
so that $a_{r_q}=b_{l_{q+1}}$. Similarly, if $O_1=-1$ then 
$U'$ contains $E_{q+1,l_{q+1}}^{\e_{q+1}}E_{q,l_q}^{-\e_q}$, so 
$a_{l_{q+1}}=b_{r_q}$. 
To see that \ref{it:cyccanc1} holds note that when $o(E_q)=o(E_{q+1})=O_q=
O_{q+1}=1$, we have $l_q=\nu_q$, $r_q=\mu_q$, $l_{q+1}=\nu_{q+1}$ and 
$r_{q+1}=\mu_{q+1}$. As $O_q=1$ we have $a_{r_q}=b_{l_{q+1}}$, 
so \[
t_q=[b_{\nu_q}s_{\nu_q}a_{\mu_q}][b_{\nu_{q+1}}s_{\nu_{q+1}}a_{\mu_{q+1}}]
=[b_{l_q}^{-1}s_{l_q}a_{r_q}][b_{l_{q+1}}^{-1}s_{l_{q+1}}a_{r_{q+1}}]
=b_{l_q}^{-1}s_{l_q}s_{l_{q+1}}a_{r_{q+1}}.\]
The other seven cases follow similarly. 
\end{proof}
\begin{lemma}\label{lem:genext}
Let $v$ be an internal vertex of $\G_S$ and (as on page \pageref{def:Bi'})  
let $B'_i=\{v_1^i,\ldots, v_{t}^i\}$ be the set of vertices of $\G_S$ corresponding to the 
 equivalence
class $B_i$, and let $g_i$ be 
the $S$-genus of $B_i$.  Let $C_v$ and $C_j^i$ be the cycles  extending  
$v$ and $v_j^i$, respectively, in $\G''_U$.  Then  the pair $(\G''_U,\psi)$ is 
\be
\item\label{it:genext1} a genus $0$ joint extension on $v$ by the cyclic word $\psi(C_v)$ and 
\item a genus $g_i$ joint extension on $B'_i$ by the words 
$\psi(C_1^i), \ldots , \psi(C_{t_i}^i)$. 
\ee
\end{lemma}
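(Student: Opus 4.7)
The plan is to prove both parts by an explicit computation of the cyclic product $\psi(C_v) = \psi(c_1)\cdots\psi(c_d)$ around the extension cycle of any given vertex. By Lemma \ref{lem:cyccanc}, each consecutive factor $\psi(c_q)\psi(c_{q+1}) = t_q$ exhibits telescoping: the rightmost letter ($a_{r_q}$ or $b_{r_q}$) of $\psi(c_q)$ cancels with the leftmost letter ($b_{l_{q+1}}$ or $a_{l_{q+1}}$) of $\psi(c_{q+1})$, by virtue of the identities $a_{r_q}(E_q) = b_{l_{q+1}}(E_{q+1})$ (when $O_q = 1$) and $a_{l_{q+1}}(E_{q+1}) = b_{r_q}(E_q)$ (when $O_q = -1$), which follow from Lemma \ref{lem:vind} and Definition \ref{def:Ffact}. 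Iterating around the cycle, all $a$'s and $b$'s cancel in pairs, and the final closure identity (coming from the first/last pair $(E_d, E_1)$) closes the loop. What remains is a cyclic product of the short-edge subwords $u_q = s_{l_{q+1}}(E_{q+1})$ (if $O_q = 1$) or $u_q = s_{r_q}(E_q)$ (if $O_q = -1$), i.e.\
\[
\psi(C_v) \;=_H\; u_1^{O_1}u_2^{O_2}\cdots u_d^{O_d}
\]
as a cyclic word.

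For part (i), $v$ is an internal vertex of $\G_S$, so every short-edge subword adjacent to $v$ in $W$ is empty and hence every $s_{l_q}(E_q)$ and $s_{r_q}(E_q)$ is the trivial word. The telescoping therefore collapses completely, yielding $\psi(C_v) =_H 1$, and condition \ref{it:ext1} of Definition \ref{defn:ext} holds with $t=1$ and $g=0$. Condition \ref{it:ext2} of Definition \ref{defn:ext} needs checking only if $v$ has degree $\le 2$; but as noted after Lemma \ref{cull}, any Wicks form of genus $n > 1/2$ has all vertices of degree at least $3$, and an internal vertex of $\G_S$ inherits its degree from $\G_W$. In the exceptional case $n = 1/2$, necessarily $W = A^2$ and $U = A^{\pm 2}$, so clause (b) of \ref{it:ext2} holds.

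For part (ii), the same telescoping applied at each $v_j^i$ identifies $\psi(C_j^i)$ with $u_1^{O_1}\cdots u_d^{O_d}$, which by Lemma \ref{lem:bdryword} is conjugate in $H$ to $\theta(W_j^i)$. Hence the tuple $(\psi(C_1^i), \ldots, \psi(C_{t_i}^i))$ agrees componentwise, up to $H$-conjugation, with $(\theta(W_1^i), \ldots, \theta(W_{t_i}^i))$; and the $H$-genus of a tuple is manifestly invariant under componentwise conjugation, since one can absorb conjugators into the variables $v_i$ in the standard equations \eqref{eq:qgo}, \eqref{eq:qgn}. Combined with Definition \ref{def:sgenus} this gives
\[
g_i - t_i + 1 \;=\; \genus_H(\theta(W_1^i),\ldots,\theta(W_{t_i}^i)) \;=\; \genus_H(\psi(C_1^i),\ldots,\psi(C_{t_i}^i)),
\]
which is condition \ref{it:ext1} of Definition \ref{defn:ext}. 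Condition \ref{it:ext2} requires attention only when $t_i = 1$ and $v_1^i$ has degree $1$ or $2$ in $\G_U$; in that situation a short case analysis using irredundancy of $W$ (together with the fact that every boundary component of $S$ is collapsed to a single vertex of $\G_U$) shows that either $U = A^{\pm 2}$, so clause (b) holds, or the collapsed surface forces $g_i \ge 1/2$.

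The main obstacle to a fully written proof is the eight-way case analysis of Lemma \ref{lem:cyccanc}, which must be combined with the two signs of $o(A)$ in Definition \ref{def:Ffact} to verify that in every configuration the two outer factors of $t_q$ are a $b^{-1}$ (or $a^{-1}$) immediately followed, upstream, by its matching $b$ (or $a$) at the next index. Once this pattern is checked uniformly, the cyclic telescoping is transparent and the identification with the formula of Lemma \ref{lem:bdryword} is immediate, so the remaining bookkeeping for both parts of the lemma is routine.
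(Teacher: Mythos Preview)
Your approach mirrors the paper's proof closely: both use the telescoping computation recorded in Lemma~\ref{lem:cyccanc} to strip the $a$'s and $b$'s from $\psi(C_u)$, then identify the residual product of short-edge words with the formula of Lemma~\ref{lem:bdryword}. Your direct claim that the telescoped cyclic word equals $u_1^{O_1}\cdots u_d^{O_d}$ is correct up to cyclic permutation, but it hides exactly the case analysis the paper makes explicit: the paper first records the residue $\hat\psi(c_q)$ (which has zero, one, or two $s$-factors according to $(o(E_q),O_q)$) and then verifies, by splitting into the cases ``all $o(E_q)$ equal'' versus ``mixed'', that $\prod_q\hat\psi(c_q)$ agrees cyclically with $\prod_q u_q^{O_q}$. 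This is precisely the bookkeeping you flag in your final paragraph, so for parts~\ref{it:genext1} and the genus computation in part~2 your sketch is on target.

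There is, however, a genuine gap in your treatment of condition~\ref{it:ext2} of Definition~\ref{defn:ext} for part~2. You assert that when $t_i=1$ and $\deg(v_1^i)\le 2$, ``the collapsed surface forces $g_i\ge\tfrac12$''. But $g_i$ is defined via $\genus_H$, which depends on the hyperbolic group $H$ and not merely on surface topology; no purely topological or combinatorial argument on $W$ can bound $\genus_H(\theta(W_1^i))$ from below. The paper's argument runs by contradiction and hinges on Lemma~\ref{gi}: assuming $g_i=0$, one deduces $\genus_{F(\cA)}(W_1^i)=0$, so $\G_{W_1^i}$ is a tree and $W_1^i$ is freely trivial. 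Since $W_1^i$ is a reduced (sub)word of the cyclically reduced Wicks form $W$, this forces $W_1^i$ to be empty (giving $E_1^{\e_1}E_1^{-\e_1}$ in $W$, contradicting cyclic reducedness, when $d=1$) or $S_1'=S_2'^{-1}$ (contradicting irredundancy, when $d=2$). Your sketch invokes irredundancy but omits the crucial passage through Lemma~\ref{gi}; without it you cannot get your hands on $W_1^i$ as a free-group word, and the Wicks-form axioms give you nothing about $\theta(W_1^i)$ in $H$.
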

\begin{proof} 
First let $u$ be any vertex of $\G_U$, 
with incidence sequence $O_1,\ldots, O_d$
and let  $E_1^{\e_1}, \ldots, E_d^{\e_d}$ 
be the vertex sequence of $\lk(u)$. Let $C_u=c_1\cdots c_d$ be the 
cycle extending $u$. 
 It follows from Lemma \ref{lem:cyccanc} that 
all the words $a_{l_q}$, $a_{r_q}$, $b_{l_q}$ and $b_{r_q}$ cancel
in the product $\psi(C_u)=\psi(c_1)\cdots \psi(c_d)$. Thus
\[\psi(C_u)=\hat\psi(c_1)\cdots \hat\psi(c_d),\]
where 
\[
\hat\psi(c_q)=
\begin{cases}
s_{l_q},&\textrm{if } o(E_q)=1\textrm{ and } O_q=1\\
s_{r_q}^{-1},&\textrm{if } o(E_q)=1\textrm{ and } O_q=-1\\
1,&\textrm{if } o(E_q)=-1\textrm{ and } O_q=1\\
s_{l_q}s_{r_q}^{-1},&\textrm{if } o(E_q)=-1\textrm{ and } O_q=-1\\
\end{cases}
.
\]
If $u$  an internal vertex of $\G_S$ then, by definition, $s_{l_q}=s_{r_q}=1$, 
for all $q$. 
 Therefore, for the internal vertex $v$ of $\G_S$ we have
 $\psi(C_v)=1$. Moreover, if $v$ is an internal vertex of $\G_S$ then
it is also a vertex of $\G_W$ and hence has degree at least $3$, unless
$U=A^{\pm 2}$, for some $A\in\cA$.  
Thus \ref{it:ext1} and \ref{it:ext2} of Definition \ref{defn:ext} hold,  and \ref{it:genext1} follows. 

Assume now that $u=v_j^i\in B'_i$. 
If $o(E_q)=\e$, for fixed $\e=\pm 1$, for all $q$, then 
it follows from Lemma \ref{lem:bdryword} that $\psi(C_u)$ is conjugate in 
$H$ 
to $\theta(W_j^i)$. Hence we may assume that $o(E_q)=1$ for at least one
index $q$ and that $o(E_q)=-1$, for at least two indices $q$. In this
case we may, by renumbering the $E_q$ if necessary, choose $E_1$ such that 
$o(E_1)=-1$ and $O_1=1$. Let $1=i_1<i_2<\cdots <i_{2m}\le d$ be integers
such that $o(E_{q})=-1$ if and only if  $q\in \{i_1,\ldots, i_{2m}\}$. 
Then $O_q=-1$ if $i_{2j}\le q< i_{2j+1}$, and $O_q=1$ if 
$i_{2j+1}\le q< i_{2(j+1)}$, for $j=1,\ldots, m$ (subscripts modulo $2m$). 
Therefore
\begin{gather*}
\hat\psi(c_{i_1})=\hat\psi(c_{i_3})=\cdots =\hat\psi(c_{i_{2m-1}})=1,\\
\hat\psi(c_{i_{2j}})=s_{l_{i_{2j}}}s_{r_{i_{2j}}}^{-1},\textrm{ for } 
j=1,\ldots, m,\\
\hat\psi(c_{q})=s_{l_q},\textrm{ for } i_{2j+1}< q< i_{2(j+1)},\textrm{ and }\\
\hat\psi(c_{q})=s_{r_q}^{-1},\textrm{ for } i_{2j}< q< i_{2j+1}.
\end{gather*}
Hence, for $j=1,\ldots ,m$, 
\[\hat\psi(c_{i_{2j-1}})\cdots \hat\psi(c_{i_{2j+1}-1})=s_{l_{i_{2j-1}+1}}
\cdots s_{l_{i_{2j}}}
s_{r_{i_{2j}}}^{-1}\cdots s_{r_{i_{2j+1}-1}}^{-1},\] 
where the subscripts of $i$ are integers modulo $2m$. 
From Lemma \ref{lem:bdryword} again, it follows that $\psi(C_{v_j^i})$ is 
conjugate in $H$ to $\theta(W_j^i)$. 

 This is true for all vertices of $B'_i$, so 
if $B_i'=\{v_1^i,\ldots ,v_{t_i}^i\}$ then 
\[\genus_H(\psi(C_1^i),\ldots ,\psi(C_{t_i}^i))=
\genus_H(\theta(W^i_1),\ldots ,\theta(W^i_{t_i}))=g_i-t_i+1,\]
where $g_i$ is the $S$-genus of $B_i$. Thus \ref{it:ext1}  of Definition
\ref{defn:ext} holds for the 
extension of $B_i'$ by $\psi(C_1^i),\ldots ,\psi(C_{t_i}^i)$. 

To verify
\ref{it:ext2}, suppose that $g_i=0$, $t_i=1$ and that the degree of $v_1^i$ is $d$. 
Then $\genus_H(\psi(C_1^i))=\genus_H(\theta(W_1^i))=0$ and Lemma \ref{gi} implies
that $\genus_{F(\mathcal{A})}(W_1^i)=0$. 
If $d=1$ then 
$W_1^i=1$. But $W_1^i=S(E_1^{\e_1})$ is a cyclic subword of the Wicks
form $W$ (or it's inverse) and by definition $W$ is cyclically reduced. Therefore this
can't occur. 

Now suppose that $d=2$. Then $v_j^i$ has link $E_1^{\e_1},E_2^{\e_2}$, 
and since there are an even number of non-alternating edges incident
to every vertex of $\G_W$, $o(E_1)=o(E_2)=\pm 1$. Moreover
$W$ contains $(E_1^{\e_1}S_1^{\prime O_1}E_2^{-\e_2})^{O_1}$ and 
$(E_2^{\e_2}S_2^{\prime O_2}E_1^{-\e_1})^{O_2}$, where $S_q^\prime$ is 
defined in the proof of Lemma \ref{lem:bdryword}. If $o(E_1)=o(E_2)=1$, 
then $O_1=O_2$ and $W^{\pm 1}$ contains 
$E_1^{\e_1}S_1^{\prime}E_2^{-\e_2}$ and 
$E_2^{\e_2}S_2^{\prime}E_1^{-\e_1}$. In this case the label of the
cyclic word $W^i$ is $(S_1^\prime S_2^\prime)^{\pm 1}$, so $S'_1=
S_2^{\prime\, -1}$. Hence $W$ is redundant, a contradiction. 
In the case $o(E_1)=o(E_2)=-1$, we may assume that $O_1=-O_2=1$ and
so $W^{\pm 1}$ contains $E_1^{\e_1}S_1^{\prime}E_2^{-\e_2}$ and 
$E_1^{\e_1}S_2^{\prime}E_2^{-\e_2}$. In this case the boundary label 
of $W^i$ is $(S_1^\prime S_2^{\prime\, -1})^{\pm 1}$, so 
$W$ contains two occurrences of $E_1^{\e_1}S_1^{\prime}E_2^{-\e_2}$, which
again implies redundancy.
 
  Therefore part \ref{it:ext2} of Definition \ref{defn:ext} holds. Thus,  
we have the required 
genus $g_i$ extension of $B_i'$. 
\end{proof}

We are now in a position to complete the main part of the proof of Theorems \ref{Thexp} and \ref{Thexp-}, 
by showing that $(\G''_U,\psi)$ is the required extension of $U$ over $H$. Once this has been done
the remaining step is to bound the length of $R$, and this is addressed in Section \ref{sec:Rbound}.

\begin{proposition}\label{prop:ext}
Let $\genus(S)=k$ and let $g=n-k$. Then 
the pair $(\G''_U,\psi)$ is a genus $g$ extension of $U$ over $H$, of length
at most $2K(n)(12l+M+4)$.
\end{proposition}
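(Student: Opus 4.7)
The plan is to prove the two assertions of the proposition separately: first that $(\G''_U,\psi)$ is indeed a genus $g$ extension, then that its length is at most $2K(n)(12l+M+4)$. For the genus claim I would partition the vertices of $\G_U$ into singletons $\{v\}$ for each internal vertex of $\G_S$, together with the sets $B'_1,\ldots,B'_p$ obtained by collapsing each equivalence class $B_i$ of boundary components of $S$ to points. By Lemma \ref{lem:genext}, $(\G''_U,\psi)$ is a genus $0$ joint extension on each internal singleton, and a genus $g_i$ joint extension on each $B'_i$ (where $g_i$ is the $S$-genus from Definition \ref{def:sgenus}). Lemma \ref{gi} then gives $\sum_{i=1}^p g_i=n-k=g$, which is precisely the condition needed to conclude that $(\G''_U,\psi)$ is a genus $g$ extension.

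For the length bound, the key observation is that each long edge $A\in\supp(U)$ contributes exactly the pair $|z_1(A)|+|z_2(A)|$ to $\sum_v|\psi(C_v)|$. Indeed, $A$ is incident to two vertices of $\G_U$ (counted with multiplicity if $A$ is a loop), and by the case analysis carried out in the proof of Lemma \ref{lem:uext} — or directly from the definition of $\psi$ on cycle edges in \eqref{eq:psidefo} and \eqref{eq:psidefn} — one endpoint receives a label $z_1(A)^{\pm 1}$ and the other $z_2(A)^{\pm 1}$. Hence
\[\sum_v|\psi(C_v)| \;=\; \sum_{A\in\supp(U)}\bigl(|z_1(A)|+|z_2(A)|\bigr).\]

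To bound this sum I would use the expressions for $z_i(A)$ from Definition \ref{def:Ffact}, together with the fact that each $a(U_k)$ appears precisely twice across all the $z_i$'s (once as an $a_*$-term and once as a $b_*$-term at the adjacent index). Lemma \ref{lem:chouv} yields $|a(U_k)|\le 5l+M+3$, so the total contribution of the boundary terms is at most $2f(5l+M+3)$, where $f=|U|$. The $s$-terms contribute $\sum_{k=1}^f|\theta(S_k)|$, and since each short edge has image of length at most $L=12l+M+4$, this is bounded by $(|W|-f)L\le(K(n)-f)L$. Summing and using $f\le|W|\le K(n)$ gives
\[\sum_v|\psi(C_v)| \;\le\; 2f(5l+M+3)+(K(n)-f)(12l+M+4) \;=\; (M+2-2l)f+K(n)(12l+M+4),\]
and a case split on the sign of $M+2-2l$ shows this is at most $2K(n)(12l+M+4)$.

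The main obstacle, and the only place I expect any genuine subtlety, is verifying the claim that each long edge contributes exactly $|z_1(A)|+|z_2(A)|$ rather than, say, two copies of the same $z_i$. This requires a careful accounting of the cases $o(A)=\pm 1$ and $O'_f=\pm 1$ (and the subcases $r_1=r'_1,l_1=l'_1$ versus $r_1=l'_1,l_1=r'_1$ in the non-orientable case), essentially re-examining the computations behind Lemma \ref{lem:uext}. Once this bookkeeping is confirmed, the length estimate reduces to the elementary combinatorial inequality above, and the genus argument is immediate from the two lemmas already proved.
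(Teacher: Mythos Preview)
Your proposal is correct and follows essentially the same route as the paper's proof. The genus argument is identical (partition into internal vertices and the $B'_i$, apply Lemma~\ref{lem:genext} and Lemma~\ref{gi}), and your length computation is the paper's computation reorganised: the paper sums the individual $a_j,b_j,s_j$ contributions, noting via the proof of Lemma~\ref{lem:uext} that each occurs exactly once, to obtain $(12l+M+4)\mathcal{S}+2(5l+M+3)\mathcal{L}$, which with $\mathcal{S}+\mathcal{L}\le K(n)$ is precisely your inequality $(M+2-2l)f+K(n)(12l+M+4)$ after setting $f=\mathcal{L}$. The ``main obstacle'' you flag---that the two cycle-edges associated to a long edge $A$ carry $z_1(A)^{\pm1}$ and $z_2(A)^{\pm1}$ rather than the same index twice---is exactly what the paper handles by pointing back to equations~\eqref{eq:altphi1}--\eqref{eq:nonaltphi}.
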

\begin{proof}
Let $C_1,\ldots, C_Q$ be the cycles extending the vertices of $\G_U$ corresponding
 to the $Q$ boundary components of $S$; and let $C_{Q+1},\ldots, C_{Q+t}$ be the 
cycles extending the internal vertices of $S$. Then the length of 
 $(\G''_U,\psi)$ is 
\[\sum_{i=1}^{Q+t}|\psi(C_i)|.\]
For $1\le j\le Q+t$, we have $\psi(C_j)$ equal to  
a product of terms of the form  $a_j(A)$, $b_j(A)$ and $s_j(A)$, where 
$A$ is a long edge of $W$, and $j=1$ or $2$.  
From the proof of Lemma \ref{lem:uext}, in particular equations
\eqref{eq:altphi1}, \eqref{eq:altphi2}, \eqref{eq:altphi3} and 
\eqref{eq:nonaltphi}, for each long edge $A$ of $W$ and for $j=1,2$, 
each of $a_j(A)$, $b_j(A)$ and $s_j(A)$ occurs in exactly one 
$\psi(c)$, for some letter $c$ of one of the cycles $C_v$.   
 
Every $s_j(A)$ is the product of the labels of the short edges
between two consecutive long edges of $W$;  
 and short edges  have
labels in $F(X)$ of 
length  at most  $12l+M+4$.  
Each
$a_{j}(A)$ and $b_{j}(A)$ is a word of length at most $5l+M+3$ arising from  a
long edge of $W$ (Lemma \ref{lem:chouv}).  

By Lemma \ref{cull},  the maximum
number of letters in $W$ is $K=\max\{2,12n-6\}$. 
Thus the sum of number of short
letters and long letters is at most $K$. Therefore, if
$\mathcal{S}$ and $\mathcal{L}$ are the numbers of short edges and long
edges of $W$, respectively, the length of the extension 
is bounded  by
\begin{eqnarray*}
(12l+M+4)\mathcal{S}+2(5l+M+3)\mathcal{L} &\leq & 2K(12l+M+4).
\end{eqnarray*}   

From Lemma \ref{lem:genext} $(\G''_U,\psi)$ is an extension 
of genus $\sum_{i=1}^p g_i$, and the proposition now follows from Lemma \ref{gi}.
\end{proof}
Combining equation \eqref{eq:Ffac} and Proposition \ref{prop:ext}, 
we have the required extension 
 and it remains only to verify the 
bound on the length of $R$. 

\subsection{Bounding the length of $R$}\label{sec:Rbound}
Now assume $h=RFR^{-1}$, where $h$ satisfies the hypotheses and $F$ the conclusions of Theorem \ref{Thexp} or Theorem \ref{Thexp-}. 
We shall consider the geodesic quadrilateral with sides labelled $F, R^{-1}, h^{-1}, R$, based at $1$ in $\G_X(H)$, 
and refer to these geodesics by their labels. We apply Lemma \ref{lem:la} to this quadrilateral, with $\g_0=F$. 
This gives a partition  $F_\i,F_\g,F_\t$ of $F$ such that $F_\t^{-1}$ $\d$-fellow travels with an initial segment of $R^{-1}$, 
$F_\i^{-1}$ $\d$-fellow travels with a terminal segment of $R$ and $F_\g^{-1}$, which may be empty, $2\d$-fellow travels with a 
segment of $h$, as in Figure \ref{fig:bdr1}. 
\begin{figure}[htp]
\begin{center}
\begin{subfigure}[b]{.32\columnwidth}
\begin{center}
\psfrag{h}{{\scriptsize $h$}}
\psfrag{Fi}{{\scriptsize $F_\i$}}
\psfrag{Fg}{{\scriptsize $F_\g$}}
\psfrag{Ft}{{\scriptsize $F_\t$}}
\psfrag{R}{{\scriptsize $R$}}
\psfrag{<d}{{\scriptsize $\le \d$}}
\psfrag{<2d}{{\scriptsize $\le 2\d$}}
\includegraphics[scale=0.35]{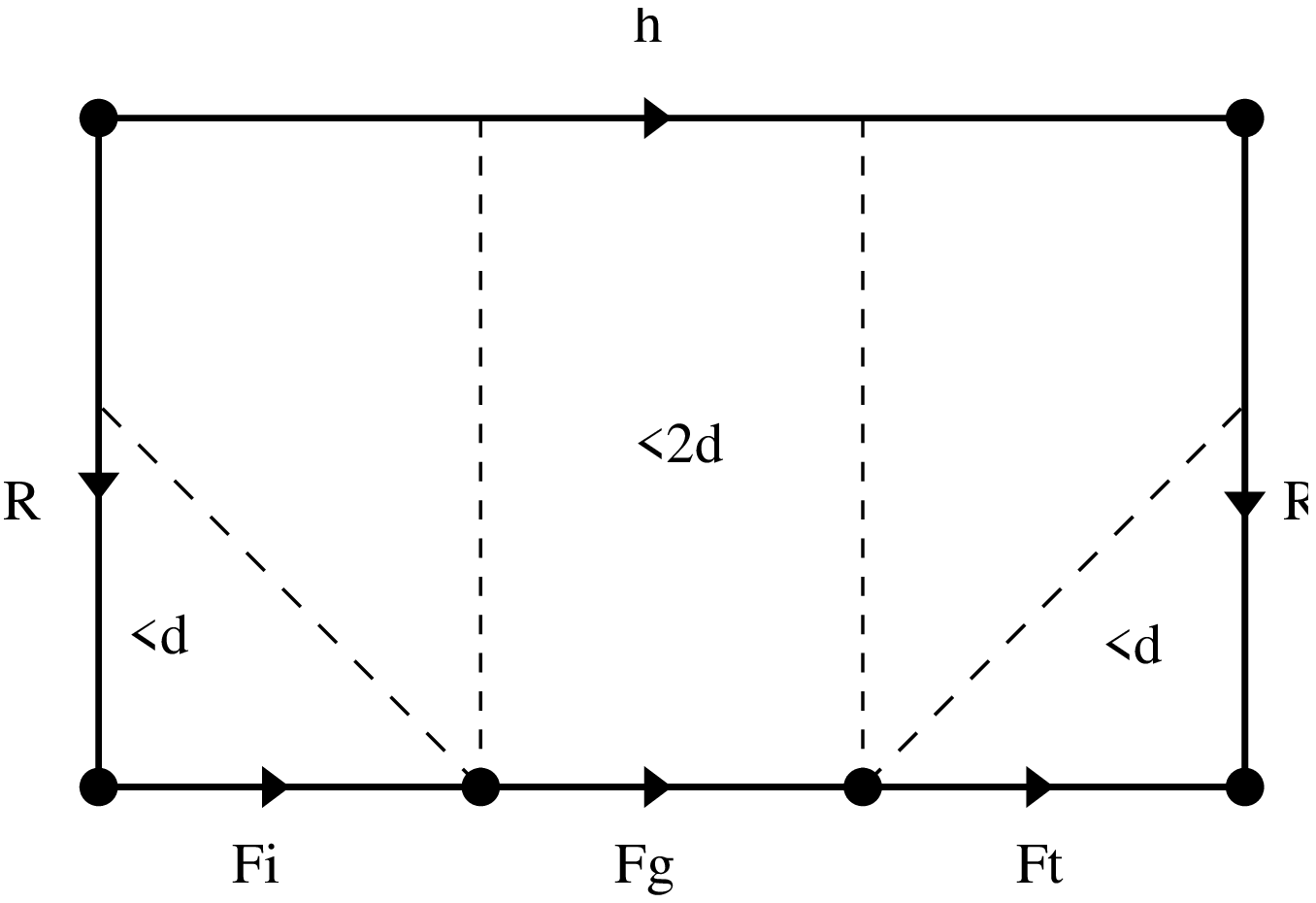}
\caption{A partition $F_\i,F_\g,F_\t$}\label{fig:bdr1}
\end{center}
\end{subfigure}
\begin{subfigure}[b]{.32\columnwidth}
\begin{center}
\psfrag{h}{{\scriptsize $h$}}
\psfrag{F0}{{\scriptsize $F_0$}}
\psfrag{F1}{{\scriptsize $F_1$}}
\psfrag{R0}{{\scriptsize $R_0$}}
\psfrag{R1}{{\scriptsize $R_1$}}
\psfrag{R}{{\scriptsize $R$}}
\psfrag{d}{{\scriptsize $d$}}
\psfrag{p}{{\scriptsize $p$}}
\psfrag{q}{{\scriptsize $q$}}
\psfrag{a}{{\scriptsize $a$}}
\psfrag{TE}{{\scriptsize $\theta(E)$}}
\includegraphics[scale=0.35]{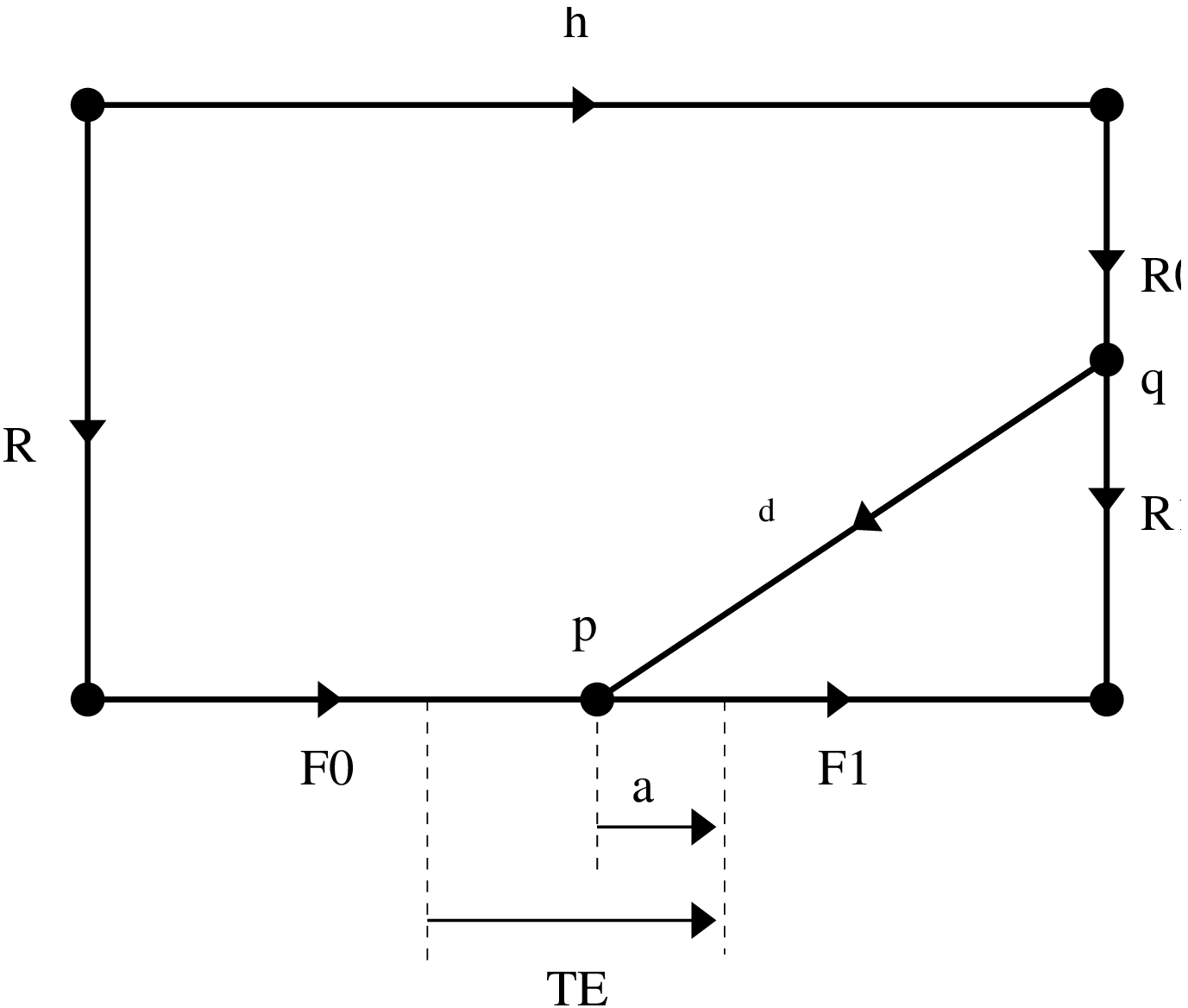}
\caption{$p$ lies on $F_\t$ and $q$ on $R^{-1}$}\label{fig:bdr2}
\end{center}
\end{subfigure}
\begin{subfigure}[b]{.32\columnwidth}
\begin{center}
\psfrag{h}{{\scriptsize $h$}}
\psfrag{Fi}{{\scriptsize $F_\i$}}
\psfrag{Fg}{{\scriptsize $F_\g$}}
\psfrag{Ft}{{\scriptsize $F_\t$}}
\psfrag{R}{{\scriptsize $R$}}
\psfrag{F0}{{\scriptsize $F_0$}}
\psfrag{F1}{{\scriptsize $F_1$}}
\psfrag{R0}{{\scriptsize $R_0$}}
\psfrag{R1}{{\scriptsize $R_1$}}
\psfrag{h0}{{\scriptsize $h_0$}}
\psfrag{h1}{{\scriptsize $h_1$}}
\psfrag{<d}{{\scriptsize $\le \d$}}
\psfrag{<2d}{{\scriptsize $\le 2\d$}}
\psfrag{d}{{\scriptsize $d$}}
\psfrag{p}{{\scriptsize $p$}}
\psfrag{q}{{\scriptsize $q$}}
\psfrag{a}{{\scriptsize $a$}}
\psfrag{TE}{{\scriptsize $\theta(E)$}}
\includegraphics[scale=0.35]{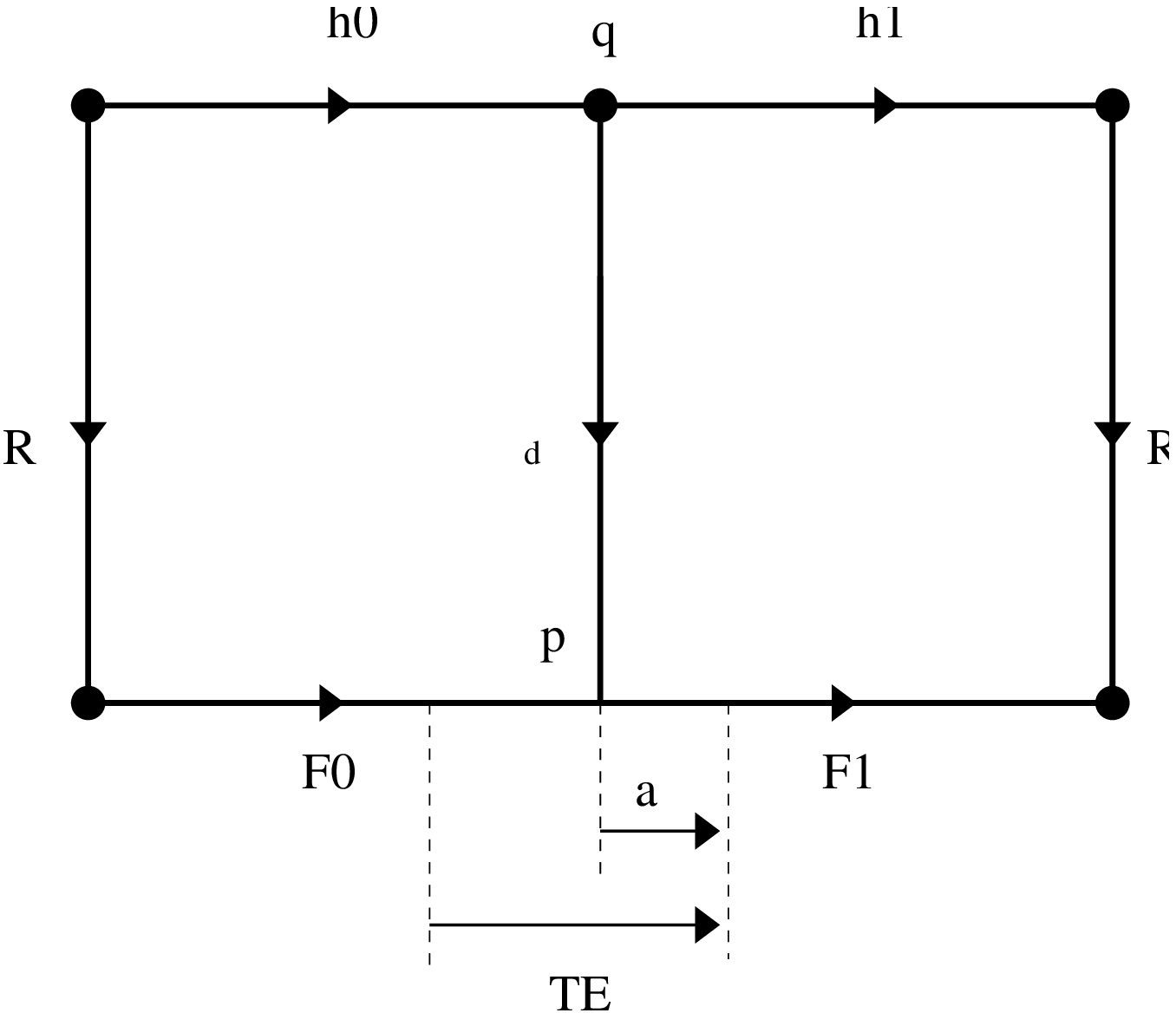}
\caption{$p$ lies on $F_\g$ and $q$ on $h$}\label{fig:bdr3}
\end{center}
\end{subfigure}
\end{center}
\caption{}\label{fig:bdra}
\end{figure}  
Assume first that $F$ satisfies \ref{it:exp1} of Theorem \ref{Thexp} or Theorem \ref{Thexp-}. 
If $|F|\le 12l+M+4\d+5$ then, from  Lemma \ref{lb}, there is $R$ such that $h=_HRFR^{-1}$ and $|R|\le |h|/2+6l+3M/2+2\d+7/2$, as claimed.
Therefore we assume that $|F|>12l+M+4\d+5$ and that $F=\theta(W)$, where $W$ is a Wicks form of the appropriate type, with
$|\theta(E)|\le 12l+M+4$, for all $E \in \supp(W)$. 
Moreover, we assume that $R$ is minimal, in the sense that, 
for all $(W',\theta')\in \cF$, with no long edges, if $h=_H R'\theta'(W'){R'}^{-1}$ then $|R|\le |R'|$. 

Let $p$ be a vertex on $F$ such that $|d(p,\i(F)-d(p,\t(F)|\le 1$. First suppose that  $p$ lies on $F_\i$ or $F_\t$, so there
is a 
vertex $q$ on $R$ or $R^{-1}$ such that $d(p,q)\le \d$. Without loss of generality,  we may take $q$ to lie on $R^{-1}$ 
and we take $d$ to be the label of a geodesic from $q$ to $p$. 
Then $R=R_0R_1$, where $\t(R_0)=\i(R_1)=q$ and $F=F_0F_1$, where $\t(F_0)=\i(F_1)=p$, as in Figure \ref{fig:bdr2}. This means  
$|R_1|\ge |F_1|-\d$ and  as $|F_1|\ge (|F|-1)/2$, we have $|R_1|> 6l+M/2+\d+2$.  
However $|\theta(E)|\le 12l+M+4$, for all $E\in \supp(W)$, so 
$p$ is distance at most $6l+M/2+2$ from the initial vertex of $\theta(E^{\pm 1})$, for some  $E\in \supp(W)$. Thus $a^{-1}F_1F_0a=\theta(W')$, for
some cyclic permutation $W'$ of $W$ and some $a\in F(X)$ with $|a|\le 6l+M/2+2$ (see  Figure \ref{fig:bdr2}). Let $F'=\theta(W')$. 
As $R_1=dF_1$, we have $h=_H R_0d F_1F_0d^{-1}R_0^{-1}= R_0da F'a^{-1}d^{-1}R_0^{-1}$ and $|R_0da|\le |R_0|+|a|+|d|\le |R_0|+ 6l+M/2+\d+2<|R_0|+|R_1|=|R|$. 
As $W'$ is a Wicks form we may replace $(W,\theta)$ and $F$, in the conclusion of  Theorem \ref{Thexp} or Theorem \ref{Thexp-}, 
by  $(W',\theta)$ and $F'$ and then replace $R$ by $R_0da$. However, this contradicts the minimality of $|R|$. Therefore we may
assume that $p$ lies on $F_\g$. 

As $p\in F_\g$ there is a vertex $q$ on $h$ such that  $d(p,q)\le 2\d$, and we now take 
 $d$ to be the label of a geodesic from such a vertex $q$ on $h$ to the vertex $p$ on $F$, as in Figure \ref{fig:bdr3}. 
Let $h=h_0h_1$, where $\t(h_0)=q=\i(h_1)$; so $h=h_2h_1h_0h_2^{-1}$, for some $h_2$ such that $|h_2|\le |h|/2$ (i.e. $h_2=h_0$ or $h_1^{-1}$). 
Then $h_1h_0=dF_1F_0d^{-1}$ so 
$h=h_2dF_1F_0d^{-1}h_2^{-1}=h_2da F' a^{-1} d^{-1}h_2^{-1}$, with $a$ and $F'$ as before. Now $|h_2da|\le |h|/2 + 2\d+ 6l+M/2+2$ and by hypothesis 
$|R|\le |h_2da|$, so in all cases we have $|R| \le |h|/2+6l+3M/2+7/2+2\d$.

Now assume that $F$ satisfies \ref{it:exp2} of Theorem \ref{Thexp} or Theorem \ref{Thexp-}. 
If  $R$ is an $H$-minimal element of $F(X)$ such that 
$h=_H R F R^{-1}$, where $F=\theta(W)$,  then we say that $R$ is a \emph{conjugator} for 
$(W,\theta)$. Since we assumed that $W$ ends in
a long edge we call $R$ a \emph{minimal conjugator} if $R$ is a conjugator for $(W,\theta)$ and, 
for every cyclic permutation $W'$ of 
$W$ which ends in a long edge, and every conjugator $R'$ of $(W',\theta)$,
we have $|R|\le |R'|$. Assume then that $R$ is a minimal conjugator. Then 
 $F$ has a partition $\a_1,\ldots,\a_f$, such that \ref{it:chouv1} and \ref{it:chouv2} of Lemma \ref{lem:chouv}  
 hold (with $\a_i$ in place of $F_i$). 
Let $P=\{\t(\a_i)\,:\, 1\le i\le f-1\}$. 
Then, from Lemma \ref{lem:chouv} again, $P\neq \emptyset$. 
Suppose that $p$ lies on $F_\i$ or $F_\t$, for all $p\in P$.  
Then, without loss of generality we may assume that $p$ lies on $F_\t$, for some $p\in P$; so
 there exists a point $q$ on $R^{-1}$ such that $d(p,q)\le \d$. 
Factorise $R=R_0R_1$ and $F=F_0F_1$,  as in the previous case. This time 
 $F_1F_0$ is the label of a Hamiltonian cycle on a graph corresponding to the extension given by the theorem, 
and the point $p$ is, by definition $v(U_i)$ for some long edge $U_i$. Thus $F_1F_0$ is the  Hamiltonian cycle
 of an extension constructed from a permutation $W'$ of $W$ ending with a long edge. Hence 
 we may replace $F$ by $F_1F_0$ in the conclusion of the theorem. 
We have $|F_1|\ge 6l+2$, since $\a_f$ is a subinterval of $F_1$, so $|R_1|\ge  6l+2-\d$,  
 and  $h=_H R_0dF_1F_0d^{-1}R_0$, where  $|d|\le \d<|R_1|$, contrary
to the choice of $|R|$ as a minimal conjugator. This contradiction shows that $p$ lies on $F_\g$, for 
all $p\in P$.  

Now choose $p\in P$, so the geodesic $d$  
from some point $q$ on $h^{-1}$ to $p$ has  length at most $2\d$. Now $d$ is a conjugator, for a cyclic permutation
of $W$ ending in a long edge, and by minimality of $R$ we have 
$|R|\le |h|/2 +2\d$, as required.
\label{endthm}
\end{proof}

%
\bibliography{qpe}{}  
\bibliographystyle{plain}  
\end{document}